\documentclass[11pt,reqno]{amsart}
%%\textwidth 16.5cm
%%\textheight 22.5cm
%%\oddsidemargin 0pt
%%\topmargin -1cm
%\setattribute{journal}{name}{}
\usepackage[square,numbers]{natbib}
%\documentclass[preprint,aos]{imsart}
%\RequirePackage{natbib}
%\usepackage[notcite,notref]{showkeys}
%\setattribute{journal}{name}{}

%\RequirePackage{txfonts}
%\usepackage[round,colon,authoryear]{natbib}
%\usepackage{subfigure}
%\usepackage{subcaption}
\usepackage[colorinlistoftodos]{todonotes}
\RequirePackage[OT1]{fontenc}
\RequirePackage{amsthm,amsmath,bm,natbib,graphicx,enumitem,bbm,comment}
\RequirePackage[colorlinks,citecolor=blue,urlcolor=blue]{hyperref}
\RequirePackage{hypernat}
\usepackage{amssymb,tabularx,multicol,multirow,booktabs,csquotes}
\usepackage[top=1in, bottom=1in, left=1in, right=1in]{geometry}
\usepackage{fancyhdr,lipsum, makecell}
\usepackage{booktabs, makecell}
\usepackage[space]{grffile}
\usepackage{lineno}
\usepackage{url}
\usepackage{textcomp}
%\DeclareUnicodeCharacter{2082}{-}
\usepackage{xpatch}
\RequirePackage{xr}
%\externaldocument{supplement_aos_submission}

%\usepackage[notcite,notref]{showkeys}
%\renewcommand{\baselinestretch}{1.4}

%\usepackage[english]{babel}
%\setcounter{secnumdepth}{5}
%\startlocaldefs
%\numberwithin{equation}{section}
%\theoremstyle{plain}
%\newtheorem{thm}{Theorem}[section]
%\endlocaldefs
%\usepackage[active]{srcltx} % SRC Specials

% Over-full v-boxes are due to the \v{c} in author's name
%\vfuzz2pt  Don't report small over-full v-boxes

 %THEOREM Environments ------------------------------------
%\newtheorem{thm}{Theorem}[section]
%\newtheorem{cor}[theorem]{Corollary}
\def \ve {\varepsilon}
\newtheorem{theorem}{Theorem}
\newtheorem{corollary}{Corollary}
\newtheorem{lemma}{Lemma}
\newtheorem{prop}[theorem]{Proposition}

%\theoremstyle{definition}

%\theoremstyle{remark}

%\numberwithin{equation}{subsection}

%\newtheorem{prop}[theorem]{Proposition}
%\newtheorem{ass}[theorem]{Assumption}
%\newtheorem{pri}[theorem]{Prior}
%\theoremstyle{definition}
%\newtheorem{defn}[theorem]{Definition}
%\theoremstyle{remark}

\DeclareMathOperator{\sech}{sech}
%\newtheorem{cor}[theorem]{Corollary}
%\numberwithin{equation}{subsection}
%% MATH ----------------------------------------------------

\newcommand{\bE}{\mathbb{E}}

\newcommand{\bP}{\mathbb{P}}
\newcommand{\bQ}{\mathbb{Q}}
\newcommand{\bR}{\mathbb{R}}

\newcommand{\bX}{\mathbb{X}}

\newcommand{\bZ}{\mathbb{Z}}

% mathcal

% mathfrak
%\newcommand{\ga}{\mathfrak{a}} \newcommand{\gA}{\mathfrak{A}}

%mathsf

% mathrsfs

%Natesh's macros

\def \bx{\mathbf{x}}

\def \bQ{\mathbf{Q}}
\def \sf {\mathsf}
\def \bX{\mathbf{X}}

\def \be{\begin{align*}}
\def \ee{\end{align*}}

\def \E{\mathbb{E}}
\def \P{\mathbb{P}}

\def \sumn{\sum_{i=1}^n}
\def \bmu{\pmb{\mu}}
\def \R{\mathbb{R}}

\def \C {\mathcal{C}}

\def \bzero {\mathbf{0}}

\newcommand\blfootnote[1]{%
	\begingroup
	\renewcommand\thefootnote{}\footnote{#1}%
	\addtocounter{footnote}{-1}%
	\endgroup
}

\def \bsigma{\boldsymbol{\sigma}}

\title[Detection Thresholds for Ising Models]{Sharp Signal Detection under Ferromagnetic Ising Models}

\author{Sohom Bhattacharya}
\address{Sohom Bhattacharya\newline Department of Statistics, Stanford University, CA, USA,
	\newline \tt{sohomb@stanford.edu}}

\author{Rajarshi Mukherjee}
\address{Rajarshi Mukherjee\newline Department of Biostatistics, Harvard University, MA, USA,
	\newline \tt{ram521@mail.harvard.edu}}

\author{Gourab Ray}
\address{Gourab Ray\newline Department of Mathematics and Statistics, University of Victoria, Vancouver, BC, Canada,
	\newline \tt{gourabray@uvic.ca}}

\begin{document}

%\thankstext{m4}{GR is supported by NSERC 50311-57400}

\begin{abstract} 

	In this paper we study the effect of dependence on detecting a class of structured signals in Ferromagnetic Ising models. Natural examples of our class include Ising Models on lattices, and Mean-Field type Ising Models  such as dense Erd\H{o}s-R\'{e}nyi, and dense random regular graphs. Our results not only provide sharp constants of detection in each of these cases and thereby pinpoint the precise relationship of the detection problem with the underlying dependence, but also demonstrate how to be agnostic over the strength of dependence present in the respective models. 
	
\end{abstract}

\maketitle

\section{Introduction} 

\blfootnote{\textup{2010} \textit{Mathematics Subject Classification}: \textup{62G10}, \textup{62G20}, \textup{60C20}} 
\blfootnote{\textit{Keywords and phrases}: \textup{Ising Model}, 
	%\textup{Variational problem}, 
	\textup{Signal Detection},
	\textup{Structured Sparsity},
	\textup{Sharp Constants}
}

Let $\bX=(X_1,\ldots,X_n)^\top\in \{\pm 1\}^n$ be a random vector with the joint distribution of $\bX$ given by an Ising model defined as:
\begin{align}
\P_{\beta, \bQ,\bmu}(\bX=\bx):=\frac{1}{Z(\beta,\mathbf{Q}, \mathbf{\bmu})}\exp{\left(\frac{\beta}{2}\bx^\top\mathbf{Q} \bx+\bmu^\top\bx\right)},\qquad \forall \bx \in \{\pm 1\}^n,
\label{eqn:general_ising}
\end{align}
where $\mathbf{Q}$ is an $n \times n$ symmetric and hollow matrix, $\bmu:=(\mu_1,\ldots,\mu_n)^\top\in \mathbb{R}^{n}$ is an unknown parameter vector to be referred to as the external magnetization vector, $\beta\in \mathbb{R}$ is a real number usually referred to as the ``inverse temperature", and $Z(\beta,\mathbf{Q}, \mathbf{\bmu})$ is a normalizing constant.  %Hereafter, for brevity, we omit the dependence on $n$ when no confusion occurs. 
It is clear that the pair $(\beta,\mathbf{Q})$ characterizes the dependence among the coordinates of $\bX$, and $X_i$'s are independent if $\beta\mathbf{Q}=\mathbf{0}_{n\times n}$.  The matrix $\mathbf{Q}$ will usually be associated with a certain sequence of simple labeled graphs $\mathbb{G}_n=(V_n,E_n)$ with vertex set $V_n=\{1,\dots,n\}$ and edge set $E_n \subseteq V_n\times V_n$ and corresponding $\mathbf{Q}=|V_n| G_n/2|E_n|$, where $G_n$ is the adjacency matrix of $\mathbb{G}_n$.  % and will typically correspond to a dependency neighborhood graph of $\bX$. 
%This knowledge of $\bQ$ quells identifiability issues in the model. 
Note that %Further, although our results are best understood for a known value of $\beta$ as well (in particular, they will mostly depend on an upper bound on $\beta$), 
we do not absorb $\beta$ in the matrix $\bQ$. This is because we want to understand the effect of the nuisance parameter $\beta$ on the inference about $\bmu$. %Section \ref{section:main_results}. %Finally typical examples of Ising models we shall focus on will have $\bQ$ correspond to the classical Ising model on the d-dimensional lattice \citep[Chapter 3]{friedli2017statistical} or the corresponding mean-field model (often referred to as the Curie-Weiss Model).
 %Finally, although the behavior of the model can depend on the value of $d$, we often suppress this dependence in many of our notation for simplicity.
 %It is clear that the matrix $\mathbf{Q}$ characterizes the dependence among the coordinates of $\bX$, and they are independent if $\mathbf{Q}=\mathbf{0}$. It is clear that under model (\ref{eqn:general_ising}), the relevant null hypothesis can be expressed as $\bmu=\mathbf{0}$. 

We are interested in testing against a collection of alternatives defined by a class of subsets $\mathcal{C}_s$ of $\mathbb{R}_+^n$ each of which is of size $s$. More precisely, given any class of subsets $\mathcal{C}_s\subset \{S\subset\mathbb{R}_+^n:|S|=s \}$ of $\mathbb{R}_+^n$ having size $s$ each, we consider testing the following hypotheses
\begin{equation} 
	H_0: \bmu=\mathbf{0} \quad {\rm vs} \quad H_1: \bmu \in \Xi(\mathcal{C}_s,A), \label{eqn:sparse_hypo}
\end{equation}
where
$${\Xi}(\mathcal{C}_s,A):=\left\{\begin{array}{c}\bmu\in \R_+^n: \mathrm{supp}(\bmu)\in \mathcal{C}_s  {\rm \ and\ }  \min\limits_{i\in {\rm supp}(\bmu)}\mu_i\geq A\end{array}\right\},$$
and
$$
{\rm supp}(\bmu):=\{i\in \{1,\ldots,n\}:\mu_i\ne 0\}.
$$
Thus the class of alternatives $\Xi(\mathcal{C}_s,A)$ puts non-zero signals on one of candidate sets in $\mathcal{C}_s$ where each signal set has size $s$.  %Throughout we shall assume that there exists a $\delta>0$ such that $s\leq n^{1-\delta}$. However, some of our results go through for $s$ as large as $\frac{n}{\log{n}}$. 
Of primary interest here is to explore the effect of $(\beta,\mathbf{Q})$ in testing \eqref{eqn:sparse_hypo} when $\mathcal{C}_s$ has low complexity in a suitable sense.

 \textcolor{black}{In this regard, previously, \cite{arias2011detection,arias2005near} studied the detection of block-sparse and thick shaped signals on lattices while \cite{addario2010combinatorial} considered general class of signals of combinatorial nature. However these papers crucially assume independence between the outcomes and thereby correspond to $\beta=0$ in~\eqref{eqn:general_ising} in our context. Following up on this line of research, several other papers have also considered detection of signals over lattices and networks (\textcolor{black}{see e.g. \cite{enikeeva2020bump,zou2017nonparametric,arias2018distribution,sharpnack2015detecting,walther2010optimal,butucea2013detection,konig2020multidimensional} and references therein}). However, in overwhelming majority of the literature, the underlying networks only  describe the nature of signals -- such as rectangles or thick clusters in lattices \citep{arias2011detection}. A fundamental question however remains -- ``how does dependence characterized by a network modulate the behavior of such detection problems ?" Only recently, \cite{enikeeva2020bump}  explored the effect of dependence \footnote{Effect of dependence in signal detection for Gaussian outcomes has also been explored for detecting unstructured arbitrary sparse signals \citep{hall2008properties,hall2010innovated}.} on such structured detection problems for stationary Gaussian processes -- with examples including linear lattices studied through the lens of Gaussian auto-regressive observation schemes. Dependence structures beyond Gaussian random variables are often more challenging to analyze (due to possible lack of closed form expressions of resulting distributions) and allow for interesting and different behavior of such testing problems -- see e.g. \cite{mukherjee2016global}. One of the motivations of this paper is to fill this gap in the literature and show how dependent binary outcomes can substantially change the results for detecting certain classes structured signals.}  One of the motivations of this paper is to pinpoint the precise effect of dependence on the behavior of such testing problems. In particular, \cite{mukherjee2016global,deb2020detecting} demonstrate how dependence might have a subtle effect on the minimax separation rate of sparse testing problems. In this paper we crystallize the effect of such dependence by going beyond optimal rates and characterizing sharp asymptotic constant for minimax separation while testing against suitably structured hypotheses $\mathcal{C}_s$.

%In particular, the results in \cite{arias2011detection} include the case of independent binary outcomes corresponding to $\beta=0$ for $\mathcal{C}$ corresponding to rectangular signals on lattices. Similarly, \cite{addario2010combinatorial} includes detection of disjoint signals in independent normal models -- where the set of disjoint signals can be considered as a particular example of low complexity class. 

To describe our results in the context of the model-problem pair \eqref{eqn:general_ising}-\eqref{eqn:sparse_hypo} we adopt a standard minimax framework as follows. Let a statistical test for $H_0$ versus $H_1$ be a measurable $\{0,1\}$ valued function of the data $\bX$, with $1$ denoting rejecting the null hypothesis $H_0$ and $0$ denoting not rejecting $H_0$. The worst case risk of a test $T: \{\pm 1\}^{\Lambda_n(d)}\to \{0,1\}$ for testing \eqref{eqn:sparse_hypo} is defined as 
\begin{align} 
	\mathrm{Risk}(T,{\Xi}(\mathcal{C}_s,A),\beta,\bQ)&:=\P_{\beta,\bQ,\mathbf{0}}\left(T(\bX)=1\right)+\sup_{\bmu \in {\Xi}(\mathcal{C}_s,A)}\P_{\beta,\bQ,\bmu}\left(T(\bX)=0\right). \label{eqn:risk}
\end{align}
We say that a sequence of tests $T_n$ corresponding to a sequence of model-problem pair (\ref{eqn:general_ising}) and (\ref{eqn:sparse_hypo}), to be asymptotically powerful (respectively asymptotically not powerful) against $\Xi(\mathcal{C}_s,A)$ if
\begin{equation}
	\label{eqn:powerful}
	\limsup\limits_{n\rightarrow \infty}\mathrm{Risk}(T_n,\Xi(\mathcal{C}_s,A),\beta,\bQ)= 0\text{ (respectively }\liminf\limits_{n\rightarrow \infty}\mathrm{Risk}(T_n,\Xi(\mathcal{C}_s,A),\beta,\bQ)>0).
\end{equation}
The goal of the current paper is to characterize how  for some low complexity class $\mathcal{C}_s$, the sparsity $s$,  and strength $A$ of the signal jointly determine if there is an asymptotically powerful test, and how the behavior changes with $(\beta,\bQ)$. 

With the above framework, our main results can be summarized as follows.

	\begin{enumerate}
\item [(i)] For some classical mean-field type models we show that detecting low complexity sets (see Theorem \ref{theorem:cw_known_beta} for exact definition) has same constant of detection for low and critical dependence and has a larger constant of minimax separation (i.e. strictly more information theoretic hardness) for higher dependence. Our examples naturally include the complete graph (Theorem \ref{theorem:cw_known_beta}), dense  Erd\H{o}s-R\'{e}nyi and dense random regular graphs(Theorem \ref{theorem:er_known_beta}).

\item [(ii)] For detecting thick rectangular signals in Ising models over lattices of general dimensions, we present the sharp minimax separation constants (Theorem \ref{thm:lattice_known_beta}) for low dependence and high dependence (under a ``pure phase" defined in Section \ref{sec:lattice}). In contrast to dense regular graphs, the problem has a strict monotone increasing nature of the constant as one approaches criticality from the low dependence direction (Lemma \ref{prop:monotonicity_susceptibility}). The exact monotonic nature of the constant  in the high dependence case is not clear and is left as future research direction.

\item [(iii)] We further demonstrate how the sharp optimal tests can be obtained adaptively over the dependence strength $\beta$ (Theorem \ref{thm:cw_unknown_beta} and Theorem \ref{thm:lattice_unknown_beta}).

\end{enumerate}

The rest of this paper is organized as follows. In Section \ref{sec:mean_field} we present our results for detecting low complexity type signals in dense regular type graphs. Subsequently, Section \ref{sec:lattice} considers detecting thick rectangular signals over lattices. Finally all the proofs and associated technical lemmas are collected in Section \ref{sec:proof_of_main_results}.

\subsection{Notation} $[n]$, for $\mathbf{v}=(v_1,\ldots,v_d)^T\in \mathbb{R}^d$ define $\bar{\mathbf{v}}=\frac{1}{d}\sum_{i=1}^d v_i$, for any set $A$ define $\mathbbm{1}(\cdot\in A)$ as the indicator function for the set. For any set $S\in \mathcal{S}$, let $\bmu_S(A)$ denote the vector with $\mu_i=A\mathbbm{1}_{i \in S}$. Let $\mathbf{1}$ denote the vector of all $1$s. We also let $\mathbbm{1}$ to denote generic indicator functions. \par
The results in this paper are mostly asymptotic (in $n$) in nature and thus requires some standard asymptotic  notations.  If $a_n$ and $b_n$ are two sequences of real numbers then $a_n \gg b_n$  (and $a_n \ll b_n$) implies that ${a_n}/{b_n} \rightarrow \infty$ (and ${a_n}/{b_n} \rightarrow 0$) as $n \rightarrow \infty$, respectively. Similarly $a_n \gtrsim b_n$ (and $a_n \lesssim b_n$) implies that $\liminf_{n \rightarrow \infty} {{a_n}/{b_n}} = C$ for some $C \in (0,\infty]$ (and $\limsup_{n \rightarrow \infty} {{a_n}/{b_n}} =C$ for some $C \in [0,\infty)$). Alternatively, $a_n = o(b_n)$ will also imply $a_n \ll b_n$ and $a_n=O(b_n)$ will imply that $\limsup_{n \rightarrow \infty} \ a_n / b_n = C$ for some $C \in [0,\infty)$). If $C>0$ then we write $a_n=\Theta(b_n)$.  If  $a_n/b_n\rightarrow 1$, then we  say $a_n \sim b_n$. For any $a,b \in \bZ$, $a \leq b$, let $[a,b] :=\{a,a+1,\ldots, b\}$. 
%\todo[inline]{Read if looks okay}

\section{Mean-Field type Interactions}\label{sec:mean_field}
In this section, we collect our results on the testing problem \eqref{eqn:sparse_hypo} for some specific examples of mean-field type models \citep{basak2015universality} such as Ising models on the complete graph, and dense Erd\H{o}s-R\'{e}nyi and random regular graphs.  %In this regard, we will first state a  general result which will thereafter be verified in specific examples.
However, for the precise statement the upper bounds of our results we first define a class of signals $\mathcal{C}_s$. This is captured by a notion of complexity defined through the following weighted Hamming type of metric (see e.g. -- \cite{arias2005near,arias2011detection}) on $2^{[n]}$. Mathematically, for any two subsets $S_1,S_2\subset [n]$ we let $\gamma(S_1,S_2):=\sqrt{2}\left(1-\frac{|S_1\cap S_2|}{\sqrt{|S_1||S_2|}}\right)$
denote their distance. Subsequently, for any $\varepsilon>0$ we let $|\mathcal{N}(\mathcal{C}_s,\gamma,\varepsilon)|$ denote the $\varepsilon$-covering number of $\C_s$ w.r.t. $\gamma$. %Subsequently, we shall define a low complexity class of signals $\C$ having set-size $s$ if it satisfies the following condition. {\color{red} condition not written.}
Our main result in terms of detecting signals in $\Xi(\mathcal{C}_s,A)$ pertains to classes of signals with suitably low complexity defined through the asymptotic behavior of $|\mathcal{N}(\mathcal{C}_s,\gamma,\varepsilon)|$. In this regard we first provide a complete picture for all temperature regimes in the mean-field Curie-Weiss model followed by demonstrating how similar results might be obtained in high temperature regimes for other dense regular type graphs.

\subsection{Complete Graph}\label{sec:curie_weiss}
We begin by stating and discussing our results for the Curie-Weiss model.

\begin{theorem}\label{theorem:cw_known_beta}
	Consider testing \eqref{eqn:sparse_hypo}  in the model \eqref{eqn:general_ising} with $\mathbf{Q}_{ij}=\frac{\mathbf{1}(i\neq j)}{n}$ correspond to the complete graph.
	\begin{enumerate}[label=\textbf{\roman*}.]
		\item  \label{thm:cw_known_beta_ub}
		%\textcolor{magenta}{Suppose there exists $\varepsilon_n\rightarrow 0$ for which $|\mathcal{N}(\mathcal{C}_s,\gamma,\varepsilon_n)| \rightarrow \infty$ and  $\log|\mathcal{N}(\mathcal{C}_s,\gamma,\varepsilon_n)| \ll s \ll n/ \log|\mathcal{N}(\mathcal{C}_s,\gamma,\varepsilon_n)|$. Suppose $\mathcal{C}_s$ satisfies condition $(s,\gamma,\varepsilon_n)$ w.r.t. the metric $\gamma$ and some sequence $\varepsilon_n\rightarrow 0$.}\todo{condition repeated -- decide which one to keep and how.}\todo{Do we need $\log|\mathcal{N}(\mathcal{C}_s,\gamma,\varepsilon_n)|=\Theta(\log{n})$?} Then the following hold:
		Assume that $\mathcal{C}_s$ satisfies the following condition w.r.t. the metric $\gamma$ for some sequence $\varepsilon_n\rightarrow 0$: %\textcolor{magenta}{$|\mathcal{C}_s|=\Theta (n^c)$ for some constant $c>0$.} %\textcolor{red}{Do we need $c=1$ or can we derive the constant in terms of $c>0$? Also I think we actually only need cardinalty of $|\mathcal{N}(\mathcal{C}_s,\gamma,\varepsilon_n)|$ and not $|\C_s |$ and I think we only need the answer in terms of $\log{|\mathcal{N}(\mathcal{C}_s,\gamma,\varepsilon_n)|}$ for the upper bound using scan for ``any" net of $\C_s$.}
		 $$\Theta(\log n)= \log|\mathcal{N}(\mathcal{C}_s,\gamma,\varepsilon_n)|\ll s\ll n/\log|\mathcal{N}(\mathcal{C}_s,\gamma,\varepsilon_n)|.$$
		 %$$\log|\mathcal{N}(\mathcal{C}_s,\gamma,\varepsilon_n)|=\Theta(\log n).$$

	%	$(s,\gamma,\varepsilon_n)$ w.r.t. the metric $\gamma$ for a given sequence $\varepsilon_n\rightarrow 0$. %with $\mathcal{C}$ satisfying Condition $(s,d,\epsilon)$. %and the model \eqref{eqn:general_ising} satisfies $\mathrm{High}(\beta_0,\mathcal{C})$. 
Then the following hold.
	
		 \begin{enumerate}[label=\emph{\alph*})]
		 	\item  \label{thm:cw_known_beta_ub_high_temp} For $\beta < 1$, $s \ll \frac{n}{\log n}$, there exists a sequence of asymptotically powerful test if 
		 	\begin{equation}
		 	\liminf_{n \rightarrow \infty}\sqrt{s}\tanh(A) (\log|\mathcal{N}(\mathcal{C}_s,\gamma,\varepsilon_n)|)^{-1/2}>\sqrt{2}.
		 	\end{equation}
		 \item  \label{thm:cw_known_beta_ub_critical_temp}	For $\beta=1$, the same conclusion is valid for $s \ll \frac{\sqrt{n}}{\log n}$.
		 
		 \item \label{thm:cw_known_beta_ub_low_temp} For $\beta >1$ and $s \ll \frac{n}{\log n}$, there exists a sequence of asymptotically powerful test if 
		 	\begin{equation}
		 	\liminf_{n \rightarrow \infty}\sqrt{s}\tanh(A) (\log|\mathcal{N}(\mathcal{C}_s,\gamma,\varepsilon_n)|)^{-1/2}>\sqrt{2} \cosh(\beta m),
		 	\end{equation}
		 	where $m$ is the unique positive root of the equation $m=\tanh(\beta m)$.
		 \end{enumerate}

%	\item [(i)] For $\beta = 1$, \textcolor{blue}{$s \ll \frac{n}{\log n}$}, there exists a sequence of asymptotically powerful test if 
%\begin{equation}
%\liminf_{n \rightarrow \infty}\sqrt{s}\tanh(A) (\log(N_\varepsilon(\mathcal{C})))^{-1/2}>2.
%\end{equation}
		
		\item\label{thm:cw_known_beta_lb} 
		%\todo{(I think relation of $s$ vs $\log{|\mathcal{N}(\C_s,\gamma,\varepsilon_n)|}$ is enough because that relationship does not hold for larger complexity like arbitrary sparse signal classes. We can delete these disclaimers after writing down the final proofs).} 
	Assume that, there exists a subset $\tilde{\mathcal{C}}_s\subset \mathcal{C}_s$ of disjoint sets such that $$\Theta(\log n)=\log|\tilde{\mathcal{C}}_s| \ll s \ll n/ \log|\tilde{\mathcal{C}}_s|.$$ 
	Then the following hold.
\\
		 \begin{enumerate}[label=\emph{\alph*})]
		 	\item \label{thm:cw_known_beta_lb_high_temp} For $\beta < 1$ and $s \ll \frac{n}{\log n}$, all tests are asymptotically powerless if
		 	\begin{equation}
		 	\liminf_{n \rightarrow \infty} \sqrt{s} \tanh(A) (\log|\tilde{\mathcal{C}}_s|)^{-1/2}<\sqrt{2}.
		 	\end{equation}
		 	
		 	\item \label{thm:cw_known_beta_lb_critical_temp} For $\beta =1$, the same conclusion is valid for $s \ll \frac{\sqrt{n}}{\log n}$. 
		 	
		 	\item \label{thm:cw_known_beta_lb_low_temp} For $\beta >1$ and $s \ll \frac{n}{\log n}$, no tests are asymptotically powerful if 
		 	\begin{equation}
		 	\liminf_{n \rightarrow \infty}\sqrt{s}\tanh(A) (\log|\tilde{\mathcal{C}}_s|)^{-1/2}< \sqrt{2} \cosh(\beta m),
		 	\end{equation}
		 \end{enumerate}
	\end{enumerate}
\end{theorem}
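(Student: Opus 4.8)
The plan is to treat the upper and lower bounds separately, and within each, to handle the three temperature regimes by conditioning on the global magnetization $\bar X = \frac1n\sum_i X_i$, which reduces the Curie-Weiss model to a product measure with a random ``effective external field''. The key structural fact I would establish first is a conditional representation: given $\bar X = t$, the coordinates of $\bX$ behave (approximately, after a combinatorial/local CLT argument on the number of $+1$'s) like independent spins, and under $H_0$ the law of $\bar X$ concentrates near $0$ for $\beta<1$, near $0$ at scale $n^{-1/4}$ for $\beta=1$, and near $\pm m$ for $\beta>1$ where $m=\tanh(\beta m)$. This is standard Curie-Weiss asymptotics; I would cite or reprove it via the Hubbard-Stratonovich / exact-recursion approach.

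\textbf{Lower bound (part \ref{thm:cw_known_beta_lb}).} Using the disjoint subfamily $\tilde{\mathcal C}_s$, I would put a uniform prior over $S\in\tilde{\mathcal C}_s$ with $\bmu=\bmu_S(A)$ and bound the total variation between the null and the mixture by controlling the (conditional) likelihood ratio's second moment. Conditioning on $\bar X$, the conditional likelihood ratio factorizes over coordinates because the $S$'s are disjoint, so the chi-square divergence reduces to a product of the form $\prod$ over overlaps, leading to the familiar expression $\E[\,\exp(c\,|S\cap S'|\,)\,]$-type bound; disjointness makes cross terms trivial and the dominant contribution is the diagonal $S=S'$, of size $|\tilde{\mathcal C}_s|^{-1}\exp(s\cdot(\text{something}\times\tanh^2 A))$. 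Matching this to $1$ gives the constant $\sqrt2$ in high/critical temperature. In the low-temperature regime the conditional field is shifted by $\beta m$, which rescales the per-coordinate KL/chi-square contribution by $\mathrm{sech}^2(\beta m)$ in the exponent — equivalently inflating the required signal by $\cosh(\beta m)$; the two symmetric phases $\pm m$ contribute equally and only cost a factor $2$ absorbed in constants. The critical case needs the $n^{-1/4}$ fluctuation of $\bar X$ to be shown negligible relative to the signal, which is why $s\ll\sqrt n/\log n$ rather than $n/\log n$.

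\textbf{Upper bound (part \ref{thm:cw_known_beta_ub}).} I would use a scan/max-type statistic over an $\varepsilon_n$-net $\mathcal N(\mathcal C_s,\gamma,\varepsilon_n)$: for each net element $S$ form $T_S = \sum_{i\in S}(X_i - \E_{H_0}[X_i\mid \bar X])$ (a ``centered within phase'' sum), reject if $\max_S T_S/\sqrt{|S|}$ exceeds $\sqrt{2\log|\mathcal N|}$. Under $H_0$, conditionally on $\bar X$ the $T_S$ are sums of bounded weakly-dependent variables, so a union bound with Gaussian/Bernstein tails controls the false alarm. Under $H_1$ with true support $S^\star$, one picks the net element $S$ with $\gamma(S,S^\star)\le\varepsilon_n$; the condition $\gamma(S,S^\star)$ small forces $|S\cap S^\star|/\sqrt{|S||S^\star|}\to1$, so $T_S$ has conditional mean $\approx |S\cap S^\star|\tanh(A)$ (or $\tanh(A+\beta m)-\tanh(\beta m)\asymp \mathrm{sech}^2(\beta m)\tanh(A)$ in low temperature — here I would need a short monotonicity/Taylor lemma to pin the constant as exactly $\cosh(\beta m)$), which beats $\sqrt{|S|\cdot 2\log|\mathcal N|}$ precisely under the stated $\liminf$ condition.

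\textbf{Main obstacle.} The delicate point is making the ``conditionally independent with shifted field'' heuristic rigorous with \emph{sharp} constants — in particular, showing that conditioning on $\bar X$ (a non-product operation) does not distort the variance of $T_S$ to leading order and that the Gaussian approximation for $\max_S T_S$ holds at the scale $\sqrt{2\log|\mathcal N|}$ uniformly, given only $\log|\mathcal N|=\Theta(\log n)$ and $s$ in the stated window. The low-temperature regime adds the complication that one must restrict to a single phase (say $\bar X\approx +m$) by a preliminary estimate of $\mathrm{sign}(\bar X)$, and verify the fluctuations of $\bar X$ around $m$ (order $n^{-1/2}$) contribute negligibly to both the mean and variance of $T_S$ relative to the $\sqrt{s}$ signal scale; this is where the hypothesis $s\ll n/\log n$ is consumed.
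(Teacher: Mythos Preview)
Your overall strategy is close to the paper's, but there is a genuine gap in the lower bound that would cost you the sharp constant.

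\textbf{The lower bound gap.} You propose controlling the (untruncated) second moment of the likelihood ratio under the uniform prior on $\tilde{\mathcal C}_s$, and you write that ``the dominant contribution is the diagonal $S=S'$, of size $|\tilde{\mathcal C}_s|^{-1}\exp(s\cdot(\text{something}\times\tanh^2 A))$. Matching this to $1$ gives the constant $\sqrt2$.'' This is not right. With disjoint supports the off-diagonal terms do contribute $1+o(1)$, but the diagonal term, after conditioning or by a direct partition-function computation, is
\[
\frac{1}{|\tilde{\mathcal C}_s|}\exp\bigl(s[\log\cosh(2A)-2\log\cosh(A)]\bigr)
=\frac{1}{|\tilde{\mathcal C}_s|}\exp\bigl(sA^2(1+o(1))\bigr),
\]
so bounding the untruncated second moment only yields the lower bound for $\sqrt{s}\tanh(A)<\sqrt{\log|\tilde{\mathcal C}_s|}$, \emph{not} $\sqrt{2\log|\tilde{\mathcal C}_s|}$. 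This is the same factor-$\sqrt{2}$ loss one sees in the Gaussian sparse-means problem when one forgets to truncate. The paper closes this gap by a \emph{truncated} second moment: one multiplies each summand of $L_\pi$ by $\mathbf 1\{Z_S\le \tilde t_n(\delta)\}$ with $\tilde t_n(\delta)=\sqrt{2(1+\delta)\log|\tilde{\mathcal C}_s|}$, verifies that this truncation changes nothing at first order, and then shows that the diagonal picks up an extra factor $\P_{\beta,\bQ,\bmu_S(2A)}(Z_S\le \tilde t_n(\delta))$ which, via a Chernoff bound under the $2A$-tilted measure, contributes an additional $\exp(-(1+o(1))\log|\tilde{\mathcal C}_s|)$ and kills $T_1$. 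In low temperature the same truncation is applied to the phase-aware events $\{Z_S\le \pm m\sqrt{s}+\tilde t_n(\delta)\}$ and the exponent picks up the $\sech^2(\beta m)$ factor, which is how $\cosh(\beta m)$ enters. Without truncation your argument will not reach the claimed constants in any of the three regimes.

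\textbf{Upper bound: a minor methodological difference.} Your plan to condition on $\bar X$ and run a union bound on the centered sums $T_S=\sum_{i\in S}(X_i-\E[X_i\mid\bar X])$ is workable, and in low temperature it is essentially what the paper does (through the auxiliary Gaussian $W_n\sim N(\bar X,1/(n\beta))$, which makes the coordinates \emph{exactly} conditionally independent rather than approximately so). For $\beta\le 1$, however, the paper does \emph{not} center: it uses $Z_S=\sum_{i\in S}X_i/\sqrt{s}$ directly and obtains the moderate-deviation tail $\P_{\beta,\bQ,\mathbf 0}(Z_S>t)\le(1+o(1))e^{-t^2/2}$ by a Chernoff bound whose exponent is a ratio of partition functions, computed exactly via the $W_n$-representation (this is where $s\ll n/\log n$ for $\beta<1$ and $s\ll\sqrt n/\log n$ for $\beta=1$ enter). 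Your conditioning route would also need a sharp conditional moderate deviation uniform over the net; the partition-function route sidesteps this and is shorter. For the Type~II error both approaches reduce to a mean--variance estimate, and the paper gets the required lower bound on $\E_{\beta,\bQ,\bmu}[\sum_{i\in\tilde S^\star}X_i]$ by a two-term Taylor expansion in $\bmu$ combined with GHS/GKS covariance bounds, rather than by explicit conditioning.
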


%\subsection{Curie-Weiss Model}\label{sec:curie_weiss}

%\begin{defn}\label{defn:partition_function_ratop}
%	We say that the model \eqref{eqn:general_ising} satisfies condition $\mathrm{High}(\beta_0,\mathcal{C})$ if for all $0\leq \beta<\beta_0$ and all $S\in \mathcal{C}$ one has $\frac{Z_n(\beta,\bQ,
%		\bmu_S(A))}{Z_n(\beta,\bQ,
%		\bmu_S(0))}\leq C\left(\cosh(A)\right)^s$ for some constant $C>0$.
%\end{defn}

%\begin{theorem}\label{theorem:high_temp_upper_general}
%	Consider testing \eqref{eqn:sparse_hypo}  in the model \eqref{eqn:general_ising} with $0\leq \beta\leq 1$, $\mathcal{C}$ satisfying Condition $(s,d,\epsilon)$. %and the model \eqref{eqn:general_ising} satisfies $\mathrm{High}(\beta_0,\mathcal{C})$. 
%	Then the following hold for the complete graph i.e. $\mathbf{Q}_{ij}=\frac{\mathbf{1}(i\neq j)}{n}$.
%	
%	\begin{enumerate}
%		\item There exists a sequence of asymptotically powerful test if 
%		\begin{equation}
%		\liminf_{n \rightarrow \infty}\frac{1}{\sqrt{s}} \tanh(A) (\log(N_\varepsilon(\mathcal{C})))^{-1/2}>2.
%		\end{equation}
%		
%			\item All tests are asymptotically powerless if
%		\begin{equation}
%		\liminf_{n \rightarrow \infty}\frac{1}{\sqrt{s}} \tanh(A) (\log(N_\varepsilon(\mathcal{C})))^{-1/2}<2.
%		\end{equation}
%	\end{enumerate}
%\end{theorem}

%As we shall see that this result covers high temperature regime upto the critical temperature for a class of mean-field models which includes the Curie-Weiss Model, Ising models on (dense enough) Erd\"{o}s-R\'{e}nyi random graphs and a class of dense regular graphs (?).

A few remarks are in order regarding the conditions, involved optimal procedures, and implications of Theorem \ref{theorem:cw_known_beta}. We first note that the upper and lower bounds are sharp as long as there exists $\tilde{C}_s,\mathcal{N}(\C_s,\gamma,\varepsilon_n)$ such that $\log|\tilde{C}_s|\sim \log |\mathcal{N}(\C_s,\gamma,\varepsilon_n)|$ for some $\varepsilon_n\rightarrow 0$. In particular, the sharp constants match for testing a class of signals whose size is dominated (on log-scale) by the size of a subclass of disjoint sets.    %\todo[inline]{First say when are the ub and lb match.}
Next we note that the conditions on $s$ posited in the various parts of the theorem are actually optimal. Indeed, as was discussed in \cite{deb2020detecting},  for $\beta=1$ and $s\gg \sqrt{n}$ the rate of detection is much faster (requiring only $s\tanh(A)\gg n^{1/4}$) and the problem might not offer a phase transition at the level of sharp constants.  Next we note that the optimal test above is based on a suitably calibrated scanning procedure. However, the scanning procedure is somewhat different depending on the regime of dependence. In high and critical dependence ($0\leq \beta\leq 1$) the procedure can be described as follows. For $S\in \mathcal{N}(\C_s,\gamma,\varepsilon_n)$ we first define $Z_S=\sum_{i\in S}X_i/\sqrt{s}$ and our scan test rejects for large values of $Z_{\max}:=\max\limits_{S\in \mathcal{N}(\C_s,\gamma,\varepsilon_n)} Z_S$.   In contrast, for the high dependence regime ($\beta>1$) we perform a somewhat randomized scan test by first generating $W_n \sim N(\bar{\bX},1/(n\beta))$ and subsequently for $W_n >0$, rejecting $H_0$  for large values of $Z_{\max}-m\sqrt{s}$ and when $W_n \leq 0$ rejecting $H_0$  for large values of $Z_{\max}+m\sqrt{s}$  \footnote{$m:=m(\beta)$ is the unique positive root of $m=\tanh(\beta m)$.} 
The fact that these sequence of tests are indeed sharp optimal is thereafter demonstrated by precise analyses of the Type II errors (through a mean-variance control) and a matching lower bound calculation obtained through a truncated second moment approach. Both the analyses of the tests and the proofs for  matching lower bounds rely on the moderation deviation behavior of $Z_S,S\in \mathcal{N}(\C_s,\gamma,\varepsilon_n) $ -- which are obtained through Lemma \ref{lemma:cw_moderate_deviation}. Finally, a direct analysis of the sharp constants of detection, $\sqrt{2}$ for $\beta\leq 1$ and $\sqrt{2\cosh^2(\beta m(\beta))}$ for $\beta>1$, reveals that the problem becomes harder as one moves away from the critical dependence $\beta=1$. In particular, the sharp constant of detection can be succinctly defined through by $\sqrt{2\cosh^2(\beta m(\beta))}$  which we display below in Figure 1 to demonstrate this phenomenon.
\begin{figure}[h]
	\begin{center}
	\includegraphics[scale=0.3]{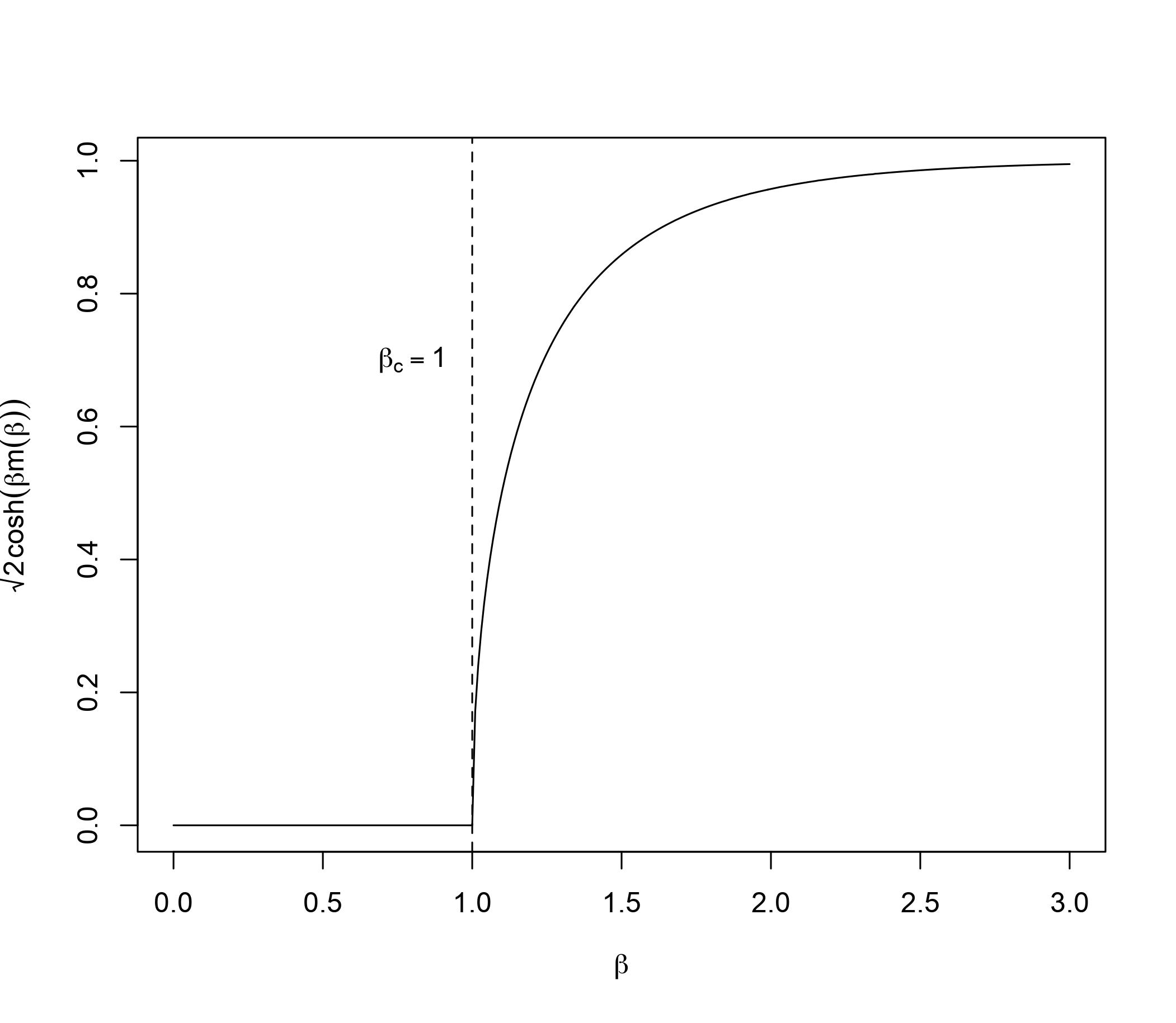}
\caption{Behavior of $\sqrt{2} \cosh(m(\beta))$}
	\end{center}
\end{figure}

To describe our next result, we note that the proof of the upper bound in Theorem \ref{theorem:cw_known_beta} assumes the knowledge of $\beta>0$. Our next result therefore pertains to showing that the sharp rates obtained above can actually be obtained adaptively over the knowledge of the inverse temperature $\beta>0$. To this end we begin by noting that using a consistent of $\beta>0$ might seem hopeless to begin with since consistent estimation of $\beta$ is not possible when $\beta\in [0,1)$. However, when $\beta\leq 1$, our optimal test does not depend on the specific knowledge of $\beta$. Therefore our idea can be described as follows: (1) construct a consistent test to decide whether $\beta\leq 1$ or $\beta> 1$; (2) if the test rejects in favor of $\beta> 1$ then use a pseudo-likelihood estimator of $\beta$ \textit{under the working model of $\bmu=0$} \footnote{this is important since joint estimation of $\beta,\bmu$ can be significantly harder information theoretically} and if the test return in favor of $\beta\leq 1$ then construct the $\beta$ independent optimal test in Theorem \ref{theorem:cw_known_beta}. 

%\todo[inline]{Should we just prove the result for unknown $\beta$ i.e. state Theorem \ref{theorem:cw_known_beta}	 in a way such that asymptotically powerful tests exists if $\liminf_{n \rightarrow \infty}\sqrt{s}\tanh(A) (\log|\mathcal{N}(\mathcal{C}_s,\gamma,\varepsilon_n)|)^{-1/2}>2 \sech^2(\beta m)$ if $s\ll n/\log{|\mathcal{N}(\C_s,\gamma,\varepsilon_n)|}$ for $\beta\neq 1$ and $s\ll \sqrt{n}/\log{|\mathcal{N}(\C_s,\gamma,\varepsilon_n)|}$ for $\beta=1$ and then the statement will imply that the same test is used adaptively throughout. We can indeed provide the discussion as above to explain that how we proceed with the proof i.e. first prove for known $\beta$ and then use the above pre-testing+pseudo-likelihood type approach and also maybe state the test in a little more mathematical detail.}

\begin{theorem}\label{thm:cw_unknown_beta}
Theorem \ref{theorem:cw_known_beta} holds for unknown $\beta>0$  if  $\|\bmu\|_{\infty}=O(1)$.
	
\end{theorem}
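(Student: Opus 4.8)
The plan is to implement the three-step scheme sketched just before the statement. Throughout write $\bar{\bX}=\frac1n\sum_{i=1}^n X_i$ for the empirical magnetization and recall that, for $\beta>1$, $m=m(\beta)$ is the unique positive root of $m=\tanh(\beta m)$. The hypothesis $\|\bmu\|_\infty=O(1)$ enters through a single consequence: combined with the admissible ranges of $s$ (so $s=o(n)$) it gives $\|\bmu\|_1=o(n)$, and hence the two \emph{global} statistics on which the adaptive construction rests --- the magnetization $\bar{\bX}$ and the Curie--Weiss pseudo-likelihood score --- have, uniformly over $\bmu\in\Xi(\mathcal C_s,A)$, the same first-order behaviour as under $\bmu=\bzero$; concretely, the contamination moves $n\bar{\bX}$, and the score, by only $O(s)=o(n)$.

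\emph{Step 1 (diagnosing the temperature).} I would first build a consistent test of $\{\beta\le1\}$ versus $\{\beta>1\}$ by thresholding $|\bar{\bX}|$: fix $\delta_n\to0$ with $\delta_n n^{1/4}\to\infty$ and $\delta_n\log n\to\infty$ (for instance $\delta_n=(\log n)^{-1/2}$) and declare ``$\beta>1$'' iff $|\bar{\bX}|\ge\delta_n$. For fixed $\beta<1$ one has $\bar{\bX}=O_{\P}(n^{-1/2})$, for $\beta=1$ one has $\bar{\bX}=O_{\P}(n^{-1/4})$ (both from Lemma~\ref{lemma:cw_moderate_deviation}, or from classical mean-field limit theorems), and for fixed $\beta>1$ one has $|\bar{\bX}|\to m(\beta)>0$; moreover, by the previous paragraph, the contamination shifts $\bar{\bX}$ by at most $O(s/n)=o(\delta_n)$ in each of the relevant ranges of $s$. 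Hence, under $H_0\cup H_1$, the event $\{|\bar{\bX}|\ge\delta_n\}$ has probability tending to $1$ when $\beta>1$ and to $0$ when $\beta\le1$.

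\emph{Steps 2--3 (the test, and why the plug-in costs nothing).} On $\{|\bar{\bX}|<\delta_n\}$ I would run the $\beta$-free scan test underlying the high- and critical-temperature upper bounds of Theorem~\ref{theorem:cw_known_beta}. On $\{|\bar{\bX}|\ge\delta_n\}$ I would take $\hat\beta$ to be the maximum pseudo-likelihood estimator of the Curie--Weiss model computed \emph{under the working model $\bmu=\bzero$}, draw $W_n\sim N(\bar{\bX},1/(n\hat\beta))$, and run the randomized scan test underlying the low-temperature upper bound of Theorem~\ref{theorem:cw_known_beta} with $\beta$ replaced by $\hat\beta$. The key point is that the pseudo-likelihood score equation reads $\sum_i X_i m_i(\bX)=\sum_i m_i(\bX)\tanh(\hat\beta\,m_i(\bX))$ with $m_i(\bX)=\bar{\bX}-X_i/n$; since each $m_i(\bX)$ is within $1/n$ of $\bar{\bX}$, this collapses on $\{|\bar{\bX}|\ge\delta_n\}$ to $\tanh(\hat\beta\bar{\bX})=\bar{\bX}+o_{\P}(n^{-1/2})$, so that $\hat\beta>1$ there, $m(\hat\beta)=\bar{\bX}+o_{\P}(n^{-1/2})$, and $\sech(\hat\beta\,m(\hat\beta))=\sqrt{1-\bar{\bX}^2}+o_{\P}(n^{-1/2})$. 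Consequently the plugged-in test is nothing but a \emph{self-centered, self-normalized} scan: it compares $\max_{S\in\mathcal N(\mathcal C_s,\gamma,\varepsilon_n)}\frac1{\sqrt s}\sum_{i\in S}(X_i-\bar{\bX})$, with the sign of $\bar{\bX}$ selected by $W_n$, against a threshold of order $\sqrt{2(1-\bar{\bX}^2)\log|\mathcal N(\mathcal C_s,\gamma,\varepsilon_n)|}$, and so involves neither $\beta$ nor any auxiliary estimate.

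\emph{Assembling, and the main obstacle.} Since the Step~1 decision event is asymptotically deterministic under each fixed $(\beta,\bmu)$, the risk of the adaptive test equals that of the appropriate known-$\beta$ test up to $o(1)$, and it remains to re-run the Type~I and Type~II analyses behind Theorem~\ref{theorem:cw_known_beta} with the deterministic $m(\beta)$ replaced by the random $\bar{\bX}$. One uses $\bar{\bX}=m(\beta)+o_{\P}(1)$, so that the self-normalization converges to $\sech(\beta m)$ and the sharp constant $\sqrt2\cosh(\beta m)$ is recovered; and, crucially, one uses that $\bar{\bX}$ now sits \emph{inside} the centering of every scan coordinate, so that the common shift $\sqrt s\,(\bar{\bX}-m(\beta))$ is subtracted from the signal coordinate and from every null coordinate alike and therefore cancels in the comparison with the threshold --- provided $s=o(n)$ and $\|\bmu\|_\infty=O(1)$ keep the signal-induced part of $\bar{\bX}-m(\beta)$ at the benign size $O(s/n)$. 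I expect this last verification to be the hard part: showing, uniformly over $\bmu\in\Xi(\mathcal C_s,A)$ and over the ranges of $s$ permitted by Theorem~\ref{theorem:cw_known_beta}, that substituting the data-driven $\bar{\bX}$ (equivalently $\hat\beta$) for $m(\beta)$ perturbs neither the moderate-deviation concentration of the scan maximum (Lemma~\ref{lemma:cw_moderate_deviation}) nor the mean--variance bound controlling the Type~II error by more than $o(1)$. By contrast, the corresponding robustness statements used in Step~1, and the fact that the pseudo-likelihood maximizer is well defined and consistent on $\{|\bar{\bX}|\ge\delta_n\}$ under the contaminated model, should be comparatively routine.
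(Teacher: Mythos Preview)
Your three-step scheme is exactly the one the paper uses: threshold $|\bar{\bX}|$ to diagnose the regime, run the $\beta$-free scan when the diagnosis says $\beta\le1$, and plug a pseudo-likelihood estimate computed under the working null into the low-temperature randomized scan otherwise. Your observation that the Curie--Weiss pseudo-likelihood equation collapses to $\tanh(\hat\beta\bar{\bX})\approx\bar{\bX}$, so that $m(\hat\beta)\approx\bar{\bX}$ and the plugged-in test is a self-centered scan, is correct and a nice way to see what is going on; the paper does not exploit it and instead treats $\hat\beta$ as a black box via a general consistency result (Lemma~\ref{lmm:beta_estimation}: the pseudo-likelihood maximizer under the working null remains consistent for $\beta$ whenever $|\mathrm{supp}(\bmu)|=o(n)$), concluding $m(\hat\beta)\to m(\beta)$ by continuity and then invoking the known-$\beta$ Type~I/II analyses verbatim. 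So the ``main obstacle'' you flag is handled in the paper not by tracking the fluctuations of a random self-centering but by that abstract consistency lemma; your route reaches the same conclusion once you note $\sqrt{s}\,|\bar{\bX}-m(\beta)|=o_{\P}(1)$ under both null and alternative in the permitted ranges of $s$.

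Two small corrections to your Step~1. First, the claim that the contamination moves $n\bar{\bX}$ by $O(s)$ is correct for $\beta<1$ but not at $\beta=1$, where pairwise covariances are $\Theta(n^{-1/2})$ and the shift in $\E\sum_iX_i$ is of order $s\sqrt n$; your conclusion survives because the critical-case assumption $s\ll\sqrt n/\log n$ still gives $s\sqrt n/n\ll\delta_n$. The paper does this with separate mean/variance bounds in each regime. Second, your justification of the $\beta>1$ side of Step~1 via ``$|\bar{\bX}|\to m(\beta)>0$ plus an $O(s/n)$ shift'' is too informal to give uniformity over $\bmu$; the paper instead uses the change-of-measure bound $\P_{\bmu}(\,\cdot\,)\le e^{2s\|\bmu\|_\infty}\P_{\bzero}(\,\cdot\,)$ applied to the exponentially small null probability $\P_{\bzero}(|\bar{\bX}|\le\delta_n)$, and this is precisely where the hypothesis $\|\bmu\|_\infty=O(1)$ is spent.
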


\textcolor{black}{The proof of Theorem \ref{thm:cw_unknown_beta}, which is deferred to Section \ref{sec:proof_of_main_results}, requires the assumption on $\bmu$ in terms of its maximal element. Although this requirement can be relaxed to $\|\bmu\|_{\infty}=o(n/s)$, our arguments were unable to get rid of it completely. We keep further explorations in this regard for future research.
}

%\todo[inline]{Check if we need the above anymore.}

\subsection{Dense Regular Graphs}\label{sec:dense_reg}

The results in the Curie-Weiss model in the last section provides insight on the possible behavior of this testing problem under mean-field type models \citep{basak2015universality}.  Here we demonstrate that this intuition of similar behavior to the Curie-Weiss model with regard to this inferential problem is indeed true for some specific examples of mean-field type models such as dense Erd\H{o}s-R\'{e}nyi and random regular graphs, which is our first result in this direction.  In particular, we let $\mathbb{G}_n=(V_n,E_n)\sim \mathcal{G}_n(n,p)$ denote an Erd\H{o}s-R\'{e}nyi random graph with edges $E_n$ formed by joining pairs of vertices  $i,j\in V_n=\{1,\ldots,n\}$ independently with probability $p\in (0,1)$. In a similar vein we let $\mathbb{G}_n=(V_n,E_n)\sim \mathcal{G}_n(n,d)$ denote an randomly drawn graph from the collection of all $d$-regular graphs on $ V_n=\{1,\ldots,n\}$.

\begin{theorem}\label{theorem:er_known_beta}
The same conclusion of Theorem \ref{theorem:cw_known_beta} hold for any $\beta\geq 0$ when either (i) $\bQ=\frac{G_n}{np}$ with $G_n$ being the adjacency matrix of $\mathbb{G}_n\sim \mathcal{G}_n(n,p)$ with $p=\Theta(1)$;  or (ii) $\bQ=\frac{G_n}{d}$ with $G_n$ being the adjacency matrix of $\mathbb{G}_n\sim \mathcal{G}_n(n,d)$ with $d=\Theta(n)$.
\end{theorem}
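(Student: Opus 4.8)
The plan is to reduce the statement to the Curie--Weiss analysis already carried out in Theorem~\ref{theorem:cw_known_beta}. The calibrated scan test behind the upper bound and the truncated second moment estimate behind the lower bound use almost nothing about $\bQ$ beyond three ingredients: (i) concentration of the empirical magnetization $\bar{\bX}$ --- at $0$ when $\beta\le1$, at $\pm m(\beta)$ when $\beta>1$; (ii) a uniform approximation of the local field $m_i(\bX):=\sum_j\bQ_{ij}X_j$ by the mean field $\bar{\bX}$; and (iii) the moderate deviation asymptotics for $Z_S=\sum_{i\in S}X_i/\sqrt s$ supplied by Lemma~\ref{lemma:cw_moderate_deviation}. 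It therefore suffices to re-derive (i)--(iii) for $\bQ=G_n/(np)$ with $p=\Theta(1)$ and for $\bQ=G_n/d$ with $d=\Theta(n)$; everything below is meant with probability $1-o(1)$ over the draw of $\mathbb G_n$.

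\textbf{Step 1: mean-field approximation of the local field.} Let $\bar{\bQ}=(\mathbf 1\mathbf 1^\top-\mathbf I)/n$ be the complete-graph matrix, so $m_i(\bX)-\bar{\bX}=e_i^\top(\bQ-\bar{\bQ})\bX$. For dense Erd\H os--R\'enyi graphs $\|G_n-\E G_n\|_{\mathrm{op}}=O(\sqrt n)$, hence $\|\bQ-\bar{\bQ}\|_{\mathrm{op}}=O(1/\sqrt n)$; for dense random regular graphs the same estimate holds through the a.a.s.\ bound $\lambda_2(G_n)=O(\sqrt d)$ from the literature on spectra of random regular graphs. This gives at once the crude bound $\sup_{\bx\in\{\pm1\}^n}\max_i|m_i(\bx)-\bar{\bx}|=O(1)$, which is too weak for sharp constants. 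To upgrade it I would bootstrap: comparing the free energy $n^{-1}\log Z(\beta,\bQ,\bmu)$ with its Curie--Weiss analogue (justified since these ensembles are mean-field in the sense of \citep{basak2015universality}) pins $\bar{\bX}$ to the windows in (i), and for configurations in the associated typical set the coordinates of $(G_n-\E G_n)\bX$ are of size $\sqrt{n\log n}$ --- a Bernstein bound and a union bound over the $n$ vertices only, together with a one-site resampling/cavity step to neutralize the dependence of $\bX$ on $\mathbb G_n$ --- so that $\max_i|m_i(\bX)-\bar{\bX}|=O(\sqrt{\log n/n})$ on the support of $\P_{\beta,\bQ,\bmu}$.

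\textbf{Step 2: transfer of the moderate deviations, and conclusion.} By Step~1 the Ising model on $\mathbb G_n$ differs from the Curie--Weiss model only in that the exact mean field $\bar{\bX}+O(1/n)$ is replaced by $m_i(\bX)=\bar{\bX}+O(\sqrt{\log n/n})$ uniformly over $i$; propagating this perturbation through $Z_S$ shifts it by $O(\sqrt{s\log n/n})=o(1)$, negligible at the scale $\sqrt{\log|\mathcal N(\C_s,\gamma,\varepsilon_n)|}$ of the scan threshold thanks to $s\ll n/\log n$ (and $s\ll\sqrt n/\log n$ at $\beta=1$). Running the argument of Lemma~\ref{lemma:cw_moderate_deviation} with this $o(1)$-perturbed local field yields the identical moderate deviation asymptotics: in the range $u\asymp\sqrt{\log|\mathcal N(\C_s,\gamma,\varepsilon_n)|}\ll\sqrt s$, $\P(Z_S>u)=e^{-(1+o(1))u^2/2}$ under $H_0$ for $\beta\le1$, whereas for $\beta>1$, conditionally on the sign of $\bar{\bX}$, $\P(Z_S\mp m(\beta)\sqrt s>u)=e^{-(1+o(1))u^2\cosh^2(\beta m(\beta))/2}$ --- the variance deflation by $\cosh^{-2}(\beta m(\beta))$ of the low-temperature spins being exactly the source of the factor $\cosh(\beta m(\beta))$ in the detection constant. (At $\beta=1$ one additionally invokes the universality of the critical, non-Gaussian fluctuations of $\bar{\bX}$, which is what forces $s\ll\sqrt n/\log n$.) Substituting these estimates into the Type~I/Type~II error analysis of the (possibly randomized) scan test and into the truncated second moment computation over a disjoint subfamily $\tilde{\mathcal C}_s$ reproduces all three parts of Theorem~\ref{theorem:cw_known_beta} verbatim, the only change being the replacement of the Curie--Weiss normalizing-constant ratios $Z(\beta,\bQ,\bmu_S(A))/Z(\beta,\bQ,\mathbf 0)$ by their $\mathbb G_n$ counterparts, again controlled by Steps~1--2.

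The bootstrap in Step~1 is the part I expect to be hardest. It requires marrying the quenched randomness of $\mathbb G_n$ with the annealed randomness of $\bX$ and certifying the uniform-over-$i$ local-field approximation not merely for a single typical configuration but throughout the moderate-deviation band that determines the sharp constant; moreover the cavity comparison of $\P_{\beta,\bQ,\bmu}$ with its mean-field surrogate accumulates errors over all $n$ sites, so one genuinely needs an $o(1/\sqrt n)$-per-edge estimate. This is precisely where $p=\Theta(1)$ and $d=\Theta(n)$ are indispensable: for sparser graphs $\|\bQ-\bar{\bQ}\|_{\mathrm{op}}$ fails to be $o(1)$, the magnetization no longer concentrates in the mean-field fashion, and the sharp constants genuinely change.
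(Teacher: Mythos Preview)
Your Step~2 contains a genuine gap. You assert that once $\max_i|m_i(\bX)-\bar{\bX}|=O(\sqrt{\log n/n})$, ``propagating this perturbation through $Z_S$ shifts it by $O(\sqrt{s\log n/n})$.'' But $Z_S=\sum_{i\in S}X_i/\sqrt s$ is a sum of \emph{spins}, not of local fields; closeness of $m_i(\bX)$ to $\bar{\bX}$ under $\P_{\beta,\bQ,\bmu}$ does not by itself couple the random-graph Ising law to the Curie--Weiss law so that the two versions of $Z_S$ differ by an additive $o(1)$. The auxiliary-variable decoupling you are implicitly invoking (condition on $W_n$, spins become conditionally i.i.d.) is a feature of the Curie--Weiss Hamiltonian $\tfrac{\beta n}{2}\bar{\bX}^2$; for $\bQ=G_n/(np)$ the Hamiltonian $\tfrac{\beta}{2}\bX^\top\bQ\bX$ is not a function of $\bar{\bX}$ alone, so no such decoupling exists. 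To make your route work you would need either a coupling of the two Gibbs measures controlling $\sum_{i\in S}|X_i-X_i^{\rm CW}|$, or an MGF comparison for $Z_S$ under the two measures---neither of which follows from a pointwise local-field bound, and neither of which your bootstrap in Step~1 supplies.

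The paper takes a different and more direct route. For the moderate deviation it Chernoffs, $\P_{\beta,\bQ,\bzero}(Z_S>t)\le e^{-\lambda t}\,Z(\beta,\bQ,\bmu_S(\lambda/\sqrt s))/Z(\beta,\bQ,\bzero)$, and then shows (Lemma~\ref{lemma:log_partition_function_comparison_er}) that each partition function differs from its Curie--Weiss counterpart by a \emph{bounded multiplicative constant} with high probability in $\mathbb G_n$; a constant factor is harmless at the scale $e^{-t_n(\delta)^2/2}$. The comparison is obtained by writing $Z(\beta,\bQ,\bmu)/Z(\beta,\bQ^{\rm CW},\bmu)=\E_{\beta,\bQ^{\rm CW},\bmu}\bigl[e^{\frac{\beta}{2}\bX^\top\mathcal A_n\bX}\bigr]$ with $\mathcal A_n=\bQ-\bQ^{\rm CW}$, conditioning on the Curie--Weiss auxiliary variable $W_n$, and controlling the resulting centered quadratic form by a Hanson--Wright inequality (Lemma~\ref{lem:hanson_wright}); this uses $\|\mathcal A_n\|_{\rm op}=o(1)$ and $\|\mathcal A_n\|_F^2=O(1)$, both available precisely when $p=\Theta(1)$ or $d=\Theta(n)$. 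For the low-temperature upper bound the paper transfers the Type~I error directly via a result of \cite{debsum}, and for the lower bound across all $\beta>0$ it proves an event-transfer statement (Lemma~\ref{lemma:compare_sets_low_temp}): if an event has probability bounded away from zero under the Curie--Weiss measure, the same is true under the random-graph measure, so any test that were powerful on $\mathbb G_n$ would also be powerful for Curie--Weiss, contradicting Theorem~\ref{theorem:cw_known_beta}\ref{thm:cw_known_beta_lb}. None of this passes through the uniform local-field approximation you propose.
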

A few remarks in order regarding the statement and proof of Theorem \ref{theorem:er_known_beta}. First, the result should be understood as a high probability statement w.r.t. the randomness of the underlying Erd\H{o}s-R\'{e}nyi random graph. In particular, we prove that same results as in Theorem \ref{theorem:cw_known_beta} hold with probability converging {\color{blue} to $1$} under the Erd\H{o}s-R\'{e}nyi measure on $\mathbb{G}_n$. In this regard, the requirement of $p=\Theta(1)$ is mostly used for the $\beta>1$ case and can be relaxed for $\beta\leq 1$ regime. However, to keep our discussions consistent over values of $\beta$ we only consider the $p=\Theta(1)$ case. Finally, the proof of the theorem mainly operates through careful comparison of suitable event probabilities and partition functions under Ising models over Erd\H{o}s-R\'{e}nyi and complete graphs respectively -- the proofs of which can be found in Section \ref{sec:technical_lemmas_mean_field}. We also show that the result above can be obtain without the knowledge of $\beta$.

\begin{theorem}\label{thm:er_unknown_beta}
Theorem \ref{theorem:er_known_beta} holds for unknown $\beta>0$  if  $\|\bmu\|_{\infty}=O(1)$.
\end{theorem}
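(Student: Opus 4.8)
The plan is to combine the adaptation strategy behind Theorem~\ref{thm:cw_unknown_beta} with the complete-graph comparison already developed for Theorem~\ref{theorem:er_known_beta}. Recall that for known $\beta$ the optimal test is $\beta$-free when $\beta\le 1$ (reject for large $Z_{\max}$) and, when $\beta>1$, is the randomized scan test that first draws $W_n\sim N(\bar{\bX},1/(n\beta))$ and then rejects for large $Z_{\max}-\mathrm{sign}(W_n)\,m(\beta)\sqrt{s}$, where $m(\beta)$ is the positive root of $m=\tanh(\beta m)$. Since consistent estimation of $\beta$ is impossible in the high-temperature phase but is not needed there, it suffices to (a) exhibit a pre-test $\psi_n$ that consistently decides $\{\beta\le 1\}$ versus $\{\beta>1\}$, uniformly over $\bmu\in\Xi(\mathcal{C}_s,A)$ with $\|\bmu\|_\infty=O(1)$ and with probability $1-o(1)$ over the draw of $\mathbb{G}_n$; (b) on $\{\psi_n=1\}$ produce $\hat\beta$ with $\hat\beta\to\beta$ in probability, computed under the working model $\bmu=\bzero$; and (c) plug $\hat\beta$ into the randomized scan test, using continuity of $\beta\mapsto m(\beta)$ and $\beta\mapsto\cosh(\beta m(\beta))$ on compacts of $(1,\infty)$ to conclude that the Type~I/II analyses of Theorem~\ref{theorem:er_known_beta} survive the plug-in.

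For (a), use the normalized magnetization $\bar{\bX}$: on dense Erd\H{o}s-R\'{e}nyi and random regular graphs, $\bar{\bX}$ concentrates around $0$ for $\beta<1$ and around $\pm m(\beta)$ for $\beta>1$ (with critical fluctuations at $\beta=1$), exactly as on the complete graph; this follows either from \cite{basak2015universality} or, more in keeping with this paper, from the partition-function and event-probability comparisons of Section~\ref{sec:technical_lemmas_mean_field}. Reject in favour of $\beta>1$ when $|\bar{\bX}|>\delta_n$ for a slowly vanishing $\delta_n$. Because $\|\bmu\|_\infty=O(1)$ and $s=o(n)$, the external field shifts $\bar{\bX}$ by at most $O(s/n)=o(1)$, so the pre-test remains consistent uniformly over the alternative; quantifying this robustness is where one compares $Z(\beta,\bQ,\bmu)$ with $Z(\beta,\bQ,\bzero)$, just as in the proof of Theorem~\ref{theorem:er_known_beta}.

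For (b), use the maximum pseudo-likelihood estimator based on the conditional laws $\P(X_i=1\mid X_{-i})=\tfrac12\big(1+\tanh(\beta(\bQ\bX)_i)\big)$, treating $\bmu$ as $\bzero$. On dense regular graphs $(\bQ\bX)_i=\bar{\bX}+o(1)$ uniformly in $i$ with probability $1-o(1)$ over $\mathbb{G}_n$ (a consequence of degree concentration / spectral-gap bounds transferred from the complete-graph case), so the pseudo-likelihood score reduces to the Curie--Weiss score up to $o(1)$ and the consistency of $\hat\beta$ established for Theorem~\ref{thm:cw_unknown_beta} carries over. The hypothesis $\|\bmu\|_\infty=O(1)$ enters exactly as in Theorem~\ref{thm:cw_unknown_beta}: the $s=o(n)$ contaminated coordinates contribute negligibly to the pseudo-likelihood objective, so $\hat\beta$ stays consistent under $H_1$; this also explains why the assumption is hard to push past $\|\bmu\|_\infty=o(n/s)$.

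The main obstacle is establishing the robustness in (a) and (b) simultaneously against the sparse bounded field $\bmu$ and against the randomness of $\mathbb{G}_n$ --- i.e. showing that, with probability $1-o(1)$ over $\mathbb{G}_n$, the local fields $(\bQ\bX)_i$ behave like $\bar{\bX}$ uniformly in $i$ and that neither the pre-test statistic nor the pseudo-likelihood objective is distorted by the $s=o(n)$ signal coordinates. This is precisely where the partition-function comparison underlying Theorem~\ref{theorem:er_known_beta} must be interleaved with the estimator analysis of Theorem~\ref{thm:cw_unknown_beta}; once these are in place, assembling the final test --- run the $\beta$-free scan on $\{\psi_n=0\}$ and the $\hat\beta$-plug-in randomized scan test on $\{\psi_n=1\}$ --- and verifying that the lower bounds are unaffected (they never use knowledge of $\beta$) is a routine continuity argument.
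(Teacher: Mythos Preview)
Your proposal is correct and follows the same high-level strategy as the paper: a pre-test on $|\bar{\bX}|$ to decide $\{\beta\le 1\}$ versus $\{\beta>1\}$, a pseudo-likelihood estimator of $\beta$ under the working model $\bmu=\bzero$ on the low-temperature branch, and a plug-in into the randomized scan. The paper's proof is in fact very short: it cites concentration of $\bar{\bX}$ from \cite{debsum} and the covariance bound $\mathrm{Cov}_{\beta,\bQ,\bmu}(X_i,X_j)\lesssim 1/n$ from \cite{deb2020detecting} for the pre-test part, and then invokes Lemma~\ref{lmm:beta_estimation} (together with \cite{infising}) for the pseudo-likelihood consistency. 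The one substantive difference is your step~(b): you reduce the pseudo-likelihood analysis to the Curie--Weiss case via the approximation $(\bQ\bX)_i=\bar{\bX}+o(1)$ uniformly in $i$, whereas the paper's Lemma~\ref{lmm:beta_estimation} establishes consistency of the $\bmu=\bzero$ pseudo-likelihood estimator \emph{directly} for any $\bQ$ with $\sup_n\|\bQ\|<\infty$ and $\liminf_n n^{-1}\log(2^{-n}Z(\beta,\bQ,\bzero))>0$, and handles the sparse contamination through the hypothesis $|\mathrm{supp}(\bmu)|=o(n)$ alone. Your reduction is valid but is an unnecessary detour (and the uniformity-in-$i$ claim requires its own union-bound/spectral argument); the paper's route avoids it entirely and makes clear that no Curie--Weiss comparison is needed for the estimation step.
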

The requirement of $\|\bmu\|_{\infty}=O(1)$ can be relaxed to $\|\bmu\|_{\infty}=o(n/s)$. However, at this moment we have not been able to relax this completely.

%\todo[inline]{State the result appropriately for quenched measure and put $p=\Theta(1)$.}

\section{Short Range Interactions on Lattices}\label{sec:lattice}

To describe the detection problem for nearest neighbor interaction type models, it is convenient to represent the points $i=1,\dots, n$ to be vertices of $d$-dimensional hyper-cubic lattice and the underlying graph (i.e. the graph corresponding to $\bQ$) to be the nearest neighbor (in sense of Euclidean distance) graph on these vertices. More precisely, given positive integers $n,d$, we consider a growing sequence of lattice boxes of dimension $d$ defined as 
$$\{\Lambda_{n,d}\}_{n \ge 1}=\{[-n^{1/d},n^{1/d}]^d\cap \mathbb{Z}^d\}_{n \ge 1}$$
where $\mathbb{Z}^d$ denotes the d-dimensional integer lattice. %The elements of $\Lambda_{n,d}$ (and also those of $\mathbb{Z}^d$) will be referred to as vertices.
Subsequently, we consider a family of random variables defined on the vertices of $\Lambda_{n,d}$ as $\bX \in \{-1,+1\}^{\Lambda_{n,d}}$  with the following probability mass function (p.m.f.)
\begin{align}
\P_{\beta,\bQ,\bmu}(\bX=\bx)=\frac{1}{Z(\beta,\mathbf{Q}(\Lambda_{n,d}), \mathbf{\bmu})}\exp{\left(\frac{\beta}{2}\bx^\top\mathbf{Q}(\Lambda_{n,d}) \bx+\bmu^\top\bx\right)},\qquad \forall \bx \in \{\pm 1\}^{\Lambda_{n,d}},
\label{eqn:lattice_ising}
\end{align}
where as usual $\mathbf{Q}(\Lambda_{n,d})=(\bQ(\Lambda_{n,d})_{ij})_{i,j\in\Lambda_{n,d}}$ is a symmetric and hollow \textcolor{black}{array} (i.e. $\bQ(\Lambda_{n,d})_{ii}=0$ for all $i\in \Lambda_{n,d}$) with elements indexed by pairs of vertices in $\Lambda_{n,d}$ (organized in some pre-fixed lexicographic order), $\bmu:=(\mu_i:i\in \Lambda_{n,d})^\top\in \mathbb{R}^{\Lambda_{n,d}}$ referred to as the external magnetization vector indexed by vertices of $\Lambda_{n,d}$ , $\beta>0$ is the ``inverse temperature", and $Z(\beta,\mathbf{Q}(\Lambda_{n,d}), \mathbf{\bmu})$ is a normalizing constant. Note that in this notation $i,j\in \Lambda_{n,d}$ are vertices of the $d$-dimensional integer lattice and hence correspond to $d$-dimensional vector with integer coordinates. Therefore, using this notation, by nearest neighbor graph will correspond to the matrix $\bQ_{ij}=\bQ_{ij}(\Lambda_{n,d})=\mathbf{1}(0<\|i-j\|_1= 1)$.

Our results in this model will be derived for any dependence other than a critical dependence $\beta_c(d)$ to be defined next. The case of critical dependence for lattices remains open even in terms of optimal rate for the minimax separation and therefore we do not pursue the issue of optimal constants in this regime. To describe a notion of critical temperature, consider  $\mathcal{Q}(d)$ be the sequence of matrices $\{\bQ(\Lambda_{n,d})\}_{n\geq 1}$ and define
		\begin{align*}
		\beta_c(d)=\beta_{c}(\mathcal{Q}(d))=\inf\left\{\beta>0: \lim_{h\downarrow 0}\lim_{n\rightarrow \infty}\E_{\beta,\bQ(\Lambda_{n,d}),\bmu(h)}\left(\frac{1}{n}\sum_{i\in \Lambda_{n,d}} X_i\right)>0\right\},
		\end{align*}
		{where we let $\bmu(h)$ denote the vector in $\mathbb{R}^{|\Lambda_{n,d}|}$ with all coordinates equal to $h$.
		The existence and equivalence of the above notions (such as ones including uniqueness of infinite volume measure) of critical temperature in nearest neighbor Ising Model is a topic of classical statistical physics and we refer the interested reader to the excellent expositions in \cite{friedli2017statistical,duminil2017lectures} for more details.} {This value of $\beta_c(d)$ (which is known to be strictly positive for any fixed $d\geq 1$) is referred to as the critical inverse temperature in dimension $d$ and the behavior of the system of observations $\bX$ changes once $\beta$ exceeds this threshold. For $d=1$, it is known from the first work in this area \citep{ising1925beitrag} that $\beta_c(1)=+\infty$ and consequently the Ising model in 1-dimension is said to have no phase transitions. The seminal work of \cite{onsager1944crystal} provides a formula for $\beta_c(2)$ and obtaining an analytical formula for $\beta_c(d)$ for $d\geq 3$ remains open. Consequently, results only pertain to the existence of a strictly non-zero finite $\beta_c(d)$ which governs the macroscopic behavior of the system of observations $X_i,i\in \Lambda_{n,d}$ as $n\rightarrow \infty$. In particular, the average magnetization $n^{-1}\sum X_i$ converges to $0$ in probability for $\beta<\beta_c(d)$ and to an equal mixture of two delta-dirac random variables $m_+(\beta)$ and $m_-(\beta)=-m_+(\beta)$, for $\beta>\beta_c(d)$ (\cite{lebowitz1977coexistence}).} This motivates defining Ising models in pure phases as follows. %Standard results demontrate a central limit theorem of the total magnetization under a modified Ising Model -- termed as pure phase. 
		%Without going into deep tecnhicalities, Ising Models in pure phases (the $+$phase and the $-$phase) is model \eqref{eqn:lattice_ising} conditioned to have $X_i$ with $i\in \partial\Lambda_{n,d}$ (i.e. the boundary of the cube $\Lambda_{n,d}$) to be either all $+1$ (for $+$phase) or all $-1$(for $-$phase). 
		
	 Let $\Lambda^{\sf w}_{n,d}$ denote the graph obtained by identifying the vertices not in $\Lambda_{n,d}$ into a single vertex $\partial\Lambda_{n,d} $ and then erasing all the self loops. Let $\bQ(\Lambda^{\sf w}_{n,d})$ denote the adjacency matrix corresponding to nearest neighbour interaction in this modified graph.
	{	
		We denote 
		\begin{align*}
		\P^{+}_{\beta,\bQ(\Lambda_{n,d}),\bmu}(\bX=\bx)=\P_{\beta,\bQ(\Lambda^{\sf w}_{n,d}),\bmu}(\bX=\bx|\partial \Lambda_{n,d}=+1),\\ \P^{-}_{\beta,\bQ(\Lambda_{n,d}),\bmu}(\bX=\bx)=\P_{\beta,\bQ(\Lambda^{\sf w}_{n,d}),\bmu}(\bX=\bx|\partial\Lambda_{n,d}=-1),
		\end{align*}
		to be Ising Models (see \eqref{eqn:general_ising}) with $+$ and $-$ boundary conditions respectively. On the other hand $\P_{\beta,\bQ,\bmu}$ is referred to as the Ising model with \emph{free boundary condition}. It is well known \citep{friedli2017statistical}, that for $0\leq \beta<\beta_c(d)$ the asymptotic properties of the models $\P^{+}_{\beta,\bQ,\bmu}$, $\P^{-}_{\beta,\bQ,\bmu}$, and $\P_{\beta,\bQ,\bmu}$ are similar (i.e. they have all the same infinite volume $n\rightarrow \infty$ weak limit). However, for $\beta>\beta_c(d)$, the model $\P_{\beta,\bQ,\bmu}$ behaves asymptotically as the mixture of $\P^{+}_{\beta,\bQ,\bmu}$ and $\P^{-}_{\beta,\bQ,\bmu}$. %Indeed, an argument similar to the proof of Theorem \ref{theorem:arbitrarysparse_lattice} shows that the same detection thresholds of for testing \eqref{eqn:hypo_sparse} holds while working with $\P^{+}_{\beta,\bQ,\bmu}$ well. 
		%Indeed, for $\beta>\beta_c(d)$ it is not expected to have condition (D) {\color{red} (what is it?)} hold for $\P_{\beta,\bQ,\bmu}$ and 
		We only present our result for the measure $\P^{+}_{\beta,\bQ,\bmu}$ in such cases.} \textcolor{black}{Although we believe that a similar result might hold for both negative boundary condition (i.e. $\P^{-}_{\beta,\bQ,\bmu}$) as well as free boundary condition (i.e. the original model $\P_{\beta,\bQ,\bmu}$) we do not yet have access to a rigorous argument in this regard.} In the rest of this section, we therefore shall use the superscript ``$\rm bc$" in our probability, expectation, and variance operators (e.g.  $\P^{\rm bc}_{\beta,\bQ(\Lambda_{n,d}),\bmu}$,  $\E^{\rm bc}_{\beta,\bQ(\Lambda_{n,d}),\bmu}$, and  $\mathrm{Var}^{\rm bc}_{\beta,\bQ(\Lambda_{n,d}),\bmu}$) where $\mathrm{bc}\in \{\mathsf{f},+\}$ and stands for ``boundary condition" referring to either the free boundary condition (when $\rm bc=\mathsf{f}$) or plus boundary condition (when $\rm bc=+$).
		
		To present our results in this case, we begin with describing the structure of our alternatives. Since our model has an inherent geometry given by the lattice structure in $d$-dimensions, it is natural to consider signals which can be described by such geometry. Similar to one of the emblematic cases considered in \cite{arias2005near,arias2011detection,walther2010optimal,butucea2013detection,konig2020multidimensional}, here we discuss testing against block sparse alternatives of size $s$ define by $\Xi(\C_{s},A)$ with
 \begin{align}
\C_s=\C_{s,\rm rect}:=\left\{\prod\limits_{j=1}^d[a_j:b_j]\cap \Lambda_n(d): \ b_j-a_j=\lceil s^{1/d}\rceil\right\}.\label{eqn:cube_signals}
 \end{align}
 Although we only present the results for sub-cube detection with equal size of the sides , one can extend the results to detection of thick clusters (see \cite{arias2011detection} for details) with modifications of the arguments presented here. 
\textcolor{black}{The development and analysis of multi-scale tests similar to those explored in \cite{arias2005near,walther2010optimal,konig2020multidimensional} is also important. However we keep this for future research to keep our discussions in this paper focused on understanding the main driving principles behind the sharp constants of detection under Ising dependence.% the  naturally consider all (thick) rectangles of volume $s\gg \log{n}$ those have both length and width diverging to infinity with $n$.
} %\todo[inline]{Define the class of signals mathematically here.} 
%\todo[inline]{I think we can speak about arbitrary thick rectangles but then the rate will not be $\log{(n/s)}$ right (because log cardinality of all rectangles is different. We should write a comment on the need for multi-scale testing and why we skip it here for the sake of targeted results.)}\textcolor{red}{Why? G.}

This a class of alternatives is known to require $\tanh(A)\asymp \sqrt{\frac{\log{n}}{s}}$ for consistent detection (see e.g. 	\citep{arias2005near,arias2008searching,arias2011detection} for independent outcomes case and \cite{enikeeva2020bump,deb2020detecting} for dependent models) and allows a sharp transition at a level of multiplicative constants. Indeed, such sharp constants of phase transition has been derived for either independent outcome models 
		\citep{arias2005near,arias2008searching,arias2011detection} and for dependent Gaussian outcomes \cite{enikeeva2020bump}. For our problem, the derivation of the sharp optimal constant of detection is somewhat subtle and we first discuss a way to formalize this asymptotic constant below.
		
		We begin by noting that it is reasonable to believe that a sequence of optimal test can be obtained by  scanning over a suitable subclass of the potentially signal rectangles. We define the test first to gain intuition about the sharp constant of detection. To put ourselves in the context of notation similar to Section \ref{sec:mean_field}, we use $\C_{s,\rm rect}$ to denote the class of all thick rectangles of volume $s$ and for any $S\in \C_{s,\rm rect}$
		\begin{align*}
		    Z_{S}=\frac{1}{\sqrt{s}}\sum_{i\in S}\left(X_i-\E^{\rm bc}_{\beta,\bQ(\Lambda_{n,d}),\mathbf{0}}(X_i)\right),
		\end{align*}
		 We would like to scan over a suitable subclass $\tilde{\C}_{s,\rm rect}$ which captures the essential complexity of $\C_{s,\rm rect}$ both in terms of theoretical and computational aspects. From a theoretical perspective, we shall require a precise understanding of the moderate deviation behavior of $Z_S$ and which in turn crucially relies on understanding the asymptotic behavior of the variance of $Z_S$. In particular, it is reasonable to believe that in the ``pure phase" (which is the case for free boundary high temperature and low temperature plus boundary case) $Z_S$ should asymptotically behave like a Gaussian random variable and thus its moderate deviation exponent is characterized by its variance (See \cite[Theorem V.7.2]{ellis2006entropy} for a result of this nature). To justify that there is a valid candidate for the limit of $\mathrm{Var}^{+}_{\beta,\bQ(\Lambda_{n,d}),\mathbf{0}}(Z_S)$ we have the following result which is one of the main components of this section.
		
\begin{theorem}\label{thm:variance_limit_lattice}
{Suppose $\mathrm{dist}(S,\partial\Lambda_{n,d}):=\inf\limits_{j \in \partial\Lambda_{n,d}, i\in S}\|i-j\|_1\gg \log^2{n}$.}  Then there exists $\chi^{\mathsf{f}}(\beta)$ (for $\beta<\beta_c(d)$) and $\chi^{+}(\beta)$ (for $\beta>\beta_c(d)$) such that
\begin{align}
    \lim_{s\rightarrow \infty}\mathrm{Var}^{\mathrm bc}_{\beta,\bQ(\Lambda_{n,d}),\mathbf{0}}(Z_S)&=\chi^{\mathrm bc}(\beta),\quad \text{for} \quad \beta<\beta_c(d), \mathrm{bc} \in \{\sf f,+\}; \label{eqn:var_limit_lattice_hightemp}\\
    \lim_{s\rightarrow \infty}\mathrm{Var}^{+}_{\beta,\bQ(\Lambda_{n,d}),\mathbf{0}}(Z_S)&=\chi^{+}(\beta),\quad \text{for} \quad \beta>\beta_c(d). \label{eqn:var_limit_lattice_lowtemp}
\end{align}
\end{theorem}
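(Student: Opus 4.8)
The plan is to reduce the variance to a sum of truncated two-point functions, identify the candidate limit via the infinite-volume pure-phase Gibbs state, and then run a bulk-versus-boundary averaging estimate based on exponential decay of correlations. Since $Z_S$ is centered by construction,
\[
\mathrm{Var}^{\mathrm{bc}}_{\beta,\bQ(\Lambda_{n,d}),\mathbf{0}}(Z_S)=\frac1s\sum_{i\in S}\sum_{j\in S}\tau^{\mathrm{bc}}_{n}(i,j),\qquad \tau^{\mathrm{bc}}_{n}(i,j):=\mathrm{Cov}^{\mathrm{bc}}_{\beta,\bQ(\Lambda_{n,d}),\mathbf{0}}(X_i,X_j),
\]
and $|S|\sim s$ because the cube has side $\lceil s^{1/d}\rceil$. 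The candidate limit is built from the infinite-volume pure phase on $\mathbb{Z}^d$ (the unique Gibbs state when $\beta<\beta_c(d)$, the extremal $+$ state when $\beta>\beta_c(d)$), whose truncated two-point function $\bar\tau^{\mathrm{bc}}$ is translation invariant, $\bar\tau^{\mathrm{bc}}(i,j)=\bar\tau^{\mathrm{bc}}(i-j)$; I would set $\chi^{\mathrm{bc}}(\beta):=\sum_{k\in\mathbb{Z}^d}\bar\tau^{\mathrm{bc}}(k)$ and show the display above converges to it. (For $\beta<\beta_c(d)$ the free and $+$ states coincide, so $\chi^{\mathsf f}(\beta)=\chi^{+}(\beta)$, matching the statement.)

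The analytic input I would invoke is the classical fact that, away from $\beta_c(d)$ and in a pure phase, truncated correlations are nonnegative, decay exponentially, and stabilize in the volume: there exist $C_\beta,c_\beta\in(0,\infty)$ such that (i) $0\le\bar\tau^{\mathrm{bc}}(k)\le C_\beta e^{-c_\beta\|k\|_1}$ for all $k$; (ii) the same bound $0\le\tau^{\mathrm{bc}}_{n}(i,j)\le C_\beta e^{-c_\beta\|i-j\|_1}$ holds in finite volume; and (iii) $|\tau^{\mathrm{bc}}_{n}(i,j)-\bar\tau^{\mathrm{bc}}(i-j)|\le C_\beta e^{-c_\beta\,\mathrm{dist}(\{i,j\},\partial\Lambda_{n,d})}$. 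Nonnegativity is the FKG/Griffiths inequality (valid also under the $+$ state, viewed as the $h\uparrow\infty$ boundary-field limit); the exponential decay is the mass-gap property of the pure phase, coming from the high-temperature/random-current expansion when $\beta<\beta_c(d)$ and from a Pirogov--Sinai-type cluster expansion for the low-temperature $+$ state; statements (ii)--(iii) follow for the free high-temperature state from monotonicity of correlations in the volume (Griffiths) and for the low-temperature $+$ state from the uniform cluster-expansion bounds; see \cite{friedli2017statistical,duminil2017lectures}. In particular $\chi^{\mathrm{bc}}(\beta)<\infty$.

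Finally I would split $S$ into bulk and boundary. Fix $r_n\to\infty$ with $r_n=o(s^{1/d})$ (say $r_n=\lfloor s^{1/(2d)}\rfloor$) and let $S^{\circ}:=\{i\in S:\mathrm{dist}(i,\partial S)\ge r_n\}$; since $S$ is a cube of side $\asymp s^{1/d}$, $|S\setminus S^{\circ}|\lesssim r_n s^{1-1/d}=o(s)$, so $|S^{\circ}|/s\to1$. For $i\in S^{\circ}$, writing
\[
\sum_{j\in S}\tau^{\mathrm{bc}}_{n}(i,j)=\chi^{\mathrm{bc}}(\beta)-\sum_{j\notin S}\bar\tau^{\mathrm{bc}}(i-j)+\sum_{j\in S}\bigl(\tau^{\mathrm{bc}}_{n}(i,j)-\bar\tau^{\mathrm{bc}}(i-j)\bigr),
\]
the first error term is at most $\sum_{\|k\|_1\ge r_n}C_\beta e^{-c_\beta\|k\|_1}\to0$ (because $i\in S^{\circ}$), and the second is, using (iii), the hypothesis $\mathrm{dist}(S,\partial\Lambda_{n,d})\gg\log^2 n$, and $s\lesssim n$, at most $s\,C_\beta e^{-c_\beta\log^2 n}\to0$; hence $\sum_{j\in S}\tau^{\mathrm{bc}}_{n}(i,j)=\chi^{\mathrm{bc}}(\beta)+o(1)$ uniformly over $i\in S^{\circ}$, while for the $o(s)$ remaining sites the inner sum is bounded in modulus by $\sum_{k\in\mathbb{Z}^d}C_\beta e^{-c_\beta\|k\|_1}$ by (ii). Substituting into the variance formula gives
\[
\mathrm{Var}^{\mathrm{bc}}_{\beta,\bQ(\Lambda_{n,d}),\mathbf{0}}(Z_S)=\frac{|S^{\circ}|}{s}\,\chi^{\mathrm{bc}}(\beta)+o(1)\longrightarrow\chi^{\mathrm{bc}}(\beta),
\]
which is \eqref{eqn:var_limit_lattice_hightemp} and \eqref{eqn:var_limit_lattice_lowtemp}. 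The hard part is the statistical-mechanics input (ii)--(iii) in the low-temperature $+$ phase in general dimension, i.e. uniform exponential decay and finite-volume stabilization of the truncated two-point function of the extremal $+$ state; the $\log^2 n$ separation assumption enters exactly here, to make the finite-volume correction in (iii) summable over the $\lesssim n$ sites of $S$.
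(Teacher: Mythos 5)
Your variance identity and bulk-versus-boundary averaging (the $S^{\circ}$ split) give a clean and correct arithmetic reduction, and it is genuinely different from the paper's route: the paper does not manipulate two-point functions directly, but instead constructs an explicit high-probability spin coupling between $\P^{\rm bc}_{\beta,\bQ(\Lambda_{n,d}),\mathbf 0}$ and the infinite-volume pure state (Corollary \ref{cor:ising_coupling}), and then controls $\mathrm{Var}(\sum_{i\in S}X_i)$ by splitting on the good coupling event $\mathcal G$ and applying Cauchy--Schwarz, before invoking the infinite-volume limit computed in Lemma \ref{lem:varlatinfinite}. Both decompositions are valid once the right statistical-mechanics input is secured.

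The gap is exactly in that input. You assert the finite-volume stabilization estimate (iii), $|\tau^{\rm bc}_n(i,j)-\bar\tau^{\rm bc}(i-j)|\le C_\beta e^{-c_\beta\,\mathrm{dist}(\{i,j\},\partial\Lambda_{n,d})}$, as if it were classical, but the justifications you give do not cover the theorem's regime. For $\beta<\beta_c(d)$ with free bc, ``monotonicity of correlations in the volume (Griffiths)'' only yields the one-sided sandwich $\tau^{\sf f}_n\le\bar\tau^{\sf f}\le\tau^{\sf w}_n$; producing a rate requires controlling the gap $\tau^{\sf w}_n-\tau^{\sf f}_n$ via a boundary-influence-decay argument (in the paper this is the exposure coupling in Lemma \ref{lem:coupling}, which relies on the sharpness results of \cite{duminil2017sharp}). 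More seriously, for $\beta>\beta_c(d)$ a ``Pirogov--Sinai-type cluster expansion'' gives (ii)--(iii) only for $\beta$ sufficiently large, not for all $\beta>\beta_c(d)$; the uniform exponential decay and volume-stabilization in the entire $+$ phase is the content of \cite{duminil2018exponential} combined with Pisztora coarse-graining \cite{pisz96,bodineau2003slab} and a renormalized block-exploration coupling (Lemma \ref{lem:coupling}, wired case). You correctly flag (ii)--(iii) as ``the hard part,'' but that hard part is essentially the whole theorem: without the FK-Ising/Edwards--Sokal machinery and \cite{duminil2018exponential}, the argument does not close for $\beta$ just above $\beta_c(d)$. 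Also, even the correlation-decay inputs (i)--(ii) at low temperature should cite \cite{duminil2018exponential} rather than a cluster expansion.
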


Armed with Theorem \ref{thm:variance_limit_lattice} we are now ready to state the main sharp detection threshold for detecting rectangles over lattices.

\begin{theorem}\label{thm:lattice_known_beta}
Suppose $\bX\sim \P^{\rm bc}_{\beta,\bQ(\Lambda_{n,d}),\bmu}$ for $\rm bc \in \{\mathsf{f}, +\}$and consider testing \eqref{eqn:sparse_hypo} against $\C_s=\C_{s,\rm rect}$. Then the following hold with $\rm bc \in \{\mathsf{f},+\}$ for $\beta<\beta_c(d)$ and $\rm bc=+$ for $\beta>\beta_c(d)$.
\begin{enumerate}
    \item A test based on $\max_{S\in \tilde{\C}_{s,\rm rect}} Z_S$ is asymptotically powerful if $s\gg (\log{n})^d$ $$\liminf\tanh(A)\sqrt{s/\log{(n/s)}}>\sqrt{2\chi^{\rm bc}(\beta)}.$$
    
    \item All tests are asymptotically powerless if $\limsup\tanh(A)\sqrt{s/\log{(n/s)}}<\sqrt{2\chi^{\rm bc}(\beta)}$. %\todo[inline]{Check powerless or not powerful.}
\end{enumerate}

\end{theorem}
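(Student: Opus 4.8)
The plan is to establish the two halves of the statement separately: for part~1, analyse the scan test $\max_{S\in\tilde\C_{s,\rm rect}}Z_S$, and for part~2, run a truncated second moment argument. Throughout I write $\E^{\rm bc},\mathrm{Var}^{\rm bc},\P^{\rm bc}$ for $\E^{\rm bc}_{\beta,\bQ(\Lambda_{n,d}),\,\cdot}$, etc., and $\chi^{\rm bc}(\beta)$ for the limiting variance furnished by Theorem~\ref{thm:variance_limit_lattice}. First I would pin down $\tilde\C_{s,\rm rect}\subset\C_{s,\rm rect}$: a maximal family of axis‑parallel cubes of side $\lceil s^{1/d}\rceil$ with (i) $\mathrm{dist}(S,\partial\Lambda_{n,d})\gg\log^2 n$ for every member (so that Theorem~\ref{thm:variance_limit_lattice} applies to each $Z_S$), and (ii) pairwise $\ell_1$‑separation at least $(\log\log n)^2$. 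Since the $\gg\log^2 n$ boundary shell of $\Lambda_{n,d}$ is an $o(1)$ fraction of the volume, such a family may be taken to be a $\gamma$‑net of $\C_{s,\rm rect}$ (every cube of the prescribed side overlaps some member in a $(1-o(1))$‑fraction of its volume) with $\log|\tilde\C_{s,\rm rect}|=(1+o(1))\log(n/s)$. The test rejects when $Z_{\max}:=\max_{S\in\tilde\C_{s,\rm rect}}Z_S$ exceeds $t_n:=\sqrt{2\chi^{\rm bc}(\beta)\log(n/s)}\,(1+\delta_n)$ with $\delta_n\downarrow 0$ slowly.

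The Type~I control is a union bound fed by the lattice analogue of Lemma~\ref{lemma:cw_moderate_deviation}: I would prove that, uniformly over $S\in\tilde\C_{s,\rm rect}$ and $0\le t\le C\sqrt{\log n}$,
\[
\log\P^{\rm bc}_{\beta,\bQ(\Lambda_{n,d}),\mathbf 0}\big(Z_S\ge t\big)=-\frac{t^2}{2\chi^{\rm bc}(\beta)}\,(1+o(1)),
\]
so that $|\tilde\C_{s,\rm rect}|\,\exp(-t_n^2/2\chi^{\rm bc}(\beta))\to 0$. I expect this moderate‑deviation estimate to be the main obstacle. Its ingredients are Theorem~\ref{thm:variance_limit_lattice}, which supplies the correct normalising variance, and a Gaussian‑type moderate deviation principle for the centred block sum in a pure phase: in the free high‑temperature case through a convergent high‑temperature (cluster) expansion giving analyticity of the pressure and Gaussian tails (or an Ellis‑type theorem, cf.\ \cite[Theorem~V.7.2]{ellis2006entropy}), and in the plus low‑temperature case through the analogous polymer expansion around the all‑$+$ configuration together with exponential decay of truncated correlations in the $+$ state (see \cite{friedli2017statistical}). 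The hypothesis $s\gg(\log n)^d$ is precisely what places the relevant scale $t\asymp\sqrt{\log(n/s)}$ inside the window where the Gaussian exponent is valid and makes the surface‑to‑volume ratio of the cubes negligible.

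For Type~II, fix $\bmu=\bmu_{S^*}(A)$ with $S^*\in\C_{s,\rm rect}$ and choose $S\in\tilde\C_{s,\rm rect}$ with $|S\cap S^*|\ge(1-o(1))s$. The drift is $\E^{\rm bc}_{\beta,\bQ,\bmu}(Z_S)=s^{-1/2}\sum_{i\in S}\big(\E^{\rm bc}_{\beta,\bQ,\bmu}[X_i]-\E^{\rm bc}_{\beta,\bQ,\mathbf 0}[X_i]\big)$; the terms with $i\in S\setminus S^*$ are nonnegative by Griffiths' second (GKS) inequality and may be dropped, while for $i\in S\cap S^*$ the same monotonicity in $\bmu\ge\mathbf 0$ gives $\E^{\rm bc}_{\beta,\bQ,\bmu}[X_i]\ge g(A)$, where $g(h):=\E^{\rm bc}_{\beta,\bQ,\bmu_{\{i\}}(h)}[X_i]$ solves $g'=1-g^2$ (since $X_i^2=1$), hence $g(h)=\tanh\!\big(h+\tanh^{-1}g(0)\big)$; since $g(0)=\E^{\rm bc}_{\beta,\bQ,\mathbf 0}[X_i]$ converges to the bulk magnetisation and $A\to 0$, and since linear response $\partial_{\mu_j}\E[X_i]=\mathrm{Cov}(X_i,X_j)$ pins the exact order of the drift through $\chi^{\rm bc}(\beta)$, one checks the drift exceeds $(1+o(1))t_n$ under the stated condition. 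Finally $\mathrm{Var}^{\rm bc}_{\beta,\bQ,\bmu}(Z_S)\le(1+o(1))\chi^{\rm bc}(\beta)$ — truncated two‑point functions only shrink under an added nonnegative field, by the GHS inequality — and re‑running the moderate‑deviation estimate under the tilted law gives $\P^{\rm bc}_{\beta,\bQ,\bmu}(Z_S<t_n)\to 0$, so $\mathrm{Risk}\to 0$.

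For part~2 I would use a truncated second moment in the style of the matching Curie–Weiss lower bound. Put the uniform prior on $S^*$ over a maximal disjoint, $(\log\log n)^2$‑separated subfamily of $\tilde\C_{s,\rm rect}$ of size $N=e^{(1+o(1))\log(n/s)}$, with signal $\bmu_{S^*}(A)$; the likelihood ratio is $L_{S^*}=\tfrac{Z(\beta,\bQ,\mathbf 0)}{Z(\beta,\bQ,\bmu_{S^*}(A))}\exp\big(A\sum_{i\in S^*}X_i\big)$. Truncating $L_{S^*}$ on $\{Z_{\max}\le t_n\}$ and using the moderate‑deviation control of $\sum_{i\in S^*}X_i=\sqrt s\,Z_{S^*}$ (same exponent $t^2/2\chi^{\rm bc}(\beta)$), one shows that whenever $\limsup\tanh(A)\sqrt{s/\log(n/s)}<\sqrt{2\chi^{\rm bc}(\beta)}$ the truncated mixture likelihood ratio has first moment $\to 1$ and second moment $\to 1$ — the off‑diagonal terms being negligible by the separation of the $S^*$'s and exponential decay of truncated correlations — whence the mixture is asymptotically indistinguishable from $\P^{\rm bc}_{\beta,\bQ,\mathbf 0}$ in total variation and no test is powerful. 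The strict monotonicity asserted after the statement, Lemma~\ref{prop:monotonicity_susceptibility}, is then a separate analytic fact about $\beta\mapsto\chi^{\mathsf f}(\beta)$ on $[0,\beta_c(d))$.
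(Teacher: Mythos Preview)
Your overall architecture matches the paper's: scan test for part~1, truncated second moment over a disjoint subfamily for part~2, with the moderate‐deviation exponent $t^2/(2\chi^{\rm bc}(\beta))$ as the common engine. There are, however, two places where your execution diverges from the paper and one of them is a genuine gap.

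\textbf{The drift computation is wrong.} Your single‐spin argument lower‐bounds $\E^{\rm bc}_{\beta,\bQ,\bmu_{S^*}(A)}[X_i]-\E^{\rm bc}_{\beta,\bQ,\mathbf 0}[X_i]$ by $g(A)-g(0)$ where $g(h)=\E^{\rm bc}_{\beta,\bQ,\bmu_{\{i\}}(h)}[X_i]$, and your ODE $g'=1-g^2$ is correct. But that only yields a per‐spin drift of $(1-m^2)A(1+o(1))$ where $m=\E^{\rm bc}_{\beta,\bQ,\mathbf 0}[X_0]$, because you have turned off the field everywhere except at $i$. The true per‐spin response to a field on \emph{all} of $S^*$ is $\sum_{j\in S^*}\mathrm{Cov}^{\rm bc}(X_i,X_j)\approx\chi^{\rm bc}(\beta)$, and since $\chi^{\rm bc}(\beta)\ge\mathrm{Var}^{\rm bc}(X_0)=1-m^2$ with strict inequality whenever $\beta>0$, your bound is too small by exactly the factor that matters for the sharp constant. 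In the low‐temperature plus phase where $m>0$ the shortfall is fatal: your drift bound $(1-m^2)\sqrt sA$ does not exceed $t_n$ under the stated hypothesis. The paper fixes this cleanly (Lemma~\ref{lem:lattice_mean_alt}) by writing $f(\nu)=\log Z(\beta,\bQ,\bmu_{S^*}(\nu))$ and Taylor‐expanding $f'(A)-f'(0)=Af''(0)+O(A^2f''')$; then $f''(0)=\mathrm{Var}^{\rm bc}_{\mathbf 0}(\sum_{i\in S^*}X_i)=s\chi^{\rm bc}(\beta)(1+o(1))$ by Theorem~\ref{thm:variance_limit_lattice}, and the third‐cumulant control $f'''=O(s)$ comes from \cite{martin1973mixing}. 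Your parenthetical ``linear response $\partial_{\mu_j}\E[X_i]=\mathrm{Cov}(X_i,X_j)$ pins the exact order through $\chi$'' is the right idea but is not the argument you actually ran; replacing the single‐spin $g$ by this full linear response is exactly what the paper's lemma does.

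\textbf{The moderate deviation is the right target but under‐specified.} You correctly identify $\log\P^{\rm bc}_{\mathbf 0}(Z_S\ge t)=-t^2/(2\chi)(1+o(1))$ as the crux, but cluster/polymer expansions or a bare reference to Ellis do not by themselves deliver the needed uniform cumulant control $\kappa_k(\sum_{i\in S}X_i)=O(s)$ for $k\ge 3$ in both regimes. The paper invokes \cite[Equation~25]{martin1973mixing} for precisely this bound (valid for Gibbs systems with exponentially decaying truncated correlations), which turns the Chernoff bound into $\exp(-\lambda t+\tfrac{\lambda^2}{2}\chi(1+o(1))+O(\lambda^3/\sqrt s))$ and then optimises at $\lambda=t/\chi$. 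The same cumulant input drives the partition‐function ratios in the diagonal term $T_1$ of the second‐moment computation for part~2, which you do not spell out; the paper writes $Z(\bmu_S(kA))/Z(\mathbf 0)=\exp(\tfrac{k^2A^2}{2}s\chi(1+o(1)))$ and handles $T_1$ by a tilted Chernoff bound exactly parallel to the Curie--Weiss argument.

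A minor point: requiring the scanning family itself to be pairwise $(\log\log n)^2$‐separated makes it disjoint, so it cannot also be a $(1-o(1))$‐overlap net as you claim. The paper uses two different families: a fine overlapping grid with spacing $\eta_n s^{1/d}$, $\eta_n\to0$, for the scan, and a coarse disjoint family (centres of $3\times$ larger cubes, hence separation $\asymp s^{1/d}$) for the prior.
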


A few remarks are in order regarding the results in Theorem \ref{thm:lattice_known_beta}. First, we have not tried optimize the requirement $s\gg (\log{n})^d$ in our upper bound above. Indeed, as noted in \cite{deb2020detecting} one needs $s\gg \log{n}$ for any successful detection and hence our results on sharp constants matches the requirement on $s$ up to log-factors. Moreover, our next verifies that the susceptibility is indeed an increasing function of $0\leq \beta<\beta_c(d)$.

\begin{prop}\label{prop:monotonicity_susceptibility}
$\chi(\beta)$ is differentiable and strictly monotonically increasing for $\beta \in (0,\beta_c(d))$.
\end{prop}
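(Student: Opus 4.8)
The plan is to identify $\chi(\beta)=\chi^{\mathsf{f}}(\beta)$ with the infinite-volume magnetic susceptibility of the Ising model and to deduce both assertions from the Griffiths and Lebowitz correlation inequalities. Throughout write $\langle\cdot\rangle_\beta$ for expectation under the unique translation-invariant infinite-volume measure $\lim_n\P^{\mathsf{f}}_{\beta,\bQ(\Lambda_{n,d}),\mathbf{0}}$, which exists for $0\le\beta<\beta_c(d)$; note $\langle X_i\rangle_\beta=0$ by spin-flip symmetry. From the averaging argument underlying Theorem~\ref{thm:variance_limit_lattice} --- $\mathrm{Var}^{\mathsf{f}}(Z_S)=\frac1s\sum_{i,i'\in S}\langle X_iX_{i'}\rangle_\beta$ is a Fej\'er average of the two-point function, and $\sum_{j\in\mathbb{Z}^d}\langle X_0X_j\rangle_\beta<\infty$ for $\beta<\beta_c(d)$ by the equivalence of the notions of critical temperature recalled in Section~\ref{sec:lattice} --- one obtains $\chi(\beta)=\sum_{j\in\mathbb{Z}^d}\langle X_0X_j\rangle_\beta$, a series of terms that are nonnegative (Griffiths' first inequality) and nondecreasing in $\beta$ (Griffiths' second inequality, passed to the limit); hence $\chi$ is finite and nondecreasing, so locally bounded, on $(0,\beta_c(d))$. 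Differentiating the finite-volume Gibbs weight and passing to the limit (again using $\chi(\beta)<\infty$) gives, for every $j$,
\[
\partial_\beta\langle X_0X_j\rangle_\beta=\sum_{\{k,l\}:\,\|k-l\|_1=1}\big(\langle X_0X_jX_kX_l\rangle_\beta-\langle X_0X_j\rangle_\beta\langle X_kX_l\rangle_\beta\big).
\]

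For differentiability of $\chi$ I would differentiate $\sum_j\langle X_0X_j\rangle_\beta$ term by term, justified by uniform convergence of the differentiated series on compact subsets of $(0,\beta_c(d))$. By Griffiths' second inequality every summand of $\partial_\beta\langle X_0X_j\rangle_\beta$ is nonnegative, so the differentiated double series has nonnegative terms and it is enough to bound it. For the terms in which $\{0,j\}$ and $\{k,l\}$ are disjoint, the Lebowitz inequality for the four-point function together with Griffiths' first inequality gives
\[
0\ \le\ \langle X_0X_jX_kX_l\rangle_\beta-\langle X_0X_j\rangle_\beta\langle X_kX_l\rangle_\beta\ \le\ \langle X_0X_k\rangle_\beta\langle X_jX_l\rangle_\beta+\langle X_0X_l\rangle_\beta\langle X_jX_k\rangle_\beta ,
\]
and summing over $j$ and over nearest-neighbor edges $\{k,l\}$, using translation invariance ($\sum_j\langle X_jX_l\rangle_\beta=\chi(\beta)$), collapses this contribution to $O_d(\chi(\beta)^2)$, while the $O_d(1)$ degenerate edges per $j$ (those sharing a vertex with $\{0,j\}$, where some index collapses via $X_i^2=1$) are bounded crudely and sum to at most $O_d(\chi(\beta))$. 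Since $\chi$ is locally bounded, these estimates are uniform on compacts, so term-by-term differentiation is legitimate and $\chi\in C^1(0,\beta_c(d))$ with $\chi'(\beta)=\sum_j\partial_\beta\langle X_0X_j\rangle_\beta\ge 0$.

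To upgrade to strict monotonicity I would keep, in $\chi'(\beta)=\sum_j\partial_\beta\langle X_0X_j\rangle_\beta$, only the term $j=e_1$ (with $e_1$ a neighbour of the origin) and, inside it, only the edge $\{k,l\}=\{0,e_1\}$; everything discarded is nonnegative by Griffiths, and the retained quantity equals $\langle(X_0X_{e_1})^2\rangle_\beta-\langle X_0X_{e_1}\rangle_\beta^2=1-\langle X_0X_{e_1}\rangle_\beta^2$. Because the Ising specification assigns strictly positive conditional probability to each value of a spin given its neighbours, $\P_\beta(X_0\ne X_{e_1})>0$ for every finite $\beta$, so $\langle X_0X_{e_1}\rangle_\beta<1$; therefore $\chi'(\beta)\ge 1-\langle X_0X_{e_1}\rangle_\beta^2>0$ on all of $(0,\beta_c(d))$, which is the claim.

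The main obstacle is the differentiability step, i.e.\ legitimately exchanging $\partial_\beta$ with the infinite sum over $\mathbb{Z}^d$: this is exactly where Griffiths' inequalities (to make the differentiated series have nonnegative terms) and the Lebowitz inequality together with the a priori finiteness of $\chi$ below $\beta_c(d)$ (to produce the uniform-on-compacts bound) are indispensable. The bookkeeping for the finitely many degenerate edge terms per site, and the passage from finite to infinite volume in the derivative formula for $\langle X_0X_j\rangle_\beta$, are routine but must be carried out.
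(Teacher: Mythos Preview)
Your argument is correct and, for the strict-monotonicity part, considerably more elementary than the paper's. Both routes compute $\chi'(\beta)$ as the sum over $j$ and nearest-neighbor edges $\{k,l\}$ of the Ursell four-point function $\langle X_0X_jX_kX_l\rangle_\beta-\langle X_0X_j\rangle_\beta\langle X_kX_l\rangle_\beta$, and both rely on Lebowitz's bound on this quantity (together with $\chi(\beta)<\infty$ for $\beta<\beta_c$) to justify the exchange of limit and differentiation; the paper cites \cite{lebowitz1972bounds} for exactly this step, so your ``routine'' passage to infinite volume is the same as theirs. Where the proofs diverge is the strict-positivity argument: the paper rewrites each Ursell term via the double random current identity $\langle X_0X_jX_kX_l\rangle_\beta\,\mathbf P^{\emptyset\otimes\{0,j\}\Delta\{k,l\}}_{\beta,d}(k\not\leftrightarrow l)$ and then runs a finite-energy surgery on a plaquette configuration to exhibit one strictly positive summand. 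You instead drop all but the single term $j=e_1$, $\{k,l\}=\{0,e_1\}$ (legitimate since every discarded term is $\ge 0$ by Griffiths~II), reducing the question to $1-\langle X_0X_{e_1}\rangle_\beta^2>0$, which is immediate from the strict positivity of the Ising conditional probabilities at finite $\beta$. Your route avoids the random-current machinery entirely and yields the explicit lower bound $\chi'(\beta)\ge 1-\langle X_0X_{e_1}\rangle_\beta^2$; the paper's route is more structural and connects $\chi'$ to a probabilistic object of independent interest, but is overkill for the proposition as stated.
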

It is worth noting that this monotonicity without the strictness can be seen as a consequence of the Edwards-Sokal coupling and the monotonicity in $p$ of the FK-Ising model (\cite{grimmett2006random}). Finally, we demonstrate that the results above can be obained without the knowledge of $\beta\neq \beta_c(d)$.

\begin{theorem}\label{thm:lattice_unknown_beta}
Theorem \ref{thm:lattice_known_beta} continues to hold for unknown $\beta\neq \beta_c(d)$ and $\rm bc=\mathsf{f}$.
	
\end{theorem}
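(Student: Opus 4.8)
The plan is to follow the adaptation scheme behind Theorem~\ref{thm:cw_unknown_beta}. First I would construct a data-driven test that consistently decides whether $\beta<\beta_c(d)$ or $\beta>\beta_c(d)$, uniformly over $\bmu\in\{\bzero\}\cup\Xi(\C_{s,\mathrm{rect}},A)$; then, on each regime, I would run the scan test from Theorem~\ref{thm:lattice_known_beta} with the unknown population quantity $\chi^{\mathrm{bc}}(\beta)$ (and, at low temperature, the spontaneous magnetization $m_+(\beta)$) replaced by a consistent estimate built from boxes far from any potential signal. The feature that makes this work for free boundary conditions \emph{without} any constraint on $\|\bmu\|_\infty$---in contrast to the mean-field case, where the pseudo-likelihood estimator of $\beta$ forces $\|\bmu\|_\infty=O(1)$---is that with free boundary and $\bmu=\bzero$ one has $\E^{\mathsf f}_{\beta,\bzero}(X_i)=0$ by spin-flip symmetry, so the scan statistic $Z_S=s^{-1/2}\sum_{i\in S}X_i$ is already $\beta$-free; only its critical value and, at low temperature, a recentering depend on $\beta$, and both can be read off from bounded spin averages over regions disjoint from the support, which a signal of volume $s\ll n$ can corrupt in only $O(1)$ places.

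For the regime test I would tile the bulk $\{i\in\Lambda_{n,d}:\mathrm{dist}(i,\partial\Lambda_{n,d})\gg\log^2 n\}$ by $K_n\to\infty$ disjoint boxes $B_1,\dots,B_{K_n}$ of common volume $v_n$ with $s\ll v_n\ll n$, and declare $\beta<\beta_c(d)$ when $\mathrm{median}_k|\bar X_{B_k}|$ falls below a slowly vanishing threshold. For $\beta<\beta_c(d)$, exponential decay of truncated correlations in the pure free phase gives $\mathrm{Var}^{\mathsf f}_{\beta,\bzero}(\bar X_{B_k})=O(1/v_n)$, so $\max_k|\bar X_{B_k}|=o_P(1)$, and the same decay shows a single size-$s$ signal rectangle perturbs all but $O(1)$ of the $\bar X_{B_k}$ by $o(1/n)$ (only the $O(1)$ boxes within $O(\log n)$ of it are affected, using $s\gg(\log n)^d$), so the median stays $o_P(1)$ under both hypotheses. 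For $\beta>\beta_c(d)$ the free-boundary system is in a single global phase---a bounded field on a set of volume $o(n)$ cannot destroy long-range order---so at least $K_n-O(1)$ of the $\bar X_{B_k}$ concentrate around a common $\sigma m_+(\beta)$, $\sigma\in\{\pm1\}$, and $\mathrm{median}_k|\bar X_{B_k}|\to m_+(\beta)>0$; crucially this requires no knowledge of the numerical value of $\beta_c(d)$.

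On the event $\{\beta<\beta_c(d)\}$, I would estimate $\chi^{\mathsf f}(\beta)$ by a trimmed mean $\widehat\chi$ of $\{Z_{B'_k}^2\}$ over $\Theta(n/s)$ disjoint size-$s$ bulk boxes $B'_k$, $Z_{B'_k}=s^{-1/2}\sum_{i\in B'_k}X_i$: by Theorem~\ref{thm:variance_limit_lattice}, $\E^{\mathsf f}_{\beta,\bzero}(Z_{B'_k}^2)\to\chi^{\mathsf f}(\beta)$, decay of correlations gives concentration of the trimmed mean, and only $O(1)$ of the $B'_k$ meet the signal, so $\widehat\chi\xrightarrow{P}\chi^{\mathsf f}(\beta)$ uniformly over the signal class. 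Rejecting $H_0$ when $\max_{S\in\tilde{\C}_{s,\mathrm{rect}}}Z_S>\sqrt{2\widehat\chi\,(1+\delta_n)\log|\tilde{\C}_{s,\mathrm{rect}}|}$ for a suitable $\delta_n\downarrow 0$, the Type~I and Type~II analyses in the proof of Theorem~\ref{thm:lattice_known_beta} go through verbatim with $\chi^{\mathsf f}(\beta)$ replaced by $(1+o_P(1))\chi^{\mathsf f}(\beta)$, so the sharp constant $\sqrt{2\chi^{\mathsf f}(\beta)}$ is preserved. On the event $\{\beta>\beta_c(d)\}$ I would additionally set $\widehat\sigma=\mathrm{sign}(\mathrm{median}_k\bar X_{B_k})$ and $\widehat m=|\mathrm{median}_k\bar X_{B_k}|$ (so $\widehat m\xrightarrow{P}m_+(\beta)$ and, since $v_n\gg s$, $(\widehat m-m_+(\beta))\sqrt s=o_P(1)$), scan $\max_S\{Z_S-\widehat\sigma\widehat m\sqrt s\}$, and calibrate with a trimmed mean of $(Z_{B'_k}-\widehat\sigma\widehat m\sqrt s)^2$, which converges to $\chi^{+}(\beta)=\chi^{-}(\beta)$; conditionally on the realized phase the free-boundary model at $\bmu=\bzero$ agrees with $\P^{\pm}_{\beta,\bzero}$ up to $o(1)$ on any fixed bulk region, and by spin-flip symmetry the analysis under $\P^{-}_{\beta,\bmu}$ reduces to the plus-phase analysis under the nonpositive signal $-\bmu$, giving the same constant $\sqrt{2\chi^{+}(\beta)}$ as in Theorem~\ref{thm:lattice_known_beta} with $\mathrm{bc}=+$. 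Since the regime test is consistent uniformly over $\{\bzero\}\cup\Xi(\C_{s,\mathrm{rect}},A)$, the risk of the test that follows the high-temperature decision when $\beta<\beta_c(d)$ is flagged and the low-temperature decision otherwise is within $o(1)$ of the risk of the branch matching the true $\beta$, which yields the theorem.

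I expect the main obstacle to be the low-temperature branch: making rigorous, uniformly over the signal class and with $\|\bmu\|_\infty$ unconstrained, both the decomposition of the free low-temperature model into its two pure phases after empirical phase selection and the transfer of the plus-boundary moderate-deviation and mean-shift estimates of Theorem~\ref{thm:lattice_known_beta} through the spin flip. The delicate points there are controlling the $\sqrt s$-scaled centering bias $(\widehat m-m_+(\beta))\sqrt s$ and verifying that an arbitrarily strong but geometrically localized signal still corrupts only $O(1)$ of the calibration blocks; in the high-temperature branch these difficulties are absent, which is why that branch---and hence the theorem restricted to $\beta<\beta_c(d)$---is comparatively routine once Theorems~\ref{thm:variance_limit_lattice} and~\ref{thm:lattice_known_beta} are in hand.
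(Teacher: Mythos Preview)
Your proposal and the paper's proof take genuinely different routes. The paper's argument is essentially two lines: apply Lemma~\ref{lmm:beta_estimation} to obtain the maximum pseudo-likelihood estimator $\hat\beta_n$ computed under the working model $\bmu=\bzero$, which is consistent for the true $\beta$ uniformly over \emph{arbitrary} $\bmu\in\mathbb R_+^n$ with $|\mathrm{supp}(\bmu)|=o(n)$; then plug $\hat\beta_n$ into the scan cutoff $\sqrt{2(1+\delta)\chi(\hat\beta_n)\log|\tilde\C_{s,\mathrm{rect}}|}$ and invoke continuity of $\beta\mapsto\chi(\beta)$ away from criticality. There is no regime test, no block-based estimation of $\chi$, and no phase selection.

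Two comparisons are worth making. First, the premise behind your construction---that ``the pseudo-likelihood estimator of $\beta$ forces $\|\bmu\|_\infty=O(1)$''---is a misreading of the mean-field argument: in the proof of Theorem~\ref{thm:cw_unknown_beta} the boundedness constraint enters only through the crude change-of-measure bound used to analyse the \emph{regime test} $T_n^{(1)}$ under the alternative, not through the pseudo-likelihood step, and Lemma~\ref{lmm:beta_estimation} itself carries no such restriction (its proof only uses $|S|=o(n)$). So the paper's lattice argument already dispenses with any $\|\bmu\|_\infty$ condition, and does so far more cheaply than your scheme. Second, your direct trimmed-mean estimator of $\chi$ from disjoint bulk boxes is a legitimate alternative for $\beta<\beta_c(d)$, with the mild conceptual advantage that it does not require the function $\beta\mapsto\chi(\beta)$ as input---only Theorem~\ref{thm:variance_limit_lattice} and exponential decay of correlations.

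Finally, the entire low-temperature branch of your plan oversteps the statement. Theorem~\ref{thm:lattice_known_beta} with $\mathrm{bc}=\mathsf f$ is asserted only for $\beta<\beta_c(d)$ (the case $\beta>\beta_c(d)$ is stated under $\mathrm{bc}=+$), so the adaptive claim under $\mathrm{bc}=\mathsf f$ does not require you to handle the free-boundary supercritical phase. The phase-selection, spin-flip reduction to $\P^{+}$, and $(\widehat m-m_+)\sqrt s$ bias control you identify as the ``main obstacle'' therefore lie outside the scope of the theorem---and, as you correctly anticipate, making them rigorous uniformly over unbounded localized fields would need ingredients well beyond what the paper develops.
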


Although this completes the picture for non-critical temperature in nearest neighbor Ising models over lattices, the behavior of the testing problem at $\beta=\beta_c(d)$ remains open. At this moment we believe that the blessing of getting a better rate and/or constant at criticality continues to hold for lattices as well.  

%\todo[inline]{For $\beta$ adaptive tests, should we only consider plus boundary conditions -- then again we might need to prove the high temperature results for plus boundary... I think we can just state the high temp results hold for any bc (right after the statement of the theorem) and then just state the above result for bc. We need to write a few lines after statement of this theorem.}

\section{Discussions}\label{sec:discussions}
In this paper we have considered sharp constants of detecting structured signals under Ising dependence. Although we have derived sharp phase transitions in some popular classes of of both mean-field type Ising models (at all temperatures) and nearest-neighbor Ising models on lattices (at all non-critical temperatures) several related directions remain open. As an immediate interesting question pertains to the Ising model on lattices and figuring out the exact detection thresholds at the critical temperature to complete the narrative of precise benefit of critical dependence in this model. As was discussed in \cite{mukherjee2019testing} this might require new ideas. Moreover, even for non-critical temperatures it remains to explore the multi-scale procedures for adaptive testing of thick clusters for Ising models over lattices (see e.g. \cite{arias2005near,walther2010optimal,konig2020multidimensional} and references therein). Moreover distributional approximation for the test statistics used here is also a crucial direction for the sake of improved practical applicability of our result. We keep these questions as future research directions.

\section{Acknowledgements}
GR is supported by NSERC 50311-57400.

\bibliographystyle{plainnat}
\bibliography{biblio_structured}

\section{Proofs of Main Results}\label{sec:proof_of_main_results} We divide the proofs our main results according to Sections \ref{sec:mean_field} and \ref{sec:lattice}.

\subsection{\bf Proof of Results in Section \ref{sec:curie_weiss}}
\subsubsection{\bf Proof of Theorem \ref{theorem:cw_known_beta}}

\paragraph{\bf Proof of Theorem \ref{theorem:cw_known_beta}\ref{thm:cw_known_beta_ub}} We divide our proof according to the various parts of the theorem.

\paragraph{\bf Proof of Theorem \ref{theorem:cw_known_beta}\ref{thm:cw_known_beta_ub}\ref{thm:cw_known_beta_ub_high_temp} }
Recall from the discussion following the statement of Theorem \ref{theorem:cw_known_beta} that the claimed optimal test is given the by the scan test which can be described as follows. For $S\in \mathcal{N}(\C_s,\gamma,\varepsilon_n)$ we first define $Z_S=\sum_{i\in S}X_i/\sqrt{s}$ and our scan test rejects for large values of $Z_{\max}:=\max\limits_{S\in \mathcal{N}(\C_s,\gamma,\varepsilon_n)} Z_S$. The cut-off for the test is decided by the moderate deviation behavior of $Z_S$'s given in Lemma \ref{lemma:cw_moderate_deviation} -- which implies that for any $\delta>0$, the test given by $T_n(\delta)=\mathbf{1}\left\{Z_{\max}>\sqrt{2(1+\delta)\log{|\mathcal{N}(\C_s,\gamma,\varepsilon_n)|}}\right\}$ has Type I error converging to $0$.

Turning to the Type II error, consider any $\P_{\beta,\bQ,\bmu}\in \Xi(\C_s,A)$ and note that by monotonicity arguments (i.e. stochastic increasing nature of the distribution of $Z_{\max}$ as a function of coordinates of $\bmu$) it is enough to restrict to the case where $A=\Theta(\sqrt{\log{|\mathcal{N}(\C_s,\gamma,\varepsilon_n)|}})$. Thereafter, note that by GHS inequality(cf. \ref{lemma:GHS}) one has $\mathrm{Var}_{\beta,\bQ,\bmu}(\sum_{i\in \tilde{S}^{\star}} X_i)\leq \mathrm{Var}_{\beta,\bQ,\bzero}(\sum_{i\in \tilde{S}^{\star}} X_i)=O(s)$ for $\beta<1$ since $s\ll \frac{n}{\log{n}}$ by appealing to \cite[Lemma 9(a)]{deb2020detecting}. As a result, $\frac{\sum_{\i\in \tilde{S^{\star}}}(X_i-\E_{\beta,\bQ,\bmu}(X_i))}{\sqrt{s}}=O_{\P_{\beta,\bQ,\bmu}}(1)$. Therefore, as usual it is enough to show that there exists $\delta>0$ such that $t_n(\delta)-\frac{1}{\sqrt{s}}\E_{\beta,\bQ,\bmu}\left(\sum_{i\in \tilde{S}^{\star}}X_i\right)\rightarrow -\infty$. To show this end, first let $S^{\star}\in \C_s$ be such that the signal lies on $S^\star$ i.e. for all $i\in S^{\star}$ one has $\bmu_i\geq A$. Note that by monotonicity arguments it is enough to consider $\bmu_i=A$. By definition of covering, we can find a $\tilde{S}^{\star}\in \mathcal{N}(\C_s,\gamma,\varepsilon_n)$ such that $\gamma\left(\tilde{S}^{\star},S^{\star}\right)\leq \varepsilon_n$ i.e. $|\tilde{S}^{\star}\cap S^{\star}|\geq s(1-\varepsilon_n/\sqrt{2})$. 

%Thereafter, 
%\textcolor{blue}{Consequently, it is enough to verify that $A\sum_{i\in \tilde{S}^{\star}\cap S^*}\sum_{j\in S^*}\mathrm{Cov}_{\beta,\bQ,\tilde{\boldsymbol{\eta}}_S^{\star}(A)}(X_i,X_j)/s=o(1)$.}\textcolor{magenta}{ This can be done along the lines of argument of \cite[Theorems 5,6,7 and Lemma 9]{deb2020detecting}. At the same time, the proof of $\tilde{R}_n=o(1)$ can be done along the lines of argument of \cite[Theorems 5,6,7 and Lemma 9]{deb2020detecting}.} 

Thereafter note that by Lemma 
\ref{lemma:expectation_underalt_lower_bound} we have that
\begin{align*}
        \E_{\beta,\bQ,\bmu}(\sum_{i\in \tilde{S}^{\star}}X_i)\geq A|\tilde{S}^{\star}\cap S^{\star}|-A^2\sum_{i\in \tilde{S}^{\star}\cap S^{\star}}\sum_{j\in S}\mathrm{Cov}_{\beta,\bQ,\tilde{\mathbf{\eta}}_{S^{\star}}(A)}(X_i,X_j),
    \end{align*}
However, by \cite[Lemma 9 (a)]{deb2020detecting} we have that
\begin{align*}
    A^2\sum_{i\in \tilde{S}^{\star}\cap S^{\star}}\sum_{j\in S}\mathrm{Cov}_{\beta,\bQ,\tilde{\mathbf{\eta}}_{S^{\star}}(A)}(X_i,X_j)&\lesssim \frac{\log{|\mathcal{N}(\C_s,\gamma,\varepsilon_n)|}}{s}\left(\frac{|\tilde{S}^{\star}\cap S^{\star}|| S^{\star}|}{n}+|\tilde{S}^{\star}\cap S^{\star}|\right)\\
    &\leq \frac{\log{|\mathcal{N}(\C_s,\gamma,\varepsilon_n)|s}}{{n}}+\log{|\mathcal{N}(\C_s,\gamma,\varepsilon_n)|}\ll As.
\end{align*}
 Consequently, we immediately have that there exists $\epsilon>0$ such that
\begin{align*}
     \E_{\beta,\bQ,\bmu}\left(\sum_{i\in \tilde{S}^{\star}}X_i\right)&\geq As(1+o(1))\geq \sqrt{2(1+\epsilon)s\log{|\mathcal{N}(\C_s,\gamma,\varepsilon_n)|}}
\end{align*}
Therefore, we can conclude that for any $\delta<\epsilon$ we have  $t_n(\delta)-\frac{1}{\sqrt{s}}\E_{\beta,\bQ,\bmu}\left(\sum_{i\in \tilde{S}^{\star}}X_i\right)\rightarrow -\infty$. This completes the proof of the upper bound for $0<\beta<1$.

\paragraph{\bf Proof of Theorem \ref{theorem:cw_known_beta}\ref{thm:cw_known_beta_ub}\ref{thm:cw_known_beta_ub_critical_temp} }
An optimal test is given the by the scan test which can be described similar to the case $\beta<1$. For $S\in \mathcal{N}(\C_s,\gamma,\varepsilon_n)$, define $Z_S=\sum_{i\in S}X_i/\sqrt{s}$ and reject for large values of $Z_{\max}:=\max\limits_{S\in  \mathcal{N}(\C_s,\gamma,\varepsilon_n)} Z_S$. Lemma \ref{lemma:cw_moderate_deviation} implies that for any $\delta>0$, the test given by $T_n(\delta)=\mathbf{1}\left\{Z_{\max}>\sqrt{2(1+\delta)\log{|\mathcal{N}(\C_s,\gamma,\varepsilon_n)|}}\right\}$ has Type I error converging to $0$ whenever $s\ll \sqrt{n}/\log{n}$.

Turning to the Type II error, consider any $\P_{\beta,\bQ,\bmu}\in \Xi(\C_s,A)$ and note that by monotonicity arguments it is enough to restrict to the case where $A=\Theta(\sqrt{\log{|\mathcal{N}(\C_s,\gamma,\varepsilon_n)|}})$. Thereafter, note that by GHS inequality(cf. \ref{lemma:GHS}) one has $\mathrm{Var}_{\beta,\bQ,\bmu}(\sum_{i\in \tilde{S}^{\star}} X_i)\leq \mathrm{Var}_{\beta,\bQ,\bzero}(\sum_{i\in \tilde{S}^{\star}} X_i)=O(s)$ even for $\beta=1$ since $s\ll \frac{\sqrt{n}}{\log{n}}$ by appealing to \cite[Lemma 9(c)]{deb2020detecting}. As a result, $\frac{\sum_{\i\in \tilde{S^{\star}}}(X_i-\E_{\beta,\bQ,\bmu}(X_i))}{\sqrt{s}}=O_{\P_{\beta,\bQ,\bmu}}(1)$. Therefore, as usual it is enough to show that there exists $\delta>0$ such that $t_n(\delta)-\frac{1}{\sqrt{s}}\E_{\beta,\bQ,\bmu}\left(\sum_{i\in \tilde{S}^{\star}}X_i\right)\rightarrow -\infty$. To show this end, first let $S^{\star}\in \C_s$ be such that the signal lies on $S^\star$ i.e. for all $i\in S^{\star}$ one has $\bmu_i\geq A$. \textcolor{black}{Note that by monotonicity arguments it is enough to consider $\bmu_i=A$.} By definition of covering, we can find a $\tilde{S}^{\star}\in \mathcal{N}(\C_s,\gamma,\varepsilon_n)$ such that $\gamma\left(\tilde{S}^{\star},S^{\star}\right)\leq \varepsilon_n$ i.e. $|\tilde{S}^{\star}\cap S^{\star}|\geq s(1-\varepsilon_n/\sqrt{2})$. 

%Thereafter, 
%\textcolor{blue}{Consequently, it is enough to verify that $A\sum_{i\in \tilde{S}^{\star}\cap S^*}\sum_{j\in S^*}\mathrm{Cov}_{\beta,\bQ,\tilde{\boldsymbol{\eta}}_S^{\star}(A)}(X_i,X_j)/s=o(1)$.}\textcolor{magenta}{ This can be done along the lines of argument of \cite[Theorems 5,6,7 and Lemma 9]{deb2020detecting}. At the same time, the proof of $\tilde{R}_n=o(1)$ can be done along the lines of argument of \cite[Theorems 5,6,7 and Lemma 9]{deb2020detecting}.} 

Thereafter note that by Lemma 
\ref{lemma:expectation_underalt_lower_bound} we have that
\begin{align*}
        \E_{\beta,\bQ,\bmu}(\sum_{i\in \tilde{S}^{\star}}X_i)\geq A|\tilde{S}^{\star}\cap S^{\star}|-A^2\sum_{i\in \tilde{S}^{\star}\cap S^{\star}}\sum_{j\in S}\mathrm{Cov}_{\beta,\bQ,\tilde{\mathbf{\eta}}_{S^{\star}}(A)}(X_i,X_j),
    \end{align*}
However, by \cite[Lemma 9 (c)]{deb2020detecting} we have that
\begin{align*}
    A^2\sum_{i\in \tilde{S}^{\star}\cap S^{\star}}\sum_{j\in S}\mathrm{Cov}_{\beta,\bQ,\tilde{\mathbf{\eta}}_{S^{\star}}(A)}(X_i,X_j)&\lesssim \frac{\log{|\mathcal{N}(\C_s,\gamma,\varepsilon_n)|}}{s}\left(\frac{|\tilde{S}^{\star}\cap S^{\star}|| S^{\star}|}{\sqrt{n}}+|\tilde{S}^{\star}\cap S^{\star}|\right)\\
    &\leq \frac{\log{|\mathcal{N}(\C_s,\gamma,\varepsilon_n)|s}}{\sqrt{n}}+\log{|\mathcal{N}(\C_s,\gamma,\varepsilon_n)|}\ll As.
\end{align*}
 Consequently, we immediately have that there exists $\epsilon>0$ such that
\begin{align*}
     \E_{\beta,\bQ,\bmu}\left(\sum_{i\in \tilde{S}^{\star}}X_i\right)&\geq As(1+o(1))\geq \sqrt{2(1+\epsilon)s\log{|\mathcal{N}(\C_s,\gamma,\varepsilon_n)|}}
\end{align*}
Therefore, we can conclude that for any $\delta<\epsilon$ we have  $t_n(\delta)-\frac{1}{\sqrt{s}}\E_{\beta,\bQ,\bmu}\left(\sum_{i\in \tilde{S}^{\star}}X_i\right)\rightarrow -\infty$. This completes the proof of the upper bound for $\beta=1$.

\paragraph{\bf Proof of Theorem \ref{theorem:cw_known_beta}\ref{thm:cw_known_beta_ub}\ref{thm:cw_known_beta_ub_low_temp} }
We use a randomized scan test here described as follows: Given data $X_i, i\in [n]$, generate a random variable $W_n \sim N(\bar{\bX},1/(n\beta))$. If $W_n >0$, we reject the null hypotheses if $Z_{\max}:=\max\limits_{S\in \mathcal{N}(\C_s,\gamma,\varepsilon_n)} Z_S>m\sqrt{s}+t_n(\delta)$. If $W_n \leq 0$ we reject if $Z_{\max}>-m\sqrt{s}+t_n(\delta)$, where $m:=m(\beta)$ is the unique positive root of $m=\tanh(\beta m)$ and $t_n(\delta)=\sqrt{2(1+\delta)(1-m^2)\log{|\mathcal{N}(\C_s,\gamma,\varepsilon_n)|}}$. It turns out that our analysis of this test works whenever $\sum_{i=1}^n \mu_i\ll n$. This is however not an issue since  the test based on conditionally centered $\sum_{i=1}^n X_i$ works as soon as $\sum_{i=1}^n \mu_i\gg \sqrt{n}$ \citep{mukherjee2016global} -- and hence simple Bonferroni correction (i.e. the test rejects when either the randomized test described above or the the conditionally centered sum based test of \cite{mukherjee2016global}) yields our desired result. Hence we focus here on the case when $\sqrt{n}\ll \sum_{i=1}^n \mu_i\ll n^{3/4}$.

By the moderate deviation behavior of $Z_S$'s given in Lemma \ref{lemma:cw_moderate_deviation}, the Type-I error converges to $0$. \par

Turning to the Type II error, consider any $\P_{\beta,\bQ,\bmu}\in \Xi(\C_s,A)$. Now, let $S^*$ be the rectangle with non-zero signal with the true signal set under the alternative.
We choose $\delta$ with $\tanh(A)=\sqrt{2(1+\delta)^2(1-m^2)^{-1}\log{|\mathcal{N}(\C_s,\gamma,\varepsilon_n)|}/s}$. By definition of covering, we can find a $\tilde{S}^{\star}\in \mathcal{N}(\C_s,\gamma,\varepsilon_n)$ such that $\gamma\left(\tilde{S}^{\star},S^{\star}\right)\leq \varepsilon_n$ i.e. $|\tilde{S}^{\star}\cap S^{\star}|\geq s(1-\varepsilon_n/\sqrt{2})$. We want to control $\P_{\beta,\bQ,\bmu}(A_{n,\tilde{S}^{\star}} \cup B_{n,\tilde{S}^{\star}})$ where
\begin{align}\label{eq:rejlt}
A_{n,S}&=\{Z_S>m \sqrt{s}+t_n(\delta),W_n>0\},\nonumber\\
B_{n,S}&=\{Z_S>-m \sqrt{s}+t_n(\delta),W_n \leq 0\}.
\end{align}
We have to show
\begin{equation}
\P_{\beta,\bQ,\bmu}(A_{n,\tilde{S}^{\star}}\cup B_{n,\tilde{S}^{\star}})\rightarrow 1.
\end{equation}
We plan to show $\P_{\beta,\bQ,\bmu}(A_{n,\tilde{S}^{\star}}) -\P_{\beta,\bQ,\bmu}(W_n>0)\rightarrow 0$ and $\P_{\beta,\bQ,\bmu}(B_{n,\tilde{S}^{\star}})-\P_{\beta,\bQ,\bmu}(W_n\leq0)\rightarrow 0$. We prove of the first limit and the proof of the second one is similar.
\begin{align*} 
&\P_{\beta,\bQ,\bmu}(A_{n,\tilde{S}^{\star}})=\E_{\beta,\bQ,\bmu}\left(\mathbf{1}(W_n>0)\P_{\beta,\bQ,\bmu}\left[\bar{X}_S>m(\beta)+t_n(\delta)/\sqrt{s}|W_n\right]\right)\\
&\le \E_{\beta,\bQ,\bmu}\left(\mathbf{1}(W_n>0)\underbrace{\P_{\beta,\bQ,\bmu}\left[\frac{\sum_{i\in S}X_i-\E_{\beta,\bQ,\bmu}(\sum\limits_{i\in S}X_i|W_n)}{\sqrt{s}}>\sqrt{s}(m-\tanh(\beta W_n+A))+t_n(\delta)|W_n\right]}_{G_n(W_n)}\right)
\end{align*}
We claim that $G_n(W_n)\mathbf{1}(W_n>0)-\mathbf{1}(W_n>0)\stackrel{\P}{\rightarrow} 0$. This will imply $\P_{\beta,\bQ,\bmu}(A_n) -\P_{\beta,\bQ,\bmu}(W_n>0)\rightarrow 0$ by DCT. To show the in probability convergence of $G_n(W_n)\mathbf{1}(W_n>0)$ we note that for any $\epsilon>0$
\begin{align*}
\P_{\beta,\bQ,\bmu}(\mathbf{1}(W_n>0)(1-G_n(W_n))>\epsilon)&\leq \P_{\beta,\bQ,\bmu}(G_n(W_n)<1-\epsilon,W_n>0).
\end{align*}
We therefore need to understand $G_n(W_n)$ on the event $W_n>0$. %{\color{magenta}Following the fluctuation of $W_n$ as given by Lemma \ref{lemma:cw_aux_var_alt_high_temp}}, we consider the good set $\mathcal{A}:= \{ W_n-m < n^{-1/2} \log{|\mathcal{N}(\C_s,\gamma,\varepsilon_n)|}\}$ which satisfies $\P(\mathcal{A}^c,W_n>0) \rightarrow 0$.
{\color{black}By Lemma \ref{lemma:cw_aux_var_alt_high_temp}, $\P_{\beta,\bQ,\bmu}(|W_n-m|>\zeta_n,W_n>0)\rightarrow 0$  for some slowly decreasing $\zeta_n$.} More specifically, this follows from  \ref{lemma:cw_aux_var_alt_high_temp} since this lemma not only implies concentration of $W_n$ around $m_n$ in $\sqrt{n}$-scale and but also that $m_n-m=\Theta(\frac{1}{n}\sum_{i=1}^n\mu_i)=o(n^{-1/4})$ by the property of $\bmu$. Next fix $\eta>0$ such that if $W_n>0,|W_n-m|\leq \zeta_n$ for some decreasing  $\frac{1}{\sqrt{n}}\ll \zeta_n\rightarrow \ll \frac{1}{n^{1/4}}$, $\sech^2(\beta W_n+A)\leq (1+\eta)^2\sech^2(\beta m)$ and  $\sqrt{s}(m-\tanh(\beta W_n+A))\leq -(1-\eta)\sech^2(\beta m)A\sqrt{s}$. Hence, on the event $W_n>0,|W_n-m|\leq \zeta_n$ we have 
\begin{align*}
\ & \sqrt{\frac{s}{\sech^2(\beta W_n+A)}}(m-\tanh(\beta W_n+A))+\frac{t_n(\delta)}{\sech^2(\beta W_n+A)}\\
& \leq -\sqrt{2\log{|\mathcal{N}(\C_s,\gamma,\varepsilon_n)|}}\left[\frac{(1-\eta)(1+\delta)}{1+\eta}-1\right]
\end{align*}
Choosing $\eta$ small enough we have the last display $\Theta(-\sqrt{\log|\mathcal{N}(\C_s,\gamma,\varepsilon_n)|})$ and the result follows by Chebyshev's Inequality. The same proof goes through for $W_n\leq 0$ by appealing to Lemma \ref{lemma:cw_aux_var_alt_high_temp}.

%\todo[inline]{Ramu check correctness and Wn er bnadik}

\paragraph{\bf Proof of Theorem \ref{theorem:cw_known_beta}\ref{thm:cw_known_beta_lb}} We divide our proof according to the various parts of the theorem.

\paragraph{\bf Proof of Theorem \ref{theorem:cw_known_beta}\ref{thm:cw_known_beta_lb}\ref{thm:cw_known_beta_lb_high_temp} }
We follow the path of truncated second moment method with respect to a suitable prior over $\tilde{\C}_s\subset \C_s$. Owing to the exchangeability of the Curie-Weiss model it is natural to consider the uniform prior $\pi$ (say) over all $\bmu\in \Xi(\C_s,A)$ such  $\mathrm{supp}(\bmu)\in\tilde{C}_s$ and $\bmu_i=A$ for $i\in \mathrm{supp}(\bmu)$. The likelihood ratio $L_{\pi}$ (say) corresponding to this prior can be written as
\begin{align*}
    L_{\pi}=\frac{1}{|\tilde{C}_s|}\sum_{S\in \tilde{C}_s}\frac{Z(\beta,\bQ,\bmu_S(A))}{Z(\beta,\bQ,\bzero)}\exp\left(A\sum_{i\in S}X_i\right),
\end{align*}
where $\bmu_S(A)$ is the vector with support $S$ and entries equal to $A$ on its support. Now recall the definition of $Z_{S}$ from the proof of Theorem \ref{theorem:cw_known_beta}\ref{thm:cw_known_beta_ub}\ref{thm:cw_known_beta_ub_high_temp} define an event $\Omega_n(S,\delta)=\{Z_{S}\leq \tilde{t}_n(\delta)\}$ -- where for any $\delta>0$ we define $\tilde{t}_n(\delta)=\sqrt{2(1+\delta)\log|\tilde{C}_s|}$. Subsequently, we let 
\begin{align}
    \tilde{L}_{\pi}(\delta):=\frac{1}{|\tilde{C}_s|}\sum_{S\in \tilde{C}_s}\frac{Z(\beta,\bQ,\bmu_S(A))}{Z(\beta,\bQ,\bzero)}\exp\left(A\sum_{i\in S}X_i\right)\mathbf{1}(\bX\in \Omega_n(S,\delta))\label{eqn:trunctaed_lr_cw_high_temp}
\end{align}
denote a truncated likelihood ratio at level $\delta>0$. It is thereafter enough the verify that there exists $\delta>0$ such that the following three claims hold \citep{Ingster5}
\begin{align}
    \P_{\beta,\bQ,\bzero}(L_{\pi}\neq \tilde{L}_{\pi}(\delta))&\rightarrow 0 \label{claim:trunctaed_untruncated_lr_cw_high_temp};\\
\E_{\beta,\bQ,\bzero}(\tilde{L}_{\pi}(\delta))&\rightarrow 1 \label{claim:expec_trunctaed_lr_cw_high_temp};\\
\E_{\beta,\bQ,\bzero}(\tilde{L}_{\pi}^2(\delta))&\leq 1+o(1) \label{claim:var_trunctaed_lr_cw_high_temp}.
\end{align}
We now show them in sequence. To show \eqref{claim:trunctaed_untruncated_lr_cw_high_temp} note that 
\begin{align*}
    \P_{\beta,\bQ,\bzero}(L_{\pi}\neq \tilde{L}_{\pi}(\delta))=\sum_{S\in \tilde{C}_s}\P_{\beta,\bQ,\bzero}\left(\frac{1}{\sqrt{s}}\sum_{i\in S}X_i>\tilde{t}_n(\delta)\right)\rightarrow 0.
\end{align*}
The convergence to $0$ in the display above follows from Lemma \ref{lemma:cw_moderate_deviation} for any $\delta>0$ -- by the same verbatim argument that showed the Type I error convergence to $0$ in the proof of Theorem \ref{theorem:cw_known_beta}\ref{thm:cw_known_beta_ub}\ref{thm:cw_known_beta_ub_high_temp}. 

Next we turn to \eqref{claim:expec_trunctaed_lr_cw_high_temp}. To verify this, we first note that by a simple change of measure
\begin{align*}
   \E_{\beta,\bQ,\bzero}(\tilde{L}_{\pi}(\delta))=\frac{1}{|\tilde{\C}_s|}\sum_{S\in \tilde{C}_s}\P_{\beta,\bQ,\bmu_S(A)}\left(\frac{1}{\sqrt{s}}\sum_{i\in S}X_i\leq\tilde{t}_n(\delta)\right).
\end{align*}
To analyze the R.H.S of the display above note that $\frac{1}{\sqrt{s}}\sum_{i\in S}(X_i-\E_{\beta,\bQ,\bmu_S(A)})$ is tight since $\mathrm{Var}_{\beta,\bQ,\bmu_S(A)}(\sum_{i\in S}X_i)\leq \mathrm{Var}_{\beta,\bQ,\bzero}(\sum_{i\in S}X_i)=O(s)$ by G.H.S inequality. Also, by arguments similar to the control of Type II error in the proof of  Theorem \ref{theorem:cw_known_beta}\ref{thm:cw_known_beta_ub}\ref{thm:cw_known_beta_ub_high_temp} and the fact that \\$\limsup_{n \rightarrow \infty}\sqrt{s}\tanh(A) (\log|\tilde{\C}_s|)^{-1/2}<\sqrt{2}$, we have that for any $\delta>0$  that $\tilde{t}_n(\delta)-\frac{1}{\sqrt{s}}\E_{\beta,\bQ,\bmu_S(A)}(\sum_{i\in S}X_i)\rightarrow \infty$ uniformly in $S\in \tilde{\C}_s$. Therefore by Chebyshev's Inequality we conclude that $1-\E_{\beta,\bQ,\bzero}(\tilde{L}_{\pi}(\delta))\rightarrow 0$ for any $\delta>0$. 

Finally we shall make a choice of $\delta>0$ while verifying \eqref{claim:var_trunctaed_lr_cw_high_temp}. First note that since $\tilde{\C}_s$ consists of disjoint sets we have
\begin{align}
   \ &   \E_{\beta,\bQ,\bzero}\left(\tilde{L}_{\pi}^2(\delta)\right) \nonumber
    \\
    &=\frac{1}{|\tilde{\C}_s|^2}\sum_{S\in \tilde{\C}_s}\frac{Z^2(\beta,\bQ,\bzero)Z(\beta,\bQ,\bmu_{S}(2A))}{Z^2(\beta,\bQ,\bmu_S(A))Z(\beta,\bQ,\bzero)}\P_{\beta,\bQ,\bmu_{S}(2A)}( \Omega_n(S,\delta))\nonumber\\
    &+\frac{1}{|\tilde{\C}_s|^2}\sum_{S_1\neq S_2\in \tilde{\C}_s}\frac{Z^2(\beta,\bQ,\bzero)Z(\beta,\bQ,\bmu_{S_1\cup S_2}(A))}{Z(\beta,\bQ,\bmu_{S_1}(A))Z(\beta,\bQ,\bmu_{S_2}(A))Z(\beta,\bQ,\bzero)}\P_{\beta,\bQ,\bmu_{S_1\cup S_2}(A)}(\Omega_n(S_1,\delta)\cap \Omega_n(S_2,\delta))\nonumber \\
    &=T_1+T_2 \quad \text{(say),} \label{eq:second_moment_split}
\end{align}
We will first show that $T_2\leq 1+o(1)$. To see this note that
\begin{align*}
    T_2\leq \frac{1}{|\tilde{\C}_s|^2}\sum_{S_1\neq S_2\in \tilde{\C}_s}\frac{Z^2(\beta,\bQ,\bzero)Z(\beta,\bQ,\bmu_{S_1\cup S_2}(A))}{Z(\beta,\bQ,\bmu_{S_1}(A))Z(\beta,\bQ,\bmu_{S_2}(A))Z(\beta,\bQ,\bzero)}=1+o(1),
\end{align*}
by the proof for the control of the second term in equation (9) of \cite[Theorem 3]{deb2020detecting} along with the correlation bounds presented in \cite[Lemma 9(a)]{deb2020detecting}. By symmetry, the value of $T_1$ equals
\begin{equation}\label{eq:t1}
	T_1 = \frac{1}{|\tilde{\C}_s|}\frac{Z^2_n(\beta,0)}{Z^2_n(\beta,s,A)}\frac{Z^2(\beta,\bQ,\bzero)Z(\beta,\bQ,\bmu_{S}(2A))}{Z^2(\beta,\bQ,\bmu_S(A))Z(\beta,\bQ,\bzero)}\P_{\beta,\bQ,\bmu_{S}(2A)}( \Omega_n(S,\delta)).
\end{equation}
Let $\varepsilon>0$ small enough such that $\sqrt{s}\tanh(A) (\log|\tilde{\mathcal{C}}_s|)^{-1/2}= 2(1-\varepsilon)$. Set $\lambda >0$ such that $\tanh(2A- \lambda/\sqrt{s})=\tilde{t}_n(\delta)/ \sqrt{s}$. Note, $\cosh(2A) \leq (1-\frac{(t_n(\delta)+\lambda)^2}{s})^{-1/2}$. Therefore,
\begin{align}
	\P_{\beta,\bQ,\bmu_S(2A)}\Big(Z_S \leq \tilde{t}_n(\delta)\Big) & \leq e^{\lambda \tilde{t}_n(\delta)} \E_{\beta,\bQ,\bmu_S(2A)}\Big(\exp{(-\frac{\lambda}{\sqrt{s}}\sum\limits_{i \in S}X_i)}\Big) \nonumber \\
	& = (1+o(1))  e^{\lambda \tilde{t}_n(\delta)} \Big(\frac{\cosh(2A-\frac{\lambda}{\sqrt{s}})}{\cosh(2A)}\Big)^s \nonumber \\
	&\le (1+o(1))e^{\lambda \tilde{t}_n(\delta)} e^{\frac{\tilde{t}^2_n(\delta)}{2}}\frac{1}{\cosh^{s}(2A)} \nonumber\\
	& \le (1+o(1)) \exp\left(\lambda \tilde{t}_n(\delta)+\frac{\tilde{t}^2_n(\delta)}{2}-\frac{(\lambda+t_n(\delta))^2}{2}\right)\\
	&=(1+o(1))\exp(-\frac{\lambda^2}{2}).
\end{align}
where the first equality is by Lemma \ref{lem:htrat}. Choose $\eta>0$ small enough such that,
$$2\sqrt{\frac{1-\varepsilon}{1+\delta}}(1-\eta)\frac{\tilde{t}_n(\delta)}{\sqrt{s}}\leq 2\sqrt{(1-\varepsilon)}(1-\eta) \tanh(A) \leq\tanh(2A)\leq \frac{\lambda+\tilde{t}_n(\delta)}{\sqrt{s}}.$$
Hence,
\begin{equation}
    \P_{\beta,\bQ,\bmu_S(2A)}\left(Z_S \leq \tilde{t}_n(\delta)\Big) \leq (1+o(1))\exp \Big(-(2\sqrt{\frac{1-\varepsilon}{1+\delta}}(1-\eta)-1)^2\frac{\tilde{t}^2_n(\delta)}{2}\right).
\end{equation}
By \eqref{eq:t1},
\begin{align}\label{tn2}
	T_1 &= (1+o(1))\exp \Big(s\{\log \cosh(2A)-2\log\cosh(A)\}-\log|\tilde{\C}_s|-(2\sqrt{\frac{1-\varepsilon}{1+\delta}}(1-\eta)-1)^2 \frac{\tilde{t}^2_n(\delta)}{2}\Big)\nonumber \\
	& \leq (1+o(1))\exp \Big(sA^2-\log|\tilde{\C}_s|-(2\sqrt{\frac{1-\varepsilon}{1+\delta}}(1-\eta)-1)^2 \frac{\tilde{t}^2_n(\delta)}{2}\Big) =o(1),
\end{align}
for small enough $\eta>0$. This completes the proof of \eqref{claim:var_trunctaed_lr_cw_high_temp}, finishing the proof of lower bound.

\paragraph{\bf Proof of Theorem \ref{theorem:cw_known_beta}\ref{thm:cw_known_beta_lb}\ref{thm:cw_known_beta_lb_critical_temp} }

The proof is verbatim same as that for $\beta<1$ in part 
Theorem \ref{theorem:cw_known_beta}\ref{thm:cw_known_beta_lb}\ref{thm:cw_known_beta_lb_high_temp} and the only change comes from applying Lemma \ref{lemma:cw_moderate_deviation} which is applicable now for $s\ll \sqrt{n}/\log{n}$ -- a condition that is part of the theorem's assumption.

\paragraph{\bf Proof of Theorem \ref{theorem:cw_known_beta}\ref{thm:cw_known_beta_lb}\ref{thm:cw_known_beta_lb_low_temp} }
To prove the lower bound, fix $\varepsilon>0$ such that $\sqrt{s}\tanh(A) (\log|\tilde{\mathcal{C}}_s|)^{-1/2}= 2(1-\varepsilon)(1-m^2)^{-1}$. Defining
\begin{equation}\label{eq:ltlr}
 L_{\pi}=\frac{1}{|\tilde{C}_s|}\sum_{S\in \tilde{C}_s}\frac{Z(\beta,\bQ,\bmu_S(A))}{Z(\beta,\bQ,\bzero)}\exp\left(A\sum_{i\in S}X_i\right),
\end{equation}
it suffices to show $L_{\pi} \rightarrow 1$ in probability. Defining $\Omega_n(S,\delta)=\{(A_{n,S} \cup B_{n,S})^c\}$ -- where for any $\delta>0$ we define $\tilde{t}_n(\delta)=\sqrt{2(1+\delta)\log|\tilde{C}_s|}$. Subsequently, we let 
\begin{align}
    \tilde{L}_{\pi}(\delta):=\frac{1}{|\tilde{C}_s|}\sum_{S\in \tilde{C}_s}\frac{Z(\beta,\bQ,\bmu_S(A))}{Z(\beta,\bQ,\bzero)}\exp\left(A\sum_{i\in S}X_i\right)\mathbf{1}(\bX\in \Omega_n(S,\delta))\label{eqn:trunctaed_lr_cw_low_temp},
\end{align}
and it is enough to show \eqref{claim:trunctaed_untruncated_lr_cw_high_temp},\eqref{claim:expec_trunctaed_lr_cw_high_temp},\eqref{claim:var_trunctaed_lr_cw_high_temp}. The proof of type I error in Theorem \ref{theorem:cw_known_beta}\ref{thm:cw_known_beta_ub}\ref{thm:cw_known_beta_ub_low_temp} yields $\P(L_{\pi}\neq \tilde{L}_\pi) \rightarrow 0$. The proof of \eqref{claim:expec_trunctaed_lr_cw_high_temp} follows similar to the Type II error of in Theorem \ref{theorem:cw_known_beta}\ref{thm:cw_known_beta_ub}\ref{thm:cw_known_beta_ub_low_temp}. To prove \eqref{claim:var_trunctaed_lr_cw_high_temp}, recall that
\begin{align}\label{eqn:cw_lt_lr_2nd}
   \ &   \E_{\beta,\bQ,\bzero}\left(\tilde{L}_{\pi}^2(\delta)\right)\\
    &=\frac{1}{|\tilde{\C}_s|^2}\sum_{S\in \tilde{\C}_s}\frac{Z^2(\beta,\bQ,\bzero)Z(\beta,\bQ,\bmu_{S}(2A))}{Z^2(\beta,\bQ,\bmu_S(A))Z(\beta,\bQ,\bzero)}\P_{\beta,\bQ,\bmu_{S}(2A)}( \Omega_n(S,\delta))\nonumber\\
    &+\frac{1}{|\tilde{\C}_s|^2}\sum_{S_1\neq S_2\in \tilde{\C}_s}\frac{Z^2(\beta,\bQ,\bzero)Z(\beta,\bQ,\bmu_{S_1\cup S_2}(A))}{Z(\beta,\bQ,\bmu_{S_1}(A))Z(\beta,\bQ,\bmu_{S_2}(A))Z(\beta,\bQ,\bzero)}\P_{\beta,\bQ,\bmu_{S_1\cup S_2}(A)}(\Omega_n(S_1,\delta)\cap \Omega_n(S_2,\delta))\nonumber\\
    &=T_1+T_2,
\end{align}
Here, $T_2 \leq 2+o(1)$, because
$$\frac{Z^2(\beta,\bQ,\bzero)Z(\beta,\bQ,\bmu_{S_1\cup S_2}(A))}{Z(\beta,\bQ,\bmu_{S_1}(A))Z(\beta,\bQ,\bmu_{S_2}(A))Z(\beta,\bQ,\bzero)}=(2+o(1))\frac{\cosh^{2s}(\beta m+A)+\cosh^{2s}(\beta m-A)}{(\cosh^{s}(\beta m+A)+\cosh^{s}(\beta m-A))^2} =2+o(1),$$
where we have used $\frac{\cosh^{s}(\beta m-A)}{\cosh^{s}(\beta m+A)}=o(1)$. To bound $T_1$, choose $\eta>0$ small to be specified later. Set $\lambda= \frac{\tilde{t}_n(\delta)\sqrt{s}(1-\eta)}{1-m^2}$.
\begin{align*}
\ & 	\P_{\beta,\bQ,\bmu_S(2A)}(Z_S\leq \sqrt{s}+\tilde{t}_n(\delta)|W_n>0)\\
& \leq \exp\Big({\lambda(m+\tilde{t}_n(\delta)/\sqrt{s})+s [\log\cosh(\beta m+2A-\frac{\lambda}{s})-\log\cosh(\beta m+2A)]}\Big)\\
	&\leq \exp\Big(\lambda(m-\tanh(\beta m+2A)+\frac{\tilde{t}_n(\delta)}{\sqrt{s}})+\frac{\lambda^2}{2s}\sech^2(\beta m)+o(\log|\tilde{C}_s|)\Big)\\
	&\leq \exp\Big(-\frac{\lambda \tilde{t}_n(\delta)}{\sqrt{s}}(1-\eta)+\frac{\lambda^2}{2s}\sech^2(\beta m)+o(\log|\tilde{C}_s|)\Big)\\
	&= \exp\Big(-(1+\varepsilon)(1-\eta)^2 \log|\tilde{C}_s|+o(\log|\tilde{C}_s|)\Big),
\end{align*}
where the third inequality is by $\tanh(\beta m+2A)\geq m+2A\sech^2(\beta m)$ and small $\eta>0$. Similar bound holds for $W_n<0$. Therefore,
\begin{align*}
	T_1 &\leq C \exp\Big(s[\log\cosh(\beta m+2A)-2\log\cosh(\beta m+A)+\log\cosh(\beta m)] -\log|\tilde{C}_s| -(1+\varepsilon)(1-\eta)^2 \log|\tilde{C}_s|\Big)\\
	&\leq C \exp\Big(sA^2 -\log|\tilde{C}_s| -(1+\varepsilon)(1-\eta)^2 \log|\tilde{C}_s|\Big)=o(1),
\end{align*}
by small $\varepsilon,\eta>0$ and since $\tanh(A)\sim A$. Therefore, $\E_{\beta,\bQ,\bzero}(\tilde{L}^2_\pi) \leq 2+o(1)$. Hence,
$$\liminf_{n \rightarrow \infty} \inf_T\mathrm{Risk}(T,{\Xi}(\mathcal{C}_s,A),\beta,\bQ)\geq 1- \limsup_{n \rightarrow \infty}\frac{1}{2}\sqrt{\E_{\beta,\bQ,\bzero}(L^2_\pi)-1}>0.$$
Consequently, no test is asymptotically powerful.

\subsubsection{\bf Proof of Theorem \ref{thm:cw_unknown_beta}}
%\todo[inline]{Finish proof and complete references.}
To obtain an adaptive test, we first test the hypothesis $H_{0,\beta}: \beta\leq 1$ vs $H_{1,\beta}:\beta>1$. This is obtained through the test $$T_n^{(1)}=\mathbbm{1}\left(|\bar{\bX}|\geq  \frac{1}{\log{n}} \right).$$ 
%for some slowly $1/n^{1/4}\ll \kappa_n \ll 1$ to be decided later. 
Subsequently, if $T_n^{(1)}=0$ we simply use the test $$T_n^{(2)}=\mathbbm{1}\left(Z_{\max}>\sqrt{2(1+\delta)\log{|\mathcal{N}(\C_s,\gamma,\varepsilon_n)|}}\right)$$ for $\delta>0$ as in the proof of  Theorem \ref{theorem:cw_known_beta}\ref{thm:cw_known_beta_ub}\ref{thm:cw_known_beta_ub_high_temp}. %We also define $\tilde{T_n^{(2)}}=\mahbbm{1}(|\bar{\bX}|\geq \kappa_n')$ for some $1/n^{1/4}\ll \kappa_n'\ll 1$. 
In contrast, if $T_n^{(1)}=1$ we estimate $\beta$ assuming the working model ``$\bX\stackrel{\text{working}}{\sim}\P_{\beta,\bQ,\mathbf{0}}$" using the Pseudo-likelihood method \citep{besag1974spatial,chatterjee2007estimation,bhattacharya2018inference,ghosal2020joint} and with $\hat{\beta}_{\rm working}$ denote this estimator we consider rejecting $H_0$ using 
\begin{align*}
    T_n^{(3)}&=\mathbbm{1}(Z_{\max}>m(\hat{\beta}_{\rm working})+\sqrt{2(1+\delta)(1-m^2(\hat{\beta}_{\rm working}))\log|\mathcal{N}(\C_s,\gamma,\varepsilon_n)|},W_n\geq 0)\\&+\mathbbm{1}(Z_{\max}>-m(\hat{\beta}_{\rm working})+\sqrt{2(1+\delta)(1-m^2(\hat{\beta}_{\rm working}))\log|\mathcal{N}(\C_s,\gamma,\varepsilon_n)|},W_n< 0)
\end{align*}
where $\delta>0$ can be chosen as in the proof of Theorem Theorem \ref{theorem:cw_known_beta}\ref{thm:cw_known_beta_ub}\ref{thm:cw_known_beta_ub_low_temp} and $m(\beta)$ is the unique positive root of $m=\tanh(\beta m)$. Our final $\beta$-adaptive test is thereafter given by $T_n=(1-T_n^{(1)})T_n^{(2)}%(T_n^{(2)}+\tilde{T_n^{(2)}})
+T_n^{(1)}T_n^{(3)}$. %where $T_n^{(4)}=\mathbbm{1}\left(Z_{\max}=\sqrt{s}\right)$.
We note that in this part of the proof we do not perform the Bonferroni correction of the test $T_n^{(3)}$ with a test based on conditionally centered version of $\sum_{i=1}^n X_i$ since here $\sum_{i=1}^n \mu_i\ll s\ll n$. Therefore as it is clear from the proof of Theorem \ref{theorem:cw_known_beta}\ref{thm:cw_known_beta_ub}\ref{thm:cw_known_beta_ub_low_temp}, it suffices to only consider the randomized scan test described through $T_n^{(3)}$.

We first show that uniformly over all $\bmu \in \{\mathbf{0}\}\cup \Xi(\C_s,A)$ one has that
$|\bar{\bX}|=o_{\P_{\beta,\bQ,\bmu}}(\frac{1}{\log{n}})$ for $\beta\leq 1$ and $|\bar{\bX}|\gg \frac{1}{\log{n}}$  with probability converging to $1$ for $\beta> 1$. For the first claim, first let $\beta \leq 1$. Then from Lemma \ref{lem:bayesian} we have that $|\bar{\bX}|=o_{\P_{\beta,\bQ,\bzero}}(\frac{1}{\log{n}})$ for $\beta\leq 1$. Now note that by Lemma \ref{lemma:expectation_underalt_lower_bound} we have that there exists constants $C_1,C_2>0$ such that
\begin{align*}
    \E_{\beta,\bQ,\bmu}\left(\sum_{i=1}^n X_i\right)\leq C_1\sum_{i=1}^n \tanh(\mu_i)+ C_2\sum_{i=1}^n\sum_{j=1}^n\tanh(\mu_j)\left(\bQ_{i,j}+\mathrm{Cov}_{\beta,\bQ,\bmu}(X_i,X_j)\right)
\end{align*}

Now suppose $\beta<1$. Then we have from \cite{deb2020detecting} that $0\leq \mathrm{Cov}_{\beta,\bQ,\bmu}(X_i,X_j)\lesssim 1/n$. Hence for $\beta<1$ we have 
\begin{align*}
    \E_{\beta,\bQ,\bmu}\left(\sum_{i=1}^n X_i\right)\lesssim \sum_{i=1}^n \tanh(\mu_i) \sum_{i=1}^n\sum_{j=1}^n\tanh(\mu_j)\left(\bQ_{i,j}+\mathrm{Cov}_{\beta,\bQ,\bmu}(X_i,X_j)\right)\lesssim \sum_{j=1}^n \tanh(\mu_j)\lesssim s
\end{align*}
since $\|\bmu\|_0\leq s$. Hence under any $\bmu$ and $\beta<1$ one has
\begin{align*}
   \ &  \P_{\beta,\bQ,\bmu}(T_n^{(1)}=1)\\
   &=\P_{\beta,\bQ,\bmu}\left(\sum_{i=1}^n X_i>\frac{n}{\log{n}}\right)+\P_{\beta,\bQ,\bmu}\left(\sum_{i=1}^n X_i<-\frac{n}{\log{n}}\right)\\
    &\leq \P_{\beta,\bQ,\bmu}\left(\sum_{i=1}^n X_i>\frac{n}{\log{n}}\right)+\P_{\beta,\bQ,\bzero}\left(\sum_{i=1}^n X_i<-\frac{n}{\log{n}}\right) \quad \text{by monotonicity of}\ \sum_{i=1}^n X_i\\
    &=\P_{\beta,\bQ,\bmu}\left(\frac{\sum_{i=1}^n X_i-\E_{\beta,\bQ,\bmu}(\sum_{i=1}^n X_i)}{\sqrt{\mathrm{Var}_{\beta,\bQ,\bmu}(\sum_{i=1}^n X_i)}}>\frac{n}{\log{n}\sqrt{\mathrm{Var}_{\beta,\bQ,\bmu}(\sum_{i=1}^n X_i)}}-\frac{\E_{\beta,\bQ,\bmu}(\sum_{i=1}^n X_i)}{\sqrt{\mathrm{Var}_{\beta,\bQ,\bmu}(\sum_{i=1}^n X_i)}}\right)+o(1)\\
    &\leq \P_{\beta,\bQ,\bmu}\left(\frac{\sum_{i=1}^n X_i-\E_{\beta,\bQ,\bmu}(\sum_{i=1}^n X_i)}{\sqrt{\mathrm{Var}_{\beta,\bQ,\bmu}(\sum_{i=1}^n X_i)}}>C_1\frac{\sqrt{n}}{\log{n}}-C_2\frac{s}{\sqrt{n}}\right)+o(1)
\end{align*}
where in the last line we have used the fact that by GHS inequality (Lemma \ref{lemma:GHS}) and Lemma \ref{lem:bayesian}  we have that $n\lesssim \mathrm{Var}_{\beta,\bQ,\bmu}(\sum_{i=1}^n X_i)\leq \mathrm{Var}_{\beta,\bQ,\bzero}(\sum_{i=1}^n X_i)\lesssim n$ for $\beta< 1$. Now note that by our assumptions $s/\sqrt{n}\ll \sqrt{n}/\log{n}$ since $s\ll n/\log{n}$ and hence the first term of the display above goes to $0$ uniformly in $\bmu$ for $\beta<1$.

Turning to $\beta=1$, we have from \cite{deb2020detecting} that $0\leq \mathrm{Cov}_{\beta,\bQ,\bmu}(X_i,X_j)\lesssim 1/\sqrt{n}$. Hence for $\beta=1$ we have 
\begin{align*}
    \E_{\beta,\bQ,\bmu}(\sum_{i=1}^n X_i)\lesssim \sum_{i=1}^n \tanh(\mu_i) \sum_{i=1}^n\sum_{j=1}^n\tanh(\mu_j)\left(\bQ_{i,j}+\mathrm{Cov}_{\beta,\bQ,\bmu}(X_i,X_j)\right)\lesssim \sum_{j=1}^n \tanh(\mu_j)\lesssim s\sqrt{n}
\end{align*}
since $\|\bmu\|_0\leq s$. Hence under any $\bmu$ and $\beta=1$ one has
\begin{align*}
   \ &  \P_{\beta,\bQ,\bmu}(T_n^{(1)}=1)\\
   &=\P_{\beta,\bQ,\bmu}\left(\sum_{i=1}^n X_i>\frac{n}{\log{n}}\right)+\P_{\beta,\bQ,\bmu}\left(\sum_{i=1}^n X_i<-\frac{n}{\log{n}}\right)\\
    &\leq \P_{\beta,\bQ,\bmu}\left(\sum_{i=1}^n X_i>\frac{n}{\log{n}}\right)+\P_{\beta,\bQ,\bzero}\left(\sum_{i=1}^n X_i<-\frac{n}{\log{n}}\right) \quad \text{by monotonicity of}\ \sum_{i=1}^n X_i\\
    &=\P_{\beta,\bQ,\bmu}\left(\frac{\sum_{i=1}^n X_i-\E_{\beta,\bQ,\bmu}(\sum_{i=1}^n X_i)}{\sqrt{\mathrm{Var}_{\beta,\bQ,\bmu}(\sum_{i=1}^n X_i)}}>\frac{n}{\log{n}\sqrt{\mathrm{Var}_{\beta,\bQ,\bmu}(\sum_{i=1}^n X_i)}}-\frac{\E_{\beta,\bQ,\bmu}(\sum_{i=1}^n X_i)}{\sqrt{\mathrm{Var}_{\beta,\bQ,\bmu}(\sum_{i=1}^n X_i)}}\right)+o(1)\\
    &\leq \P_{\beta,\bQ,\bmu}\left(\frac{\sum_{i=1}^n X_i-\E_{\beta,\bQ,\bmu}(\sum_{i=1}^n X_i)}{\sqrt{\mathrm{Var}_{\beta,\bQ,\bmu}(\sum_{i=1}^n X_i)}}>C_1\frac{n^{1/4}}{\log{n}}-C_2\frac{s}{n^{1/4}}\right)+o(1)
\end{align*}
where in the last line we have used the fact that by GHS inequality (Lemma \ref{lemma:GHS}) and Lemma \ref{lem:bayesian}  we have that $n^{3/4}\lesssim \mathrm{Var}_{\beta,\bQ,\bmu}(\sum_{i=1}^n X_i)\leq \mathrm{Var}_{\beta,\bQ,\bzero}(\sum_{i=1}^n X_i)\lesssim n^{3/4}$ for $\beta< 1$. Now note that by our assumptions $s/n^{1/4}\ll n^{1/4}/\log{n}$ since $s\ll \sqrt{n}/\log{n}$ and hence the first term of the display above goes to $0$ uniformly in $\bmu$ for $\beta=1$ as well. This shows that for any $\bmu$ one has that $T_n^{(1)}$ has Type I error converging to $0$ for testing $H_{0,\beta}$ vs $H_{1,\beta}$.

Next we show that uniformly over all $\bmu \in \{\mathbf{0}\}\cup \Xi(\C_s,A)$  $|\bar{\bX}|\gg \frac{1}{\log{n}}$  with probability converging to $1$ for $\beta> 1$. The claim is trivial for $\bmu=\bzero$ by \cite{Ellis_Newman}. In particular, it also follows that $\P_{\beta,\bQ,\bzero}(T_n^{(1)}=0)\leq e^{-Cn}$ for some $C>0$ depending on $\beta>1$. Then for any $\bmu$ with $\|\bmu\|_{\infty}\leq M$ for some $M>0$ one has 
\begin{align*}
   \sup_{\bmu:\|\bmu\|_{\infty}\leq M} \P_{\beta,\bQ,\bzero}(T_n^{(1)}=0)&=\sup_{\bmu:\|\bmu\|_{\infty}\leq M}\frac{Z(\beta,\bQ,\bzero)}{Z(\beta,\bQ,\bmu)}\E_{\beta,\bQ,\bzero}\left(\mathbbm{1}(T_n^{(1)}=0)e^{\sum_{i=1}^n \mu_iX_i}\right)\\
    &=\sup_{\bmu:\|\bmu\|_{\infty}\leq M}e^{2s\|\bmu\|_{\infty}}\P_{\beta,\bQ,\bzero}(T_n^{(1)}=0)\leq e^{2sM-Cn}\rightarrow 0
\end{align*}
since $s\ll n$. This completes the proof for $\beta>1$ for the consistency of testing $H_{0,\beta}$ vs $H_{1,\beta}$ using $T_n^{(1)}$ under any $\bmu\in \mathbb{R}_+^n$ s.t. $\|\bmu\|_{0}\leq s\ll n$ and $\|\bmu\|_{\infty}=O(1)$. We next turn to the consistency of the test $T_n=(1-T_n^{(1)})T_n^{(2)}%(T_n^{(2)}+\tilde{T_n^{(2)}})
+T_n^{(1)}T_n^{(3)}$ for testing the actual hypotheses \eqref{eqn:sparse_hypo} of interest.

First consider the case $\beta\leq 1$.
Let us first consider the Type I error of the test $T_n$.
\begin{align*}
    \P_{\beta,\bQ,\bzero}(T_n=1)= \P_{\beta,\bQ,\bzero}(T_n^{(1)}=0, T_n^{(2)}=1)+\P_{\beta,\bQ,\bzero}(T_n^{(1)}=1, T_n^{(3)}=1).%+\P_{\beta,\bQ,\bzero}(T_n^{(1)}=1).
\end{align*}
By consistency of the test $T-n^{(1)}$ for testing $H_{0,\beta}$ vs $H_{1,\beta}$ one has that for $\beta\leq 1$ one has $\P_{\beta,\bQ,\bzero}(T_n^{(1)}=1)\rightarrow 0$. Hence  $\P_{\beta,\bQ,\bzero}(T_n^{(1)}=1, T_n^{(3)}=1)\rightarrow 0$ as well. Hence it is enough to show that $\P_{\beta,\bQ,\bzero}( T_n^{(2)}=1)\rightarrow 0$ under $\beta\leq 1$. However this follows by arguments verbatim to the proof of Type I errors in Theorem \ref{theorem:cw_known_beta}\ref{thm:cw_known_beta_ub}\ref{thm:cw_known_beta_ub_high_temp} and Theorem \ref{theorem:cw_known_beta}\ref{thm:cw_known_beta_ub}\ref{thm:cw_known_beta_ub_critical_temp} since the test $T_n^{(2)}$ is free of $\beta$.  This completes the control of Type I error for $\beta\leq 1$. Turning to the Type II error, once again note that for any $\bmu \in \C_s$
\begin{align*}
     \P_{\beta,\bQ,\bmu}(T_n=0)=\P_{\beta,\bQ,\bzero}(T_n^{(1)}=0, T_n^{(2)}=0)+\P_{\beta,\bQ,\bzero}(T_n^{(1)}=1, T_n^{(3)}=0).
\end{align*}
Once again by consistency of the test $T-n^{(1)}$ for testing $H_{0,\beta}$ vs $H_{1,\beta}$ (uniformly against any $\bmu$ in our class having $\|\bmu\|_{\infty}=O(1)$) one has that for $\beta\leq 1$ one has $\P_{\beta,\bQ,\bmu}(T_n^{(1)}=1)\rightarrow 0$ uniformly in such $\bmu$'s. Hence  $\P_{\beta,\bQ,\bzero}(T_n^{(1)}=1, T_n^{(3)}=1)\rightarrow 0$ uniformly as well and is enough to show that $\P_{\beta,\bQ,\bmu}( T_n^{(2)}=0)\rightarrow 0$ uniformly in $\bmu$ under $\beta\leq 1$. This once again follows by arguments verbatim to the proof of Type II errors in Theorem \ref{theorem:cw_known_beta}\ref{thm:cw_known_beta_ub}\ref{thm:cw_known_beta_ub_high_temp} and Theorem \ref{theorem:cw_known_beta}\ref{thm:cw_known_beta_ub}\ref{thm:cw_known_beta_ub_critical_temp} since the test $T_n^{(2)}$ is free of $\beta$. 

We next turn to the final case of analyzing the test $T_n$ under $\beta>1$. Let us first consider the Type I error of the test $T_n$.
\begin{align*}
    \P_{\beta,\bQ,\bzero}(T_n=1)= \P_{\beta,\bQ,\bzero}(T_n^{(1)}=0, T_n^{(2)}=1)+\P_{\beta,\bQ,\bzero}(T_n^{(1)}=1, T_n^{(3)}=1).%+\P_{\beta,\bQ,\bzero}(T_n^{(1)}=1).
\end{align*}
By consistency of the test $T_n^{(1)}$ for testing $H_{0,\beta}$ vs $H_{1,\beta}$ one has that for $\beta> 1$ one has $\P_{\beta,\bQ,\bzero}(T_n^{(1)}=0)\rightarrow 0$. Hence  $\P_{\beta,\bQ,\bzero}(T_n^{(1)}=0, T_n^{(2)}=1)\rightarrow 0$ as well. Hence it is enough to show that $\P_{\beta,\bQ,\bzero}( T_n^{(3)}=1)\rightarrow 0$ under $\beta> 1$. Now note that the only dependence of the test $T_n^{(3)}$ on $\hat{\beta}_{\rm working}$ is in its cutoff and the proof of the Type I error  in Theorem \ref{theorem:cw_known_beta}\ref{thm:cw_known_beta_ub}\ref{thm:cw_known_beta_ub_low_temp} shows that it is enough to show that $m(\hat{\beta}_{\rm working})\stackrel{\P_{\beta,\bQ,\bzero}}{\rightarrow} m(\beta)$ which follows from Lemma \ref{lmm:beta_estimation}. In particular, Lemma \ref{lmm:beta_estimation} is applicable since \begin{equation}
	\liminf_{n \rightarrow \infty} \frac{1}{n}\log \frac{1}{2^n}Z(\beta,\bQ^{\rm CW},\bzero)>0,
\end{equation}
using \cite[(7.9)]{infising}. Hence, by Lemma \ref{lmm:beta_estimation}, is applicable. Turning to the Type II error of our test $T_n$ once again by consistency of the test $T_n^{(1)}$ for testing $H_{0,\beta}$ vs $H_{1,\beta}$ (uniformly against any $\bmu$ in our class having $\|\bmu\|_{\infty}=O(1)$) one has that for $\beta>1$ one has $\P_{\beta,\bQ,\bmu}(T_n^{(1)}=0)\rightarrow 0$ uniformly in such $\bmu$'s. Hence  $\P_{\beta,\bQ,\bmu}(T_n^{(1)}=0, T_n^{(2)}=0)\rightarrow 0$ uniformly as well and is enough to show that $\P_{\beta,\bQ,\bmu}( T_n^{(2)}=0)\rightarrow 0$ uniformly in $\bmu$ under $\beta> 1$. Once again from the proof of the Type II error  in Theorem \ref{theorem:cw_known_beta}\ref{thm:cw_known_beta_ub}\ref{thm:cw_known_beta_ub_low_temp} it is enough to show that $m(\hat{\beta}_{\rm working})\stackrel{\P_{\beta,\bQ,\bmu}}{\rightarrow} m(\beta)$ uniformly in $\bmu$ in our class which follows from Lemma \ref{lmm:beta_estimation}.

%\section{Technical Lemmas:}

\subsection{\bf Technical Lemmas for Proofs of Theorems in Section \ref{sec:curie_weiss}}\label{sec:technical_lemmas_mean_field}

\begin{lemma}\label{lemma:cw_moderate_deviation}
Suppose $\bX\sim \P_{\beta,\bQ^{\rm CW},\bmu}$ with $\bQ^{\rm CW}_{i,j}=\mathbf{1}(i\neq j)/n$ and let $Z_S=\sum_{i\in S}X_i/\sqrt{s}$. Define $t_n(\delta):=\sqrt{2(1+\delta)(1-m^2)\log{|\mathcal{N}(\C_s,\gamma,\varepsilon_n)|}}$ for some $\delta>0$, where $m:=m(\beta)$ is the unique positive root of $m=\tanh(\beta m)$.
\begin{enumerate}
    \item[(a)] If $\beta<1, s\ll \frac{n}{\log n}$, $\P_{\beta,\bQ^{\rm CW},\mathbf{0}}(Z_S> t_n(\delta)) \le (1+o(1))e^{-t^2_n(\delta)}$.
    \item[(b)]
    If $\beta=1, s\ll \frac{\sqrt{n}}{\log n}$, then same conclusion as (a) holds.
    \item[(c)]
    If $\beta>1, s\ll \frac{n}{\log n}$,
    \begin{equation}
        	P_{\beta,\bQ^{\rm CW},\mathbf{0}}(Z_S> m \sqrt{s}+t_n(\delta)) \leq  (1+o(1))\exp\left(-\frac{t^2_n(\delta)}{2(1-m^2)}\right)
    \end{equation}
    Further,
    \begin{equation}
        P_{\beta,\bQ^{\rm CW},\mathbf{0}}(Z_S> -m \sqrt{s}+t_n(\delta), W_n<0) \le (1+o(1))\exp\left(-(1+o(1))\frac{t^2_n(\delta)}{2(1-m^2)}\right)
    \end{equation}
\end{enumerate}
%\todo[inline]{Ramu just check if looks okay or not}

\end{lemma}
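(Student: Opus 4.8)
\textbf{Proof proposal for Lemma \ref{lemma:cw_moderate_deviation}.}

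The plan is to reduce the problem to a tilted one-dimensional large-deviation computation via the classical Hubbard-Stratonovich (Gaussian) transform that linearizes the quadratic form $\frac{\beta}{2n}(\sum_i X_i)^2$. Concretely, writing $W_n\sim N(\bar{\bX},1/(n\beta))$ we have the well-known identity that under $\P_{\beta,\bQ^{\rm CW},\mathbf{0}}$ the pair $(\bX,W_n)$ has joint law proportional to $\exp(-\frac{n\beta}{2}w^2+\beta w\sum_i x_i)$ (up to normalization), so that conditionally on $W_n=w$ the coordinates $X_i$ are i.i.d.\ with mean $\tanh(\beta w)$. Thus for any set $S$ of size $s$ and any threshold $u$,
\[
\P_{\beta,\bQ^{\rm CW},\mathbf{0}}(Z_S>u)=\E\left[\P\left(\textstyle\sum_{i\in S}X_i>\sqrt{s}\,u \,\middle|\, W_n\right)\right],
\]
and one controls the inner probability by a Chernoff bound over i.i.d.\ $\pm1$ variables with mean $\tanh(\beta W_n)$, namely $\P(\sum_{i\in S}X_i>\sqrt{s}u\mid W_n)\le \exp(-\sqrt{s}u\lambda + s\log\cosh(\lambda+\mathrm{arctanh}(\tanh\beta W_n)) - s\log\cosh(\mathrm{arctanh}(\tanh\beta W_n)))$ for the optimal $\lambda>0$. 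The outer expectation is then handled using the known concentration of $W_n$: for $\beta<1$, $W_n=O_\P(n^{-1/2})$ so $\tanh(\beta W_n)=o(1)$ and the bound degenerates to the plain i.i.d.\ Gaussian-tail estimate $\exp(-\frac12 su^2(1+o(1)))$, giving part (a) with $u=t_n(\delta)/\sqrt{s}$ (here $m=0$ so $1-m^2=1$ and $t_n(\delta)^2/2 = (1+\delta)\log|\mathcal{N}|$, hence the $e^{-t_n^2(\delta)}$ after noting $t_n(\delta)^2 = 2(1+\delta)\log|\mathcal N|$ — the factor matching requires tracking that $(1+o(1))$ in the exponent which is where $s\ll n/\log n$ enters to kill the correction); for $\beta=1$ one uses instead the $n^{1/4}$-scale fluctuation of $W_n$ (via \cite{Ellis_Newman}-type limit theorems at criticality), and $s\ll \sqrt{n}/\log n$ is exactly what ensures $\sqrt{s}\cdot n^{-1/4}=o(1)$ so that the contribution of $W_n$ to the exponent is again negligible, yielding part (b); for $\beta>1$, $W_n$ concentrates near $\pm m$ with $m=m(\beta)>0$, so on $\{W_n>0\}$ one has $\tanh(\beta W_n)\approx m$ and $\sum_{i\in S}X_i$ behaves like $s\,m + \sqrt{s(1-m^2)}\,N(0,1)$, which after centering at $m\sqrt{s}$ produces the Gaussian tail with variance $1-m^2$, i.e.\ $\exp(-t_n^2(\delta)/(2(1-m^2)))$; on $\{W_n<0\}$ the conditional mean of $Z_S$ is $\approx -m\sqrt{s}$, so $\{Z_S>-m\sqrt{s}+t_n(\delta)\}$ is again a moderate deviation of size $t_n(\delta)$ above the conditional mean, giving the second bound of part (c).

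The key technical steps in order are: (1) set up the Hubbard-Stratonovich representation and the conditional i.i.d.\ structure, citing the standard identity; (2) apply a sharp Chernoff/Bahadur-Rao-type bound to $\sum_{i\in S}X_i$ conditionally on $W_n$, being careful to retain the $(1+o(1))$ prefactor rather than just the exponential rate (a local CLT or Stirling-type refinement for the binomial-type sum suffices, since $s\to\infty$); (3) import the relevant concentration/fluctuation statement for $W_n$ under $\P_{\beta,\bQ^{\rm CW},\mathbf 0}$ in each temperature regime ($\beta<1$: Gaussian $n^{-1/2}$ scale; $\beta=1$: non-Gaussian $n^{-1/4}$ scale; $\beta>1$: exponential concentration near $\pm m$, splitting the expectation over $\{W_n>0\}$ and $\{W_n<0\}$ and bounding the $\P(W_n\le 0)$ or $\P(W_n\ge 0)$ mass crudely); (4) combine, check that the error terms coming from replacing $\tanh(\beta W_n)$ by its limit are $o(\log|\mathcal N|)$ in the exponent — this is precisely where the stated sparsity restrictions $s\ll n/\log n$ (resp.\ $s\ll\sqrt n/\log n$) are used, via $\sqrt{s}\cdot(\text{fluctuation of }W_n)=o(1)$ together with $\log|\mathcal N| \le \Theta(\log n)$ and $s\gg\log n$.

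The main obstacle I anticipate is step (4) in the critical case $\beta=1$: there the fluctuations of $W_n$ are of order $n^{-1/4}$ and non-Gaussian, and the conditional mean $\tanh(\beta W_n)$ is of order $n^{-1/4}$, so the conditional sum $\sum_{i\in S}X_i$ has a non-negligible random shift of order $s\cdot n^{-1/4}=\sqrt s\cdot(\sqrt s n^{-1/4})$; one must verify that after centering by the (deterministic) null mean $0$, this shift contributes only $o(\sqrt{s\log|\mathcal N|})$ to the relevant deviation scale, which forces $\sqrt s \, n^{-1/4}\cdot \sqrt s = s n^{-1/4} = o(\sqrt{\log|\mathcal N|}) = o(\sqrt{\log n})$, i.e.\ $s=o(n^{1/4}\sqrt{\log n})$ — weaker than the stated $s\ll \sqrt n/\log n$, so the stated hypothesis is comfortably sufficient, but one must be careful that the \emph{variance} inflation (not just the mean shift) is also controlled, which is where $s\ll\sqrt n/\log n$ is genuinely needed and where appeal to \cite[Lemma 9(c)]{deb2020detecting} or a direct fourth-moment computation on $W_n$ at criticality will be required. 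Everything else is a routine, if careful, bookkeeping of Chernoff exponents.
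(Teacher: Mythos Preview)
Your approach is correct and essentially the same as the paper's: both rest on the Chernoff bound together with the Hubbard--Stratonovich auxiliary variable $W_n$. The paper organizes the argument slightly differently, applying Chernoff \emph{unconditionally} first to get
\[
\P_{\beta,\bQ^{\rm CW},\mathbf 0}(Z_S>t)\le e^{-\lambda t}\,\frac{Z(\beta,\bQ^{\rm CW},\bmu_S(\lambda/\sqrt s))}{Z(\beta,\bQ^{\rm CW},\mathbf 0)},
\]
and then invoking a separate partition-function-ratio lemma (Lemma~\ref{lemma:curie_part_fun_ratio}) which uses the $W_n$ representation to show this ratio equals $(1+o(1))\cosh^s(\lambda/\sqrt s)$ (or its shifted analogue for $\beta>1$). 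Your route---condition on $W_n$ first, apply Chernoff conditionally, then average---is equivalent when $\lambda$ is chosen independently of $W_n$, but is less modular since the partition-function ratio is reused elsewhere in the paper. One correction: you do \emph{not} need a Bahadur--Rao or local-CLT refinement for the $(1+o(1))$ prefactor. The prefactor comes entirely from controlling the MGF (i.e.\ the partition-function ratio) up to $(1+o(1))$, which is a uniform-integrability argument on $e^{T_n}$ with $T_n=s[\log\cosh(\beta W_n+\lambda/\sqrt s)-\log\cosh(\beta W_n)]$; the Markov/Chernoff step itself is applied crudely. Your diagnosis of where the sparsity conditions enter (to make $\sqrt s\cdot(\text{fluctuation of }W_n)$ and the resulting corrections to the exponent negligible) matches the paper's.
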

\begin{proof}
\begin{enumerate}
    \item[(a)]
    Pick $\lambda>0$ such that $\tanh\left(\frac{\lambda}{\sqrt{s}}\right)=\frac{t_n(\delta)}{\sqrt{s}}$. So, $\lambda \geq t_n(\delta)$, and 
$\cosh\left(\frac{\lambda}{\sqrt{s}}\right)=\left(1-\frac{t^2_n(\delta)}{s}\right)^{-1/2}.$ Also, $s \tanh\left(\frac{\lambda}{\sqrt{s}}\right)= \sqrt{st_n(\delta)}=O(\sqrt{s \log |\mathcal{N}(\C_s,\gamma,\varepsilon_n)|}) \ll \sqrt{n}$. Hence, 
\begin{align*}
    \P_{\beta,\bQ^{\rm CW}\mathbf{0}}(Z_S> t_n(\delta) &\le e^{-\lambda t_n(\delta)}\frac{Z(\beta,\bQ^{\rm CW}, \bmu_S(\frac{\lambda}{\sqrt{s}}))}{Z(\beta,\bQ^{\rm CW},\bzero)}\\
    &= (1+o(1))e^{-\lambda t_n(\delta)} \cosh^s\left(\frac{\lambda}{\sqrt{s}}\right) \le (1+o(1))e^{-t^2_n(\delta)},
\end{align*}
where the equality is due to Lemma \ref{lemma:curie_part_fun_ratio}.
\item[(b)]
When $\beta=1$, we pick same $\lambda$. Now, $s \tanh\left(\frac{\lambda}{\sqrt{s}}\right) \ll n^{1/4}$ and the bound is obtained using Lemma \ref{lemma:curie_part_fun_ratio} again and the $o(1)$ term does not depend on the choice of $S$.
\item[(c)]
When $\beta>1$, pick $\lambda=\frac{t_n(\delta)\sqrt{s}}{1-m^2}$. Here, $s\tanh(\lambda/s)\ll \sqrt{n}$ and $\lambda/s\rightarrow 0$.
\begin{align*}
\ &    \P_{\beta,\bQ^{\rm CW},\mathbf{0}}(Z_S> m \sqrt{s}+t_n(\delta))\\
&\leq \exp\left(-\frac{\lambda t}{\sqrt{s}}-\lambda m\right)\E_{\beta,\bQ^{\rm CW},\mathbf{0}}\exp\left(\frac{\lambda}{s}\sum_{i\in S}X_i\right)\\
&=\exp\left(-\frac{\lambda t}{\sqrt{s}}-\lambda m\right)\frac{Z(\beta,\bQ^{\rm CW}, \bmu_S(\frac{\lambda}{s}))}{Z(\beta,\bQ^{\rm CW},\bzero)}\\
&\leq (1+o(1))\exp\left(-\frac{\lambda t}{\sqrt{s}}-\lambda m\right)\left(\frac{\cosh(\beta m+\lambda/s)}{\cosh(\beta m)}\right)^s\\
&=(1+o(1))\exp\left(-\frac{\lambda t}{\sqrt{s}}-\lambda m+s\log\cosh(\beta m+\lambda/s)-s\log\cosh(\beta m)\right)\\
&\leq (1+o(1))\exp\left(-\frac{\lambda t}{\sqrt{s}}\underbrace{-\lambda m(\beta)+\frac{s\lambda}{s}\tanh(\beta m(\beta))}_{=0}+\frac{s\lambda^2}{2s^2}\sech^2(\beta m(\beta))\right)\\
&=(1+o(1))\exp\left(-\frac{\lambda t}{\sqrt{s}}+\frac{\lambda^2}{2s}\sech^2(\beta m(\beta))\right)\\
&=(1+o(1))\exp\left(-\frac{t^2}{2(1-m^2)}\right),
\end{align*}
where the second inequality is due to Lemma \ref{lemma:curie_part_fun_ratio} and the third inequality occurs since the third term of the Taylor expansion is negative. \\

When $W_n<0$, note that,
\begin{align*}
   \ &  \P_{\beta,\bQ^{\rm CW},\mathbf{0}}(Z_S> -m \sqrt{s}+t_n(\delta), W_n<0) \\
   &\le \exp\left(-\frac{\lambda t}{\sqrt{s}}+\lambda m\right)\E_{\beta,\bQ^{\rm CW},\mathbf{0}}\exp\left(\frac{\lambda}{s}\sum_{i\in S}X_i\mathbbm{1}_{W_n \le 0}\right)\\
	&\leq (1+o(1))\exp\left(-\frac{\lambda t_n(\delta)}{\sqrt{s}}+\lambda m\right)\left(\frac{\cosh(\beta m-\lambda/s)}{\cosh(\beta m)}\right)^s
\end{align*}
Previously, the third term in Taylor expansion was $<0$, here we simply bound it by $O\left( \frac{t^3_n(\delta)}{s}\right)=o(t^2_n(\delta))$.
\end{enumerate}
\end{proof}

The following Lemma provides estimates for ratios of normalizing constants in the Curie-Weiss model.

\begin{lemma}\label{lemma:curie_part_fun_ratio}

	For $s\in [n]$ and $A,\beta>0$ let $Z_n(\beta,s,A)=Z(\beta,\bQ^{\rm CW},\bmu_S(A))$ be the partition function  of the Curie-Weiss model $\P_{\beta,\bQ^{\rm CW},\bmu_{S}(A)}$ with $\bQ^{\rm CW}_{i,j}=\mathbbm{1}(i\neq j)/n$ %\footnote{ $\bmu_S(A)$ is the magnetization vector which has $\mu_i=A\mathbbm{1}(i\in S)$}. 
	Then the following conclusions hold for any $C>0$:
	
	\begin{enumerate}
	\item[(a)]
	If $\beta<1, s\ll \frac{n}{\log n}$ and $s\tanh(A)\ll \sqrt{n}$, then we have 
	\begin{align}\label{eq:partition<1}
	\frac{Z_n(\beta,s,A)}{Z_n(\beta,0,0)}=(1+o(1))\cosh(A)^s.
	\end{align}

	\item[(b)]
	If $\beta>1, s\ll \frac{n}{\log n}$ and  $s\tanh(A)\ll \sqrt{n}$ we have 
	$$\bE_{\beta,\bQ^{\rm CW},\bzero}\left(\exp(A\sum_{i\in S} X_i)|W_n>0\right)= (1+o(1))\frac{\cosh(\beta m+A)^s}{\cosh(\beta m)^s},$$
	$$\bE_{\beta,\bQ^{\rm CW},\bzero}\left(\exp(A\sum_{i\in S} X_i)|W_n<0\right)= (1+o(1))\frac{\cosh(\beta m-A)^s}{\cosh(\beta m)^s},$$
	where $W_n$ is the auxilliary variable for the Curie-Weiss model (cf. \cite[lemma 3]{mukherjee2016global}). Therefore,
	
	\begin{align}\label{eq:partition>1}
	 \frac{Z_n(\beta,s,A)}{Z_n(\beta,0,0)}= (1+o(1))\frac{1}{2}\left(\frac{\cosh(\beta m-A)^s}{\cosh(\beta m)^s}+\frac{\cosh(\beta m+A)^s}{\cosh(\beta m)^s}\right)%\sim \frac{1}{2}\Big[\frac{\cosh(\beta m+A)^{s}}{\cosh(\beta m)^{s}}+\frac{\cosh(\beta m-A)^{s}}{\cosh(\beta m)^{s}}\Big]
	\end{align}
	where $m$ is the unique positive root of the equation $m=\tanh(\beta m)$. 
	
%	\item[(c)]
%	If $\beta=1$ and  $s\ll \frac{\sqrt{n}}{\log n}$, then for $A:=\sqrt{\frac{C\log n}{s}}$ we have 
%	
%	\begin{align}\label{eq:partition1}
%	\frac{Z_n(\beta,s,A)}{Z_n(\beta,0,0)}\sim \cosh(A)^{s}.
%	\end{align}
	
	\item[(c)]
	If $\beta=1$ and  $s\ll \frac{n}{\log n}$, then for $s\tanh(A)\ll n^{1/4}$ we have \eqref{eq:partition<1} holds.
	
%	\begin{align}\label{eq:partition1s}
%\frac{Z_n(\beta,s,A)}{\cosh(A)^sZ_n(\beta,0,0)}=(1+o(1))\cosh(A)^s.
%	\end{align}
%	
%	\item[(e)]
%	If $s\le C\log n$, then for any $A\ge 0$ and $\beta>0$ we have
%	\begin{align}\label{eq:partitionlogn}
%	\frac{Z_n(\beta,s,A)}{Z_n(\beta,0,0)}\sim \cosh(A)^{s}.
%	\end{align}
	
	\end{enumerate}
	\end{lemma}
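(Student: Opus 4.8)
The unifying device is the Hubbard--Stratonovich representation of the Curie--Weiss model. Since $\bx^\top\bQ^{\rm CW}\bx=\tfrac1n(\sum_i x_i)^2-1$, sampling $W$ with $W\mid\bX\sim N(\bar\bX,(n\beta)^{-1})$ makes $(\bX,W)$ have joint mass proportional to $\exp(-\tfrac{n\beta}{2}w^2+\sum_i(\beta w+\mu_i)x_i)$; hence under $\P_{\beta,\bQ^{\rm CW},\bzero}$ the spins $X_i$ are conditionally i.i.d.\ given $W=w$ with $\E(e^{AX_i}\mid W=w)=\cosh(\beta w+A)/\cosh(\beta w)$ for $i\in S$, the marginal density of $W$ is $\nu_n(w)\propto e^{n\phi_\beta(w)}$ with $\phi_\beta(w):=\log\cosh(\beta w)-\tfrac{\beta}{2}w^2$, and by symmetry of $\phi_\beta$ one has $\nu_n(W>0)=\nu_n(W<0)=\tfrac12$ exactly. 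Setting $g_A(w):=\log\cosh(\beta w+A)-\log\cosh(\beta w)$, all three parts follow from the identity
\[
\frac{Z_n(\beta,s,A)}{Z_n(\beta,0,0)}=\E_{\beta,\bQ^{\rm CW},\bzero}\big(e^{A\sum_{i\in S}X_i}\big)=\E_{\nu_n}\big[e^{s\,g_A(W)}\big]
\]
together with its conditional versions on $\{W>0\}$ and $\{W<0\}$. So the task is a Laplace-type analysis of $\E_{\nu_n}[e^{s g_A(W)}]$.

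\textbf{Laplace analysis.} I would proceed in four moves. First, recall (see \cite{Ellis_Newman,ellis2006entropy}) that $\phi_\beta$ has a unique maximiser $w_0=0$ with $\phi_\beta''(0)=\beta(\beta-1)<0$ when $\beta<1$; two symmetric maximisers $w_0=\pm m$ with $\phi_\beta''(\pm m)<0$ when $\beta>1$; and for $\beta=1$ a degenerate maximiser at $0$ with $\phi_1''(0)=\phi_1'''(0)=0$ and $\phi_1^{(4)}(0)=-2<0$. Hence $W$ concentrates at $w_0$ at scale $n^{-1/2}$ when $\beta\ne1$ and at the anomalous scale $n^{-1/4}$ when $\beta=1$; for $\beta>1$ the same holds conditionally on $\{W\gtrless0\}$ around $\pm m$, because the two wells are separated by a fixed gap across which $\phi_\beta$ drops by a fixed amount. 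Second, I zoom in by setting $w=w_0+n^{-1/2}u$ (resp.\ $w=n^{-1/4}u$ when $\beta=1$), so that $n\phi_\beta(w)=n\phi_\beta(w_0)+\tfrac12\phi_\beta''(w_0)u^2+o(1)$ (resp.\ $n\phi_1(w)=n\phi_1(0)-\tfrac{u^4}{12}+o(1)$) uniformly on compacts in $u$. Third---the key step---I expand $s\,g_A$ on this window using the refined bounds $|g_A'(w_0)|=\beta|\tanh(\beta w_0+A)-\tanh(\beta w_0)|\lesssim\tanh A$ and $\sup_w|g_A''(w)|=\beta^2\sup_w|\sech^2(\beta w+A)-\sech^2(\beta w)|\lesssim\tanh A$ (each via the mean value theorem in the $A$-variable), obtaining $s\,g_A(w)=s\,g_A(w_0)+O(s\tanh(A)\,n^{-1/2})$ for $\beta\ne1$ and $s\,g_A(w)=s\,g_A(w_0)+O(s\tanh(A)\,n^{-1/4})$ for $\beta=1$, uniformly on the window; the hypotheses $s\tanh A\ll\sqrt n$ (resp.\ $\ll n^{1/4}$) are precisely what make these errors $o(1)$. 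Fourth, for the tails: on the annulus where $|W-w_0|$ exceeds the window but stays below a small fixed $\delta_0$, the same Taylor bound shows the linear and quadratic contribution of $s\,g_A$ is swamped by the strictly negative quadratic (resp.\ quartic) term from $n\phi_\beta$, so that region is negligible; on $\{|W-w_0|>\delta_0\}$ one bounds $e^{s g_A(W)}\le e^{As}$, uses $\nu_n(|W-w_0|>\delta_0)\le e^{-cn}$, and notes $s(A-\log\cosh A)\le s\log2=o(n)$, so this too is $o$ of the main term. Assembling: for $\beta\le1$, $\E_{\nu_n}[e^{s g_A(W)}]=(1+o(1))e^{s g_A(0)}=(1+o(1))\cosh(A)^s$, which is \eqref{eq:partition<1} and gives (a) and (c); for $\beta>1$, $\E_{\nu_n}[e^{s g_A(W)}\mid W>0]=(1+o(1))(\cosh(\beta m+A)/\cosh(\beta m))^s$ and, by the reflection $w\mapsto-w$ (under which $\phi_\beta$ is invariant while $g_A(w)$ becomes $\log\cosh(\beta w-A)-\log\cosh(\beta w)$), $\E_{\nu_n}[e^{s g_A(W)}\mid W<0]=(1+o(1))(\cosh(\beta m-A)/\cosh(\beta m))^s$; averaging with weights $\tfrac12,\tfrac12$ yields \eqref{eq:partition>1} and hence (b).

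\textbf{Main obstacle.} The delicate case is $\beta=1$ (part (c)): because $\phi_1$ has a degenerate quartic maximum, $W$ lives at the non-standard scale $n^{-1/4}$, and the crude estimate $|g_A''|\le1$ is too weak---the would-be error $s\,g_A''(\cdot)\,n^{-1/2}\sim s\,n^{-1/2}$ need not vanish under $s\ll n/\log n$ alone. The fix is to carry the sharper $A$-dependent bounds $|g_A'|,|g_A''|\lesssim\tanh A$ throughout, converting the error to $s\tanh(A)\,n^{-1/4}\ll1$ exactly under $s\tanh A\ll n^{1/4}$; one must also check that the field-induced shift of the effective maximiser, of order $(s\tanh A/n)^{1/3}$, stays well inside the $n^{-1/4}$ window, which is again exactly the hypothesis $s\tanh A\ll n^{1/4}$. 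Getting every error term in the quartic Laplace integral to vanish simultaneously, with the correct $A$-dependence, is the main technical point; the cases $\beta<1$ and $\beta>1$ are genuinely easier because the maximisers are non-degenerate, the natural scale is $n^{-1/2}$, and the $s\tanh A\ll\sqrt n$ assumption leaves ample room.
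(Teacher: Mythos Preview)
Your proposal is correct and follows essentially the same route as the paper: reduce to $\E_{\nu_n}[e^{s g_A(W)}]$ via the auxiliary-variable representation, then Taylor-expand $g_A$ around the maximiser(s) of $\phi_\beta$ and control the tails. The paper packages the same computation as ``$T_n - s g_A(w_0)\to 0$ in probability'' plus a uniform-integrability bound on $\E e^{2(T_n-s g_A(w_0))}$ (with explicit Gaussian/quartic envelopes for the density of $W_n$), whereas you phrase it as a direct Laplace split into window/annulus/far tail; these are the same argument, and in particular your observation that the effective maximiser shifts by order $(s\tanh A/n)^{1/3}$ in the critical case is exactly the quantity $t_n$ that appears in the paper's UI estimate for part~(c).
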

	
	\begin{proof}
	We begin with the representation used in \cite[lemma 3]{mukherjee2016global}. 
 Define a random variable $W_n$ which given $\bX$ has a distribution $ N(\bar{\mathbf{X}},1/n)$. Then under $\P_{\beta,\bQ^{\rm CW},\mathbf{0}}$, given $W_n=w_n$, each $X_i$'s are i.i.d with 
	\begin{align*}
	\P_{\beta,\bQ^{\rm CW},\mathbf{0}}(X_i=x_i|W_n=w_n)=\frac{e^{\beta w_n x_i}}{e^{\beta w_n}+e^{-\beta w_n}}.
	\end{align*}
	Therefore, for any $\mu_i\in \mathbb{R}$ one has $\E_{\beta,\bQ^{\rm CW},\mathbf{0}}(\exp(\mu_iX_i)|W_n)=\frac{\cosh(\mu_i+\beta W_n)}{\cosh(\beta W_n)}$.
	Therefore for any $\bmu\in \mathbb{R}^n$
	\begin{align}
	\frac{Z_n(\beta,\bQ^{\rm CW},\bmu)}{Z_n(\beta,\bQ^{\rm CW},\mathbf{0})}&=\frac{\sum_{\bx\in \{\pm 1\}^n}\exp\left(\frac{\beta}{n}\sum_{i<j}x_ix_j+\sumn \mu_i x_i\right)}{\sum_{\bx\in \{\pm 1\}^n}\exp\left(\frac{\beta}{n}\sum_{i<j}x_ix_j\right)}=\E_{\beta,\bQ^{\rm CW},\mathbf{0}}\left(\exp(\sumn \mu_i X_i)\right)\nonumber\\
	&=\E_{\beta,\bQ^{\rm CW},\mathbf{0}}\left[\exp\left\{\sum_{i=1}^n\left(\log\cosh(\mu_i+\beta W_n)-\log\cosh(\beta W_n)\right) \right\}\right].\label{eqn:partition_function_ratio}
	\end{align} 
	Consequently we have
	\begin{align}\label{eq:partition}
	\frac{Z_n(\beta,s,A)}{Z_n(\beta,0,0)}=\frac{Z_n(\beta,\bQ^{\rm CW},\bmu_S(A))}{Z_n(\beta,\bQ^{\rm CW},\mathbf{0})}=\E_{\beta,\bQ^{\rm CW},\mathbf{0}} (e^{T_n}),\quad T_n:=s[\log \cosh(\beta W_n+A)-\log \cosh(\beta W_n)].
	\end{align}
	Also use \cite[Lemma 3]{mukherjee2016global} to note that $W_n$ marginally has a density proportional to $e^{-nf(w)}$, with $f(w):=\beta w^2/2-\log\cosh(\beta w)$. With this we are ready to prove the lemma.

	\begin{enumerate}
	\item[(a)]

	To begin note that $|\log\cosh(\beta W_n)|\le \frac{\beta^2}{2}W_n^2$. Also, a two term Taylor's series expansion in $W_n$ gives
	\begin{align*}
	\Big|\log \cos(\beta W_n+A)-\log\cosh(A)-\beta W_n\tanh(A)\Big|\le &\frac{\beta^2}{2} W_n^2.
	\end{align*}
	Combining these two and setting $B:=\tanh(A)$ we have
	$$|T_n-s\log \cosh(A)|\le \beta|W_n|sB+\beta^2sW_n^2,$$
	the RHS of which converges to $0$ in probability, as $\sqrt{n}W_n=O_{\P_{\beta,\bQ^{\rm CW},\mathbf{0}}}(1)$ by part (a) of Lemma \ref{lem:bayesian}.
	Thus to prove \eqref{eq:partition<1}, using uniform integrability it suffices to show that  
	\begin{align}\label{eq:theta<1}
	\limsup_{n\rightarrow\infty}\E_{\beta,\bQ^{\rm CW},\mathbf{0}} e^{2T_n-2s\log\cosh(A)}<\infty.
	\end{align}
	To this effect, again using the above Taylor's expansion gives
	$$
	T_n-s\log\cosh(A)\le \beta s|W_n|B +s\beta^2W_n^2,
	$$
	using which the RHS \eqref{eq:theta<1} can be bounded as follows:
	\begin{align*}
	 \E_{\beta,\bQ^{\rm CW},\mathbf{0}} e^{2T_n-2s\log\cosh(A)} \le \E_{\beta,\bQ^{\rm CW},\mathbf{0}} e^{2\beta sB|W_n|+2s\beta^2 W_n^2}\le 2\E_{\beta,\bQ^{\rm CW},\mathbf{0}} e^{2\beta sBW_n+2s\beta^2 W_n^2}
	 \end{align*}
	Setting $\lambda_1:=\beta(1-\beta),\lambda_2:=\beta$ we have 
	$$\lambda_1 \frac{w^2}{2}\le f(w)\le \lambda_2\frac{w^2}{2},$$
	which readily gives
	\begin{align*}
	\E_{\beta,\bQ^{\rm CW},\mathbf{0}} e^{2\beta sW_nB+s\beta^2 W_n^2}\le &2\frac{\int_\R e^{2\beta sBz+2s\beta^2z^2-n\lambda_1z^2/2}dz}{\int_\R e^{-n\lambda_2z^2/2}dz}\\
	 %\le &\sqrt{\frac{\lambda_2}{\lambda_1}}e^{\frac{2\beta^2s^2A^2}{n\lambda_1}}\\
	 =&\sqrt{\frac{n\lambda_2}{n\lambda_1-4\beta^2 s}}\text{exp}\Big\{\frac{2\beta^2s^2B^2}{n\lambda_1-4\beta^2 s} \Big\},
	\end{align*}
	the RHS of which converges to $1$. Thus \eqref{eq:theta<1} holds, and so the proof is complete.
	\\

	\item[(b)]
	To begin, setting $C_n:=\int_\R e^{-n f(w)}dw$ we have
	\begin{align*}
	C_n\E e^{T_n}=&\int_{-\infty}^0 \prod_{i=1}^n e^{\log\cosh(\beta w+\mu_i)-\log\cosh(\beta w)} e^{-nf(w)}dw+\int_{0}^\infty \prod_{i=1}^n e^{\log\cosh(\beta w+\mu_i)-\log\cosh(\beta w)} e^{-nf(w)}dw\\
	=&\int_{0}^\infty \prod_{i=1}^n e^{\log\cosh(-\beta w+ \mu_i)-\log\cosh(-\beta w)} e^{-nf(w)}dw+\int_{0}^\infty \prod_{i=1}^n e^{\log\cosh(\beta w+\mu_i)-\log\cosh(\beta w)} e^{-nf(w)}dw\\
	\le &2\int_{0}^\infty \prod_{i=1}^n e^{\log\cosh(\beta w+\mu_i)-\log\cosh(\beta w)} e^{-nf(w)}dw,
	\end{align*}
	where the last inequality uses the fact that
	\begin{align*}\log\cosh(\beta w+\mu_i)-\log \cosh(\beta w)=&\int_{0}^{\mu_i}\tanh(\beta w+z)dz\\
	\le &\int_{0}^{\mu_i}\tanh(-\beta w+z)dz=\log\cosh(-\beta w+\mu_i)-\log \cosh(-\beta w),
	\end{align*}
	 as $\tanh$ is monotone increasing. This shows that 
	 \begin{align*}
	      \E_{\beta,\bQ^{\rm CW},\mathbf{0}} e^{T_n}
	     &=\frac{1}{2}\E_{\beta,\bQ^{\rm CW},\mathbf{0}} (e^{T_n}|W_n>0)+\frac{1}{2}\E_{\beta,\bQ^{\rm CW},\mathbf{0}} (e^{T_n}|W_n<0).
	 \end{align*}
	We will now show that,
	 \begin{align}\label{eq:theta>1+}
	\lim_{n\rightarrow\infty} \E_{\beta,\bQ^{\rm CW},\mathbf{0}} (e^{T_n}|W_n>0)=(1+o(1))\frac{\cosh(\beta m+A)^s}{\cosh(\beta m)^s}.
	 \end{align}
	 A similar argument takes care of the second term.
For this, use part (b) of Lemma \ref{lem:bayesian} to note that
	$\Big(\sqrt{n}(W_n-m)|W_n>0\Big)=O_p(1)$.
	For $W_n>0$, a Taylor's series expansion gives
	\begin{eqnarray*}
	\Big|\log\cosh(\beta W_n+A)-\log\cosh(\beta m+A)-\beta (W_n-m)\tanh(\beta m+A)\Big|&\le &\frac{\beta^2}{2}(W_n-m)^2\\
	 \Big|\log\cosh(\beta W_n)-\log\cosh(\beta m)-\beta(W_n-m)\tanh(\beta m)\Big|&\le &\frac{\beta^2}{2}(W_n-m)^2,
	\end{eqnarray*}
	which on taking a difference gives
	 $$\Big|T_n-s(\log\cosh(\beta m+A)-\log\cosh(\beta m))\Big|\le s\tilde{B}|W_n-m|+\beta^2s(W_n-m)^2, $$
	 where $\tilde{B}:=|\tanh(\beta m+A)-\tanh(\beta m)|$. The RHS in the display above converges to $0$ in probability, conditioned on the event $T_n>0$. As before, \eqref{eq:theta>1+} will follow from this via uniform integrability if we can show that
	\begin{align}\label{eq:theta>1ui}
	\E_{\beta,\bQ^{\rm CW},\mathbf{0}} e^{2T_n-2s\log\cosh(\beta m+A)+2s\log \cosh(\beta m)}<\infty.
	\end{align}
	For showing \eqref{eq:theta>1ui}, again use the above Taylor's expansion to note that
	$$T_n-s\log \cosh(\beta m+A)+s\log\cosh(\beta m)\le  s\tilde{B}|W_n-m|+\beta^2s(W_n-m)^2.$$
	We now claim that on $[0,\infty)$ the function $f(w)$ satisfies
	\begin{align}\label{eq:claim_new}
	\frac{\lambda_2}{2}(w-m)^2\le f(w)-f(m)\le \frac{\lambda_1}{2}(w-m)^2
	\end{align}
	for some positive constants $\lambda_1>\lambda_2$. Given the claim, a similar calculation as in part (a) shows that
	\begin{align*}
	\E_{\beta,\bQ^{\rm CW},\mathbf{0}}(e^{T_n}|W_n>0)\le &2\frac{\int_\R e^{2\beta s\tilde{B}z+2s\beta^2z^2-n\lambda_1z^2/2}dz}{\int_{-m}^\infty  e^{-n\lambda_2z^2/2}dz}\\
	 %\le &\sqrt{\frac{\lambda_2}{\lambda_1}}e^{\frac{2\beta^2s^2A^2}{n\lambda_1}}\\
	 =&(1+o(1))\sqrt{\frac{n\lambda_2}{n\lambda_1-4\beta^2 s}}\text{exp}\Big\{\frac{2\beta^2s^2\tilde{B}^2}{n\lambda_1-4\beta^2 s} \Big\},
	\end{align*}
	which converges to $1$ as before, thus verifying \eqref{eq:theta>1ui} and hence completing the proof of the Lemma.
	
	It thus remains to prove \eqref{eq:claim_new}. To this effect, note that the function $f(.)$ on $[0,\infty)$ has a unique global minima at $z=m$, and $f''(m)>0$. Thus the function $F:[0,\infty)\mapsto \R$ defined by
	$$F(w)=\frac{f(w)-f(m)}{2(w-m)^2},z\ne m,\quad F(m):=f''(m)$$
	is continuous and strictly positive on $[0,\infty)$, and $\lim_{t\rightarrow\infty}F(t)=\beta_2>0$. Thus setting $\lambda_2:=\inf_{t\in [0,\infty)}F(t)$, $\lambda_1:=\sup_{t\in [0,\infty)}F(t)$, it follows that $\lambda_2>0$ and $\lambda_1<\infty$, and so the proof of \eqref{eq:claim_new} is complete.

	\item[(c)]
	We now use a four term Taylor expansion to get
	\begin{eqnarray*}
	\Big|\log\cosh(W_n+A)-\log\cosh(A)-\frac{W_n^2}{2}\text{sech}^2(A)|&\le &B|W_n|
	+\frac{1}{2}B|W_n|^3+ \frac{1}{4}|W_n|^4\\
	\Big|\log\cosh(W_n)-\frac{W_n^2}{2}\Big|&\le &\frac{1}{4}|W_n|^4
	\end{eqnarray*}
	  This gives
	$$|T_n-s\log\cosh(A)|\le sB|W_n|+sB^2\frac{W_n^2}{2}+\frac{1}{2}sB|W_n|^3+\frac{1}{2}sW_n^4$$
	%where $C_0$ is a finite constant depending on $\beta$.
	The RHS above converges to $0$ on noting that $sB\ll  n^{1/4}$, and $W_n=O_p(n^{-1/4})$  (which follows by part (c) of Lemma \ref{lem:bayesian}).
	\\

%
%	A two term Taylor's series expansion of $W_n$ in $T_n$ gives
%	\begin{align*}
%	|\log\cosh(T_n+A)-\log\cosh(A)-T_n\tanh(A)|\le \frac{T_n^2}{2},
%	\end{align*}
%	and so $$|W_n-s\log\cosh(A)|\le sB|T_n|+sT_n^2,$$
%	which converges to $0$ in probability, as $|T_n|=O_p(n^{-1/4})$ using  part (c) of Lemma \ref{lem:bayesian}. 
As before, using uniform integrability it suffices to show that
	\begin{align}\label{eq:unif1}
	\limsup_{n\rightarrow\infty}\E_{\beta,\bQ^{\rm CW},\mathbf{0}} e^{2T_n-2s\log\cosh(A)}<\infty.
	\end{align}
	To this end, %use the above Taylor's series to get $W_n-s\log \cosh(A)\le sB|T_n|+sT_n^2$. 
	use the bound $x-\log 2\le \log\cosh(x)\le x$ for $x\ge 0$ to conclude
	$$T_n-s\log\cosh(A)\le 2s\log 2.$$
	
	Also, since $$\lim_{z\rightarrow0}\frac{f(z)}{z^4}=\frac{1}{12},$$ there exists $\delta\in (0,1)$ and $0<\lambda_2\le\lambda_1<\infty$ such that for $|z|\le \delta$ we have $$\lambda_1 z^4\le f(z)\le \lambda_2 z^4,$$ which gives
	\begin{align}\label{eq:theta=1}
	\E_{\beta,\bQ^{\rm CW},\mathbf{0}} e^{2W_n-2s\log\cosh(A)}\le &e^{2s\log 2}\P(|T_n|>\delta)+\frac{\int_{-\delta}^\delta e^{5sB|z|-n\lambda_1z^4}dz}{\int_{-\delta}^\delta e^{-n\lambda_2z^4}dz},
	\end{align}
	where the last line uses the bound $$\max\Big(sB^2z^2, sB|z|^3+sz^4\Big)\le sB|z|$$ for all $|z|\le \delta$.
	We now bound each of the terms in the RHS of \eqref{eq:theta=1}. The denominator in the RHS of \eqref{eq:theta=1} by a change of variable equals $$n^{-1/4}\int_{-\delta n^{1/4}}^{\delta n^{1/4}} e^{-\lambda_2 t^4}dt=(1+o(1))n^{-1/4}\int_{-\infty}^\infty e^{-\lambda_2t^4}dt.$$ For estimating the numerator of RHS, set $t_n:=\Big(\frac{10sB}{n\lambda_1}\Big)^{1/3}$ and note that for $t>t_n$ we have $5sB|z|\le \frac{1}{2}n\lambda_1 z^4$. This gives
	\begin{align*}
	\int_{-\delta}^\delta e^{5sB|z|-n\lambda_1z^4}dz\le &2\int_0^{t_n}e^{5sBz}dz+2\int_{t_n}^\delta e^{-\frac{n\lambda_1}{2}t^4}dt\\
	\le &2t_ne^{5sBt_n}+2\int_{0}^\infty e^{-\frac{n\lambda_1}{2}t^4}dt\\
	= &2t_ne^{5sBt_n}+2n^{-1/4}\int_0^\infty e^{-\lambda_1 t^4/2}dt.
	\end{align*}
	The first term above is asymptotically $2t_n\ll n^{-1/4}$, where we use the bound $sB\ll n^{1/4}$. Thus the  numerator in the second term in the RHS of \eqref{eq:theta=1} is $O(n^{-1/4})$. Proceeding to estimate the first term, use Lemma \ref{lem:bayesian} to note that $\P(|W_n|>\delta)$ decays exponentially in $n$, whereas $e^{2s\log 2}\le e^{\frac{2n\log 2}{\log n} }$ is sub exponential. It thus follows that the RHS of \eqref{eq:theta=1} is bounded, and so the proof of part (c) is complete.

	\end{enumerate}
	\end{proof}

\begin{lemma}\label{lem:htrat}
Suppose $\bX\sim \P_{\beta,\bQ^{\rm CW},\bmu_S(A)}$ with $\bQ^{\rm CW}_{i,j}=\mathbf{1}(i\neq j)/n$. 	If $\lambda \in \R$, $\beta<1$, $s \ll n/ \log n$, $B := s \max \{\tanh (\lambda+A),\tanh (A)\} \ll \sqrt{n}$, then
	\begin{equation}\label{eq:htaltmgf}
		\E_{\beta,\bQ^{\rm CW},\bmu_S(A)}\big(\exp{(\lambda\sum\limits_{i \in S}X_i)}\big)= (1+o(1))\Big(\frac{\cosh(A+\lambda)}{\cosh(A)}\Big)^s.
	\end{equation}
\end{lemma}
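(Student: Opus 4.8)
The plan is to compute the moment generating function directly via a change of measure, reducing it to a ratio of Curie--Weiss partition functions, and then invoke the already-established Lemma~\ref{lemma:curie_part_fun_ratio}(a). First I would write
\[
\E_{\beta,\bQ^{\rm CW},\bmu_S(A)}\Big(\exp\big(\lambda\textstyle\sum_{i\in S}X_i\big)\Big)
=\frac{Z_n(\beta,\bQ^{\rm CW},\bmu_S(A+\lambda))}{Z_n(\beta,\bQ^{\rm CW},\bmu_S(A))},
\]
since multiplying the Gibbs weight $\exp(\tfrac{\beta}{2}\bx^\top\bQ^{\rm CW}\bx+A\sum_{i\in S}x_i)$ by $\exp(\lambda\sum_{i\in S}x_i)$ just replaces the external field $A$ on $S$ by $A+\lambda$. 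Both numerator and denominator are of the form $Z_n(\beta,s,\cdot)$ appearing in Lemma~\ref{lemma:curie_part_fun_ratio}.

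Next I would write the ratio as
\[
\frac{Z_n(\beta,\bQ^{\rm CW},\bmu_S(A+\lambda))}{Z_n(\beta,\bQ^{\rm CW},\bmu_S(A))}
=\frac{Z_n(\beta,s,A+\lambda)/Z_n(\beta,0,0)}{Z_n(\beta,s,A)/Z_n(\beta,0,0)},
\]
and apply Lemma~\ref{lemma:curie_part_fun_ratio}(a) to each factor. The lemma requires $\beta<1$, $s\ll n/\log n$, and that the relevant $s\tanh(\cdot)$ be $\ll\sqrt n$; the hypothesis $B=s\max\{\tanh(\lambda+A),\tanh(A)\}\ll\sqrt n$ is precisely designed so that both $s\tanh(A+\lambda)\ll\sqrt n$ (note $|\tanh(A+\lambda)|\le\max\{\tanh(A+\lambda),\tanh(A)\}$ when $\lambda\ge 0$, and for $\lambda<0$ one has $|\tanh(A+\lambda)|\le\tanh(A)$) and $s\tanh(A)\ll\sqrt n$ hold. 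This yields numerator $=(1+o(1))\cosh(A+\lambda)^s$ and denominator $=(1+o(1))\cosh(A)^s$, hence the claimed $(1+o(1))(\cosh(A+\lambda)/\cosh(A))^s$.

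The only mild subtlety — and the step I'd watch most carefully — is checking the case distinction in $\lambda$ to confirm that the single hypothesis $B\ll\sqrt n$ really does imply $s\,|\tanh(A+\lambda)|\ll\sqrt n$ for both signs of $\lambda$, so that Lemma~\ref{lemma:curie_part_fun_ratio}(a) applies to the numerator; for $\lambda\ge 0$ this is immediate since $0\le\tanh(A+\lambda)\le\max\{\tanh(A+\lambda),\tanh A\}$, and for $\lambda<0$ one uses $|\tanh(A+\lambda)|<\tanh(A)\le\max\{\tanh(A+\lambda),\tanh A\}$ together with $A>0$. A second point worth noting is that the $o(1)$ error terms furnished by Lemma~\ref{lemma:curie_part_fun_ratio}(a) are uniform in the field value over the relevant range (this is already implicit in how that lemma is invoked elsewhere, e.g.\ in the proof of Lemma~\ref{lemma:cw_moderate_deviation}(a)), so dividing the two expansions is legitimate and the $(1+o(1))$ factors combine cleanly. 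No second-moment or variance control is needed here; the whole statement is a one-line consequence of the partition-function asymptotics.
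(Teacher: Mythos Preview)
Your proof is correct and follows exactly the paper's approach: express the moment generating function as a ratio of partition functions, divide numerator and denominator by $Z_n(\beta,0,0)$, and apply Lemma~\ref{lemma:curie_part_fun_ratio}(a) to each. The paper's own proof is the same two-line argument, without your additional hypothesis-checking (one small caveat: your claim that $|\tanh(A+\lambda)|<\tanh(A)$ for $\lambda<0$ fails when $\lambda<-2A$, but since $\cosh$ is even one can simply apply Lemma~\ref{lemma:curie_part_fun_ratio}(a) with field $|A+\lambda|$, and in any case this regime does not arise in the lemma's applications).
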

\begin{proof} Observe that,
\[ \E_{\beta,\bQ^{\rm CW},\bmu_S(A)}\big(\exp{(\lambda\sum\limits_{i \in S}X_i)}\big)= \frac{Z(\beta,\bQ^{\rm CW},\bmu_S(A+\lambda))}{Z(\beta,\bQ^{\rm CW},\bmu_S(A))}=\frac{Z(\beta,\bQ^{\rm CW},\bmu_S(A+\lambda))/Z(\beta,\bQ,\bzero)}{Z(\beta,\bQ^{\rm CW},\bmu_S(A))/Z(\beta,\bQ,\bzero)}.
\]
Now, Lemma \ref{lemma:curie_part_fun_ratio} completes the proof.
\end{proof}

\begin{lemma}\label{lemma:cw_aux_var_alt_high_temp} Suppose $\bX\sim \P_{\beta,\bQ^{\rm CW},\bmu}$ with $\bQ^{\rm CW}_{i,j}=\mathbf{1}(i\neq j)/n$. 
\begin{comment}
and let $W_n\sim N\left(\bar{\bX},\frac{1}{n\beta}\right)$. Then $W_n$ has density proportional to $e^{- nf_{n,\bmu}(w)}$ where 
\begin{equation}\label{eq:zdensity}
    f_n(w):= f_{n,\bmu}(w)= \frac{ \beta w^2}{2}- \frac{1}{n}\sum\limits_{i=1}^{n}\log \cosh(\beta w+\mu_i).
\end{equation}
the following claims hold.
\item[(a)] If $\beta <1$, the function $w \mapsto f_{n,\mathbf{\bmu}}(w)$ is a strictly convex function in $\R$, has a unique global minimum $m_n:= m_n(\bmu)$ with $m_n \in (0,1)$, such that $m_n= \Theta(B(\bmu_S(A)))$, and $\exists \lambda_1, \lambda_2>0$ such that
\begin{equation}\label{eq:httight}
\P_{\beta,\bQ^{\rm CW},\bmu}(|W_n-m_n|>Kn^{-1/2}) \leq \frac{\lambda_2}{\lambda_1} \P(|N(0,\frac{1}{\lambda_1})|>K),
\end{equation}    
which implies    
\begin{equation}
    \E_{\beta,\bQ^{\rm CW},\bmu}(|W_n-m_n|)= O(\frac{1}{\sqrt{n}}).
\end{equation}
\item[(b)]
If $\beta=1$, the function $w \mapsto f_{n,\mathbf{\bmu}}(w)$ is a strictly convex function in $\R$, has a unique global minimum $m_n$ with $m_n \in (0,1)$, such that $m^3_n= \Theta(B(\bmu_S(A)))$ and an universal constant $c>0$ such that for $n$ sufficiently large enough and every $K>0$
	\begin{equation}\label{eq:1ut}
	\P_{\beta,\bQ^{\rm CW},\bmu}(W_n-m_n>Kn^{-1/4}) \leq  \frac{\P(N(0,1)>K^2c)}{\P(0<N(0,1)<K^2c)},
	\end{equation}
\item[(c)]
\end{comment}
When $\beta>1$, let $m$ be the unique positive root of $m=\tanh(\beta m)$. Fix $\bmu\in \Xi(s,A)$. %the function $w \mapsto f_{n,\mathbf{\bmu}}(w)$ on $[0,\infty]$ has a unique global minimum  $m_n:=m_n(\beta)$ satisfying $m_n \in (0,1)$ and $m_n-m =\Theta(\frac{1}{n}\sum_i \mu_i)$ where 
Then the following hold uniformly in $\bmu\in \mathbb{R}_+^n$ such that $\sum_{\i=1}^n \bmu_i\ll n$.
	
	\begin{enumerate}
	    \item [(a)] %For each $K>0$,
	%\begin{equation}
	%	\P_{\beta,\bQ^{\rm CW},\bmu}\left(W_n>m_n+Kn^{-1/2}\right) \leq c_1 \P\left(N\Big((0,\frac{1}{c_2}\Big)>K\right),
	%\end{equation}
	%where $c_1, c_2>0$ are constants depending on $\beta$.
	Fix any sequence $\kappa_n\rightarrow 0$. Then  then there exists a sequence $\zeta_n\rightarrow 0$ (depending only on the pre-fixed sequence $\kappa_n$) such that, $$\sup_{\bmu:\sum_{i=1}^n \mu_i\leq \kappa_n n}\P_{\beta,\bQ^{\rm CW},\bmu}\left(|W_n - m|>\zeta_n, W_n\geq 0\right) \rightarrow 0$$
	
	\item [(b)] 	Fix any sequence $\kappa_n\rightarrow 0$. Then {\color{black} then there exists a sequence $\zeta_n\rightarrow 0$ (depending only on the pre-fixed sequence $\kappa_n$) such that, $$\sup_{\bmu:\sum_{i=1}^n \mu_i\leq \kappa_n n}\P_{\beta,\bQ^{\rm CW},\bmu}\left(|W_n + m|>\zeta_n, W_n\leq 0\right) \rightarrow 0$$
	%$$\P_{\beta,\bQ^{\rm CW},\bmu}\left(|W_n + m|>\zeta_n n^{-1/2}, W_n<0\right) \rightarrow 0$$}
	}
	\end{enumerate}
\end{lemma}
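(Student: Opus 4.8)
The plan is to use the Hubbard--Stratonovich representation of the Curie--Weiss measure (as in \cite[Lemma 3]{mukherjee2016global} and the proof of Lemma~\ref{lemma:curie_part_fun_ratio}) and to observe that a nonnegative external field with small average perturbs the relevant rate function only additively, uniformly on the whole line, so that $W_n$ still concentrates at $\pm m$. Under $\P_{\beta,\bQ^{\rm CW},\bmu}$ the variable $W_n\sim N(\bar\bX,1/(n\beta))$ has marginal density proportional to $e^{-n f_{n,\bmu}(w)}$, where $f_{n,\bmu}(w)=\tfrac{\beta}{2}w^2-\tfrac1n\sum_{i=1}^n\log\cosh(\beta w+\mu_i)$, and conditionally on $W_n=w$ the $X_i$ are independent with $\E(X_i\mid W_n=w)=\tanh(\beta w+\mu_i)$. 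Write $f(w):=f_{n,\bzero}(w)=\tfrac{\beta}{2}w^2-\log\cosh(\beta w)$, which for $\beta>1$ has exactly the two global minimizers $\pm m$, with common value $c_0:=f(m)<0$ and $f''(\pm m)>0$. Since $\log\cosh(\beta w+\mu_i)-\log\cosh(\beta w)=\int_0^{\mu_i}\tanh(\beta w+t)\,dt$ and $|\tanh|\le1$, we get the crucial uniform bound $\sup_{w\in\R}\bigl|f_{n,\bmu}(w)-f(w)\bigr|\le\tfrac1n\sum_i|\mu_i|=\tfrac1n\sum_i\mu_i\le\kappa_n$ whenever $\sum_i\mu_i\le\kappa_n n$ (here $\mu_i\ge0$ is used).

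\emph{Choice of $\zeta_n$.} Set $\rho(\delta):=\inf\{f(w)-c_0:\ |\,|w|-m\,|\ge\delta\}$. Because $f$ is even, smooth, equals $c_0$ only at $\pm m$ where $f''>0$, and $f(0)-c_0=|c_0|>0$, one has $\rho(\delta)>0$ for all $\delta>0$ and $\rho(\delta)\ge c\,\delta^2$ for all small $\delta$, with $c=c(\beta)>0$. Choose $\zeta_n\to0$, depending only on $(\kappa_n)$ and $n$, with $\zeta_n^2/\kappa_n\to\infty$ and $n\zeta_n^2\to\infty$ — for instance $\zeta_n=\max(\kappa_n,1/n)^{1/4}$. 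On $\{W_n\ge0,\ |W_n-m|>\zeta_n\}$ (resp.\ $\{W_n\le0,\ |W_n+m|>\zeta_n\}$) one has $|\,|W_n|-m\,|\ge\zeta_n$, hence $f(W_n)\ge c_0+\rho(\zeta_n)$.

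\emph{Ratio estimate.} Express $\P_{\beta,\bQ^{\rm CW},\bmu}(|W_n-m|>\zeta_n,\,W_n\ge0)$ as the ratio of $\int_{\{w\ge0,\,|w-m|>\zeta_n\}}e^{-nf_{n,\bmu}(w)}\,dw$ to $\int_\R e^{-nf_{n,\bmu}(w)}\,dw$, and use the uniform bound of the first paragraph to replace $f_{n,\bmu}$ by $f$ at the cost of a factor $e^{\pm n\kappa_n}$. In the numerator, $e^{-nf(w)}\le e^{-n(c_0+\rho(\zeta_n))}$ on the relevant (bounded part of the) set, and the tail $|w|\ge4$ contributes at most $\int_\R e^{-n\beta w^2/4}\,dw$ (since $f(w)\ge\beta w^2/4$ there), which is negligible next to $e^{-n(c_0+\rho(\zeta_n))}$; so the numerator is $\le C\,e^{n\kappa_n}e^{-n(c_0+\rho(\zeta_n))}$. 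In the denominator, integrating over $|w-m|\le n^{-1/2}$ (where $f(w)\le c_0+f''(m)/n$) gives the lower bound $c'\,n^{-1/2}e^{-n(c_0+\kappa_n)}$. Hence
\[
\P_{\beta,\bQ^{\rm CW},\bmu}\bigl(|W_n-m|>\zeta_n,\ W_n\ge0\bigr)\ \le\ \frac{C}{c'}\,\sqrt{n}\;\exp\!\bigl(-n(\rho(\zeta_n)-2\kappa_n)\bigr)\ \le\ \frac{C}{c'}\,\sqrt{n}\;\exp\!\bigl(-\tfrac{c}{2}\,n\zeta_n^2\bigr)\ \longrightarrow\ 0,
\]
uniformly over all $\bmu\in\R^n$ with entries $\ge0$ and $\sum_i\mu_i\le\kappa_n n$, since $n\zeta_n^2\to\infty$. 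This is part (a); part (b) is identical after replacing the neighbourhood of $m$ by that of $-m$.

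\emph{Main obstacle.} The only genuinely delicate point is that for an \emph{arbitrary} null sequence $\kappa_n$ one must exhibit a \emph{single} sequence $\zeta_n\to0$ that is simultaneously large enough in two competing senses: $\zeta_n\gg\sqrt{\kappa_n}$, so that $\rho(\zeta_n)$ beats the perturbation $\kappa_n$, and $\zeta_n\gg n^{-1/2}$, so that $n\rho(\zeta_n)\to\infty$ makes the exponential bound vanish. This is exactly why the statement only asks for the existence of such a $\zeta_n$, and the choice $\zeta_n=\max(\kappa_n,1/n)^{1/4}$ satisfies all of $\zeta_n\to0$, $\zeta_n^2/\kappa_n\to\infty$, and $n\zeta_n^2\to\infty$.
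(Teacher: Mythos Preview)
Your proof is correct and arrives at the same quantitative requirement ($\zeta_n^2\gg\kappa_n$ and $n\zeta_n^2\to\infty$) as the paper, but by a genuinely different route. The paper's argument is a two-line change of measure: it quotes the null-field concentration $\P_{\beta,\bQ^{\rm CW},\bzero}(|W_n-m|>\zeta_n,\,W_n\ge0)\le e^{-Cn\zeta_n^2}$ from \cite{ellis2006entropy}, and then tilts, using $|X_i|\le1$ and $Z(\beta,\bQ^{\rm CW},\bzero)/Z(\beta,\bQ^{\rm CW},\bmu)\le e^{\sum_i\mu_i}$ to get $\P_{\beta,\bQ^{\rm CW},\bmu}(\mathcal{E}_n)\le e^{2\sum_i\mu_i}\,\P_{\beta,\bQ^{\rm CW},\bzero}(\mathcal{E}_n)\le e^{2\kappa_n n-Cn\zeta_n^2}$ for any event $\mathcal{E}_n$. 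You instead write the marginal density of $W_n$ under $\bmu$ explicitly and carry out a direct Laplace estimate, bounding $\sup_w|f_{n,\bmu}(w)-f(w)|\le\kappa_n$. This is more self-contained---you are essentially reproving the Ellis-type bound along the way---while the paper's tilting trick is shorter because it decouples the concentration step (done once at $\bmu=\bzero$) from the perturbation in $\bmu$. The two arguments rest on the same elementary inequality $|\log\cosh(\beta w+\mu_i)-\log\cosh(\beta w)|\le\mu_i$: in your version it controls $|f_{n,\bmu}-f|$, in the paper's it controls the likelihood ratio $e^{\sum_i\mu_iX_i}$.
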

%\todo[inline]{ramu?}

\begin{proof}

For the proof of the case $W_n\geq 0$ we first note that by \cite{ellis2006entropy} there exists $C>0$ $\P_{\beta,\bQ,\bzero}\left(|W_n-m|>{\zeta_n},W_n\geq 0\right)\leq \exp\left(-Cn\zeta_n^2\right)$ and hence for $\sum_{i=1}^n \mu_i\ll n$ we have with $\mathcal{E}_n=\{|W_n-m|>\frac{\zeta_n}{\sqrt{n}},W_n\geq 0\}$ and any sequence $\kappa_n\rightarrow 0$ that
\begin{align*}
   \sup_{\bmu:\sum_{i=1}^n \mu_i\leq \kappa_n n} \P_{\beta,\bQ,\bmu}\left(\mathcal{E}_n\right)&= \sup_{\bmu:\sum_{i=1}^n \mu_i\leq \kappa_n n}\frac{Z(\beta,\bQ,\bzero)}{Z(\beta,\bQ,\bmu)}\E_{\beta,\bQ,\bzero}\left(\mathbbm{1}(\mathcal{E}_n)e^{\sum_{i=1}^n \mu_iX_i}\right)\\
    &=\sup_{\bmu:\sum_{i=1}^n \mu_i\leq \kappa_n n}e^{2\sum_{i=1}^n \mu_i}\P_{\beta,\bQ,\bzero}(\mathcal{E}_n)\leq e^{2\sum_{i=1}^n\bmu_i-Cn\zeta_n^2}\rightarrow 0
\end{align*}
whenever $n\zeta_n^2\gg \kappa_n n$. This completes the proof of the lemma. For the case $W_n \leq 0$ \cite{ellis2006entropy} there exists $C>0$ $\P_{\beta,\bQ,\bzero}\left(|W_n+m|>{\zeta_n},W_n\geq 0\right)\leq \exp\left(-Cn\zeta_n^2\right)$ and the rest of the proof is similar.

\begin{comment}
Note that, from \eqref{eq:f2}, $f''_n(w)$ in strictly decreasing on $(-\infty,-A/\beta]$ with $f''_n(-A/\beta)<0$ for large $n$ and $\lim\limits_{w \rightarrow -\infty}f''(w)=\beta >0$. Since $f'_n(-A/\beta)=\beta q_n\tanh(A)-A>0$ and $\lim\limits_{w \rightarrow -\infty}f'_n(w)=-\infty$, there is one unique negative root (denoted by $m^{-}_n$). Clearly, $-1<m^{-}_n<-A$ by \eqref{eq:1mn}. $m^{-}_n-\tanh(\beta m^{-}_n)>0$ implies $m^{-}_n>-m$ and similar to the previous case, $m^{-}_n-m =\Theta(\frac{s}{n}\tanh(A))$. Choose $\varepsilon>0$ small enough such the $-m<m^{-}_n<-m(1-\varepsilon)$ and $\lambda_2:= \beta-\beta^2[\sech^2(\beta m (1-\varepsilon))]>0$.
\end{comment} 
\end{proof}

The next lemma follows from \cite[Theorems 1-3]{Ellis_Newman}. 
\begin{lemma}\label{lem:bayesian} Suppose $\bX\sim \P_{\beta,\bQ^{\rm CW},\bmu}$ with $\bQ^{\rm CW}_{i,j}=\mathbbm{1}(i\neq j)/n$.
\begin{enumerate}
\item[(a)]
If $\beta\in (0,1)$ then under $\P_{\beta,\bQ^{\rm CW},\mathbf{0}}$ we have $$\sqrt{n}\bar{\bX}\stackrel{d}{\rightarrow}N\left(0,\frac{1}{1-\beta}\right).$$

\item[(b)]
If $\beta>1$ then under $\P_{\beta,\bQ^{\rm CW},\mathbf{0}}$ we have $$\sqrt{n}(\bar{\bX}-m|\bar{\bX}>0)\stackrel{d}{\rightarrow}N\left(0,\frac{1-m^2}{1-\beta(1-m^2)}\right),$$
where $m$ is the unique positive root of the equation $t=\tanh(\beta t)$.

\item[(c)]
If $\beta=1$ then under $\P_{\beta,\bQ^{\rm CW},\mathbf{0}}$ we have $$n^{1/4}\bar{\bX}\stackrel{d}{\rightarrow}Y,$$
where $Y$ is a random variable with density proportional to $e^{-y^4/12}$.

\end{enumerate}
\end{lemma}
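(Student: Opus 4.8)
This lemma is essentially the statement of \cite[Theorems~1--3]{Ellis_Newman} specialized to our normalization, so in principle nothing needs to be proved; for completeness I indicate the self-contained route, which reuses the Hubbard--Stratonovich auxiliary variable of Lemma~\ref{lemma:curie_part_fun_ratio}. Recall that if, given $\bX$, we draw $W_n\sim N(\bar{\bX},1/(n\beta))$, then under $\P_{\beta,\bQ^{\rm CW},\mathbf{0}}$ the marginal law of $W_n$ has density proportional to $e^{-nf(w)}$ with $f(w)=\beta w^2/2-\log\cosh(\beta w)$, and conditionally on $W_n=w$ the coordinates $X_1,\dots,X_n$ are i.i.d.\ with $\E[X_i\mid W_n=w]=\tanh(\beta w)$ and $\mathrm{Var}(X_i\mid W_n=w)=\sech^2(\beta w)$. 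Thus, conditionally on $W_n$, $\sqrt{n}\big(\bar{\bX}-\tanh(\beta W_n)\big)$ is a normalized sum of bounded i.i.d.\ variables, converging to $N(0,\sech^2(\beta W_n))$, a fluctuation asymptotically independent of the randomness carried by $W_n$.

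The plan is then to analyze $W_n$ by Laplace asymptotics and transfer to $\bar{\bX}$. One computes $f'(w)=\beta\big(w-\tanh(\beta w)\big)$ and $f''(w)=\beta-\beta^2\sech^2(\beta w)$. For $\beta<1$ the unique global minimizer is $0$ with $f''(0)=\beta(1-\beta)>0$; for $\beta>1$ there are exactly two global minimizers $\pm m$, $m=\tanh(\beta m)$, with $f''(\pm m)=\beta\big(1-\beta(1-m^2)\big)>0$; for $\beta=1$ the unique minimizer is $0$ with $f''(0)=f'''(0)=0$ and $f(w)=w^4/12+O(w^6)$. The quadratic (resp.\ quartic) lower and upper bounds on $f$ near its minima---of the type already established in the course of Lemma~\ref{lemma:curie_part_fun_ratio}, cf.\ \eqref{eq:claim_new}---give tightness, and standard Laplace/Watson asymptotics for $\int e^{-nf}$ then yield: $\sqrt{n}W_n\Rightarrow N(0,1/f''(0))$ for $\beta<1$; $\sqrt{n}(W_n-m)\mid W_n>0\Rightarrow N(0,1/f''(m))$ for $\beta>1$ (using $\P(W_n>0)=1/2$ by symmetry of $f$); and $n^{1/4}W_n\Rightarrow Y$ with $Y$ of density proportional to $e^{-y^4/12}$ for $\beta=1$.

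Finally one transfers these limits to $\bar{\bX}=\tanh(\beta W_n)+\big(\bar{\bX}-\tanh(\beta W_n)\big)$ by linearizing $\tanh$ at the relevant minimizer. For $\beta<1$, $\tanh(\beta W_n)=\beta W_n+O_p(n^{-3/2})$, so $\sqrt{n}\bar{\bX}$ is the sum of the asymptotically independent $\beta\sqrt{n}W_n\Rightarrow N(0,\beta^2/f''(0))=N(0,\beta/(1-\beta))$ and $N(0,1)$, i.e.\ $N(0,1/(1-\beta))$. For $\beta>1$, on $\{W_n>0\}$ we have $\tanh(\beta W_n)-m=\beta(1-m^2)(W_n-m)+O_p(n^{-1})$, so $\sqrt{n}(\bar{\bX}-m)\mid\bar{\bX}>0$ converges to $\beta(1-m^2)\,N(0,1/f''(m))$ plus an independent $N(0,1-m^2)$, whose total variance simplifies to $(1-m^2)/(1-\beta(1-m^2))$. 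For $\beta=1$, at scale $W_n=O_p(n^{-1/4})$ both the cubic correction in $\tanh(\beta W_n)$ ($O_p(n^{-3/4})$) and the conditional fluctuation ($O_p(n^{-1/2})$) are $o_p(n^{-1/4})$, so $n^{1/4}\bar{\bX}=n^{1/4}W_n+o_p(1)\Rightarrow Y$. The main obstacle is the critical regime $\beta=1$: the Gaussian Laplace heuristic fails since $f''(0)=0$, and one must justify the degenerate $n^{1/4}$ scaling and the non-Gaussian limit while checking that the higher-order Taylor terms of $f$, the $\tanh$-nonlinearity, and the conditional CLT error are all negligible at this finer scale---precisely the work carried out in \cite[Theorems~1--3]{Ellis_Newman}, from which the lemma may alternatively be quoted verbatim.
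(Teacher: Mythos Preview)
Your proposal is correct and aligns with the paper's own treatment: the paper does not give a proof but simply states that the lemma follows from \cite[Theorems~1--3]{Ellis_Newman}, which is exactly what you open with. Your added Hubbard--Stratonovich sketch is a valid and standard self-contained route, and your variance computations in each regime check out; the paper offers no comparable detail beyond the citation.
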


\subsection{\bf Proof of Results in Section \ref{sec:dense_reg}}

\subsubsection{\bf Proof of Theorem \ref{theorem:er_known_beta}} We will divide the proof into three parts based on high $(\beta<1)$, critical $(\beta=1)$, and low $(\beta>1)$ temperature. Throughout we will use $\bQ$ for either scaled adjacency matrix of of the graph $G(n,p)$ (with $p=\Theta(1)$) or random regular graph on $n$ vertices each having degree $d=\Theta(n)$.

\paragraph{\bf Proof for upper bound in High Temperature $(0\leq \beta<1)$ Regime:}
We first focus on the upper bound. As before, the optimal test is given the by the scan test which rejects for large values of $Z_{\max}:=\max\limits_{S\in S\in \mathcal{N}(\C_s,\gamma,\varepsilon_n)} Z_S$ (recall that for $S\in \mathcal{N}(\C_s,\gamma,\varepsilon_n)$ we defined $Z_S=\sum_{i\in S}X_i/\sqrt{s}$). The cut-off for the test is decided by the moderate deviation behavior of $Z_S$'s given in Lemma \ref{lemma:er_regular_moderate_deviation} -- which implies that for any $\delta>0$, the test given by $T_n(\delta)=\mathbf{1}\left\{Z_{\max}>\sqrt{2(1+\delta)\log{|\mathcal{N}(\C_s,\gamma,\varepsilon_n)|}}\right\}$ has Type I error converging to $0$.

Turning to the Type II error, consider any $\P_{\beta,\bQ,\bmu}\in \Xi(\C_s,A)$.  First note that by GHS inequality $\mathrm{Var}_{\beta,\bQ,\bmu}(\sum_{i\in \tilde{S}^{\star}} X_i)\leq \mathrm{Var}_{\beta,\bQ,\bzero}(\sum_{i\in \tilde{S}^{\star}} X_i)=O(s)$. As a result, $\frac{\sum_{i\in \tilde{S^{\star}}}(X_i-\E_{\beta,\bQ,\bmu}(X_i))}{\sqrt{s}}=O_{\P_{\beta,\bQ,\bmu}}(1)$. Therefore, as usual it is enough to show that there exists $\delta>0$ such that $t_n(\delta)-\frac{1}{\sqrt{s}}\E_{\beta,\bQ,\bmu}\left(\sum_{i\in \tilde{S}^{\star}}X_i\right)\rightarrow -\infty$. To show this end, first let $S^{\star}\in \C_s$ be such that the signal lies on $S^\star$ i.e. for all $i\in S^{\star}$ one has $\bmu_i\geq A$. Note that by monotonicity arguments it is enough to consider $\bmu_i=A$. By definition of covering, we can find a $\tilde{S}^{\star}\in \mathcal{N}(\C_s,\gamma,\varepsilon_n)$ such that $\gamma\left(\tilde{S}^{\star},S^{\star}\right)\leq \varepsilon_n$ i.e. $|\tilde{S}^{\star}\cap S^{\star}|\geq s(1-\varepsilon_n/\sqrt{2})$. 
Thereafter note that by Lemma 
\ref{lemma:expectation_underalt_lower_bound} we have that
\begin{align}
        \E_{\beta,\bQ,\bmu}(\sum_{i\in \tilde{S}^{\star}}X_i)\geq A|\tilde{S}^{\star}\cap S^{\star}|-A^2\sum_{i\in \tilde{S}^{\star}\cap S^{\star}}\sum_{j\in S}\mathrm{Cov}_{\beta,\bQ,\tilde{\mathbf{\eta}}_{S^{\star}}(A)}(X_i,X_j),\label{eq:er_high_t2}
    \end{align}
However, by \cite[Lemma 9 (a) and Theorem 7]{deb2020detecting} we have that
\begin{align*}
    A^2\sum_{i\in \tilde{S}^{\star}\cap S^{\star}}\sum_{j\in S}\mathrm{Cov}_{\beta,\bQ,\tilde{\mathbf{\eta}}_{S^{\star}}(A)}(X_i,X_j)&\lesssim \frac{\log{|\mathcal{N}(\C_s,\gamma,\varepsilon_n)|}}{s}\left(\frac{|\tilde{S}^{\star}\cap S^{\star}|| S^{\star}|}{n}+|\tilde{S}^{\star}\cap S^{\star}|\right)\\
    &\leq \frac{\log{|\mathcal{N}(\C_s,\gamma,\varepsilon_n)|s}}{{n}}+\log{|\mathcal{N}(\C_s,\gamma,\varepsilon_n)|}\ll As,
\end{align*}
 since $\log{|\mathcal{N}(\C_s,\gamma,\varepsilon_n)|} \ll s\ll n/\log{|\mathcal{N}(\C_s,\gamma,\varepsilon_n)|}$. Consequently, we immediately have that there exists $\epsilon>0$ such that
\begin{align*}
     \E_{\beta,\bQ,\bmu}\left(\sum_{i\in \tilde{S}^{\star}}X_i\right)&\geq As(1+o(1))\geq \sqrt{2(1+\epsilon)s\log{|\mathcal{N}(\C_s,\gamma,\varepsilon_n)|}}
\end{align*}
Therefore, we can conclude that for any $\delta<\epsilon$ we have  $t_n(\delta)-\frac{1}{\sqrt{s}}\E_{\beta,\bQ,\bmu}\left(\sum_{i\in \tilde{S}^{\star}}X_i\right)\rightarrow -\infty$. This completes the proof of the upper bound for $0<\beta<1$. \par 

\paragraph{\bf Proof for upper bound at Critical Temperature $(\beta=1)$ Regime:} Similar to $\beta<1$ regime, the optimal test is given the by the scan test which rejects for large values of $Z_{\max}:=\max\limits_{S\in S\in \mathcal{N}(\C_s,\gamma,\varepsilon_n)} Z_S$ (recall that for $S\in \mathcal{N}(\C_s,\gamma,\varepsilon_n)$ we defined $Z_S=\sum_{i\in S}X_i/\sqrt{s}$). The cut-off for the test is decided by the moderate deviation behavior of $Z_S$'s given in Lemma \ref{lemma:er_regular_moderate_deviation} -- which implies that for any $\delta>0$, the test given by $T_n(\delta)=\mathbf{1}\left\{Z_{\max}>\sqrt{2(1+\delta)\log{|\mathcal{N}(\C_s,\gamma,\varepsilon_n)|}}\right\}$ has Type I error converging to $0$.

 \par 
Turning to the Type II error, we again consider any $\P_{\beta,\bQ,\bmu}\in \Xi(\C_s,A)$. First note that by GHS inequality $\mathrm{Var}_{\beta,\bQ,\bmu}(\sum_{i\in \tilde{S}^{\star}} X_i)\leq \mathrm{Var}_{\beta,\bQ,\bzero}(\sum_{i\in \tilde{S}^{\star}} X_i)=O(s)$ even for $\beta=1$ since $s\ll \frac{\sqrt{n}}{\log{n}}$ by appealing to \cite[Lemma 9(c)]{deb2020detecting}. Hence, it is enough to show that there exists $\delta>0$ such that $t_n(\delta)-\frac{1}{\sqrt{s}}\E_{\beta,\bQ,\bmu}\left(\sum_{i\in \tilde{S}^{\star}}X_i\right)\rightarrow -\infty$. As before, let $S^{\star}\in \C_s$ be such that the signal lies on $S^\star$ i.e. for all $i\in S^{\star}$ one has $\bmu_i\geq A$. By monotonicity arguments it is enough to consider $\bmu_i=A \mathbbm{1}_{i \in S^{\star}}$. Following \eqref{eq:er_high_t2}, it is enough to lower bound $A^2\sum_{i\in \tilde{S}^{\star}\cap S^{\star}}\sum_{j\in S}\mathrm{Cov}_{\beta,\bQ,\tilde{\mathbf{\eta}}_{S^{\star}}(A)}(X_i,X_j)$. By \cite[Lemma 9 (c) and Theorem 7]{deb2020detecting}, we have that
\begin{align*}
    A^2\sum_{i\in \tilde{S}^{\star}\cap S^{\star}}\sum_{j\in S}\mathrm{Cov}_{\beta,\bQ,\tilde{\mathbf{\eta}}_{S^{\star}}(A)}(X_i,X_j)&\lesssim \frac{\log{|\mathcal{N}(\C_s,\gamma,\varepsilon_n)|}}{s}\left(\frac{|\tilde{S}^{\star}\cap S^{\star}|| S^{\star}|}{\sqrt{n}}+|\tilde{S}^{\star}\cap S^{\star}|\right)\\
    &\leq \frac{\log{|\mathcal{N}(\C_s,\gamma,\varepsilon_n)|s}}{\sqrt{n}}+\log{|\mathcal{N}(\C_s,\gamma,\varepsilon_n)|}\ll As,
\end{align*}
since $\log{|\mathcal{N}(\C_s,\gamma,\varepsilon_n)|} \ll s \ll \frac{n}{\log{|\mathcal{N}(\C_s,\gamma,\varepsilon_n)|}}$. This completes the proof of the upper bound for $\beta=1$.\par 

\paragraph{\bf Proof for upper bound in Low Temperature $(\beta>1)$ Regime:}
To prove the upper bound on detection threshold, we will use the same randomized test as described in Theorem \ref{theorem:cw_known_beta}\ref{thm:cw_known_beta_ub}\ref{thm:cw_known_beta_ub_low_temp}. Let the rejection region of the test be denoted by $\Omega_n$. Using \cite[Theorem 1.6]{debsum}, we obtain 
$$
    \log \P_{\beta,\bQ,\bzero}(\Omega_n) \leq C_\delta \Big( \log \P_{\beta,\bQ^{\rm CW},\bmu_S(A)}(\Omega_n)+\|\bQ\|^2_F+\sum_{i=1}^{n}(R_i-1)^2\Big).
$$
Now, $\P_{\beta,\bQ^{\rm CW},\bzero}(\Omega_n)=o(1)$ by Theorem \ref{theorem:cw_known_beta}\ref{thm:cw_known_beta_ub}\ref{thm:cw_known_beta_ub_low_temp}. The error term in RHS is $O(1)$ by \cite[Section 1.2]{debsum}. Hence, type-I error of the proposed test converges to $0$.\par 
For the type-II error, fix $\eta>0$  and $\tanh(A)=\sqrt{2(1+\delta)^2(1-m^2)^{-1}\log{|\mathcal{N}(\C_s,\gamma,\varepsilon_n)|}/s}$. To get the desired cut-off of the test, take $\delta>0$ to be chosen later and set $t_n(\delta)=\sqrt{2(1+\delta)(1-m^2)\log{|\mathcal{N}(\C_s,\gamma,\varepsilon_n)|}}$. Assume $\bmu$ be the true signal with support $S^\star$ and we scan over $S$ such that $\gamma(S, S^\star) \le \varepsilon_n=o(1)$. We will show that $\bP_{\beta, \bQ, \bmu}(\sum_{i\in S} X_i> ms+t \sqrt{s}|\bar{\bX} \ge 0) \rightarrow 1$. Note that,
\begin{align*}
    &\bP_{\beta, \bQ, \bmu}(\sum_{i\in S} X_i> ms+t \sqrt{s}|\bar{\bX} \ge 0) \\ &= \bP_{\beta, \bQ, \bmu}(\frac{\sum_{i\in S} X_i-\bE_{\beta,\bQ,\bmu}(\sum_{i\in S} X_i|\bar{\bX}>0)}{\sqrt{Var_{\beta,\bQ,\bmu}(\sum_{i\in S} X_i|\bar{\bX}>0)}}> \frac{ms+t \sqrt{s}-\bE_{\beta,\bQ,\bmu}(\sum_{i\in S} X_i|\bar{\bX}>0)}{\sqrt{Var_{\beta,\bQ,\bmu}(\sum_{i\in S} X_i|\bar{X}>0)}}|\bar{\bX} \ge 0)
\end{align*}
Since LHS is $O_{\bP_{\beta,\bQ,\bmu}}(1)$, it is enough to show the RHS (henceforth denoted by $T_n$) converges to $-\infty$. Defining $\alpha_n=\sqrt{\frac{\log n}{n}}$, we obtain the following estimates from \cite[Lemma 9]{deb2020detecting} for some absolute constants $c_i >0$, $i \in [3]$:
\begin{align}
    \max_{i,j}\mathrm{Cov}_{\beta,\bQ,\bmu}(X_i,X_j|\bar{\bX}\ge 0) \le \frac{c_1}{n} \label{eq:cov_er_lt}\\
    \max_{i}|\mathrm{Var}_{\beta,\bQ,\bmu}(X_i|\bar{X}\ge 0)-\sech^2 (\beta m+\mu_i)|\le c_2 \alpha_n \label{eq:var_er_lt}\\
    \max_{i}|\bE_{\beta,\bQ,\bmu}(X_i|\bar{X}\ge 0)-\tanh (\beta m+\mu_i)|\le c_3 \alpha_n \label{eq:mean_er_lt}.
\end{align}
All the above statements hold with high probability with respect to the randomness of $\mathbb{G}_n$.

%\todo[inline]{High Probability Statement?}

Using the bounds, we obtain 
\begin{align*}
    T_n &\le \frac{ms-t_n(\delta)\sqrt{s}-\sum_{i \in S}\tanh(\beta m+\mu_i)+sc_3\alpha_n}{\sqrt{\sum_{i \in S}\sech^2(\beta m+\mu_i)+sc_2\alpha_n- \frac{cs^2}{n}}}
    \le \frac{ms-t_n(\delta)\sqrt{s}-s\tanh(\beta m+A)+sc_3\alpha_n}{\sqrt{s\sech^2(\beta m+A)+sc_2\alpha_n- \frac{cs^2}{n}}}\\
    &\le \frac{1}{C\sqrt{s}}\left(ms-t_n(\delta)\sqrt{s}-s\big(m-(1+o(1)A \sech^2(\beta m)\big)+sc_3\alpha_n\right)\\
    & \le \frac{1}{C} \left(-t_n(\delta)+As(1+o(1))(1-m^2)+o(As)\right),
\end{align*}
since $\alpha_n=o(A)$. By choosing $\delta$ based on $\eta$, the final bound converges to $\infty$. {\color{black} The same calculation holds for $\P_{\beta,\bQ,\bmu}\big(\sum_{i \in S} X_i > -ms+ t\sqrt{s}|\bar{\bX}<0\big)$} yielding Type-II error converging to $0$.

\begin{comment}
For $\delta>0$, $\exists C_l, C_u >0$ such that
\begin{align*}
    \P_{\beta,\bQ^{\rm ER},\mathbf{0}}(Z_S> m \sqrt{s}+t_n(\delta)) &\leq \exp\left(-\frac{\lambda t}{\sqrt{s}}-\lambda m\right)\E_{\beta,\bQ^{\rm ER},\mathbf{0}}\exp\left(\frac{\lambda}{s}\sum_{i\in S}X_i\right)\\
&=\exp\left(-\frac{\lambda t}{\sqrt{s}}-\lambda m\right)\frac{Z(\beta,\bQ^{\rm ER}, \bmu_S(\frac{\lambda}{s}))}{Z(\beta,\bQ^{\rm ER},\bzero)}\\
&\le \exp\left(-\frac{\lambda t}{\sqrt{s}}-\lambda m\right) \frac{C_u}{C_l} \frac{Z(\beta,\bQ^{\rm CW}, \bmu_S(\frac{\lambda}{s}))}{Z(\beta,\bQ^{\rm CW},\bzero)},
\end{align*}
w.p. $\geq 1-\delta$, using Lemma \ref{lemma:log_partition_function_comparison_er}. Similar bounds also hold for $W_n<0$. This completes the proof of Type-I error using Lemma \ref{lemma:cw_moderate_deviation}.
\end{comment}

\paragraph{\bf Proof for lower bound in $\beta>0$ Regime:}
To prove the lower bound, fix $\beta>$ and $\varepsilon>0$ such that $\sqrt{s}\tanh(A) (\log|\tilde{\mathcal{C}}_s|)^{-1/2}= 2(1-\varepsilon)c_{\beta}$ where $c_{\beta}=1$ if $\beta \le 1$, and $(1-m^2)^{-1}$ otherwise. Suppose there exists a test with rejection region $\Omega_n$ which can test $H_0$ vs $H_1$. Hence, $\bP_{\beta,\bQ,\bzero}(\Omega_n)\rightarrow 0$. Using Lemma \ref{lemma:compare_sets_low_temp}, we have $\bP_{\beta,\bQ^{\rm CW},\bzero}(\Omega_n)\rightarrow 0$. However, since every test not asymptotically powerful for Curie-Weiss model in this regime of signal, implying that,
$\bP_{\beta,\bQ^{\rm CW},\bmu_S(A)}(\Omega^c_n) \ge \eta$, for some $\eta>0$ for all large enough $n$. This implies, using Lemma \ref{lemma:compare_sets_low_temp}, there exists $\nu >0$ such that $\bP_{\beta,\bQ,\mu_S(A)}(\Omega_n) \ge \nu$. Hence, the type-I and type-II errors cannot converge to $0$ simultaneously completing the proof of the lower bound.

\subsubsection{\bf Proof of Theorem \ref{thm:er_unknown_beta}}
To obtain an adaptive test, we apply the same procedure as described by the proof of Theorem \ref{thm:cw_unknown_beta}. The proof of type-I error convergence when $\bmu=0$ follows from concentration of $\bar{\bX}$ which is immediate by \cite[Theorem 1.1, Theorem 1.2]{debsum}. For general $\bmu$, we only require the fact $\mathrm{Cov}_{\beta,\bQ,\bmu}(X_i,X_j)\lesssim 1/n$ which follows from \cite[Lemma 9]{deb2020detecting}. Finally we also can obtain consistent estimator of $\beta$ thanks to \cite[Corollary 3.1, Corollary 3.2]{infising} and Lemma \ref{lmm:beta_estimation}. This completes our proof exactly as Theorem \ref{thm:cw_unknown_beta}.

\begin{comment}
Define a prior same as proof of Theorem \ref{theorem:cw_known_beta}\ref{thm:cw_known_beta_lb} and the likelihood ratio $L_{\pi}$ (say) corresponding to this prior can be written as
\begin{align*}
    L_{\pi}=\frac{1}{|\tilde{C}_s|}\sum_{S\in \tilde{C}_s}\frac{Z(\beta,\bQ,\bmu_S(A))}{Z(\beta,\bQ,\bzero)}\exp\left(A\sum_{i\in S}X_i\right),
\end{align*}
It suffices to show $L_{\pi} \rightarrow 1$ in probability. Defining the truncated likelihood ratio as \eqref{eqn:trunctaed_lr_cw_low_temp}, one can show $\P(L_{\pi}\neq \tilde{L}_\pi) \rightarrow 0$ similar to the proof of Type-I error for every $\beta>0$. We bound $\E_{\beta,\bQ,\bzero}(\tilde{L}_\pi)$ similar to proof of Typ-II error for $\beta>0$. Fixing $\delta >0$, Lemma \ref{lemma:log_partition_function_comparison_er} yields $\exists C_u,C_l>0$ such that $\bP(\mathcal{A}) \ge 1-\delta$, where $\mathcal{A}:= \{ \forall \beta>0,\bmu_S(A), 
C_l\leq \frac{Z(\beta,\bQ,\bmu_S(A)|\hat{\Omega}_n)}{Z(\beta,\bQ^{\rm CW},\bmu_S(A)|\hat{\Omega}_n)} \leq C_u \}$. Following the calculation of \ref{theorem:cw_known_beta}\ref{thm:cw_known_beta_lb} $\mathcal{A}$, $T_2 \leq \frac{C^3_u}{C^3_l}(2+o(1))$.
The bound on $T_1$ using Chernoff's inequality and graph to the partition function of CW at the cost of additional constant using Lemma \ref{lemma:log_partition_function_comparison_er} and obtain
$T_1=o(1)$ w.p. $\geq 1-\delta$. This completes the proof of the lower bound.
\end{comment}

\subsection{\bf Technical Lemmas for Proofs of Theorems in Section \ref{sec:dense_reg}}\label{sec:technical_lemmas_mean_field}

\begin{lemma}\label{lemma:compare_sets_low_temp}
Fix $\beta>0$. %and $\bmu$ such that $\sum_i \mu_i \ll n$. 
Let $\bQ^{\rm CW}$ be the scaled adjacency matrix of the complete graph and $\bQ$ is either scaled adjecency matrix of of the graph $G(n,p)$ (with $p=\Theta(1)$) or random regular graph on $n$ vertices each having degree $d=\Theta(n)$. Fix any $\delta>0$. For any event $\mathcal{E}_n$: the following holds with probability $\ge 1-\delta$: If $\exists \eta> 0$ such that $\bP_{\beta,\bQ^{\rm CW},\mu_S(A)}(\mathcal{E}_n) \ge \eta$, then $\exists \nu >0$ such that $\bP_{\beta,\bQ,\mu_S(A)}(\mathcal{E}_n) >\nu$. 

\begin{comment}
\begin{equation}\label{eq:compare_sets_lt}
    \log \P_{\beta,\bQ,\bmu_S(A)}(\mathcal{E}_n) \leq C_\delta \Big( \log \P_{\beta,\bQ^{\rm CW},\bmu_S(A)}(\mathcal{E}_n)+\|\bQ\|^2_F+\sum_{i=1}^{n}(R_i-1)^2\Big),
\end{equation}
w.p. $\geq 1-\delta$,where $R_i=\sum_j \bQ_{ij}$.
\end{comment}
\end{lemma}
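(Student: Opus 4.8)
The plan is to read the statement off the general Ising comparison inequality of \cite[Theorem~1.6]{debsum} -- the same result used for the type-I error bound above, but invoked in the \emph{opposite} direction -- once one observes that the relevant ``complexity'' of the two interaction matrices is $O(1)$ for all the graphs under consideration. That result supplies a constant $C_\beta>0$ depending only on $\beta$ (and, crucially, on nothing else -- its error term carries no dependence on the external field) such that for any two symmetric hollow matrices $\bQ_1,\bQ_2$ of bounded operator norm, any external field $\bmu$, and any event $\mathcal{E}$,
\[
\log\bP_{\beta,\bQ_1,\bmu}(\mathcal{E})\ \le\ C_\beta\Big(\log\bP_{\beta,\bQ_2,\bmu}(\mathcal{E})+\mathrm{Err}(\bQ_1)+\mathrm{Err}(\bQ_2)\Big),\qquad \mathrm{Err}(\bQ):=\|\bQ\|_F^2+\sum_{i=1}^n(R_i(\bQ)-1)^2,
\]
with $R_i(\bQ)=\sum_j\bQ_{ij}$. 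I would apply this with $\bQ_1=\bQ^{\rm CW}$, $\bQ_2=\bQ$, field $\bmu_S(A)$ and $\mathcal{E}=\mathcal{E}_n$; combined with the hypothesis $\bP_{\beta,\bQ^{\rm CW},\bmu_S(A)}(\mathcal{E}_n)\ge\eta$ this yields $\log\bP_{\beta,\bQ,\bmu_S(A)}(\mathcal{E}_n)\ge C_\beta^{-1}\log\eta-\mathrm{Err}(\bQ^{\rm CW})-\mathrm{Err}(\bQ)$.

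The next step is to bound the two error terms. For $\bQ^{\rm CW}_{ij}=\mathbf{1}(i\ne j)/n$ one has $R_i(\bQ^{\rm CW})=(n-1)/n$ and $\|\bQ^{\rm CW}\|_F^2=(n-1)/n$, so $\mathrm{Err}(\bQ^{\rm CW})\le 2$ deterministically. For a random $d$-regular graph with $d=\Theta(n)$ and $\bQ=G_n/d$, every row sum of $\bQ$ equals $1$, so $\sum_i(R_i(\bQ)-1)^2=0$, while $\|\bQ\|_F^2=2|E_n|/d^2=n/d=\Theta(1)$; hence $\mathrm{Err}(\bQ)$ is a deterministic $O(1)$ and the conclusion holds with probability $1$. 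For $\bQ=G_n/(np)$ with $\mathbb{G}_n\sim\mathcal{G}_n(n,p)$, $p=\Theta(1)$, a first moment computation using $d_i\sim\mathrm{Bin}(n-1,p)$ gives $\E\|\bQ\|_F^2=2\binom{n}{2}p/(np)^2\to 1/p$ and $\E\sum_i(R_i(\bQ)-1)^2=\sum_i\big(\mathrm{Var}(d_i)/(np)^2+n^{-2}\big)\to(1-p)/p$, so $\E\,\mathrm{Err}(\bQ)=O(1)$; by Markov's inequality there is $C_\delta<\infty$ with $\mathrm{Err}(\bQ)\le C_\delta$ on an event of $\mathbb{G}_n$-probability at least $1-\delta$. (These estimates are also recorded in \cite[Section~1.2]{debsum}.)

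Putting the pieces together: on the event of probability $\ge 1-\delta$ on which $\mathrm{Err}(\bQ)\le C_\delta$, whenever $\bP_{\beta,\bQ^{\rm CW},\bmu_S(A)}(\mathcal{E}_n)\ge\eta$ we get
\[
\bP_{\beta,\bQ,\bmu_S(A)}(\mathcal{E}_n)\ \ge\ \exp\!\Big(C_\beta^{-1}\log\eta-C_\delta-2\Big)\ =\ \eta^{1/C_\beta}\,e^{-C_\delta-2}\ =:\ \nu\ >\ 0,
\]
which is the assertion, with $\nu$ depending only on $\beta$, $\delta$, $\eta$.

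Finally, regarding the main difficulty: granted \cite[Theorem~1.6]{debsum} there is essentially none -- the two points requiring care are (a) invoking the inequality with $\bQ^{\rm CW}$, rather than $\bQ$, in the ``first'' slot, i.e.\ in the direction that lower-bounds $\bP_{\beta,\bQ,\bmu_S(A)}(\mathcal{E}_n)$ (opposite to the type-I error use, which upper-bounds it), and (b) the elementary verification that $\mathrm{Err}(\bQ)$ is $O(1)$ uniformly in $S$ and $A$. If one instead had to establish the comparison inequality from scratch, that would be the substantive part: interpolating between the Hamiltonians $\tfrac{\beta}{2}\bx^\top\bQ\bx$ and $\tfrac{\beta}{2}\bx^\top\bQ^{\rm CW}\bx$ and controlling the increments of $\log\bP(\mathcal{E})$ along the path by the operator-norm and row-sum discrepancies between $\bQ$ and $\bQ^{\rm CW}$; but the excerpt permits citing it, so the lemma reduces to the bookkeeping above.
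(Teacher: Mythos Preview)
Your argument hinges entirely on the assertion that \cite[Theorem~1.6]{debsum} is a \emph{symmetric} comparison between two arbitrary interaction matrices $\bQ_1,\bQ_2$, so that you may place $\bQ^{\rm CW}$ in the first slot and the random $\bQ$ in the second. The evidence in the paper points against this: every invocation of that theorem (and the commented-out earlier draft of this very lemma) has the form
\[
\log \P_{\beta,\bQ,\cdot}(\Omega)\ \le\ C\Big(\log \P_{\beta,\bQ^{\rm CW},\cdot}(\Omega)+\|\bQ\|_F^2+\sum_i(R_i(\bQ)-1)^2\Big),
\]
with the error depending on $\bQ$ alone and measuring its distance to the mean-field matrix. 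That is a one-directional mean-field comparison, not a statement about arbitrary pairs; it gives ``if CW is small then $\bQ$ is small,'' which is precisely the \emph{wrong} direction for the present lemma. Your display with $\mathrm{Err}(\bQ_1)+\mathrm{Err}(\bQ_2)$ is not the form recorded anywhere in the paper, and the fact that the authors wrote a separate hands-on proof here (rather than simply swapping the roles in Theorem~1.6 as they did for the type-I error) strongly suggests the reverse inequality is not available as a black box. So as written, the proposal has a genuine gap at step (a): the cited theorem may simply not furnish the direction you need.

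The paper's route is substantively different. It writes $\P_{\beta,\bQ,\bmu_S(A)}(\mathcal{E}_n)$ as a change of measure from the Curie--Weiss law, isolating the factor $e^{\frac{\beta}{2}\bX^\top\mathcal{A}_n\bX}$ with $\mathcal{A}_n=\bQ-\bQ^{\rm CW}$. It then introduces the CW auxiliary variable $W_n$, centers $\bX$ at its conditional mean $\tilde T_n(W_n)$, and decomposes the quadratic form into a Hanson--Wright-controlled piece $Y_n$ plus the deterministic $\tilde T_n^\top\mathcal{A}_n\tilde T_n$. The partition-function ratio is handled by Lemma~\ref{lemma:log_partition_function_comparison_er}, $\tilde T_n^\top\mathcal{A}_n\tilde T_n$ by direct moment bounds (Lemma~\ref{lem:random_regular_covariance}), and $Y_n$ is truncated to a bounded window via Lemma~\ref{lem:hanson_wright}; combining these yields the lower bound $\nu$. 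If you want to salvage your approach, you would need to either verify that \cite[Theorem~1.6]{debsum} really does apply with the roles reversed, or reprove that direction from scratch --- and the latter is essentially what the paper's argument is doing.
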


\begin{proof}
Defining $\mathcal{A}_n:= G_n- 11^T/n+ I/n$, note that
\[
\P_{\beta,\bQ,\bmu_S(A)}(\mathcal{E}_n)= \frac{Z(\beta,\bQ^{\rm CW},\bmu_S(A))}{Z(\beta,\bQ,\bmu_S(A))}\E_{\beta,\bQ^{\rm CW},\bmu_S(A)}e^{\frac{\beta}{2}\sigma^\top \mathcal{A}_n \sigma} \mathbbm{1}_{\sigma \in \mathcal{E}_n}.\]
For the auxiliary variable $W_n$, define a vector $\tilde{T}_n=\tilde{T}_n(W_n)$ such that $\tilde{T}_{n,i}=\tanh(\beta W_n +A \mathbbm{1}_{i \in S})$. Observe that,
\[\sigma^\top \mathcal{A}_n \sigma= (\sigma^\top-\tilde{T}_n) \mathcal{A}_n (\sigma-\tilde{T}_n)+2 \tilde{T}^\top_n\mathcal{A}_n (\sigma-\tilde{T}_n)+\tilde{T}^\top_n\mathcal{A}\tilde{T}_n =: Y_n+\tilde{T}^\top_n\mathcal{A}\tilde{T}_n.
\]
Define the good set $J_{n,\varepsilon}:= \{m-\varepsilon \le |W_n| \le m+\varepsilon\}$. Then,
\begin{equation}
    \P_{\beta,\bQ,\bmu_S(A)}(\mathcal{E}_n)\ge \frac{Z(\beta,\bQ^{\rm CW},\bmu_S(A))}{Z(\beta,\bQ,\bmu_S(A))} \left( \E_{\beta,\bQ^{\rm CW},\bmu_S(A)}e^{\sigma^\top \mathcal{A}_n \sigma} \mathbbm{1}_{J_{n,\varepsilon}}\right)
\end{equation}
The ratio of partition function is $\ge 1/C_u$ for some $C_u>0$, w.p. $\geq 1-\delta$ by Lemma \ref{lemma:log_partition_function_comparison_er}.  %The first summand on $J^c_{n,\varepsilon}$ is $o(1)$ {\color{red} Ramu dekhao}. To see this note that for any $\delta>0$ one has that there exists a sequence $\epsilon_n(\delta)\rightarrow 0$ such that with probability $1-\epsilon_n(\delta)$ one has that $\|\mathcal{A}_n\|_{\rm op}\leq n^{-1/2+\delta}$ for either $G(n,p)$~\cite[Theorem 1.1]{vu2005spectral} or for random regular graph~\cite[Theorem A]{tikhomirov2019spectral}. As a result, for any $\delta>0$ one has that with probability with probability $1-\epsilon_n(\delta)$, $\sup\limits_{\sigma \in \{\pm\}^n}e^{\sigma^T\mathcal{A}_n\sigma}\leq e^{n^{1/2+\delta}}$. Subsequently, with probability larger than $1-\epsilon_n(\delta)$ the following hold

To analyse the expectation above, define $A_M=\{\sigma: |Y_n| \le M\}$ for some $M>0$ and observe that,
\begin{align*}
    \E_{\beta,\bQ^{\rm CW},\bmu_S(A)}e^{\frac{\beta}{2}\sigma^\top \mathcal{A}_n \sigma} \mathbbm{1}_{J_{n,\varepsilon}}\mathbbm{1}_{\sigma \in \mathcal{E}_n} &\geq \E\left(\E_{\beta,\bQ^{\rm CW},\bmu_S(A)}\Big(e^{\frac{\beta}{2}\sigma^\top \mathcal{A}_n \sigma} \mathbbm{1}_{J_{n,\varepsilon}}\mathbbm{1}_{\sigma \in \mathcal{E}_n} \mathbbm{1}_{A_M}|W_n\Big)\right)\\
    &\ge e^{-\frac{\beta}{2}M}\E\left(e^{\frac{\beta}{2}\tilde{T}^\top_n\mathcal{A}\tilde{T}_n}\bP_{\beta,\bQ,\bmu_S(A)}(J_{n,\varepsilon} \cap \mathcal{E}_n \cap A_M|W_n)\right)
\end{align*}
The proof of Lemma \ref{lemma:log_partition_function_comparison_er} yields $\exists \theta>0$ and $\delta>0$ small enough such that
$\inf_{W_n}\exp(\frac{\beta}{2}\tilde{T}_n(W_n)^T\mathcal{A}_n\tilde{T}_n(W_n))\ge e^{-\theta}$ with probability $\geq 1-\delta$.
Therefore,
\begin{align*}
    \E_{\beta,\bQ^{\rm CW},\bmu_S(A)}e^{\frac{\beta}{2}\sigma^\top \mathcal{A}_n \sigma} \mathbbm{1}_{J_{n,\varepsilon}}\mathbbm{1}_{\sigma \in \mathcal{E}_n} \ge e^{-\frac{\beta}{2}M-\theta} \bP_{\beta,\bQ,\bmu_S(A)}(J_{n,\varepsilon} \cap \mathcal{E}_n \cap A_M).
\end{align*}
By Hanson-Wright inequality(cf. Lemma \ref{lem:hanson_wright}), pick $M$ large enough such that $\P_{\beta,\bQ^{\rm CW},\bmu_S(A)}(A_M|W_n)\ge 1-\eta/4$. Hence, $\P_{\beta,\bQ^{\rm CW},\bmu_S(A)}(A_M)\ge 1-\eta/4$  By picking $\varepsilon>0$ small enough, we can also get $\P_{\beta,\bQ^{\rm CW},\bmu_S(A)}(J_{n,\varepsilon})\ge 1-\eta/4$. Hence,
\begin{align*}
    \E_{\beta,\bQ^{\rm CW},\bmu_S(A)}e^{\frac{\beta}{2}\sigma^\top \mathcal{A}_n \sigma} \mathbbm{1}_{J_{n,\varepsilon}} \ge \frac{\eta}{2} e^{-\frac{\beta}{2}M-\theta}=:\nu >0.
\end{align*}
This concludes the proof of this lemma.
\end{proof}

\begin{lemma}\label{lemma:auxillary_variable_conc_alt_low_temp}
 %Fix $\bmu \in \R^n_{+}$ such that $\sum_i \mu_i \ll n$.
 There exists a constant $C_{\varepsilon}>0$ such that 
 \begin{align*}
     \P_{\beta,\bQ^{\rm CW},\bmu_S(A)}(J^c_{n,\varepsilon})\leq e^{-Cn},
 \end{align*}
 where $\bQ^{\rm CW}$ corresponds to the coupling matrix of a Curie-Weiss model on $n$ vertices.
\end{lemma}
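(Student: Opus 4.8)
The plan mirrors the change-of-measure argument already used in the proof of Lemma~\ref{lemma:cw_aux_var_alt_high_temp}. Recall the auxiliary-variable representation set up in the proof of Lemma~\ref{lemma:curie_part_fun_ratio} (following \cite{mukherjee2016global}): conditionally on $\bX$, the variable $W_n$ has a law that does not depend on $\bmu$, so that for the $W_n$-measurable event $J^c_{n,\varepsilon}$,
\[
\P_{\beta,\bQ^{\rm CW},\bmu_S(A)}(J^c_{n,\varepsilon})
=\frac{Z(\beta,\bQ^{\rm CW},\bzero)}{Z(\beta,\bQ^{\rm CW},\bmu_S(A))}\,\E_{\beta,\bQ^{\rm CW},\bzero}\!\big(\mathbbm{1}(J^c_{n,\varepsilon})\,e^{A\sum_{i\in S}X_i}\big)
\le e^{2sA}\,\P_{\beta,\bQ^{\rm CW},\bzero}(J^c_{n,\varepsilon}),
\]
where I would bound $e^{A\sum_{i\in S}X_i}\le e^{sA}$ and $Z(\beta,\bQ^{\rm CW},\bzero)/Z(\beta,\bQ^{\rm CW},\bmu_S(A))\le e^{sA}$. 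Since $sA=o(n)$ in every regime in which this lemma is applied, it then suffices to prove $\P_{\beta,\bQ^{\rm CW},\bzero}(J^c_{n,\varepsilon})\le e^{-C'n}$ for some $C'=C'(\beta,\varepsilon)>0$, which gives the assertion with any $C<C'$.

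For the null bound I would invoke the standard large-deviation behaviour of the Curie--Weiss model at $\beta>1$: by \cite{Ellis_Newman,ellis2006entropy}, $\bar{\bX}$ concentrates exponentially on $\{-m,m\}$, and since $W_n-\bar{\bX}$ is, conditionally on $\bX$, centred Gaussian with variance of order $1/n$, it concentrates exponentially at $0$; a union bound then gives $\P_{\beta,\bQ^{\rm CW},\bzero}(|W_n|\notin[m-\varepsilon,m+\varepsilon])\le e^{-C'n}$ for fixed $\varepsilon>0$. Equivalently, one may use the Laplace estimate on the marginal density of $W_n$, which is proportional to $e^{-nf(w)}$ with $f(w)=\beta w^2/2-\log\cosh(\beta w)$: for $\beta>1$ the function $f$ has exactly the two global minima $\pm m$, is bounded below by $f(m)+\eta(\varepsilon)$ for $|w|$ outside the $\varepsilon$-neighbourhoods of $\pm m$, and grows quadratically at infinity, so the part of $\int e^{-nf}$ coming from $J^c_{n,\varepsilon}$ is smaller by a factor $e^{-n\eta(\varepsilon)}$ than the contribution of the two wells.

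The only point needing attention is, exactly as in Lemma~\ref{lemma:cw_aux_var_alt_high_temp}, the tilting cost $e^{2sA}$, which is harmless precisely because $sA=o(n)$ throughout the relevant regimes (e.g. $s\ll n$ with $\|\bmu\|_\infty=O(1)$, or $s\ll n/\log n$ with $A=O(\sqrt{\log n})$). If one wanted a statement free of this constraint, I would instead work directly with the $\bmu$-dependent density $\propto e^{-nf_{n,\bmu}(w)}$, $f_{n,\bmu}(w)=\beta w^2/2-\tfrac1n\sum_i\log\cosh(\beta w+\mu_i)$, and use $|f_{n,\bmu}-f|\le sA/n$ uniformly in $w$, so that $f_{n,\bmu}$ is a vanishing perturbation of $f$ and the same two-well Laplace estimate applies verbatim. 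I do not anticipate any further obstacle.
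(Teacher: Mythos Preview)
Your proposal is correct and follows essentially the same change-of-measure argument as the paper: tilt from $\P_{\beta,\bQ^{\rm CW},\bmu_S(A)}$ back to $\P_{\beta,\bQ^{\rm CW},\bzero}$, bound the exponential factor pointwise, and invoke the exponential concentration of $W_n$ under the null from \cite{ellis2006entropy}, concluding via $sA\ll n$. The only cosmetic difference is that the paper bounds the partition-function ratio by $1$ (using $Z(\beta,\bQ^{\rm CW},\bmu_S(A))\ge Z(\beta,\bQ^{\rm CW},\bzero)$ via Jensen and the symmetry $\E_{\beta,\bQ^{\rm CW},\bzero}X_i=0$), yielding the slightly sharper prefactor $e^{As}$ rather than your $e^{2sA}$; this is immaterial for the conclusion.
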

\begin{proof}
\begin{align*}
     \P_{\beta,\bQ^{\rm CW},\bmu_S(A)}(J^c_{n,\varepsilon})&=\frac{Z(\beta,\bQ^{\rm CW},\mathbf{0})}{Z(\beta,\bQ^{\rm CW} ,\bmu_S(A))}\E_{\beta,\bQ^{\rm},\mathbf{0}}\left(e^{A\sum_{i\in S}{X_i}}\mathbbm{1}_{J^c_{n,\varepsilon}}\right)\\
     &\leq e^{As}\P_{\beta,\bQ^{\rm CW},\mathbf{0}}(J^c_{n,\varepsilon})\leq e^{As-Cn},
\end{align*}
for some $C>0$ depending on $\varepsilon>0$ (where the proof of $\P_{\beta,\bQ^{\rm CW},\mathbf{0}}(J^c_{n,\varepsilon})\leq e^{-Cn}$ follows from \cite{ellis2006entropy}). The proof follows from the assumption that $As \ll n$.
\end{proof}

\begin{comment}
To see this last piece note that it is enough to show that for any $\delta>0$ there exists a $C_{\delta}>0$ such that 
$\sup_{W_n}\exp(\tilde{T}_n(W_n)^T\mathcal{A}_n\tilde{T}_n(W_n))\leq C_{\delta}$ with probability $\geq 1-\delta$ under the Erd\H{o}s-R\'{e}nyi randomness. To that end, note that with $S(\mu)$ denoting the support of $\bmu$ one has
\begin{align*}
    \tilde{T}_n(W_n)^T\mathcal{A}_n\tilde{T}_n(W_n)&= T_1+2T_2+T_3
    \end{align*}
        where
\begin{align*}
        T_1&=\frac{1}{\tilde{d}}\sum_{(i,j)
    \in S(\bmu)^c\times S(\bmu)^c}\tanh^2(\beta W_n)\left(G_{i,j}-\frac{\tilde{d}}{n}\right);\\
    T_2&=\frac{1}{\tilde{d}}\sum_{(i,j)
    \in S(\bmu)\times S(\bmu)^c}\tanh(\beta W_n+\mu_i)\tanh(\beta W_n)\left(G_{i,j}-\frac{\tilde{d}}{n}\right);\\
    T_3&=\frac{1}{\tilde{d}}\sum_{(i,j)
    \in S(\bmu)\times S(\bmu)}\tanh(\beta W_n+\mu_i)\tanh(\beta W_n+\mu_j)\left(G_{i,j}-\frac{\tilde{d}}{n}\right).
    \end{align*}
Above $\tilde{d}=np$ when $\mathbb{G}_n\sim \mathcal{G}_n(n,p)$ and $\tilde{d}=d$ when $\mathbb{G}_n\sim \mathcal{G}_n(n,d)$. We will show that each of $T_j$ is $O_{\P}(1)$ under the randomness of both the Ising measure and that of $\mathbb{G}_n$ when $\mu_i$'s are same (and equals $A$) for $i\in S(\bmu)$. 
\end{comment}

\begin{lemma}\label{lemma:log_partition_function_comparison_er}
Let $\bQ^{\rm CW}_{i,j}$ denote the coupling matrix of Curie-Weiss  model and $\bQ$ be the scaled adjacency matrix of either the graph $G(n,p)$ (with $p=\Theta(1)$) or random regular graph on $n$ vertices each having degree $d=\Theta(n)$. Then for any $\delta>0$ the following holds with probability $\geq 1-\delta$: for \textit{any} $\beta>0,\bmu_S(A)$:
\begin{align*}
     C_l\leq \frac{Z(\beta,\bQ,\bmu_S(A))}{Z(\beta,\bQ^{\rm CW},\bmu_S(A))} \leq C_u\quad,
\end{align*}
for constants $C_l,C_u$ depending on $\beta,\delta$. %The same result holds conditional on $\bar{\bX}\leq 0$. 

%Further, if $\sum_{i,j}\mathrm{Cov}_{\beta,\bQ^{\rm ER}}(X_i,X_j|\hat{\Omega}_n)\leq \xi_n$ with $1-\epislon_n$ for some sequences $\xi_n,\epsilon_n$ with $\min\{\xi_n,1/\epsilon_n\}\rightarrow \infty$, then for any $\delta>0$
%\begin{align*}
 %    \frac{Z(\beta,\bQ^{\rm ER},\bmu|\hat{\Omega}_n)}{Z(\beta,\bQ^{\rm CW},\bmu|\hat{\Omega}_n)} \leq C_u \quad \text{w.p.}\geq 1-\delta
%\end{align*}
%for constants $C_u$ depending on $\beta,\delta$.

\end{lemma}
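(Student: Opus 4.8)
The plan is to compare the partition functions $Z(\beta,\bQ,\bmu_S(A))$ and $Z(\beta,\bQ^{\rm CW},\bmu_S(A))$ by writing their ratio as an expectation under the Curie--Weiss Ising measure and then controlling the resulting quadratic form uniformly in $\beta>0$ and in the sparse magnetization vector $\bmu_S(A)$. Concretely, let $\bQ=G_n/\tilde d$ with $\tilde d=np$ (or $d$) and $\bQ^{\rm CW}=(11^\top-I)/n$, and set $\mathcal{A}_n:=\bQ-\bQ^{\rm CW}=\tfrac{1}{\tilde d}G_n-\tfrac{1}{n}(11^\top-I)$. Then
\begin{align*}
\frac{Z(\beta,\bQ,\bmu_S(A))}{Z(\beta,\bQ^{\rm CW},\bmu_S(A))}=\E_{\beta,\bQ^{\rm CW},\bmu_S(A)}\exp\!\left(\tfrac{\beta}{2}\,\sigma^\top\mathcal{A}_n\sigma\right).
\end{align*}
The first step is to recenter $\sigma$ using the Curie--Weiss auxiliary variable $W_n\sim N(\bar{\bX},1/(n\beta))$: conditionally on $W_n$ the coordinates $X_i$ are independent with mean $\tilde T_{n,i}=\tanh(\beta W_n+A\mathbbm{1}_{i\in S})$, so writing $\sigma=\tilde T_n+(\sigma-\tilde T_n)$ gives $\sigma^\top\mathcal{A}_n\sigma=(\sigma-\tilde T_n)^\top\mathcal{A}_n(\sigma-\tilde T_n)+2\tilde T_n^\top\mathcal{A}_n(\sigma-\tilde T_n)+\tilde T_n^\top\mathcal{A}_n\tilde T_n$. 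The linear-in-$(\sigma-\tilde T_n)$ term and the centered quadratic term are handled by Hanson--Wright (Lemma~\ref{lem:hanson_wright}) conditionally on $W_n$, using $\|\mathcal{A}_n\|_{\mathrm{op}}=O(1/\sqrt{\tilde d})$ and $\|\mathcal{A}_n\|_F^2=O(1)$, which hold with probability $\ge 1-\delta$ for Erd\H{o}s--R\'enyi and random regular graphs by standard spectral concentration; these contributions are $O_{\P}(1)$ uniformly.

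The second step, which I expect to be the main obstacle, is the deterministic-in-$\sigma$ quadratic form $\tilde T_n^\top\mathcal{A}_n\tilde T_n$, since this must be controlled \emph{uniformly over all} $\beta>0$ and all sparse vectors $\bmu_S(A)$ (with $|S|=s$), as the lemma is quantified over these. Writing $\tilde T_n=\tanh(\beta W_n)\mathbf 1+v$ where $v$ is supported on $S$ with $\|v\|_\infty\le 2$ and $\|v\|_2^2=O(s)$, one expands $\tilde T_n^\top\mathcal{A}_n\tilde T_n$ into three pieces: $\tanh^2(\beta W_n)\,\mathbf 1^\top\mathcal{A}_n\mathbf 1$, cross terms $\tanh(\beta W_n)\,\mathbf 1^\top\mathcal{A}_n v$, and $v^\top\mathcal{A}_n v$. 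The first piece is $O(1)$ because $\mathbf 1^\top\mathcal{A}_n\mathbf 1=\sum_i(R_i-1)+O(1)$ with $R_i=\sum_j\bQ_{ij}$ being close to $1$ (exactly $1$ in the regular case, and $O(1/\sqrt{\tilde d})$-close in the Erd\H{o}s--R\'enyi case by a Chernoff/union bound over $n$ vertices), and $|\tanh(\beta W_n)|\le 1$; the third is $O(s\cdot\|\mathcal{A}_n\|_{\mathrm{op}})=O(s/\sqrt{\tilde d})=o(1)$ since $s\ll n$ and $\tilde d=\Theta(n)$; the cross term is bounded by $\|\mathcal{A}_n\mathbf 1\|_2\|v\|_2=O(\sqrt{s/\tilde d})=o(1)$. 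Crucially all these bounds are independent of $\beta$ (only $|\tanh(\beta W_n)|\le 1$ is used) and depend on $\bmu_S(A)$ only through $|S|=s$, so the uniformity is automatic once the good graph-event of probability $\ge 1-\delta$ (spectral norm control plus row-sum control) is fixed.

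Putting the pieces together: on the good graph-event, conditionally on $W_n$ we get $\E_{\beta,\bQ^{\rm CW},\bmu_S(A)}[\exp(\tfrac{\beta}{2}\sigma^\top\mathcal{A}_n\sigma)\mid W_n]\asymp \exp(\tfrac{\beta}{2}\tilde T_n^\top\mathcal{A}_n\tilde T_n)\cdot(1+o(1))$ with the Hanson--Wright factors lying in a fixed compact subinterval of $(0,\infty)$, and $\tilde T_n^\top\mathcal{A}_n\tilde T_n$ itself lies in a bounded interval by the previous paragraph; taking expectation over $W_n$ and using that $W_n$ concentrates near $\pm m$ (or near $0$ for $\beta\le1$) preserves two-sided bounds. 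One subtlety to be careful about: for the \emph{upper} bound on the ratio one needs an upper bound on $\tilde T_n^\top\mathcal{A}_n\tilde T_n$ that holds for \emph{all} realizations of $W_n$, not just the typical ones, which is why the bounds above are stated as deterministic consequences of $\|\mathcal{A}_n\|_{\mathrm{op}}$ and the row sums rather than of $W_n$-concentration; for the \emph{lower} bound one restricts to a good event $J_{n,\varepsilon}=\{m-\varepsilon\le|W_n|\le m+\varepsilon\}$ (or its $\beta\le1$ analogue) of probability bounded below, exactly as in the proof of Lemma~\ref{lemma:compare_sets_low_temp}. This yields constants $C_l,C_u$ depending only on $\beta$ and $\delta$, completing the proof.
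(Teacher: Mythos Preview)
Your proposal is correct and follows the paper's overall strategy: write the ratio as $\E_{\beta,\bQ^{\rm CW},\bmu_S(A)}\exp(\tfrac{\beta}{2}\sigma^\top\mathcal A_n\sigma)$, introduce the auxiliary variable $W_n$, decompose $\sigma=\tilde T_n+(\sigma-\tilde T_n)$, handle the centered quadratic and linear parts by Hanson--Wright (Lemma~\ref{lem:hanson_wright}) conditionally on $W_n$, and then bound the deterministic term $\tilde T_n^\top\mathcal A_n\tilde T_n$. There are, however, two places where your route differs from the paper's, and it is worth noting what each buys.

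\emph{The deterministic quadratic form.} You control $\tilde T_n^\top\mathcal A_n\tilde T_n$ by writing $\tilde T_n=\tanh(\beta W_n)\mathbf 1+v$ and bounding the three pieces via $\|\mathcal A_n\|_{\rm op}$, $\|\mathcal A_n\mathbf 1\|_2$ and $\mathbf 1^\top\mathcal A_n\mathbf 1$. The paper instead splits the sum over $S\times S$, $S\times S^c$, $S^c\times S^c$ and appeals to Lemma~\ref{lem:random_regular_covariance}. Your spectral-norm route is arguably cleaner: once the graph event $\{\|\mathcal A_n\|_{\rm op}\le Cn^{-1/2+\delta},\ \sum_i(R_i-1)^2\le C\}$ is fixed, the resulting bounds are automatically uniform over all $S$ and all $W_n$ (only $|\tanh|\le 1$ and $\|v\|_2^2\le 4s$ are used), so you avoid the paper's separate treatment of the bad set $J_{n,\varepsilon}^c$ via $\sup_\sigma|\sigma^\top\mathcal A_n\sigma|\le n\|\mathcal A_n\|_{\rm op}$ and Lemma~\ref{lemma:auxillary_variable_conc_alt_low_temp}.

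\emph{The lower bound.} The paper does \emph{not} use the good-event restriction you sketch. Instead it exploits the convexity of $\bQ\mapsto\log Z(\beta,\bQ,\bmu)$ to obtain
\[
\log Z(\beta,\bQ,\bmu_S(A))-\log Z(\beta,\bQ^{\rm CW},\bmu_S(A))\ \ge\ \beta\big\langle \E_{\beta,\bQ^{\rm CW},\bmu_S(A)}(\hat\Sigma),\,\bQ-\bQ^{\rm CW}\big\rangle_{\rm TR},
\]
and then bounds the trace inner product by Curie--Weiss exchangeability (only three distinct values of $\E_{\beta,\bQ^{\rm CW},\bmu_S(A)}(X_iX_j)$ appear) together with Lemma~\ref{lem:random_regular_covariance}. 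Your alternative---restrict to $J_{n,\varepsilon}\cap\{|Y_n|\le M\}$ and use that on this set $e^{\frac{\beta}{2}\sigma^\top\mathcal A_n\sigma}\ge e^{-\frac{\beta}{2}M}e^{\frac{\beta}{2}\tilde T_n^\top\mathcal A_n\tilde T_n}$, with Hanson--Wright giving $\P(|Y_n|\le M\mid W_n)\ge 1-\eta$ for large $M$---is exactly the argument the paper uses in the proof of Lemma~\ref{lemma:compare_sets_low_temp}, and it works equally well here. The convexity argument is shorter once Lemma~\ref{lem:random_regular_covariance} is in hand; your approach has the virtue of reusing the same machinery as the upper bound.
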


\begin{proof}
Define $\mathcal{A}_n:=G-\mathbf{1}\mathbf{1}^T+I/n$. For the auxiliary variable $W_n$, define a vector $\tilde{T}_n=\tilde{T}_n(W_n)$ such that $\tilde{T}_{n,i}=\tanh(\beta W_n +A \mathbbm{1}_{i \in S})$. Observe that,
\[\bX^\top \mathcal{A}_n \bX= (\bX-\tilde{T}_n)^\top \mathcal{A}_n (\bX-\tilde{T}_n)+2 \tilde{T}^\top_n\mathcal{A}_n (\bX-\tilde{W}_n)+\tilde{T}^\top_n\mathcal{A}\tilde{T}_n =: Y_n+\tilde{T}^\top_n\mathcal{A}\tilde{T}_n.
\]
Define the good set $J_{n,\varepsilon}:= \{m-\varepsilon \le |W_n| \le m+\varepsilon\}$, where $m$ is the unique positive root of $m=\tanh(\beta m)$. For the upper bound, note that,
\begin{align*}
    \frac{Z(\beta,\bQ,\bmu_S(A))}{Z(\beta,\bQ^{\rm CW},\bmu_S(A))}&=\E_{\beta,\bQ^{\rm CW},\bmu_S(A)}\Big(e^{\frac{\beta}{2}\bX^T\mathcal{A}_n\bX}\Big)\\ 
    &=\E_{\beta,\bQ^{\rm CW},\bmu_S(A)}e^{\bX^\top \mathcal{A}_n \bX} \mathbbm{1}_{J^c_{n,\varepsilon}}+ \E_{\beta,\bQ^{\rm CW},\bmu_S(A)}e^{\bX^\top \mathcal{A}_n \bX} \mathbbm{1}_{J_{n,\varepsilon}},
\end{align*}
To bound the first summand, note that for any $\delta>0$ one has that there exists a sequence $\epsilon_n(\delta)\rightarrow 0$ such that with probability $1-\epsilon_n(\delta)$ one has that $\|\mathcal{A}_n\|_{\rm op}\leq n^{-1/2+\delta}$ for either $G(n,p)$~\cite[Theorem 1.1]{vu2005spectral} or for random regular graph~\cite[Theorem A]{tikhomirov2019spectral}. As a result, for any $\delta>0$ one has that with probability with probability $1-\epsilon_n(\delta)$, $\sup\limits_{\sigma \in \{\pm\}^n}e^{\bX^T\mathcal{A}_n\bX}\leq e^{n^{1/2+\delta}}$. Subsequently, with probability larger than $1-\epsilon_n(\delta)$ the following hold
\begin{align*}
    \E_{\beta,\bQ^{\rm CW},\bmu_S(A)}e^{\sigma^\top \mathcal{A}_n \sigma} \mathbbm{1}_{J^c_{n,\varepsilon}}\leq e^{n^{1/2+\delta}}\P_{\beta,\bQ^{\rm CW},\bmu_S(A)}(J^c_{n,\varepsilon}).
\end{align*}
This inequality and Lemma \ref{lemma:auxillary_variable_conc_alt_low_temp} yields that the first summand is $o(1)$. To analyze $\E_{\beta,\bQ^{\rm CW},\bmu_S(A)}e^{\bX^\top \mathcal{A}_n \bX} \mathbbm{1}_{J_{n,\varepsilon}}$ we plan to invoke Hanson-Wright inequality (cf. Lemma \ref{lem:hanson_wright}). To this end, we choose $\varepsilon>0$ small enough such that $$\lim\sup \max_i s_{\tilde{T}_{n,i}} \lambda_1(\beta \mathcal{A}_n) <1,$$
where $s$ in Lemma \ref{lem:hanson_wright}. The choice of $\varepsilon$ is possible since $A=o(1)$. Now, we apply \ref{lem:hanson_wright} on $\bE (e^{\frac{\beta}{2}Y_n}|W_n)$ and the error term in Lemma \ref{lem:hanson_wright} is $O(1)$ as shown in \cite[Section 1.2]{debsum}. Hence, the proof concludes once we observe that $\log \E^{\rm CW} e^{\tilde{T}^\top_n\mathcal{A}\tilde{T}_n}= O(1)$ \textit{uniformly} over $S$. To see this, note that it is enough to show that for any $\delta>0$ there exists a $C_{\delta}>0$ such that 
$\sup_{W_n}\exp(\tilde{T}_n(W_n)^T\mathcal{A}_n\tilde{T}_n(W_n))\leq C_{\delta}$ with probability $\geq 1-\delta$ under either Erd\H{o}s-R\'{e}nyi randomness or random regular graph. To that end, note that with $S(\mu)$ denoting the support of $\bmu$ one has
\begin{align*}
    \tilde{T}_n(W_n)^T\mathcal{A}_n\tilde{T}_n(W_n)&= T_1+2T_2+T_3
    \end{align*}
        where
\begin{align*}
        T_1&=\frac{1}{\tilde{d}}\sum_{(i,j)
    \in S(\bmu)^c\times S(\bmu)^c}\tanh^2(\beta W_n)\left(G_{i,j}-\frac{\tilde{d}}{n}\right);\\
    T_2&=\frac{1}{\tilde{d}}\sum_{(i,j)
    \in S(\bmu)\times S(\bmu)^c}\tanh(\beta W_n+\mu_i)\tanh(\beta W_n)\left(G_{i,j}-\frac{\tilde{d}}{n}\right);\\
    T_3&=\frac{1}{\tilde{d}}\sum_{(i,j)
    \in S(\bmu)\times S(\bmu)}\tanh(\beta W_n+\mu_i)\tanh(\beta W_n+\mu_j)\left(G_{i,j}-\frac{\tilde{d}}{n}\right).
    \end{align*}
Above $\tilde{d}=np$ when $\mathbb{G}_n\sim \mathcal{G}_n(n,p)$ and $\tilde{d}=d$ when $\mathbb{G}_n\sim \mathcal{G}_n(n,d)$. In particular, in that case
\begin{align*}
    T_1&=\tanh^2(\beta W_n)\frac{1}{\tilde{d}}\sum_{(i,j)
    \in S(\bmu)^c\times S(\bmu)^c}\left(G_{i,j}-\frac{\tilde{d}}{n}\right)\\
    &=O_{\P_{\beta,\bQ,\bmu}}(1)O_{\P_{G_n}}\left(\frac{(n-s)}{\tilde{d}}\right) \quad \text{by Lemma \ref{lem:random_regular_covariance}}.
\end{align*}
Similarly, 
    \begin{align*}
    T_2&=\tanh(\beta W_n)\tanh(\beta W_n+A)\frac{1}{\tilde{d}}\sum_{(i,j)
    \in S(\bmu)\times s(\bmu)^c}\left(G_{i,j}-\frac{\tilde{d}}{n}\right)\\
    &=O_{\P_{\beta,\bQ,\bmu}}(1)O_{\P_{G_n}}\left(\frac{\sqrt{(n-s)s}}{\tilde{d}}\right) \quad \text{by Lemma \ref{lem:random_regular_covariance}},
\end{align*}
and
 \begin{align*}
    T_3&=\tanh^2(\beta W_n+A)\frac{1}{\tilde{d}}\sum_{(i,j)
    \in S(\bmu)\times s(\bmu)}\left(G_{i,j}-\frac{\tilde{d}}{n}\right)\\
    &=O_{\P_{\beta,\bQ,\bmu}}(1)O_{\P_{G_n}}\left(\frac{\sqrt{(n-s)s}}{\tilde{d}}\right) \quad \text{by Lemma \ref{lem:random_regular_covariance}},
\end{align*}
    These estimates immediately verifies claim that $T_j$'s $O_{\P}(1)$ whenever $\tilde{d}=\Theta(n)$ -- which is guaranteed by either $p=\Theta(1)$ when $\mathbb{G}_n\sim \mathcal{G}_n(n,p)$ or $d=\Theta(n)$ when $\mathbb{G}_n\sim \mathcal{G}_n(n,d)$.

\begin{comment}

\begin{align*}
    T_1&=\frac{1}{\tilde{d}}\tanh^2(\beta W_n)\left(-2 \sum_{(i,j)
    \in S(\bmu)\times S(\bmu)^c}\left(G_{i,j}-\frac{\tilde{d}}{n}\right)- \sum_{(i,j)
    \in S(\bmu)\times S(\bmu)}\left(G_{i,j}-\frac{\tilde{d}}{n}\right)\right), 
\end{align*}
implying that $\tilde{T}_n(W_n)^T\mathcal{A}_n\tilde{T}_n(W_n)= H_1+2H_2$, where $$H_1:= \frac{1}{\tilde{d}}\big(\tanh^2(\beta W_n+A)-\tanh^2(\beta W_n)\big)\sum_{(i,j)
    \in S(\bmu)\times S(\bmu)}\left(G_{i,j}-\frac{\tilde{d}}{n}\right);$$
\begin{align*}
    H_2&:= \frac{2}{\tilde{d}}\tanh(\beta W_n)\big(\tanh(\beta W_n+A)-\tanh(\beta W_n)\big)\sum_{(i,j)
    \in S(\bmu)\times S(\bmu)^c}\left(G_{i,j}-\frac{\tilde{d}}{n}\right)\\
    &=-\frac{2}{\tilde{d}}\tanh(\beta W_n)\big(\tanh(\beta W_n+A)-\tanh(\beta W_n)\big)\sum_{(i,j)
    \in S(\bmu)\times S(\bmu)}\left(G_{i,j}-\frac{\tilde{d}}{n}\right)
\end{align*}
Note that, both $H_1 + H_2= O_{\P}(\frac{s}{n})=o_{\P}(1)$ using Lemma \ref{lem:random_regular_covariance}. This concludes the proof of upper bound.
%\todo[inline]{Ramu erdos renyi ar lower bound kothay lekha achhe bolchhile dhukiye diyo}
\end{comment}

For the lower bound on ratio of partition functions we begin by noting that \textcolor{black}{by convexity of $\bQ\rightarrow \log{Z(\beta,\bQ,\bmu)}$ for every $\bmu$},\footnote{for any differentiable convex function $f:\mathbb{R}^d\rightarrow \mathbb{R}$ one has $f(y)-f(x)\geq \langle \nabla f(x),y-x\rangle$}
\begin{align*}
  \exp\left(\beta\langle \E_{\beta,\bQ^{\rm CW},\bmu_S(A)}(\hat{\Sigma}),-\Delta_{\bQ}\rangle_{\rm TR}\right) \leq  \frac{Z(\beta,\bQ,\bmu_S(A))}{Z(\beta,\bQ^{\rm CW},\bmu_S(A))}\leq \exp\left(-\beta\langle \E_{\beta,\bQ,\bmu_S(A)}(\hat{\Sigma}),\Delta_{\bQ}\rangle_{\rm TR}\right)
\end{align*}
where $\hat{\Sigma}_{i,j}=X_iX_j$, $\Delta_{\bQ}=\bQ-\bQ^{\rm CW}$, and $\langle\cdot, \cdot \rangle_{\rm TR}$ denote the matrix trace norm.

 Thereafter it is enough to show that $$\exp\left(-\beta\langle \E_{\beta,\bQ^{\rm CW},\bmu_S(A)}(\hat{\Sigma}),\bQ^{\rm CW}-\bQ^{\rm ER}\rangle_{\rm TR}\right)=O(1)$$ with high probability. This part is similar to above and is only easier since one can simply compute mean and variance of $\langle \E_{\beta,\bQ^{\rm CW},\bmu_S(A)}(\hat{\Sigma}),\bQ^{\rm CW}-\bQ^{\rm ER}\rangle_{\rm TR}$ to conclude by Chebyshev's Inequality. In particular, 
 \begin{align*}
    \ &  \langle \E_{\beta,\bQ^{\rm CW},\bmu_S(A)}(\hat{\Sigma}),\bQ^{\rm CW}-\bQ^{\rm ER}\rangle_{\rm TR}\\
    &=\xi(S,S)\sum_{(i,j)\in S\times S}\Delta_{i,j}+2\xi(S,S^c)\sum_{(i,j)\in S\times S^c}\Delta_{i,j}+\xi(S^c,S^c)\sum_{(i,j)\in S^c\times S^c}\Delta_{i,j}
 \end{align*}
 where $\Delta_{i,j}=\frac{1}{\tilde{d}}\left(G_{i,j}-\frac{\tilde{d}}{n}\right)$ and $\xi(S,S),\xi(S,S^c),\xi(S^c,S^c)$ stand for the common values (by the exchangeability in the Curie-Weiss Model) of $\{\E_{\beta,\bQ^{\rm CW},\bmu_{S}(A)}(X_i X_j)$, for  $(i,j)\in S\times S\}$, $(i,j)\in S\times S^c\}$ and $(i,j)\in S^c\times S^c\}$ respectively. Since the $\xi's$ are bounded by $1$ in absolute value, the proof of the fact that   $ \langle \E_{\beta,\bQ^{\rm CW},\bmu_S(A)}(\hat{\Sigma}),\bQ^{\rm CW}-\bQ^{\rm ER}\rangle_{\rm TR}$ has bounded variance then follows from Lemma \ref{lem:random_regular_covariance}. This concludes the proof of this lemma.
  \end{proof}

\begin{comment}
 $\mathbf{H}$ \textcolor{black}{(independent of $G$)} such that $\|\mathbf{H}\|_F^2\leq Cn^2$ for some constant $C>$ then
\begin{align*}
    \lvert\langle\mathbf{H},\bQ^{\rm ER}-\bQ^{\rm CW}\rangle_{\rm TR}\rvert\leq C_{\delta}
\end{align*}
with probability larger than $1-\delta>0$ for a constant $C_{\delta}>0$ depending on $\delta>0$. To see this note that
\begin{align*}
    \E_{\bQ^{\rm ER}}\langle\mathbf{H},\bQ^{\rm ER}-\bQ^{\rm CW}\rangle_{\rm TR}=0
\end{align*}
and by Lemma \ref{lem:random_regular_covariance} we have
\begin{align*}
     \mathrm{Var}_{\bQ^{\rm ER}}\langle\mathbf{H},\bQ^{\rm ER}-\bQ^{\rm CW}\rangle_{\rm TR}=\frac{p(1-p)}{n^2p^2}\sum_{i,j}\mathbf{H}_{i,j}^2\leq C(1-p)/p.
\end{align*}
The proof of the claim thereafter follows by Chebyshev's Inequality. %The proof of the Lemma then follows by noticing that $\max\left\{\|\E_{\beta,\bQ^{\rm CW},\bmu}(\hat{\Sigma}|\hat{\Omega}_n)\|_{F}^2,\|\E_{\beta,\bQ^{\rm ER},\bmu}(\hat{\Sigma}|\hat{\Omega}_n)\|_{F}^2\right\}\leq n^2$.

\end{comment} 

\begin{lemma}\label{lem:random_regular_covariance}
   Let $G$ be the adjacency matrix of either the graph $G(n,p)$ (with $p=\Theta(1)$) or random regular graph on $n$ vertices each having degree $d=\Theta(n)$. For any two subsets $S_1,S_2\subset \{1,\ldots,n\}$, with $\min \{|S_1|,|S_2|\} \rightarrow \infty$, $\Psi= \Psi(S_1,S_2):= \frac{1}{\sqrt{|S_1||S_2|}}\sum_{i,j\in S_1\times S_2}(G_{ij}-\bE(G_{ij}))=O_{\P}(1)$, where $\bE(G_{ij})$ is $p$ or $d/n$ respectively.
\end{lemma}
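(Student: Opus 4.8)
The plan is to prove the stronger, uniform statement that the deterministic part of $\Psi(S_1,S_2)$ is $O(1)$ and the centred random part has variance $O(1)$, uniformly over $S_1,S_2$; then $\Psi=O_{\P}(1)$ follows at once from Chebyshev's inequality. Write $\mu$ for the nominal edge probability subtracted in $\Psi$, i.e.\ $\mu=p$ for $G(n,p)$ and $\mu=d/n$ for the random regular graph. First I would split the sum defining $\Psi$ into three pieces: the diagonal terms $i=j$, a deterministic ``bias'' correction accounting for the difference between $\mu$ and the true edge probability, and a centred random remainder. The diagonal piece has absolute value $\mu\,|S_1\cap S_2|/\sqrt{|S_1||S_2|}\le\mu\le 1$, because $|S_1\cap S_2|\le\min(|S_1|,|S_2|)\le\sqrt{|S_1||S_2|}$. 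For the $d$-regular model the true edge probability is $d/(n-1)$, which differs from $\mu=d/n$ by $O(1/n)$; summed over at most $|S_1||S_2|$ off-diagonal pairs and divided by $\sqrt{|S_1||S_2|}$, this contributes a deterministic term of size $O(\sqrt{|S_1||S_2|}/n)=O(1)$ since $|S_i|\le n$. It then remains to control $\Psi_0:=\frac{1}{\sqrt{|S_1||S_2|}}\sum_{(i,j)\in S_1\times S_2,\ i\ne j}(G_{ij}-\bE G_{ij})$, which has mean zero.

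Next I would rewrite $\Psi_0$ over unordered pairs as $\Psi_0=\frac{1}{\sqrt{|S_1||S_2|}}\sum_{\{k,l\}}c_{kl}(G_{kl}-\bE G_{kl})$ with weights $c_{kl}:=\mathbf{1}(k\in S_1,l\in S_2)+\mathbf{1}(l\in S_1,k\in S_2)\in\{0,1,2\}$, noting $\sum_{\{k,l\}}c_{kl}\le|S_1||S_2|$ and $c_{kl}^2\le 2c_{kl}$. For $G(n,p)$ the indicators $\{G_{kl}\}$ are independent, so $\mathrm{Var}(\Psi_0)=\frac{p(1-p)}{|S_1||S_2|}\sum_{\{k,l\}}c_{kl}^2\le\frac{2p(1-p)}{|S_1||S_2|}\sum_{\{k,l\}}c_{kl}\le 2p(1-p)\le\frac12$. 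Combined with the deterministic bound, this proves the claim in the Erd\H{o}s-R\'{e}nyi case.

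For the uniform random $d$-regular graph with $d=\Theta(n)$ the edge indicators are dependent, and bounding their covariances is the only nontrivial step. I would use the standard estimates for uniform random regular graphs (obtained by McKay-type switching arguments, or from the asymptotic joint-edge-probability formulas) that, for distinct edges $e,f$, $\mathrm{Cov}(G_e,G_f)=O(1/n)$ when $e$ and $f$ share a vertex and $\mathrm{Cov}(G_e,G_f)=O(1/n^2)$ when $e$ and $f$ are vertex-disjoint, together with $\mathrm{Var}(G_e)=\tfrac{d}{n-1}(1-\tfrac{d}{n-1})=O(1)$. Expanding $\mathrm{Var}(\Psi_0)$ into variance and covariance terms and using $c_{kl}\le 2$: the ``same edge'' terms contribute at most $\frac{1}{|S_1||S_2|}\cdot 2|S_1||S_2|\cdot O(1)=O(1)$; there are $O(n|S_1||S_2|)$ ordered pairs of distinct relevant edges sharing a vertex (using $|S_i|\le n$ and $|S_1\cap S_2|\cdot\max(|S_1|,|S_2|)\le|S_1||S_2|$), each weighted by $O(1/n)$, hence contributing $O(1)$ after dividing by $|S_1||S_2|$; and there are at most $(|S_1||S_2|)^2$ pairs of distinct relevant vertex-disjoint edges, each weighted by $O(1/n^2)$, contributing $O((|S_1||S_2|)^2 n^{-2})$, i.e.\ $O(|S_1||S_2|/n^2)=O(1)$ after normalization since $|S_i|\le n$. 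Therefore $\mathrm{Var}(\Psi_0)=O(1)$, and Chebyshev's inequality gives $\Psi=O_{\P}(1)$.

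The one genuine obstacle is the covariance control for the random regular model, and one must check that the quoted bounds remain valid in the dense regime $d=\Theta(n)$, where the configuration model cannot be conditioned cleanly on producing a simple graph; I would therefore derive (or cite) these covariance bounds directly on the uniform random regular graph via switchings rather than through the configuration model. All remaining steps — the diagonal/bias bookkeeping, the reduction to a sum over unordered pairs, and the concluding application of Chebyshev's inequality — are routine.
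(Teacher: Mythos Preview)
Your argument is correct and lands in the same place as the paper---Chebyshev on a variance bound of order $O(1)$---but your route to the covariance estimates for the random $d$-regular graph is different and less self-contained than the paper's. You defer to McKay-type switching formulas (and flag the dense regime $d=\Theta(n)$ as a potential issue to check). The paper instead derives the two needed covariance bounds by a short, purely elementary symmetry argument: by exchangeability, all covariances $\mathrm{Cov}(G_{ij},G_{ij'})$ with a shared vertex are equal (call the common value $c_1$), and all $\mathrm{Cov}(G_{ij},G_{i'j'})$ with $i\neq i'$ are equal (call it $c_2$); then the deterministic constraints $\sum_j G_{ij}=d$ and $\sum_{i,j}G_{ij}=nd$ force $\mathrm{Var}(\sum_j G_{ij})=0$ and $\mathrm{Var}(\sum_{i,j}G_{ij})=0$, which immediately give $c_1=O(1/n)$ and $c_2=O(1/n^2)$. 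This sidesteps any switching or configuration-model technology and works uniformly in the dense regime without further justification. On the other hand, your careful bookkeeping of the diagonal terms and the $d/n$ versus $d/(n-1)$ bias is a point the paper's proof glosses over (it asserts $\E(\Psi)=0$ directly), so your write-up is cleaner there.
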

\begin{proof}
If $G$ is the adjacency matrix of either the graph $G(n,p)$, $G_{ij}$, $i \le j$ are independent and the conclusion is immediate using Chebyshev's inequality. \par 
When $G$ is the adjacency matrix of random regular graph, for any $i,j,j' \in [n]$, $Cov(G_{ij},G_{ij'})$ are same by symmetry and will be denoted by $c_1$. Similarly, for $i \neq i'$, $Cov(G_{ij},G_{i'j'})$ will be same and will denote by $c_2$. For any $i\in [n]$, $\sum_j G_{ij}=d$. Hence, $Var(\sum_j G_{ij})=0$. So,
\[ n^2c_1 \geq 0= \sum_j \mathrm{Var}(G_{ij})+2 \sum_{j \neq j'} \mathrm{Cov}(G_{ij},G_{ij'}) \leq n+n^2c_1,
\]
implying $c_1=O(1/n)$. Similarly, $\mathrm{Var}(\sum_{i,j} G_{ij})=0$ and number of tuples ${(i,j),(i',j')}$, $i \neq i'$ are $\Theta(n^4)$ and $i=i'$ are $\Theta(n^3)$. So, 
\[
0= \sum_{i,j}\mathrm{Var}(G_{ij})+\Theta(n^3)c_1+\Theta(n^4)c_2,
\]
implying $c_2=O(1/n^2)$. Next, we use the covariance estimates to prove the Lemma. \par 
To this end, note that $\bE(\Psi)=0$. Also,
\begin{align*}
    \mathrm{Var} (\Psi)= \frac{1}{|S_1||S_2|}\Big(\sum_{i,j\in S_1\times S_2}\mathrm{Var}(G_{ij})+ (|S_1||S_2|^2+|S_2||S_1|^2)c_1+|S_1|^2|S_2|^2c_2 \Big)=O(1),
\end{align*}
proving $\Psi=O_{\P}(1)$, as desired.
\end{proof}

\begin{lemma}\label{lemma:er_regular_moderate_deviation}
Suppose $\bX\sim \P_{\beta,\bQ,\bmu}$ with $\bQ$ be either scaled adjecency matrix of of the graph $G(n,p)$ (with $p=\Theta(1)$) or random regular graph on $n$ vertices each having degree $d=\Theta(n)$. Let $Z_S=\sum_{i\in S}X_i/\sqrt{s}$. Define $t_n(\delta):=\sqrt{2(1+\delta)(1-m^2)\log{|\mathcal{N}(\C_s,\gamma,\varepsilon_n)|}}$ for some $\delta>0$, where $m:=m(\beta)$ is the unique positive root of $m=\tanh(\beta m)$.
\begin{enumerate}
    \item[(a)] If $\beta<1, s\ll \frac{n}{\log n}$, $\P_{\beta,\bQ^{\rm CW},\mathbf{0}}(Z_S> t_n(\delta)) \le (1+o(1))\frac{C_u}{C_l}e^{-t^2_n(\delta)}$, w.h.p. where $C_u,C_l$ are defined as Lemma \ref{lemma:log_partition_function_comparison_er}.
    \item[(b)]
    If $\beta=1, s\ll \frac{\sqrt{n}}{\log n}$, then same conclusion as (a) holds.
\end{enumerate}
\end{lemma}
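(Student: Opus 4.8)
The plan is to mimic the Chernoff-bound argument used for the Curie--Weiss model in Lemma \ref{lemma:cw_moderate_deviation}(a), replacing the explicit evaluation of the Curie--Weiss partition-function ratio there by a two-step reduction: first compare the Ising partition functions on $\bQ$ to those on the complete graph $\bQ^{\rm CW}$ via Lemma \ref{lemma:log_partition_function_comparison_er}, and then evaluate the resulting Curie--Weiss ratio via Lemma \ref{lemma:curie_part_fun_ratio}.

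First I would fix $\delta>0$ and work on the event $\mathcal{A}$ of Lemma \ref{lemma:log_partition_function_comparison_er}, which has probability $\ge 1-\delta$ with respect to the graph randomness and on which $C_l \le Z(\beta,\bQ,\bmu_S(A))/Z(\beta,\bQ^{\rm CW},\bmu_S(A)) \le C_u$ \emph{simultaneously} for all $\beta>0$ and all $\bmu_S(A)$ --- in particular for $A=0$ and for $A=\lambda/\sqrt{s}$. For part (a), pick $\lambda>0$ with $\tanh(\lambda/\sqrt{s})=t_n(\delta)/\sqrt{s}$, exactly as in Lemma \ref{lemma:cw_moderate_deviation}(a); then by Markov's inequality and a change of measure,
\begin{align*}
\P_{\beta,\bQ,\bzero}(Z_S>t_n(\delta)) &\le e^{-\lambda t_n(\delta)}\,\frac{Z(\beta,\bQ,\bmu_S(\lambda/\sqrt{s}))}{Z(\beta,\bQ,\bzero)}\\
&= e^{-\lambda t_n(\delta)}\,\frac{Z(\beta,\bQ,\bmu_S(\lambda/\sqrt{s}))}{Z(\beta,\bQ^{\rm CW},\bmu_S(\lambda/\sqrt{s}))}\cdot \frac{Z(\beta,\bQ^{\rm CW},\bmu_S(\lambda/\sqrt{s}))}{Z(\beta,\bQ^{\rm CW},\bzero)}\cdot\frac{Z(\beta,\bQ^{\rm CW},\bzero)}{Z(\beta,\bQ,\bzero)}.
\end{align*}
On $\mathcal{A}$ the first factor is at most $C_u$ and the third at most $1/C_l$, so only the middle Curie--Weiss ratio remains to be estimated.

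Next I would invoke Lemma \ref{lemma:curie_part_fun_ratio}(a): since $\log|\mathcal{N}(\C_s,\gamma,\varepsilon_n)| \ll s \ll n/\log|\mathcal{N}(\C_s,\gamma,\varepsilon_n)|$, one has $s\tanh(\lambda/\sqrt{s}) = \sqrt{s}\,t_n(\delta) = O\big(\sqrt{s\log|\mathcal{N}(\C_s,\gamma,\varepsilon_n)|}\big) \ll \sqrt{n}$, whence $Z(\beta,\bQ^{\rm CW},\bmu_S(\lambda/\sqrt{s}))/Z(\beta,\bQ^{\rm CW},\bzero) = (1+o(1))\cosh^s(\lambda/\sqrt{s})$ with the $o(1)$ uniform in $S$. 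Combining this with $\cosh(\lambda/\sqrt{s}) = (1-t_n^2(\delta)/s)^{-1/2}$ and $\lambda\ge t_n(\delta)$, exactly the elementary computation in the proof of Lemma \ref{lemma:cw_moderate_deviation}(a), yields $\P_{\beta,\bQ,\bzero}(Z_S>t_n(\delta)) \le (1+o(1))\frac{C_u}{C_l}e^{-t_n^2(\delta)}$, which is (a). Part (b) is identical word for word, with $s\ll \sqrt{n}/\log n$ ensuring $s\tanh(\lambda/\sqrt{s}) \ll n^{1/4}$ so that Lemma \ref{lemma:curie_part_fun_ratio}(c) applies in place of (a).

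I do not expect a serious obstacle here: the genuinely delicate estimates are already packaged inside Lemma \ref{lemma:log_partition_function_comparison_er}. The only points that require care are (i) that the good event $\mathcal{A}$ can be taken to be the \emph{same} for the numerator ($A=\lambda/\sqrt{s}$) and the denominator ($A=0$) partition functions --- which is precisely the content of the ``for any $\beta>0,\bmu_S(A)$'' formulation of that lemma --- and (ii) that the $(1+o(1))$ and $o(1)$ factors produced by Lemma \ref{lemma:curie_part_fun_ratio} are uniform in the scanned set $S$, which is how that lemma is stated and used. Both are immediate, so the argument is essentially bookkeeping on top of the two cited lemmas.
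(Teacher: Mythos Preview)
Your proposal is correct and follows essentially the same argument as the paper's own proof: the same Chernoff choice $\tanh(\lambda/\sqrt{s})=t_n(\delta)/\sqrt{s}$, the same reduction of the $\bQ$-partition-function ratio to the Curie--Weiss ratio via Lemma \ref{lemma:log_partition_function_comparison_er}, and the same evaluation of the latter via Lemma \ref{lemma:curie_part_fun_ratio}(a) (resp.\ (c) for $\beta=1$). Your explicit factorization into three ratios and your remarks on the uniformity of $\mathcal{A}$ over $A\in\{0,\lambda/\sqrt{s}\}$ just make transparent what the paper leaves implicit.
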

\begin{proof}
\begin{enumerate}
    \item[(a)]
    Pick $\lambda>0$ such that $\tanh\left(\frac{\lambda}{\sqrt{s}}\right)=\frac{t_n(\delta)}{\sqrt{s}}$. So, $\lambda \geq t_n(\delta)$, and 
$\cosh\left(\frac{\lambda}{\sqrt{s}}\right)=\left(1-\frac{t^2_n(\delta)}{s}\right)^{-1/2}.$ Also, $s \tanh\left(\frac{\lambda}{\sqrt{s}}\right)= \sqrt{st_n(\delta)}=O(\sqrt{s \log |\mathcal{N}(\C_s,\gamma,\varepsilon_n)|}) \ll \sqrt{n}$. Hence, 
\begin{align*}
    \P_{\beta,\bQ,\mathbf{0}}(Z_S> t_n(\delta) &\le e^{-\lambda t_n(\delta)}\frac{Z(\beta,\bQ, \bmu_S(\frac{\lambda}{\sqrt{s}}))}{Z(\beta,\bQ,\bzero)} \le \frac{C_u}{C_l}e^{-\lambda t_n(\delta)}\frac{Z(\beta,\bQ^{\rm CW}, \bmu_S(\frac{\lambda}{\sqrt{s}}))}{Z(\beta,\bQ^{\rm CW},\bzero)}\\
    &= (1+o(1))\frac{C_u}{C_l}e^{-\lambda t_n(\delta)} \cosh^s\left(\frac{\lambda}{\sqrt{s}}\right) \le (1+o(1))\frac{C_u}{C_l}e^{-t^2_n(\delta)}
\end{align*}
w.h.p., where the second inequality is due to Lemma \ref{lemma:log_partition_function_comparison_er} and the equality is due to Lemma \ref{lemma:curie_part_fun_ratio}.
\item[(b)]
When $\beta=1$, we pick same $\lambda$. Now, $s \tanh\left(\frac{\lambda}{\sqrt{s}}\right) \ll n^{1/4}$ and the bound is obtained using Lemma \ref{lemma:log_partition_function_comparison_er} and Lemma \ref{lemma:curie_part_fun_ratio} again and the $o(1)$ term does not depend on the choice of $S$.
\end{enumerate}
\end{proof}

\subsection{\bf Proofs in Section \ref{sec:lattice}}
We shall prove Theorems \ref{thm:variance_limit_lattice} and \ref{thm:lattice_known_beta} here. However, to do so we will need some technical preparations.

\subsubsection{\bf Technical Preparations}\label{sec:technical_prep_lattice}
We begin with the general idea behind the proof of Theorem \ref{thm:variance_limit_lattice}. The crux of the proof of Theorem \ref{thm:variance_limit_lattice} is a coupling argument between the finite volume and infinite volume measures and then exploiting the various correlation decay results of the infinite volume limit which is already known. Let us denote by $\P^{\rm{bc}}_{\beta,\bQ( \bZ^d),\mathbf{0}}$, the infinite volume limit of $\P^{\rm bc}_{\beta,\bQ(\Lambda_{n,d}),\mathbf{0}}$ for $\rm{bc} \in \{\sf f, +\}$. %To that end, for $\beta<\beta_c(d)$, let $(Y^{\sf{f}}_i)_{i \in \mathbb Z^d}$ (resp. $(Y^{+}_i)_{i \in \mathbb Z^d}$) denote a sample from the infinite volume Gibbs measure $\P_{\beta,\bQ,\bmu}$ (resp. $\P^{+}_{\beta,\bQ,\bmu}$).
It is known that these limits exist and in fact it is also known that there exists a constant $c = c(\beta,d)$ such that for all $j \in \mathbb Z^d$, 
\begin{equation}
0\le \mathrm{Cov}^{\rm bc}_{\beta,\bQ(\mathbb Z^d),\mathbf{0}}(X_0,X_j) \le e^{-c \|j\|} \label{eq:correlation_decay}
\end{equation}
 where $\rm bc \in \{ \sf f,+\} $ if $\beta < \beta_c$ and $\rm bc = + $ if $\beta> \beta_c$ and  $\|\cdot\|$ is the $L^1$ norm in $\bZ^d$. The first inequality is known as the GKS inequality (see the lecture notes \cite{friedli2017statistical}) and the second inequality is proved in \cite{aizenman1987phase,duminil2016new} for $\beta<\beta_c$ and in \cite{duminil2018exponential} for $\beta>\beta_c$.
 From now on, we assume that 
 
 $$
 \text{if } \beta<\beta_c, \rm bc \in \{\sf f, +\} \text{ and if }\beta>\beta_c, \rm bc = +.
 $$
 Note that \eqref{eq:correlation_decay} justifies the existence of the following constant  for all $\beta \neq \beta_c$:
 
 \begin{equation}
   \chi^{\rm bc} = \chi^{\rm bc}(\beta, d) := \sum_{j \in \bZ^d} \mathrm{Cov}^{\rm bc}_{\beta,\bQ(\bZ^d),\mathbf{0}}( X_0,X_j)  \in (0,\infty)  \label{eq:chi_finite}
 \end{equation}
 In fact for $\beta<\beta_c$, since the infinite volume limits for $+$ and $\sf f$ coincide, $\chi^{\sf f}(\beta ,d) = \chi^{\sf +}(\beta,d)$. From now on, we will drop the superscript $\rm bc$ from $\chi^{\rm bc}$ and the boundary condition should be clear from context from the temperature of the model.

The constant $\chi(\beta, d)$ is known as the \emph{susceptibility} of the Ising model. We quickly mention here that $\chi(\beta_c(d), d) = \infty$ (\cite{simon1980correlation,aizenman2015random}).
We first state an easy consequence of \eqref{eq:chi_finite}.
\begin{lemma}\label{lem:varlatinfinite}
Pick $S \in \C_{s,\rm rect}$ with volume $s$. Fix $\beta\neq \beta_c$ and $\text{if } \beta<\beta_c, \rm bc \in \{\sf f, +\} \text{ and if }\beta>\beta_c, \rm bc = +.$ Then
$$
\lim_{s \to \infty} \frac1s {\mathrm{Var}^{\rm bc}_{\beta,\bQ(\mathbb Z^d),\mathbf{0}}} \left (\sum_{i\in S} X_i \right) = \chi
$$
\end{lemma}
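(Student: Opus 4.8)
The plan is to turn the variance into a Cesàro average of translates of the two-point function and then let the box grow; only nonnegativity of correlations (GKS) and the summability \eqref{eq:chi_finite} are needed, not the full exponential decay \eqref{eq:correlation_decay}. Throughout I abbreviate $\mathrm{Cov}(\cdot,\cdot)$ and $\mathrm{Var}(\cdot)$ for $\mathrm{Cov}^{\rm bc}_{\beta,\bQ(\bZ^d),\mathbf 0}$ and $\mathrm{Var}^{\rm bc}_{\beta,\bQ(\bZ^d),\mathbf 0}$, with $\rm bc\in\{\sf f,+\}$ for $\beta<\beta_c$ and $\rm bc=+$ for $\beta>\beta_c$. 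First I would use translation invariance of the infinite-volume measure to write
\begin{equation*}
\mathrm{Var}\Big(\sum_{i\in S}X_i\Big)=\sum_{i\in S}\sum_{j\in S}\mathrm{Cov}(X_0,X_{j-i})=\sum_{i\in S}\sum_{k\in S-i}\mathrm{Cov}(X_0,X_k),
\end{equation*}
where $S-i:=\{j-i:\ j\in S\}$.

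Next, since every $S\in\C_{s,\rm rect}$ is a box all of whose sides have length $\lceil s^{1/d}\rceil\to\infty$ (so in particular $|S|=s(1+o(1))$), I would fix $R>0$ and introduce the bulk $S_R:=\{i\in S:\ \mathrm{dist}(i,\bZ^d\setminus S)>R\}$, with $\mathrm{dist}(i,\bZ^d\setminus S):=\inf_{j\notin S}\|i-j\|_1$. Then $|S\setminus S_R|=O(R\,s^{(d-1)/d})=o(s)$ as $s\to\infty$ for fixed $R$, while $S-i\supseteq B_R:=\{k\in\bZ^d:\|k\|_1\le R\}$ for every $i\in S_R$. Splitting the outer sum over $i\in S_R$ and $i\in S\setminus S_R$ and using that all summands $\mathrm{Cov}(X_0,X_k)\ge 0$ (GKS), one gets, for every $i\in S$, $0\le\sum_{k\in S-i}\mathrm{Cov}(X_0,X_k)\le\sum_{k\in\bZ^d}\mathrm{Cov}(X_0,X_k)=\chi$ by \eqref{eq:chi_finite}, and, for $i\in S_R$, $\sum_{k\in S-i}\mathrm{Cov}(X_0,X_k)\ge\chi_R:=\sum_{k\in B_R}\mathrm{Cov}(X_0,X_k)$. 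This yields the sandwich
\begin{equation*}
\frac{|S_R|}{s}\,\chi_R\ \le\ \frac1s\,\mathrm{Var}\Big(\sum_{i\in S}X_i\Big)\ \le\ \frac{|S_R|}{s}\,\chi+\frac{|S\setminus S_R|}{s}\,\chi .
\end{equation*}
Letting $s\to\infty$ with $R$ fixed (so $|S_R|/s\to 1$ and $|S\setminus S_R|/s\to 0$) gives $\chi_R\le\liminf_{s}\tfrac1s\mathrm{Var}(\sum_{i\in S}X_i)\le\limsup_{s}\tfrac1s\mathrm{Var}(\sum_{i\in S}X_i)\le\chi$, and finally letting $R\to\infty$, with $\chi_R\uparrow\chi$ by monotone convergence (nonnegative terms), pins both the liminf and the limsup to $\chi$.

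I do not expect a serious obstacle here. The two points that need care are: (i) the class $\C_{s,\rm rect}$ consists of near-cubes whose \emph{every} side diverges, which is exactly what makes the boundary layer $S\setminus S_R$ of size $o(s)$ — the statement would fail for needle-like rectangles keeping a bounded side; and (ii) the computation is carried out genuinely in the infinite-volume limit $\P^{\rm bc}_{\beta,\bQ(\bZ^d),\mathbf 0}$, where translation invariance and the finiteness \eqref{eq:chi_finite} of the susceptibility are available, whereas the quantitative passage from the finite box $\Lambda_{n,d}$ (or $\Lambda^{\sf w}_{n,d}$) to this limit is the real work and is handled separately in the proof of Theorem \ref{thm:variance_limit_lattice}.
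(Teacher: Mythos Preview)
Your argument is correct and follows the same bulk/boundary decomposition as the paper's proof: both exploit translation invariance of the infinite-volume measure, split $S$ into an interior $S_R$ (the paper's $S'$) and a negligible boundary layer, and use GKS nonnegativity to control the boundary contribution by $\chi\cdot|S\setminus S_R|/s$. The one genuine difference is in how the interior term is handled. The paper invokes the exponential decay \eqref{eq:correlation_decay} to get the quantitative bound $|\sum_{j\in S}\mathrm{Cov}(X_i,X_j)-\chi|\le e^{-ck}$ for $i$ at depth $k$, and then chooses a single diagonal scale $1\ll m\ll s^{1/d}$. You instead observe that for $i\in S_R$ the inner sum is sandwiched between $\chi_R$ and $\chi$, take $s\to\infty$ at fixed $R$, and then send $R\to\infty$ using only $\chi_R\uparrow\chi<\infty$. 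Your version is slightly more elementary in that it never uses the \emph{rate} of decay, only the finiteness of the susceptibility \eqref{eq:chi_finite}; the paper's version is marginally more direct since it avoids the double limit. Either way the content is the same, and your remarks about needing all sides of $S$ to diverge and about the finite-to-infinite volume passage being deferred to Theorem~\ref{thm:variance_limit_lattice} are both on point.
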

\begin{proof}
Exponential decay of covariance implies that for any vertex $i \in S$ which is at $L^1$ distance at least $k$ from the boundary of $S$, 
$$
|\sum_{j\in S} {\mathrm{Cov}^{\rm bc}_{\beta,\bQ(\mathbb Z^d),\mathbf{0}}}(X_i, X_j) -\chi| \le e^{-ck}.
$$
for some constant $c = c(d)$.
Let $S' \subset S$ denote the set of vertices at a distance at least $m$  from the boundary of $S$ where $1 \ll m \ll s^{1/d}$ and let $s'$ denote the number of vertices in $S'$. Note that for this choice of $m$, $(s-s')/s \to 0$.
Thus summing the above estimate over all vertices $i \in S'$, we find that 
\begin{equation}
|\sum_{i \in S'} \sum_{j\in S}  {\mathrm{Cov}^{\rm bc}_{\beta,\bQ(\mathbb Z^d),\mathbf{0}}}(X_i, X_j)  - s'\chi| =  s'e^{-cm} \label{eq:varlatinfinite1}
\end{equation}
We trivially bound the remaining term, 
\begin{equation}
\sum_{i \in S \setminus S'} \sum_{j\in S} {\mathrm{Cov}^{\rm bc}_{\beta,\bQ(\mathbb Z^d),\mathbf{0}}}(X_i, X_j)  \le \chi (s-s')\label{eq:varlatinfinite2} 
\end{equation}
simply using the fact that the covariance is always nonnegative (first inequality of \eqref{eq:correlation_decay}).
Combining \eqref{eq:varlatinfinite1} and \eqref{eq:varlatinfinite2}, we get
\begin{align*}
\left |\frac1s {\mathrm{Var}^{\rm bc}_{\beta,\bQ(\mathbb Z^d),\mathbf{0}}} \left(\sum_{i\in S} X_i \right) - \chi \right| & \le \left | \frac1s \sum_{i \in  S'} \sum_{j\in S}  {\mathrm{Cov}^{\rm bc}_{\beta,\bQ(\mathbb Z^d),\mathbf{0}}}(X_i, X_j)   - \frac{s'}{s}\chi \right|\\& + \frac1s \left |\sum_{i \in S \setminus S'} \sum_{j\in S} {\mathrm{Cov}^{\rm bc}_{\beta,\bQ(\mathbb Z^d),\mathbf{0}}}(X_i, X_j)\right| + \chi \left|\frac{s'}{s}-1 \right|\\
& \le \frac{s'}{s}e^{-cm} + \frac2s (s-s')\chi 
\end{align*}
By the choice of $m$, both terms above converge to 0, thereby completing the proof.
\end{proof}

To prove Theorem \ref{thm:variance_limit_lattice}, we need a finite volume version of Lemma \ref{lem:varlatinfinite}. The key idea we use here is a coupling argument. To that end, let us define the FK-Ising model on $\bZ^d$ and the celebrated \emph{Edward-Sokal coupling} between the Ising model and the FK-Ising model.  

Let $G = (V,E)$ be a finite graph.  The FK-Ising measure is a probability measure on $\{0,1\}^{E}$ defined as follows
\begin{equation}
\phi_{G,p}(\omega) = p^{|\{e:\omega_e =1\}|} (1-p)^{|\{e:\omega_e =0\}|}2^{k(\omega)} \qquad \omega \in \{0,1\}^{E}.
\end{equation}
where $k(\omega)$ denotes the number of connected components of $\omega$ (where we think of $\omega$ as the graph $(V, \{e:\omega_e=1\})$). Similar to Ising, it is well known (see e.g. \cite{grimmett2006random}) that $\phi_{\Lambda_{n,d},p}$ converges to an infinite volume limit which is called the \emph{free} FK-Ising measure and we denote it by $\phi^{\sf f}_{\bZ^d,p}$.  We also need to consider a \emph{wired} FK-Ising which corresponds to taking a limit of $\phi_{\Lambda^{\sf w}_{n,d},p}$ where $\Lambda^{\sf w}_{n,d}$ formed by identifying all the vertices in $\bZ^d \setminus \Lambda_{n,d}$ into a single vertex and erasing all the self loops. It is also well known (cf. \cite{grimmett2006random}) that this limit exists. We denote the infinite volume wired FK-Ising  model by $\phi^{\sf w}_{\bZ^d,p}$. We mention that for $p \neq p_c = 1-e^{-2\beta_c}$,  {$\phi^{\sf w}_{\bZ^d,p}=\phi^{\sf f}_{\bZ^d,p}$} and furthermore for $p>p_c$, there exists a unique connected component almost surely in a sample from FK-Ising (free or wired).

Let $E'$ be a subset of the edges of $G$ and let $H$ be the subgraph of $G$ formed by the edges not in $E'$ and the vertices incident to them. For any element $\xi \in \{0,1\}^{E'}$, let $\phi^\xi_{H, p}$ denote the FK-Ising measure on $H$ conditioned on $\xi$. We say $\xi \preceq \xi'$ if $\xi(e )\le \xi'(e)$ for all $e \in E'$. There is another way to view boundary conditions when the edge set we are conditioning on is the complement of a  box. For $k<n$, let $\Lambda_{k,n,d} := \Lambda_{n,d} \setminus \Lambda_{k,d}$ and let $E(\Lambda_{k,n,d})$ denote the edge set induced by its vertices. Given $\xi \in \{0,1\}^{E(\Lambda_{k,n,d})}$ we can consider a partition $[\xi]$ of $\partial \Lambda_{k,d}$ with $i,j$ being in the same partition class if they belong in the same component of $\xi$. It is easy to see that the conditional law  $\phi^\xi_{\Lambda_{k,d}, p}$ is completely described by $[\xi]$, and this property is known as the \emph{domain Markov property} of the FK-Ising. Observe that the free boundary condition corresponds to the partition where every vertex is in its own class, while the wired boundary condition corresponds to the partition where every boundary vertex is in the same class.

We now state a crucial monotonicity property of FK-Ising.

\begin{lemma}[Monotonicity, \cite{duminil2017lectures}]\label{lem:monotonicity}
			Consider any subgraph $H=(V(H),E(H))$ of  a finite graph $G$ and consider $\phi_{H,p}^{\xi}$ on $H$ with boundary condition $\xi \in \{0,1\}^{E(G) \setminus E(H)}$ and parameter $p\in[0,1]$. Let $\{i\stackrel{H}{\leftrightarrow}j\}$ denotes the event that $i,j$ are connected with edges in the random subgraph of $H$ obtained from the open edges in $\phi_{H,p}^{\xi}$.  
		Fix $i,j\in V(H)$ and suppose $\xi_1 \preceq \xi_2$ be any two boundary conditions. Then
				\begin{equation}
				\phi_{H,p}^{\xi_1}(i\stackrel{H}{\leftrightarrow} j) \leq \phi_{H,p}^{\xi_2}(i\stackrel{H}{\leftrightarrow} j).
				\end{equation} 
				
				% %\item \textbf{Domain Markov Property:} Consider any subgraph $H'=(V(H'),E(H'))$ of $H$ and any boundary condition $\xi$ on $E()$. If $\psi'\in \{0,1\} ^{E(H')}$ and $\psi\in \{0,1\}^{E(H)\setminus E(H')}$ be any two configurations, then
				% %\begin{equation}
				% \phi^{\xi}_{p,q,H}\left(\left .\omega(H)\vert_{E(H')}=\psi'\right\vert\left . \omega(H)\right\vert_{E(H)\setminus E(H')}=\psi\right)&=\phi_{p,Q,H'}^{\psi^{\xi}}(\psi'),
				% \end{equation}
				% where $\psi^{\xi}$ is the boundary condition on $\partial H'$ obtained by putting any two vertices in $\partial H'$ in the same partition when they are connected through the configuration $\psi$ and original boundary condition $\psi$. 

\end{lemma}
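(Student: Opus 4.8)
The plan is to derive this as an instance of stochastic monotonicity of the random-cluster (FK-Ising) measure in its boundary condition, which is the standard consequence of Holley's inequality. First observe that the event $\{i\stackrel{H}{\leftrightarrow} j\}$ is an \emph{increasing} event on $\{0,1\}^{E(H)}$: opening more edges can only create, never destroy, connections. Hence the lemma follows once we establish the stochastic domination
\[
\phi_{H,p}^{\xi_1}\ \preceq_{\mathrm{st}}\ \phi_{H,p}^{\xi_2}\qquad\text{whenever }\xi_1\preceq\xi_2,
\]
meaning $\phi_{H,p}^{\xi_1}(\mathcal A)\le \phi_{H,p}^{\xi_2}(\mathcal A)$ for every increasing event $\mathcal A$, and then we specialize $\mathcal A=\{i\stackrel{H}{\leftrightarrow}j\}$.

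To get the domination I would invoke Holley's inequality: for strictly positive probability measures $\mu_1,\mu_2$ on the finite product lattice $\{0,1\}^{E(H)}$ one has $\mu_1\preceq_{\mathrm{st}}\mu_2$ as soon as the lattice condition
\[
\mu_2(\omega\vee\omega')\,\mu_1(\omega\wedge\omega')\ \ge\ \mu_1(\omega)\,\mu_2(\omega')
\qquad\forall\,\omega,\omega'\in\{0,1\}^{E(H)}
\]
holds. By the domain Markov property of FK-Ising, up to a normalizing constant,
\[
\phi_{H,p}^{\xi}(\omega)\ \propto\ \Bigl(\tfrac{p}{1-p}\Bigr)^{|\{e\in E(H):\,\omega_e=1\}|}\, 2^{\,k(\omega\cup\xi)},
\]
where $k(\omega\cup\xi)$ is the number of connected components of the graph (on the fixed vertex set) whose open edges are those of $\omega$ inside $H$ together with those of $\xi$ outside $H$. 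The normalizing constants cancel between the two sides of the lattice condition, and since $|\omega\vee\omega'|+|\omega\wedge\omega'|=|\omega|+|\omega'|$ as counts of open edges in $E(H)$, the $p/(1-p)$ factors cancel too. Thus the lattice condition reduces to
\[
k\bigl((\omega\vee\omega')\cup\xi_2\bigr)+k\bigl((\omega\wedge\omega')\cup\xi_1\bigr)\ \ge\ k\bigl(\omega\cup\xi_1\bigr)+k\bigl(\omega'\cup\xi_2\bigr).
\]

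The remaining step, which I expect to be the real content, is this inequality on component counts. The clean route is to set $\alpha:=\omega\cup\xi_1$ and $\beta:=\omega'\cup\xi_2$; since $\xi_1\preceq\xi_2$ (so their supports are nested) and $E(H)$ is disjoint from the edge set carrying $\xi_1,\xi_2$, one checks directly that $\alpha\vee\beta=(\omega\vee\omega')\cup\xi_2$ and $\alpha\wedge\beta=(\omega\wedge\omega')\cup\xi_1$, so the displayed inequality is exactly the supermodularity of the component-count functional,
\[
k(\alpha\vee\beta)+k(\alpha\wedge\beta)\ \ge\ k(\alpha)+k(\beta).
\]
This supermodularity is the classical combinatorial lemma behind the FKG property of random-cluster measures with cluster weight $q\ge 1$; it is proved by adding the edges of $\beta\setminus\alpha$ to $\alpha$ one at a time and noting that each addition decreases $k$ by at least as much starting from $\alpha$ as the corresponding addition does starting from $\alpha\wedge\beta$, and I would either reproduce that short induction or cite \cite{grimmett2006random,duminil2017lectures}. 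The only loose ends are the strict-positivity hypothesis in Holley's inequality, immediate for $p\in(0,1)$ and handled by a trivial limit or direct check at $p\in\{0,1\}$, and a careful reading of the domain Markov property to justify the conditional density formula; both are routine.
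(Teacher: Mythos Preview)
The paper does not give its own proof of this lemma: it is stated with a citation to \cite{duminil2017lectures} and used as a black box. Your argument via Holley's inequality reduced to the supermodularity of the component-count functional $k$ is exactly the standard proof one finds in that reference (and in \cite{grimmett2006random}), and the details you supply---the reduction $\alpha=\omega\cup\xi_1$, $\beta=\omega'\cup\xi_2$ using $\xi_1\preceq\xi_2$ and disjointness of $E(H)$ from the boundary edge set, together with the supermodularity $k(\alpha\vee\beta)+k(\alpha\wedge\beta)\ge k(\alpha)+k(\beta)$ coming from submodularity of the graphic matroid rank---are correct.
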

A simple consequence of Lemma \ref{lem:monotonicity} is the existence of free and wired limits of FK-Ising as asserted earlier.

A crucial connection between the the Ising model and the FK-Ising model is given by the so called Edwards-Sokal coupling which we now describe. We refer to \cite[Theorem 4.91]{grimmett2006random} for a proof.  
\begin{theorem}[Edwards-Sokal coupling]\label{thm:ES}
Let $G$ be a finite graph and let $p = 1-e^{-2\beta}$. Then there exists a coupling between $\bP_{\beta,G,\mathbf{0}}$ and $\phi_{G,p}$ such that the following holds. Suppose $(\omega, \sigma) $ is sampled from the coupling where $\omega \sim \phi_{G,p} $ and $\sigma \sim \bP_{\beta,G,\mathbf{0}}$. Then $\sigma $ can be sampled by first sampling $\omega$, and then for each connected component of $\omega$ assign the same value in $\{+1,-1\}$ on all its vertices by tossing independent, unbiased coins. The same coupling holds between 
$\phi^{\sf f}_{\bZ^d,p}$ and $\P^{\sf f}_{\beta,\bQ( \bZ^d),\mathbf{0}}$.

Finally, the same coupling holds between $\phi_{\Lambda^{\sf w}_{n,d},p}$ (resp. $\phi^{\sf w}_{\bZ^d,p}$) and $\P^{+}_{\beta,\bQ(\Lambda_{n,d}),\boldsymbol 0}$ (resp. $\P^{+}_{\beta,\bQ(\bZ^d),\boldsymbol 0}$) with only one modification: the component containing the boundary vertex (resp. the a.s. unique infinite component) is assigned a value $+1$ deterministically.
\end{theorem}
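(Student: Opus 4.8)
The plan is to follow the classical route via the Edwards--Sokal joint measure and then pass to infinite volume using the monotonicity recorded in Lemma \ref{lem:monotonicity}. First, for a finite graph $G=(V,E)$ and $p=1-e^{-2\beta}$, I would introduce the probability measure $\nu_{G,p}$ on $\{0,1\}^E\times\{\pm1\}^V$ given by
\[
\nu_{G,p}(\omega,\sigma)\ \propto\ \prod_{e=(u,v)\in E}\Big[(1-p)\,\mathbf{1}(\omega_e=0)+p\,\mathbf{1}(\omega_e=1)\mathbf{1}(\sigma_u=\sigma_v)\Big].
\]
Summing out $\omega$ edge by edge replaces the bracket by $(1-p)+p\,\mathbf{1}(\sigma_u=\sigma_v)=e^{-2\beta}+(1-e^{-2\beta})\mathbf{1}(\sigma_u=\sigma_v)$, so the $\sigma$-marginal is proportional to $\exp\!\big(-2\beta\sum_{(u,v)\in E}\mathbf{1}(\sigma_u\neq\sigma_v)\big)\propto\exp\!\big(\beta\sum_{(u,v)\in E}\sigma_u\sigma_v\big)$, which is exactly $\bP_{\beta,G,\mathbf{0}}$ (use $\mathbf{1}(\sigma_u\neq\sigma_v)=(1-\sigma_u\sigma_v)/2$ and absorb constants into the normalization; recall $\tfrac{\beta}{2}\bx^\top\bQ\bx=\beta\sum_{e}\sigma_u\sigma_v$ when $\bQ$ is the adjacency matrix). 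Summing out $\sigma$ instead: a configuration $\omega$ forces $\sigma$ to be constant on each cluster, so there are $2^{k(\omega)}$ admissible spin configurations and the $\omega$-marginal is proportional to $p^{|\{\omega_e=1\}|}(1-p)^{|\{\omega_e=0\}|}2^{k(\omega)}$, i.e.\ $\phi_{G,p}$.

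Next I would read off the two conditional laws of $\nu_{G,p}$. Conditionally on $\omega$, the measure is uniform over spin configurations that are constant on the clusters of $\omega$, which is precisely ``assign $\pm1$ to each cluster by independent fair coins'' --- the sampling recipe in the statement. Conditionally on $\sigma$, each edge $e=(u,v)$ is independently open with probability $p\,\mathbf{1}(\sigma_u=\sigma_v)$, i.e.\ open with probability $p$ when the endpoints agree and closed otherwise --- the reverse direction. This settles the finite-volume claim for $\bP_{\beta,G,\mathbf{0}}$ and $\phi_{G,p}$. For $\P^{+}_{\beta,\bQ(\Lambda_{n,d}),\mathbf 0}$ I would run the same computation on $\Lambda^{\sf w}_{n,d}$ but condition the joint measure on the spin of the identified boundary vertex being $+1$: this conditioning does not change the FK-weights (it only removes the fair-coin freedom of the boundary cluster from the $2^{k(\omega)}$ count), and it is exactly the modification ``the cluster of the boundary vertex is deterministically $+1$''.

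For the infinite-volume assertions I would take weak limits along $\Lambda_{n,d}\uparrow\bZ^d$. By Lemma \ref{lem:monotonicity} the FK-Ising measures $\phi_{\Lambda_{n,d},p}$ and $\phi_{\Lambda^{\sf w}_{n,d},p}$ converge to $\phi^{\sf f}_{\bZ^d,p}$ and $\phi^{\sf w}_{\bZ^d,p}$ respectively, and the corresponding Ising measures converge to $\P^{\sf f}_{\beta,\bQ(\bZ^d),\mathbf 0}$ and $\P^{+}_{\beta,\bQ(\bZ^d),\mathbf 0}$; since the joint laws $\nu_{\Lambda_{n,d},p}$ live on the compact product space $\{0,1\}^{E(\bZ^d)}\times\{\pm1\}^{\bZ^d}$, any subsequential weak limit has these as its marginals, and the property ``$\sigma$ is constant on each finite cluster of $\omega$ and the common values of the finite clusters are independent fair coins'' is determined by finitely many coordinates and hence survives the limit. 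The step I expect to be the main obstacle is the treatment of an infinite cluster in the wired/low-temperature regime: one must argue that $\phi^{\sf w}_{\bZ^d,p}$ has a.s.\ a unique infinite cluster and that the limiting spin assignment pins it to $+1$. I would handle this by noting that under wired boundary conditions the boundary cluster of $\Lambda^{\sf w}_{n,d}$ is forced to $+1$ and, once $p>p_c$, contains all but $o(|\Lambda_{n,d}|)$ vertices; a Burton--Keane uniqueness argument for FK-Ising then identifies the limiting infinite cluster with this boundary cluster and forces the constant value $+1$ on it, while all finite clusters keep independent fair coins, and the identity $\phi^{\sf w}_{\bZ^d,p}=\phi^{\sf f}_{\bZ^d,p}$ for $p\neq p_c$ reconciles the free and wired descriptions. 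The free low-temperature case then emerges as the $\tfrac12(\P^{+}+\P^{-})$ mixture, consistent with giving the infinite cluster a single fair coin.
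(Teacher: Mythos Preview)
The paper does not prove this theorem itself; it simply refers to \cite[Theorem 4.91]{grimmett2006random}. Your finite-volume argument via the joint measure $\nu_{G,p}$ is precisely the standard construction (and is essentially what appears in that reference): the computations of the two marginals and the two conditionals are correct, as is the treatment of the $+$ boundary by conditioning on the spin of the identified boundary vertex in $\Lambda^{\sf w}_{n,d}$.

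For the infinite-volume passage your weak-limit outline is also the usual one, but one statement is inaccurate: for $p>p_c$ the boundary cluster of $\Lambda^{\sf w}_{n,d}$ does \emph{not} contain all but $o(|\Lambda_{n,d}|)$ vertices --- it only occupies a positive density of them. Fortunately you do not actually need that claim. What matters is that any fixed vertex lying in the (a.s.\ unique) infinite cluster of the limiting $\omega^{\sf w}$ belongs to the boundary cluster of $\omega^{\sf w}_n$ for all large $n$; this follows from the monotone decrease of the wired FK measures in the volume (a consequence of Lemma \ref{lem:monotonicity}) together with Burton--Keane uniqueness, so the deterministic $+1$ assignment passes to the limit. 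The remaining finite clusters are handled by your cylinder-event argument, since for any finite collection of them the ``constant on each cluster with independent fair coins'' description is a local event.
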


We now write a lemma which establishes a strong mixing property of FK-Ising at all but critical temperature.
\begin{lemma}\label{lem:coupling}
Fix $n,d,p $ and suppose $S$ is a rectangle of volume $s$ which is at least at $L^1$ distance $m$ from the boundary of $\Lambda_{n,d}$.

Fix $p<p_c$ and ${\rm bc} \in \{\sf f, \sf w\}$.  There exists a coupling $\Phi^{\rm bc}_{n,d,p}$ between $\phi^{\rm bc}_{\Lambda_{n,d},p}$ and $\phi^{\rm bc}_{\bZ^d,p}$ such that the following holds with probability at least $1-{n^d}\exp(-cm)$. Let $(\omega^{\rm bc}_n, \omega^{\rm bc})$ be sampled from this coupling with $\omega^{\rm bc}_n \sim \phi^{\rm bc}_{\Lambda_{n,d},p}$ and $\omega^{\rm bc} \sim \phi^{\rm bc}_{\bZ^d,p}$. Then
\begin{itemize}
\item $\omega^{\rm bc}$ restricted to edges of $S$ is the same as that of $\omega^{\rm bc}_n$ restricted to $S$
\item If $i$ and $j$ in $S$ are connected in $\omega^{\rm bc}$ if and only if they are connected in $\omega^{\rm bc}_n$.
\end{itemize}
For $p>p_c$, a coupling $\Phi^{\sf w}_{n,d,p}$ holds between $\phi_{\Lambda^{\sf w}_{n,d},p}$ and $\phi^{\sf w}_{\bZ^d,p}$ where with probability at least $1-{n^d}\exp(-cm)$ in addition to the above two items, the following item holds.  Let $(\omega^{\sf w}_n, \omega^{\sf w})$ be sampled from this coupling with $\omega^{\sf w}_n \sim \phi_{\Lambda^{\sf w}_{n,d},p}$ and $\omega^{\sf w} \sim \phi^{\sf w}_{\bZ^d,p}$
\begin{itemize}
    \item $i \in S$ is in the infinite cluster of $\omega^{\sf w}$ if and only if $i$ is in the boundary cluster of $\omega^{\sf w}_n$.
\end{itemize}
\end{lemma}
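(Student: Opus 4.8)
The plan is to build the coupling by exploring the configuration from the boundary inward, using the domain Markov property and the exponential decay of connectivities below $p_c$ (and of the complement of the infinite cluster above $p_c$). First I would treat the subcritical case $p<p_c$ and $\mathrm{bc}\in\{\sf f,\sf w\}$. The key classical input is the \emph{mixing} property of the FK-Ising measure at $p\neq p_c$: for $p<p_c$ the two-point connectivity $\phi^{\rm bc}_{\bZ^d,p}(i\leftrightarrow j)\le e^{-c\|i-j\|}$, which follows from the exponential decay of the Ising two-point function \eqref{eq:correlation_decay} together with the Edwards--Sokal coupling (Theorem~\ref{thm:ES}), since $\mathrm{Cov}(X_i,X_j)=\phi(i\leftrightarrow j)$. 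Consider the annular region $A_{m/2}$ of vertices within $L^1$-distance $m/2$ of $\partial S$ but still inside $\Lambda_{n,d}$ (this is possible since $\mathrm{dist}(S,\partial\Lambda_{n,d})\ge m$). By a union bound over the at most $n^d$ vertices of $S$ and Lemma~\ref{lem:monotonicity} (to dominate the finite-volume law by the infinite-volume one and vice versa, which sandwiches the boundary effect), the event $\mathcal B$ that no open cluster of either $\omega^{\rm bc}$ or $\omega^{\rm bc}_n$ crosses the annulus $A_{m/2}$ — equivalently no vertex of $S$ is connected to $\partial\Lambda_{n,d}$, and likewise in the infinite model no vertex of $S$ is connected to $\bZ^d\setminus \Lambda_{n,d}$ — has probability at least $1-n^d e^{-cm}$ (after adjusting the constant $c$). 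On $\mathcal B$, the conditional law of $\omega$ restricted to $E(S)$, \emph{given} the configuration outside $A_{m/2}$, depends only on the partition that the exterior induces on $\partial A_{m/2}$, by the domain Markov property; but on $\mathcal B$ there are no crossing clusters, so the relevant partition is the all-singletons one for \emph{both} the finite and infinite model. Hence the conditional laws on $E(S)$ coincide, and I can couple $\omega^{\rm bc}$ and $\omega^{\rm bc}_n$ to agree on $E(S)$ and to have the same connectivity pattern among vertices of $S$ (any connection inside $S$ uses only edges of $S$ once the crossing event is excluded). Off $\mathcal B$, couple arbitrarily (say independently).

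For the supercritical case $p>p_c$ the argument is the same in spirit but uses the dual object: for $p>p_c$ there is a.s.\ a unique infinite cluster, and the \emph{complement} of the infinite cluster has exponentially decaying clusters — this is the content of \cite{duminil2018exponential} transported to FK via Edwards--Sokal, or directly the exponential decay of $\mathrm{Cov}^+_{\beta,\bQ(\bZ^d),\mathbf 0}(X_0,X_j)$ for $\beta>\beta_c$ in \eqref{eq:correlation_decay}. I would run the same annulus-exploration, but now the good event $\mathcal B$ additionally records \emph{which} vertices of $S$ are connected through $A_{m/2}$ to the exterior: on $\mathcal B$, the clusters of $\partial A_{m/2}$ that reach the exterior are exactly those that belong to the (unique) infinite/boundary cluster, and no two such clusters are separated inside the annulus. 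Conditioning on the trace of $\omega$ on $\partial A_{m/2}$ together with this "which-vertices-touch-the-outside" data, the domain Markov property gives that the law of $\omega\restriction_{E(S)}$ is determined by the induced partition of $\partial S$, which is the same for $\omega^{\sf w}$ and $\omega^{\sf w}_n$ on $\mathcal B$; so one obtains the same three conclusions, the third being that $i\in S$ lies in the infinite cluster of $\omega^{\sf w}$ iff $i$ lies in the boundary cluster of $\omega^{\sf w}_n$.

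I expect the main obstacle to be the careful bookkeeping of boundary conditions: one must check that the event "no cluster crosses the annulus $A_{m/2}$" simultaneously controls the finite-volume model (both $\sf f$ and $\sf w$ boundary on $\partial\Lambda_{n,d}$) and the infinite-volume model, and that on this event the domain Markov property really does reduce both conditional laws on $E(S)$ to the \emph{same} measure. Monotonicity (Lemma~\ref{lem:monotonicity}) is what makes this work: the $\sf f$ and $\sf w$ finite-volume measures are sandwiched between each other and between the two infinite-volume limits (which coincide for $p\neq p_c$), so the probability of the bad crossing event in \emph{any} of them is bounded by its probability in the wired infinite model, which is in turn controlled by exponential decay of connectivities/dual-connectivities. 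Making the constant $c$ uniform and absorbing the polynomial $n^d$ into $n^d\exp(-cm)$ by shrinking $c$ is routine. A secondary technical point is that "connected in $\omega$" versus "connected using only edges of $S$" must be reconciled — but on $\mathcal B$ any path between two vertices of $S$ that exits $S$ would have to cross the annulus, which is excluded, so the two notions agree.
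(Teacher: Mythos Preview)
Your high-level plan—use exponential decay and domain Markov to build the coupling—matches the paper's, but the execution has a real gap in both regimes.

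\textbf{Subcritical case.} The claim ``on $\mathcal B$ there are no crossing clusters, so the relevant partition is the all-singletons one'' is not correct. If you condition on the configuration outside $S\cup A_{m/2}$, domain Markov says the law inside depends on the partition induced on the \emph{outer} boundary of the annulus; that partition is fixed by the exterior configuration alone and is unaffected by $\mathcal B$. If instead you condition on everything outside $S$, the relevant partition sits on $\partial S$, and clusters living entirely inside $A_{m/2}$ can wire up points of $\partial S$ without ever crossing the annulus—so on $\mathcal B$ the partition is neither forced to be trivial nor forced to be the \emph{same} in the two models (you have not coupled the annulus configurations). The paper avoids this by a \emph{Markovian edge-by-edge exploration} from $\partial\Lambda_{n,d}$: it first reduces, via monotone sandwiching, to coupling the finite-volume free and wired measures $\omega_n^{\sf f}\preceq\omega_n^{\sf w}$; it then reveals the boundary cluster of $\omega_n^{\sf w}$ one edge at a time, maintaining the ordering. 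When that cluster is exhausted, every edge bordering the unexplored region is closed in \emph{both} samples, so the conditional law there is genuinely free for both and one samples them identically. The free-boundary region is \emph{random and adaptively discovered}, not a fixed annulus—this is the missing idea.

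\textbf{Supercritical case.} Your sketch is too vague to be a proof here. Above $p_c$ the infinite cluster is everywhere, so there is no closed surface to find; one must instead locate a surface of blocks on which the \emph{partitions} (which now include the single big cluster) coincide for the two measures. The paper carries this out via Pisztora's coarse graining: define ``good'' $k$-boxes (boxes with a crossing cluster that absorbs every path of length $\ge k/2$), use \cite{duminil2018exponential} to show that the free and wired $k$-box measures are close in total variation uniformly over boundary conditions, deduce that the set of good-and-agreeing boxes dominates a highly supercritical Bernoulli site percolation on $k\bZ^d$, and then run a block-level Markovian exploration as in the previous paragraph. This renormalization step is essential and has no substitute in your outline—exponential decay of truncated correlations alone does not produce the coupling, because one must also match how vertices of $S$ hook into the infinite/boundary cluster, and that is exactly what the good-box structure delivers.
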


\begin{proof}
\textbf{Free case.}
Let us first consider the free case. We will show that we can couple $\omega_n^{\sf f} \sim \phi^{\sf f}_{\Lambda_{n,d},p}$ and $\omega_n^{\sf w} \sim \phi^{\sf w}_{\Lambda_{n,d},p}$ with the required properties. We claim that this is enough to prove the lemma. Indeed by the domain Markov property and the monotonicity of FK-Ising, for any boundary condition described by the partitioning $[\xi]$, the conditional law of $\omega^{\xi} \sim \phi^{\xi}_{\Lambda_{n,d},p}$ on $\Lambda_{n,d}$ is sandwiched between that of the wired and the free measures. Therefore if the coupling between $\omega_n^{\sf f}, \omega_n^{\sf w}$ satisfy both items, then both pairs $(\omega^{\xi}, \omega_n^{\sf w})$ and $(\omega^{\xi}, \omega_n^{\sf f})$ satisfy both the items. 

 The coupling is done in a Markovian way which exposes the `boundary cluster'. Initially all the edges of the box is `unexplored' and the set of `explored' edges $E_0 = \emptyset$. Let $\xi^{\sf w}_{k}, \xi^{\sf f}_k \in \{0,1\}^{E_k}$ be the configurations revealed at step $k$ and assume $\xi^{\sf f}_{k} \preceq  \xi^{\sf w}_k$. In step $k+1$ we find an unexplored edge $e$ incident to either a boundary vertex or a vertex connected
 to the boundary vertex through a path in $E_k$ all of whose edges are open in $\xi^{\sf w}_k$. We stop if there is no such edge $e$. Otherwise, we reveal the status of $e$ according to the conditional law of $\omega_n^{\sf f}$ and $\omega_n^{\sf w}$ given $\xi^{\sf f}_{k} , \xi^{\sf w}_k$ respectively. Furthermore we do so in such a way that $\xi^{\sf f}_{k+1} \preceq \xi^{\sf w}_{k+1}$. Indeed this is possible because of the monotonicity (Lemma \ref{lem:monotonicity}). Suppose the exploration stops at a (random) step $T$. Notice that by description of the coupling, when we stop, all the explored edges incident to the boundary vertices of every component of unexplored edges have $\xi^{\sf w}$ value 0. By monotonicity of the coupling, all these edges have $\xi^{\sf f}$ value 0 as well. In other words, the boundary condition of the unexplored region is the same for the conditional law of $\omega_n^{\sf f}, \omega_n^{\sf w}$ given $\xi^{\sf f}_{T}$ and $\xi^{\sf w}_{T}$ respectively (in particular, it is a free boundary FK-Ising in the respective regions). We thus can and will sample both $\omega_n^{\sf f}, \omega_n^{\sf w}$ in the unexplored region to be the same according to this common conditional law.

Clearly from the description of the coupling it is enough to show that we stop before reaching any edge incident to a vertex in $S$ with probability at least $1-{n^d}\exp(-cm)$. This follows from \cite[Theorem 1.2]{duminil2017sharp} since on the complement of this event, one of the vertices between distance $m/4$ and $m/2$ from the boundary of $\partial \Lambda_n$ must be connected to the boundary of $\partial \Lambda_n$ in $\omega_n^{\sf w}$. There is a small subtlety here regarding the location of this vertex being not at the center of the box. We skip the details here see \cite[Section 5.5.1]{mukherjee2019testing} for more details on how this technicality can be handled. Overall, this event has probability at most $V_m\exp(-cm)$ by a union bound, where $V_m$ is the number of vertices in the said distance range from the boundary. Trivially upper bounding $V_m$ by $n^d$, we are done.

\textbf{Wired case.} Let us now prove the wired case. The idea is similar to the free case, except we need to do one step of renormalization using Pisztora's coarse graining approach and then crucially use the result of \cite{duminil2018exponential}. Again we argue that it is enough to show the coupling between $\omega_n^{\sf w} \sim \phi_{\Lambda^{\sf w}_{n,d},p}$ and $\omega_n^{\sf f} \sim \phi^{}_{\Lambda^{}_{n,d},p}$ satisfying the first two items. Observe that by domination, the only way a vertex $i\in S$ is  a boundary cluster for $\omega_n^{\sf w}$ but is not in an infinite cluster of $\omega^{\sf w}$ is if it is in a finite cluster of diameter at least $m$. The probability of this happening for any $i \in S$ is $O(|S|e^{-cm})$ by \cite[Theorem (5.104)]{grimmett2006random} when combined with the result of \cite{bodineau2003slab}. Thus the third item is satisfied so we concentrate on the first two.

For $x \in k\bZ^d$ define $\Lambda_{k,d}(x)$ to be the box $\Lambda_{k,d} +x$, and let $\mathcal B_k$ denote the set of boxes of this form. For $\omega \in \{0,1\}^{E(\Lambda_{n,d})}$ we say a box $\Lambda_k(x) \subset \Lambda_{n,d}$ is good if 
\begin{itemize}
\item there exists a cluster in $\omega$ restricted to $\Lambda_k(x)$ touching all its sides (this is called the \emph{crossing cluster} of the box), and,
\item every path of length at least $k/2$ intersects the above cluster.
\end{itemize}
We use the following result from \cite{pisz96,bodineau2003slab}: there exists a $c>0$ such that for all $k \ge 1$,
\begin{equation}
\sup_{\xi} \phi^{\xi}_{\Lambda_{2k,d},p}(\Lambda_{k} \text{ is good}) \ge 1-e^{-ck} \label{eq:good}.
\end{equation}
It also follows from \cite[Proposition 1.4]{duminil2018exponential} that for all $\ve>0$ we can choose $k_0$ large so that for all $k>k_0$
\begin{equation}
\sum_{e \in E(\Lambda_k)} (\phi^{\sf w}_{\Lambda_{2k,d},p}(\omega_e =1) - \phi^{\sf f}_{\Lambda_{2k,d},p}(\omega_e =0)) <\ve. \label{eq:tv}
\end{equation}

An immediate consequence of \eqref{eq:good} and \eqref{eq:tv} and Holley's criterion (see \cite[Theorems 2.1 and 2.3]{grimmett2006random}) is that for any coupling between $\omega_n^{\sf w}$ and $\omega^{\sf f}$ the set of blocks $B$ for which  $\omega_n^{\sf w}|_B =\omega_n^{\sf f}|_B $ and $B$ is good dominates a 5-dependant Bernoulli site percolation $\eta$ on the lattice $k\bZ^d$ the  with parameter $1-2\ve$ for large enough $k$. Using the main theorem of \cite{liggett1997domination}, we know that $\eta$ dominates an i.i.d.\ Bernoulli percolation with parameter $\tilde \ve (\ve)$ which tends to 0 as $\ve \to 0$. 

We now use the Markovian coupling similar to \cite[Lemma 3.3]{duminil2018exponential} or \cite[Lemma 5.2]{mukherjee2019testing}, we provide a brief sketch here. We assume $k$ is fixed but large as described in the previous paragraph and divides $n$ without loss of generality. We start with a set of explored boxes which are all the elements of $\mathcal B_k$ with centers in the boundary of $\Lambda_{n+k}$ and a set of unexplored boxes  which consist of all the elements of $\mathcal B_k$ intersecting them. In each step $t$ we pick an unexplored box $B$ and sample $\omega_n^{\sf w}|_{B \setminus C_t} , \omega_n^{\sf f}|_{B \setminus C_t} $ where $C_t$ is the set of explored boxes at step $t$. We also maintain $ \omega_n^{\sf w}|_{B \setminus C_t}  \preceq \omega_n^{\xi}|_{B \setminus C_t}$ in this sampling which is possible by monotonicity (Lemma \ref{lem:monotonicity}). If actually $ \omega_n^{\sf w}|_{B \setminus C_t}  = \omega_n^{\sf f}|_{B \setminus C_t}$ and $B$ is good, then we declare the box explored and add nothing to the set of unexplored boxes. Otherwise, we add the set of boxes in $\mathcal B_k$ intersecting $B$ to the set of unexplored boxes.

Let $T$ be the (stopping) time when the set of unexplored boxes is empty.
It can be easily checked that if at step $T$ the set of unexplored boxes is empty, the conditional law of  $\omega_n^{\sf w}|_{\Lambda_n \setminus C_T}$ is the same as that of $\omega_n^{\sf f}|_{\Lambda_n \setminus C_T}$. This follows from the definition of good boxes: if two vertices on the boundary of ${\Lambda_n \setminus C_T}$ belong to different clusters $K,K'$ in one and in the same cluster of the other, the clusters $K,K'$ must exit their respective boxes in $C_T$. However by definition of a good box, this means that the clusters must be the same as the crossing clusters in the good boxes. On the other hand, it is easy to see that the crossing clusters in adjacent boxes actually belong to the same cluster. Inducting this argument, we can conclude that $K,K'$ are actually the same cluster leading to a contradiction. We refer to \cite[Lemma 3.3]{duminil2018exponential} for details of this fact. Overall, this allows us to sample $\omega_n^{\sf w}|_{\Lambda_n \setminus C_T}=\omega_n^{\xi}|_{\Lambda_n \setminus C_T}$.
Furthermore if $C_T$ comes within distance $m/2$ of $S$, then the Bernoulli $(1-\tilde \ve)$ site percolation it dominates must also have a path of 0 of length at least $\frac{m}{2k}$ from the boundary. This has probability at most $n^{d-1}\exp(-cm)$ by a union bound and standard properties of Bernoulli percolation (see \cite{gri_perc}).

It remains to show that this coupling satisfies the three items on the event $C_T$ does not intersect $S$. Indeed the first item is clearly satisfied. The second item is also satisfied again because of the definition of good boxes. Recall that we already argued how to handle the final item, thereby finishing the proof.
\end{proof}

\begin{corollary}\label{cor:ising_coupling}
Fix $n,d,p $ and suppose $S$ is a rectangle of volume $s$ which is at least at $L^1$ distance $m$ from the boundary of $\Lambda_{n,d}$. For any $\beta < \beta_c$, there exists a coupling $\Psi$ between $\P^{\rm{f}}_{\beta,\bQ( \bZ^d),\bf 0}$ and $\P_{\beta,\bQ( \Lambda_{n,d}),\bf 0}$ such that if $(X_i,Y_j)_{i \in \Lambda_{n,d},j\in \bZ^d}$ be a sample from this coupling with $(X_i)_{i \in \Lambda_{n,d}} \sim \P_{\beta,\bQ( \Lambda_{n,d}),\bf 0}$ and $(Y_i)_{i \in \bZ^d} \sim \P_{\beta,\bQ( \bZ^d),\bf 0}$ then with probability at least $1-n^de^{-cm}$, $X_i = Y_i$ for all $i \in S$. A coupling with the same properties holds between $\P^{+}_{\beta,\bQ( \bZ^d),\bf 0}$ and $\P^{
+}_{\beta,\bQ( \Lambda_{n,d}),\bf 0}$

For $\beta > \beta_c$, a coupling with the same properties holds  between $\P^{+}_{\beta,\bQ( \bZ^d),\bf 0}$ and $\P^{
+}_{\beta,\bQ( \Lambda_{n,d}),\bf 0}$
\end{corollary}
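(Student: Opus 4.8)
The plan is to lift the FK--Ising coupling of Lemma~\ref{lem:coupling} to the Ising model through the Edwards--Sokal coupling of Theorem~\ref{thm:ES}. Fix $\beta\neq\beta_c$ and set $p=1-e^{-2\beta}$, so that $p<p_c$ when $\beta<\beta_c$ and $p>p_c$ when $\beta>\beta_c$; let $m$ be the $L^1$-distance from $S$ to $\partial\Lambda_{n,d}$. For the free case with $\beta<\beta_c$ I would first run $\Phi^{\sf f}_{n,d,p}$ to obtain a pair $(\omega^{\sf f}_n,\omega^{\sf f})$ with $\omega^{\sf f}_n\sim\phi^{\sf f}_{\Lambda_{n,d},p}$ and $\omega^{\sf f}\sim\phi^{\sf f}_{\bZ^d,p}$, so that on an event $G$ of probability at least $1-n^d e^{-cm}$ the two configurations have the same restriction to the edges inside $S$ and induce the same partition of $S$ into clusters. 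Conditionally on $(\omega^{\sf f}_n,\omega^{\sf f})$ I would then sample the Edwards--Sokal spins through a coupled coin field: list the partition classes $P_1,\dots,P_r$ of $S$ (on $G$ common to both configurations), flip one fair coin per class and assign its value to the $\omega^{\sf f}$-cluster through $P_\ell$ \emph{and} to the $\omega^{\sf f}_n$-cluster through $P_\ell$, and give all remaining clusters of either configuration mutually independent fair coins. By Theorem~\ref{thm:ES} the resulting spins have the correct marginals $\P_{\beta,\bQ(\Lambda_{n,d}),\mathbf{0}}$ and $\P^{\sf f}_{\beta,\bQ(\bZ^d),\mathbf{0}}$, and by construction they agree on $S$ on $G$; this gives the first coupling.

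For the plus-boundary statements I would instead use the wired coupling $\Phi^{\sf w}_{n,d,p}$ of Lemma~\ref{lem:coupling} between $\phi_{\Lambda^{\sf w}_{n,d},p}$ and $\phi^{\sf w}_{\bZ^d,p}$, together with the wired form of Theorem~\ref{thm:ES} in which the boundary cluster (finite volume) and the a.s.\ unique infinite cluster (infinite volume, when $p>p_c$) are coloured $+1$ deterministically, the rest getting fair coins coupled across the two models by matching the partition classes of $S$ exactly as above. When $\beta<\beta_c$ we have $p<p_c$, hence $\phi^{\sf w}_{\bZ^d,p}=\phi^{\sf f}_{\bZ^d,p}$ and $\P^{+}_{\beta,\bQ(\bZ^d),\mathbf{0}}=\P^{\sf f}_{\beta,\bQ(\bZ^d),\mathbf{0}}$; on $G$ the first two items of Lemma~\ref{lem:coupling} say no vertex of $S$ is joined to $\partial\Lambda_{n,d}$ in $\omega^{\sf w}_n$, so every class of $S$ gets a fair coin in both models and the free argument applies verbatim. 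When $\beta>\beta_c$ we have $p>p_c$, and here the third item of Lemma~\ref{lem:coupling} is exactly what is needed: on $G$ a vertex of $S$ lies in the boundary cluster of $\omega^{\sf w}_n$ iff it lies in the infinite cluster of $\omega^{\sf w}$, so the $+1$-coloured class of $S$ is the same set in both models; matching the remaining classes by shared fair coins then forces the two spin configurations to agree on $S$. In every case $G$ has probability at least $1-n^d e^{-cm}$, which is the claimed bound.

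The step I expect to require the most care --- though it is bookkeeping rather than a genuine obstacle --- is verifying that the coupled coin field is legitimate, i.e.\ that it simultaneously realizes the Edwards--Sokal law for the finite- and the infinite-volume model. This reduces to the two facts supplied by Lemma~\ref{lem:coupling} on $G$: the partition of $S$ into clusters is identical for $\omega_n$ and $\omega$, and the distinguished boundary/infinite cluster meets $S$ in the same partition class. Given these, assigning one coin per class of $S$ and independent coins to all other clusters is a valid coupling of the two i.i.d.\ coin fields, since each marginal remains i.i.d.\ over its own clusters. (In the applications the hypothesis $\mathrm{dist}(S,\partial\Lambda_{n,d})\gg\log^2 n$ of Theorem~\ref{thm:variance_limit_lattice} makes $n^d e^{-cm}\to 0$, but for the corollary itself only the explicit bound $1-n^d e^{-cm}$ is asserted.)
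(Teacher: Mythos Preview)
Your proposal is correct and follows essentially the same route as the paper: lift the FK--Ising coupling of Lemma~\ref{lem:coupling} through the Edwards--Sokal coupling of Theorem~\ref{thm:ES}, sharing one fair coin per common partition class of $S$ on the good event and handling the boundary/infinite cluster deterministically in the wired case. Your write-up is in fact more careful than the paper's terse version --- in particular your explicit treatment of the $\beta<\beta_c$, $\rm bc=+$ case and your check that the coupled coin field has the right marginals spell out points the paper leaves implicit.
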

\begin{proof}
This follows by combining Lemma \ref{lem:coupling} and Theorem \ref{thm:ES}. Let us first consider the free case. Suppose $(\omega_n, \omega)$ is sampled from the coupling described in Lemma \ref{lem:coupling} for the free case. Suppose we are on the event that the two items described in Lemma \ref{lem:coupling} are satisfied. Now for every $i \in S$ we sample the sign of the cluster containing $i$ in $\omega_n$ and $\omega$ by tossing the same unbiased coin. Note that this is well defined since by the property of the coupling, $i,j$ are connected in $\omega_n$ if and only if it is connected in $\omega$. In the complement of the event described in Lemma \ref{lem:coupling}
sample the signs independently. This completes the description of the coupling which clearly satisfies the required claim.

The proof for the wired case is exactly the same except the boundary cluster of $\omega_n$ and the infinite cluster of $\omega$ are always assigned $+1$ in the coupling.
\end{proof}

With this we are now finally ready to prove Theorem \ref{thm:variance_limit_lattice}.

\subsubsection{\bf Proof of Theorem \ref{thm:variance_limit_lattice}}
We prove the free case only as the proof of the wired case is exactly the same and for notational simplicity we also drop the superscript $\sf {f}$ from the proof.

Let $(X_i)_{i \in \Lambda_{n,d}} \sim \P^{\rm{f}}_{\beta,\bQ( \bZ^d),\bf 0}$ and $(Y_i)_{i \in \bZ^d} \sim \P_{\beta,\bQ( \Lambda_{n,d}),\bf 0}$ coupled together as in Corollary \ref{cor:ising_coupling}. We write $\mathrm{Var}, \mathrm{Cov}$ to denote the variance and covariance under this coupling measure. Let $\mathcal G$ be the event that $X_i = Y_i$ for all $i \in S$ and recall that by the coupling, $\mathcal G$ has probability at least $1-n^d\exp(-cm)$ which converges to 1 super polynomially fast by the choice of $m$. We break up the variance as follows
\begin{align}
 {\mathrm {Var}} \left (\sum_{i\in S} Y_i \right)& =  {\mathrm {Var}} \left (\sum_{i\in S} Y_i (1_{\mathcal G} + 1_{\mathcal G^c})\right)\\
&   =  \left ( {\mathrm {Var}} \left(\sum_{i \in S}Y_i 1_{\mathcal G}\right) +{\mathrm {Var}} \left(\sum_{i \in S}Y_i 1_{\mathcal G^c}\right) + 2{\mathrm {Cov}}\left (\sum_{i \in S}Y_i 1_{\mathcal G},  \sum_{i \in S}Y_i1_{\mathcal G^c}\right )\right)\label{eq:varlat3}
\end{align}
By definition of $\mathcal G$, \begin{equation}
    {\mathrm {Var}} \left(\sum_{i \in S}X_i 1_{\mathcal G}\right) = {\mathrm {Var}} \left(\sum_{i \in S}Y_i 1_{\mathcal G}\right)\label{eq:equal}.
\end{equation}
Now notice
\begin{align}
\left|{\mathrm {Var}} \left(\sum_{i \in S}Y_i \right)  - {\mathrm {Var}} \left(\sum_{i \in S}Y_i 1_{\mathcal G} \right) \right| \le {\mathrm {Var}} \left(\sum_{i \in S}Y_i 1_{\mathcal G^c}\right) + 2\left|{\mathrm {Cov}}\left (\sum_{i \in S}Y_i 1_{\mathcal G},  \sum_{i \in S}Y_i1_{\mathcal G^c}\right )\right|\label{eq:varlat1}
\end{align}
and observe, 
\begin{equation}
{\mathrm {Var}} \left(\sum_{i \in S}Y_i 1_{\mathcal G^c}\right)  \le \E\left (\left(\sum_{i \in S}Y_i\right)^2 1_{\mathcal G^c}\right) \le s^2n^de^{-cm}; \quad {\mathrm {Var}} \left(\sum_{i \in S}Y_i 1_{\mathcal G}\right)  \le \E\left (\left(\sum_{i \in S}Y_i\right)^2 1_{\mathcal G}\right) \le s^2 \label{eq:varlat2}.
\end{equation}
where we used the exponential bound on the probability of $\mathcal G^c$ and the trivial pointwise bound $|\sum_{i \in S}Y_i| \le s$. Using Cauchy--Schwarz inequality to bound the covariance term in \eqref{eq:varlat1}, we obtain
\begin{equation}
\left|{\mathrm {Var}} \left(\sum_{i \in S}Y_i \right)  - {\mathrm {Var}} \left(\sum_{i \in S}Y_i 1_{\mathcal G} \right) \right| \le s^2n^de^{-cm}  + 2s^{2}n^{d/2}e^{-cm/2} \label{eq:varlat3}
\end{equation}
Clearly, both inequalities of \eqref{eq:varlat2} is also true if we replace $Y$ by $X$. Using this and \eqref{eq:equal}, we can conclude 
\begin{equation*}
\left|{\mathrm {Var}} \left(\sum_{i \in S}X_i \right)  - {\mathrm {Var}} \left(\sum_{i \in S}X_i 1_{\mathcal G} \right) \right|\le s^2n^de^{-cm}  + 2s^{2}n^{d/2}e^{-cm/2}
\end{equation*}
Combining this with \eqref{eq:equal} and \eqref{eq:varlat3} and using the triangle inequality, we obtain
\begin{equation*}
\left |\frac1s {\mathrm {Var}} \left (\sum_{i\in S} X_i \right)  - \frac1s {\mathrm {Var}} \left (\sum_{i\in S} Y_i \right) \right| \le  2sn^de^{-cm}  + 4sn^{d/2}e^{-cm/2}.
\end{equation*}
The right hand side converges to 0 by the choice of $m$. Combining the above with Lemma \ref{lem:varlatinfinite}, the proof is complete

%We can think of $\Lambda_{n,d}$ as a finite graph with vertices being all the elements of $\Lambda_{n,d}$ and edges being all pairs $\{i,j\}$ with $\mathbf{Q}_{ij} >0$. One also needs to consider a \emph{wired boundary} FK-Ising which corresponds to FK-Ising in the graph $\Lambda^{\sf w}_{n,d}$ formed by identifying all the vertices in $\bZ^d \setminus \Lambda_{n,d}$ into a single vertex and erasing all the self loops.

\medskip
\subsubsection{\bf Proof of Theorem \ref{thm:lattice_known_beta}}

We prove the upper bound first. The proof is the same for both $\rm bc = +$ or $\rm bc = \sf f$ so we drop it from the superscript and also write $\chi = \chi(\beta)$ to simplify notations. Also, throughout the proof, we will use $\bQ$ instead of $\bQ(\Lambda_{n,d})$ to simplify notations. \par
For two vectors $\mathbf{a}=(a_1, \ldots, a_d)$ and $\mathbf{b}=(b_1, \ldots, b_d)$ in $\bR^d$, we say $\mathbf{b} \succeq \mathbf{a}$ if $b_j \geq a_j$, $j \in [d]$. For two such vectors, we also denote by $\text{Rect}(\mathbf{a},\mathbf{b})=\prod_{j=1}^d[a_j,b_j]$ the rectangle with endpoints $\mathbf{a}$ and $\mathbf{b}$ and by $\text{Vol}(\text{Rect}(\mathbf{a},\mathbf{b}))=\prod_{j=1}^d(b_j-a_j)$ its volume. Let $\text{Rect}(\mathbf{a})$ denote $\text{Rect}(\mathbf{a},\mathbf{b})$ when $b_j-a_j= \lceil s^{1/d}\rceil$, $\forall j \in[d]$. Now, define a sequence $\eta_n>0$ such that,
\begin{enumerate}
	\item $\eta_n \rightarrow 0$,
	\item $s^{1/d} \eta_n \rightarrow \infty$,
	\item $|\log(\eta_n)|=o(\log(n/s))$.
\end{enumerate}
Define $t_0=t_0(\eta_n):=\{-n^{1/d}+(j-1)\eta_n s^{1/d}, 1\leq j \leq (2n/s)^{1/d}\}$. Finally, set $\mathcal{C}_s= \mathcal{C}_s(\eta_n) := \{\text{Rect}(\mathbf{a}): a_i \in t_0, i \in [d]\}$. Note that $|\mathcal{C}_s|=(1+o(1))  (2n/s)\eta^d_n$.

Recall that for a set $S$, $Z_S=\frac{1}{\sqrt{s}}\sum_{i \in S}X_i$ and $Z_{\max}=\max_{S \in \mathcal{C}_s} Z_S$. Take $\delta>0$, our test is given by $T_n(\delta)=\mathbf{1}\{Z_{\max}>t_n(\delta)\}$ where $t_n(\delta):=\sqrt{2(1+\delta)\chi\log{|\mathcal{N}(\C_s,\gamma,\varepsilon_n)|}}$. Choose $\lambda  =t_n(\delta)/\chi$. Then,
\begin{align*}
	\bP_{\beta,\bQ,\bzero}\left(Z_S \geq t_n(\delta)\right) & \leq e^{-\lambda t_n(\delta)} \bE_{\beta,\bQ,\bzero}\left(\exp(\frac{\lambda}{\sqrt{s}}\sum_{i \in S} X_i)\right)\\
	&= \exp\left(-\lambda t_n(\delta) + \frac{\lambda^2}{2s}\mathrm{Var}\Big(\sum_{i \in S} X_i\Big)+ O\Big(\frac{\lambda^3}{s}\Big)\right)\\
	&= \exp\left(-(1+o(1)+\delta)\log(n/s)\right),
\end{align*}
where the equalities are due to \cite[Equation 25]{martin1973mixing} and Theorem \ref{thm:variance_limit_lattice}.
By our assumption of $\eta_n$,
$$\bP_{\beta,\bQ,\bzero}(T_n(\delta)) \le |\mathcal C_s|\max_{S\in \mathcal C_s}\bP_{\beta,\bQ,\bzero}\left(Z_S \geq t\right)=o(1),$$
implying that the type 1 error of our test converges to $0$. 
\par 
For the type II error, consider $S^\star$ be the subset with signal $A$ with $\tanh(A)=\sqrt{\frac{2}{\chi}(1+\varepsilon)\frac{\log (n/s)}{s}}$ for some small $\varepsilon >0$. Since $s \gg (\log n)^d$ and $\eta_n \rightarrow 0$, we can obtain a set $S \in \mathcal C_s$ such that $|S \cap S^\star| = (1+o(1))s$. It is enough to show 
$$\bP_{\beta,\bQ,\bmu_{S^\star}(A)}(Z_S \leq t_n(\delta)) \rightarrow 0.$$
Since $(Z_S- \bE_{\beta,\bQ,\bmu_{S^\star}(A)}(Z_S))=O_p(1)$, it is enough to show $t_n(\delta)-\frac{1}{\sqrt{s}}\bE_{\beta,\bQ,\bmu_{S^\star}(A)}(\sum_{i \in S} X_i) \rightarrow -\infty$.

\begin{lemma}\label{lem:lattice_mean_alt}
	\begin{equation}
		\bE_{S^*}(\sum_{i \in {S^\star}}X_i)=sA \chi(1+o(1)).
	\end{equation}
\end{lemma}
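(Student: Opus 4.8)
The plan is to express $\E_{\beta,\bQ,\bmu_{S^\star}(A)}\big(\sum_{i\in S^\star}X_i\big)$ via a one-parameter interpolation in the external field along $S^\star$, and then use the GHS inequality together with the susceptibility bound of Theorem \ref{thm:variance_limit_lattice} to control the resulting integral. Concretely, for $t\in[0,A]$ write $g(t):=\E_{\beta,\bQ,\bmu_{S^\star}(t)}\big(\sum_{i\in S^\star}X_i\big)$, so $g(0)=0$ (by symmetry of the zero-field model, using that $S^\star$ is far from the boundary in the $\sf f$ case, or noting $\E^+(X_i)$ is $o(1/s)$-per-site negligible — this is exactly the recentering already built into $Z_S$) and $g(A)$ is the quantity of interest. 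Differentiating, $g'(t)=\sum_{i,j\in S^\star}\mathrm{Cov}_{\beta,\bQ,\bmu_{S^\star}(t)}(X_i,X_j)=:V(t)$, the variance of $\sum_{i\in S^\star}X_i$ under field $t$ on $S^\star$. Hence $g(A)=\int_0^A V(t)\,dt$, and the task reduces to showing $V(t)=s\chi(1+o(1))$ uniformly in $t\in[0,A]$ as $s\to\infty$, since $\tanh(A)=\Theta(\sqrt{\log(n/s)/s})=o(1)$ forces $A\to 0$, and then $g(A)=A\,s\chi(1+o(1))=sA\chi(1+o(1))$.

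For the upper bound on $V(t)$, the GHS inequality (as used repeatedly in the excerpt, e.g. in the proof of Theorem \ref{theorem:cw_known_beta}) gives $V(t)\le V(0)=\mathrm{Var}_{\beta,\bQ,\bzero}(\sum_{i\in S^\star}X_i)$, and by Theorem \ref{thm:variance_limit_lattice} (applicable since $S^\star$ — being a rectangle of volume $s$ in $\C_{s,\rm rect}$ chosen with $\mathrm{dist}(S^\star,\partial\Lambda_{n,d})\gg\log^2 n$, which holds for the scanned family $\mathcal C_s$ by construction of $t_0$) we have $V(0)=s\chi(1+o(1))$. For the lower bound, the cleanest route is to compare with the infinite-volume model via the coupling of Corollary \ref{cor:ising_coupling}: on the good event $\mathcal G$ of probability $1-n^d e^{-cm}$ the configuration on $S^\star$ agrees with the infinite-volume one, so $V(t)$ differs from $\mathrm{Var}^{\rm bc}_{\beta,\bQ(\bZ^d),\bmu_{S^\star}(t)}(\sum_{i\in S^\star}X_i)$ by at most $O(s^2 n^d e^{-cm})=o(s)$ (by the same Cauchy--Schwarz bookkeeping as in the proof of Theorem \ref{thm:variance_limit_lattice}); and in the infinite-volume model one can lower bound the variance of $\sum_{i\in S^\star}X_i$ by restricting the double sum to pairs $(i,j)$ with $i$ at $L^1$-distance $\ge m$ from $\partial S^\star$, where $\sum_{j\in S^\star}\mathrm{Cov}^{\rm bc}_{\beta,\bQ(\bZ^d),\bmu_{S^\star}(t)}(X_i,X_j)$ is within $e^{-cm}$ of the full-lattice susceptibility at field $t$, which in turn is $\chi(1+o(1))$ since $t\to 0$ and the susceptibility is continuous at field $0$ (by the correlation-decay bound \eqref{eq:correlation_decay} being locally uniform in small field, or alternatively by another GHS sandwich $V(t)\ge$ a comparable lower expression).

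The main obstacle I anticipate is making the lower bound on $V(t)$ genuinely uniform in $t\in[0,A]$ — i.e. establishing that introducing a small positive field $t=o(1)$ on $S^\star$ does not deflate the susceptibility below $\chi(1+o(1))$. GHS only gives the (wrong-direction) upper bound $V(t)\le V(0)$; for the lower bound one needs either (i) a quantitative continuity statement for the finite-field truncated two-point function $\mathrm{Cov}^{\rm bc}_{\beta,\bQ(\bZ^d),\bmu(t)}(X_0,X_j)\to \mathrm{Cov}^{\rm bc}_{\beta,\bQ(\bZ^d),\bzero}(X_0,X_j)$ as $t\downarrow 0$ that is summable in $j$ (which follows from monotone/dominated convergence once one has a $t$-uniform exponential tail $\mathrm{Cov}^{\rm bc}_{\beta,\bQ(\bZ^d),\bmu(t)}(X_0,X_j)\le e^{-c\|j\|}$ for $t$ in a neighbourhood of $0$ — available from the Ornstein--Zernike / sharpness results of \cite{aizenman1987phase,duminil2016new,duminil2018exponential}, since small fields preserve non-criticality), or (ii) a direct Edwards--Sokal / random-current representation argument showing the per-site variance under field $t$ is at least that under field $0$ minus $o(1)$. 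I would use route (i): cite the $t$-uniform exponential decay, invoke dominated convergence to get $\sum_{j}\mathrm{Cov}^{\rm bc}_{\beta,\bQ(\bZ^d),\bmu(t)}(X_0,X_j)=\chi+o(1)$ uniformly for $t\in[0,A]$ with $A\to 0$, and then transfer to the finite box via the coupling exactly as above. The remaining steps (the interpolation identity, the GHS upper bound, and the $o(s)$ coupling error) are routine given the machinery already developed in Section \ref{sec:technical_prep_lattice}.
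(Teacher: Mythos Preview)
Your interpolation $g(A)=\int_0^A V(t)\,dt$ is correct, but the paper dispatches the lemma much more directly. Setting $f(\nu):=\log Z(\beta,\bQ,\bmu_{S^\star}(\nu))$, the paper simply Taylor-expands one order further: $f'(A)-f'(0)=Af''(0)+O\big(A^2\sup_{[0,A]}|f'''|\big)$, uses $f''(0)=\mathrm{Var}_{\beta,\bQ,\bzero}\big(\sum_{i\in S^\star}X_i\big)=s\chi(1+o(1))$ from Theorem~\ref{thm:variance_limit_lattice}, and invokes the cumulant bound $|f'''|=O(s)$ from \cite[Lemma~5]{martin1973mixing}. Since $A^2 s=o(As)$, the result follows in three lines.

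The key difference is that the paper works one derivative higher: rather than lower-bounding $V(t)=f''(t)$ uniformly in $t\in[0,A]$ --- your self-identified ``main obstacle'' --- it bounds $|V'(t)|=|f'''(t)|=O(s)$, which immediately yields $V(t)=V(0)+O(As)=s\chi(1+o(1))$ because $A\to 0$. Your proposed route~(i) through field-uniform exponential decay of truncated correlations plus the coupling of Corollary~\ref{cor:ising_coupling} would work in principle, but it is substantially heavier machinery than the single mixing/cumulant estimate the paper cites; what your argument buys is independence from the specific reference \cite{martin1973mixing}, at the cost of re-deriving an equivalent third-cumulant control from scratch.

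One minor correction: in the $+$-boundary case $\E^+_{\beta,\bQ,\bzero}(X_i)\to m_+(\beta)\neq 0$, so it is not ``$o(1/s)$-per-site negligible''. As you then correctly note, the relevant quantity is the increment $f'(A)-f'(0)$, matching the centering already built into $Z_S$; the paper's proof makes this explicit in its first displayed line.
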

\begin{proof}
	Define $f(\nu):=\log Z_n(\beta,\bQ,\bmu_{S^\star}(\nu))$. 
	\begin{align*}
		\bE_{\beta,\bQ, \bmu_{S^\star}(A)}(\sum_{i \in {S^\star}}X_i) &= \bE_{\beta,\bQ, \bmu_{S^\star}(A)}(\sum_{i \in {S^\star}}X_i)-\bE_{\beta, \bQ,\bzero}(\sum_{i \in {S^\star}}X_i)\\
		&=\frac{\partial }{\partial \eta}f(\nu)|_{\nu=A}- \frac{\partial }{\partial \eta}f(\nu)|_{\nu=0}\\
		&=Af^{''}(0)+O(A^2 f^{'''}(\tilde{\nu})).
	\end{align*}
Since $f^{''}(0)=s\chi$ and $f^{'''}(\tilde{\nu})=O(s)$ (by \cite[Lemma 5]{martin1973mixing}), we have $A^2 f^{'''}(\tilde{\nu})=o(As)$ obtaining the result.
\end{proof}

Finally recall that $\bE_{\beta,\bQ, \bmu_{S^\star}(A)}(X_i)=O(A)$. This, along with Lemma \ref{lem:lattice_mean_alt} yields $\bE_{\beta,\bQ, \bmu_{S^\star}(A)}(X_{S})= sA \chi(1+o(1))$ concluding the proof for small $\varepsilon >0$.
To prove the lower bound, we define a sub-collection $\tilde{\mathcal{C}}$ of rectangles as follows: Throughout we assume that $s^{1/d}$ and $n^{1/d}$ are integers for notational convenience, otherwise we work with corresponding ceiling functions. We will assume without loss of generality that $3s^{1/d}$ divides $n^{1/d}$. First, let $\widehat{C_n}$ be the class of disjoint sub-cubes of $\Lambda_{n,d}$ obtained by translating along
each axis (by $3s^{1/d}$ in each direction each time) the cube of side lengths $3s^{1/d}$ from the bottom left corner of $\Lambda_{n,d}$. Consequently, subdivide each cube in $\widehat{C_n}$ into $3^d$ cubes of side length $s^{1/d}$ each and take the center sub-cube of each cube in $\widehat{C_n}$
to be elements of our class $\tilde{\mathcal{C}}$. It is easy to see that $|\tilde{\mathcal{C}}|=|\widehat{C_n}|= (2/3)^d \frac{n}{s}$. Further,  $\min_{i \in S_1, j\in S_2, S_1 \neq S_2 \in \tilde{\mathcal{C}}} \|i-j\|_1 \ge 4s^{1/d}$. We choose a prior $\pi$ as uniform distribution over $\mu_S(A)$, $S \in \tilde{\mathcal{C}}$.\par 
Fix $\varepsilon>0$ small and set $\tanh(A)=\sqrt{\frac{2}{\chi}(1-\varepsilon)\frac{\log (n/s)}{s}}$. Using the prior $\pi$, we obtain the likelihood ratio:
\begin{align*}
    L_{\pi}=\frac{1}{|\tilde{\mathcal{C}}|}\sum_{S\in \tilde{\mathcal{C}}}\frac{Z(\beta,\bQ,\bmu_S(A))}{Z(\beta,\bQ,\bzero)}\exp\left(A\sum_{i\in S}X_i\right).
\end{align*}
Define $\tilde{t}_n(\delta)=\sqrt{2(1+\delta)\log|\\\tilde{\mathcal{C}}|}$. Subsequently, we let 
\begin{align}
    \tilde{L}_{\pi}(\delta):=\frac{1}{|\tilde{\mathcal{C}}|}\sum_{S\in \tilde{\mathcal{C}}}\frac{Z(\beta,\bQ,\bmu_S(A))}{Z(\beta,\bQ,\bzero)}\exp\left(A\sum_{i\in S}X_i\right)\mathbf{1}(Z_S \le \tilde{t}_n(\delta)).
\end{align}
Since $\bP_{\beta,\bQ,\bzero}(L_\pi \neq \tilde{L}_\pi(\delta))\rightarrow 0$ similar to the proof of type-I error convergence in the upper bound, it is enough to show $\tilde{L}_{\pi}(\delta) \rightarrow 1$ in probability. To this end, note that 
\begin{align*}
   \E_{\beta,\bQ,\bzero}(\tilde{L}_{\pi}(\delta))=\frac{1}{|\tilde{\mathcal{C}}|}\sum_{S\in \tilde{\mathcal{C}}}\P_{\beta,\bQ,\bmu_S(A)}\left(\frac{1}{\sqrt{s}}\sum_{i\in S}X_i\leq\tilde{t}_n(\delta)\right).
\end{align*}
Observe that, $\frac{1}{\sqrt{s}}\sum_{i\in S}(X_i-\E_{\beta,\bQ,\bmu_S(A)})$ is tight since $\mathrm{Var}_{\beta,\bQ,\bmu_S(A)}(\sum_{i\in S}X_i)\leq \mathrm{Var}_{\beta,\bQ,\bzero}(\sum_{i\in S}X_i)=O(s)$ by G.H.S inequality and Theorem \ref{thm:variance_limit_lattice}. Also, by arguments similar to the control of Type II error in the proof of upper bound, we have that for any $\delta>0$  that $\tilde{t}_n(\delta)-\frac{1}{\sqrt{s}}\E_{\beta,\bQ,\bmu_S(A)}(\sum_{i\in S}X_i)\rightarrow \infty$ uniformly in $S\in \tilde{\C}_s$. Therefore by Chebyshev's Inequality we conclude that $1-\E_{\beta,\bQ,\bzero}(\tilde{L}_{\pi}(\delta))\rightarrow 0$. \par 
To upper bound second moment $\tilde{L}_{\pi}$, we split  $\E_{\beta,\bQ,\bzero}(\tilde{L}^2_{\pi}(\delta))$ into sum of $T_1$ and $T_2$ as \eqref{eq:second_moment_split}. Note that, $T_2 \le 1+o(1)$ using \cite[Equation 11]{deb2020detecting}. Recall that, 
\begin{equation}\label{eq:tnlat}
T_1 = \frac{1}{|\tilde{\mathcal{C}}|}\frac{Z^2(\beta,\bQ,\bzero)Z(\beta,\bQ,\bmu_{S}(2A))}{Z^2(\beta,\bQ,\bmu_S(A))Z(\beta,\bQ,\bzero)}\P_{\beta,\bQ,\bmu_{S}(2A)}(Z_S \leq\tilde{t}_n(\delta)).
\end{equation}
For $k >0$,
\begin{align*}
	\frac{Z_n(\beta,\bQ,\bmu_S(kA))}{Z_n(\beta,\bQ,\bzero)}&= \bE_{\beta,\bQ,\bzero}(\exp(kA \sum_{i \in S}X_i))\\
	&= \exp(\frac{k^2A^2}{2}s \chi (1+o(1))+o(\log(n/s)))\\
	&= \exp(k^2(1-\varepsilon+o(1))\log(n/s)).
\end{align*}
Also,
\begin{align}
	\bP_{\beta,\bQ,\bmu_{S}(2A)}\left(Z_S \leq \tilde{t}_n(\delta)\right) & \leq e^{\lambda \tilde{t}_n(\delta)} \bE_{\beta,\bQ,\bmu_{S}(2A)}(\exp(-\lambda Z_S))= e^{\lambda \tilde{t}_n(\delta)} \frac{Z_n(\beta,\bQ,\bmu_S(2A-\frac{\lambda}{\sqrt{s}}))}{Z_n(\beta,\bQ,\bmu_S(2A))}. \label{eq:lattice_lbd_chernoff}
\end{align}
and hence the probability is upper bounded by $\exp(-(1+\tau)\log(n/s))$ for some small $\tau>0$. Hence,
$$T_1 \le \exp (\{-(1+o(1)) +(2-2\varepsilon) - (1+\tau)\}\log(n/s)),$$
where the first summand is obtained from count of rectangle, second summand from three ratio of two partition functions, third from \eqref{eq:lattice_lbd_chernoff}. Hence, $T_1 \rightarrow 0$ and we obtain the desired conclusion.

\subsection{\bf Proof of Theorem \ref{thm:lattice_unknown_beta}}
\textcolor{black}{Following Lemma \ref{lmm:beta_estimation}, we will use maximum pseudo-likelihood estimator for $\bzero$ magnetization and it is still consistent as long as the set of positive magnetization $S$ satisfies $|S| =o(n)$, since 
\begin{equation}
	\liminf_{n \rightarrow \infty} \frac{1}{n}\log \frac{1}{2^n}Z(\beta,\bQ,\bzero)>0,
\end{equation}
using \cite[equation (7.9)]{infising} for any $\beta>0$. When $\rm bc=\mathsf{f}$, we set $O(s^{1/d})=o(n)$ many vertices as $+1$ and still the conclusion of Lemma \ref{lmm:beta_estimation} holds. Once we obtain such an estimator $\hat{\beta}_n$ of $\beta$, we can consistently estimate the cutoff of our scan test since susceptibility of the Ising model is a continuous function of $\beta$ away from criticality.}

\subsubsection{\bf Proof of Proposition \ref{prop:monotonicity_susceptibility}}

Define
\begin{equation}
\chi_n(\beta):= \sum_{j \in \Lambda_{n,d}} \mathrm{Cov}^{}_{\beta,\bQ(\Lambda_{n,d}),\mathbf{0}}( X_0,X_j) = \sum_{j \in \Lambda_{n,d}} \mathbb{E}^{}_{\beta,\bQ(\Lambda_{n,d}),\mathbf{0}}( X_0X_j)
\end{equation}
A consequence of Lemma \ref{lem:monotonicity} and Theorem \ref{thm:ES} is that $\mathbb{E}^{}_{\beta,\bQ(\Lambda_{n,d}),\mathbf{0}}( X_0X_j)$ is non-decreasing in $\beta $ for $\beta \ge 0$.
We obtain the following by straightforward calculation:

\begin{equation*}
    \frac d{d\beta} \chi_{n}(\beta) = \sum_{j,k,l \in \Lambda_{n,d}} \big(\mathbb{E}^{}_{\beta,\bQ(\Lambda_{n,d}),\mathbf{0}}( X_0X_jX_kX_l)  -  \mathbb{E}^{}_{\beta,\bQ(\Lambda_{n,d}),\mathbf{0}}( X_0X_j)\mathbb{E}^{}_{\beta,\bQ(\Lambda_{n,d}),\mathbf{0}}( X_kX_l)\big)
\end{equation*}
To show that the quantity on the right hand side above is strictly positive for $\beta \in (0,\beta_c)$ \footnote{It is easy to come up with examples where the quantity is 0 but the system is not something trivial like i.i.d.} we invoke the following identity (see \cite[Section 3]{duminil2016random})
\begin{multline}
    \big(\mathbb{E}^{}_{\beta,\bQ(\Lambda_{n,d}),\mathbf{0}}( X_0X_jX_kX_l)  -  \mathbb{E}^{}_{\beta,\bQ(\Lambda_{n,d}),\mathbf{0}}( X_0X_j)\mathbb{E}^{}_{\beta,\bQ(\Lambda_{n,d}),\mathbf{0}}( X_kX_l)\big)\\
    = \mathbb{E}^{}_{\beta,\bQ(\Lambda_{n,d}),\mathbf{0}}( X_0X_jX_kX_l) \mathbf P_{\beta,n,d}^{\emptyset \otimes \{0,j\} \Delta \{k,l\}} (k \not \leftrightarrow l). \label{eq:current_identity}
\end{multline}
Here $\mathbf P_{\beta,n,d}^{\emptyset \otimes \{0,j\} \Delta \{k,l\}} $ denotes the \emph{double random current probability measure} where one current has no source and one current has sources $\{0,j\}\Delta \{k,l\}$. We refer to \cite{duminil2016random} for details on this model.

It remains to take a limit as $n\to \infty$. It is a consequence of the existence of the limit of correlations of the free Ising model that the infinite volume limit of $P_{\beta,n,d}^{\emptyset \otimes \{0,j\} \Delta \{k,l\}}$ exists  (\cite{duminil2016random}). Call this limit $P_{\beta,d}^{\emptyset \otimes \{0,j\} \Delta \{k,l\}}$. To justify that the limit can be inserted inside the sum,  we invoke a result of \cite{lebowitz1972bounds} and sharpness of phase transition (\cite{duminil2017sharp,aizenman1987phase}) and obtain that for all $\beta <\beta_c$, $\chi(\beta)$ is differentiable and 
\begin{equation}
   \frac{d}{d\beta} \chi(\beta) =  \lim_{n \to \infty} \frac{d}{d\beta}(\chi_n(\beta)) = \sum_{j,k,l \in \bZ^d}\mathbb{E}^{}_{\beta,\bQ(\bZ^d),\mathbf{0}}( X_0X_jX_kX_l) \mathbf P_{\beta,d}^{\emptyset \otimes \{0,j\} \Delta \{k,l\}} (k \not \leftrightarrow l)
\end{equation}

 Therefore all we need to show is that the sum in the right hand side is strictly positive. To prove this we assume familiarity with the random current model for which we refer the reader to \cite{duminil2016random}. Take $0,j,k,l$ to be the consecutive corners of a square (or a plaquette) in $\bZ^d$. We need the following property of the random current model which is a version of the finite energy property. Take any event $\mathcal A$ with $P_{\beta,d}^{\emptyset \otimes \{0,j\} \Delta \{k,l\}}(\mathcal A) >0$. Introduce a map $\psi$ which takes an element of $\mathcal A$ and adds and removes a finite number of  some odd or even edges, keeping the source set to be the same. There exists a constant $c>0$ such that $P_{\beta,d}^{\emptyset \otimes \{0,j\} \Delta \{k,l\}}(\psi(\mathcal A) ) \ge c P_{\beta,d}^{\emptyset \otimes \{0,j\} \Delta \{k,l\}}(\mathcal A )$. Now take $\mathcal A$ to be $k \leftrightarrow l$. Remove both odd or even edges so that all the edges incident to $0,j,k,l$ are closed in both the currents. This operation possibly introduces some sources in $\bZ^d \setminus \{0,j,k,l\}$. Next we add some odd edges none of which intersect $\{0,j,k,l\}$ so that there are no sources in each current (for example, one can add odd paths between the sources). Next we add an odd edge between $\{0,l\}$ and between $\{k,j\}$. This finishes the mapping, which clearly maps to a subset of $\{k \not \leftrightarrow l\}$. The proof is complete by the finite energy property mentioned above.

 \subsection{General Technical Lemmas}\label{sec:lemmas_general}
 In this section we collect the lemmas   which have been  used in the proofs of results in both Sections \ref{sec:mean_field} and \ref{sec:lattice}. 

\begin{lemma}[GHS Inequality \citep{lebowitz1974ghs}]\label{lemma:GHS}
		Suppose $X\sim \P_{\beta,\bQ,\bmu}$ with $\beta>0$, $\bQ_{ij}\geq 0$ for all $i,j\in [n]$ and %	and let for any multi-index $(i_1,\ldots,i_k)\in \Lambda_n(d)^k$,
%	\begin{align*}
%	u(\bmu;i_1,\ldots,i_k)=\frac{\partial^k\log{Z_n(\beta,\bQ,\bmu)}}{\partial \mu_{i_1}\ldots\partial \mu_{i_k}}.
%	\end{align*} 
	 $\bmu\in \left(\mathbb{R}^+\right)^{n}$. Then for any $(i_1,i_2,i_3)\in [n]^{\otimes 3}$ one has
	\begin{align*}
      \frac{\partial^3\log{Z_n(\beta,\bQ,\bmu)}}{\partial \mu_{i_1}\partial \mu_{i_2}\partial \mu_{i_3}}\leq 0.
	\end{align*}
	Consequently, for any $\bmu_1\succcurlyeq\bmu_2\succcurlyeq\mathbf{0}$ (i.e. coordinate-wise inequality) one has
\begin{align}
\mathrm{Cov}_{\beta,\bQ,\bmu_1}(X_i,X_j)&\leq \mathrm{Cov}_{\beta,\bQ,\bmu_2}(X_i,X_j),\label{eqn:correlation_ordering}
\end{align}
whenever $\beta\bQ_{ij}\geq 0$ for all $i,j\in [n]$.
\end{lemma}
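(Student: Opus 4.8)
The statement comprises two assertions: the non-positivity of the third field-derivative of $\log Z_n$ --- this is precisely the classical Griffiths--Hurst--Sherman inequality for a ferromagnetic Ising model in a non-negative external field --- and the ``Consequently'' corollary on monotonicity of pair covariances. I would prove them in this order.

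For the first assertion I plan to follow the standard duplicated-system argument (as in \cite{lebowitz1974ghs}). First record that $\log Z_n(\beta,\bQ,\bmu)$ is the cumulant generating function of $\bX$ under $\P_{\beta,\bQ,\bmu}$: $\partial_{\mu_i}\log Z_n=\E_{\beta,\bQ,\bmu}(X_i)$, $\partial_{\mu_i}\partial_{\mu_j}\log Z_n=\mathrm{Cov}_{\beta,\bQ,\bmu}(X_i,X_j)$, and $\partial_{\mu_{i_1}}\partial_{\mu_{i_2}}\partial_{\mu_{i_3}}\log Z_n$ is the third joint cumulant of $(X_{i_1},X_{i_2},X_{i_3})$, which for any random vector equals the third joint central moment $\E_{\beta,\bQ,\bmu}\big[\prod_{r=1}^3(X_{i_r}-\E_{\beta,\bQ,\bmu}X_{i_r})\big]$. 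It therefore suffices to show this quantity is $\le 0$. Introduce two independent copies $\bX,\bX'\sim\P_{\beta,\bQ,\bmu}$ and pass to the per-site sum and difference variables $t_i:=X_i+X_i'\in\{-2,0,2\}$ and $w_i:=X_i-X_i'\in\{-2,0,2\}$, noting that exactly one of $t_i,w_i$ is nonzero at each site. Using $X_iX_j+X_i'X_j'=\tfrac12(t_it_j+w_iw_j)$, the joint law of $(t_i,w_i)_{i\in[n]}$ is proportional to $\exp\big(\tfrac{\beta}{4}\sum_{i,j}\bQ_{ij}(t_it_j+w_iw_j)+\sum_i\mu_i t_i\big)$: the $t$'s interact ferromagnetically, the $w$'s interact ferromagnetically, there is no $t$--$w$ interaction, and the field couples only to the $t$'s. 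Since the third central moment of $(X_{i_1},X_{i_2},X_{i_3})$ equals, up to the factor $2$, that of $(t_{i_1},t_{i_2},t_{i_3})$ in this doubled system, the remaining step is a sign computation: expand the third central moment of the $t$'s by conditioning, at each involved site, on whether $t_i$ or $w_i$ is the nonzero coordinate, and invoke Griffiths' first and second correlation inequalities (GKS-I, GKS-II) --- applicable because every coupling and every field of the doubled system is non-negative --- to check that each resulting term carries the right sign. I expect this final combinatorial sign verification to be the only genuinely delicate point; all the preceding manipulations are bookkeeping, and the whole argument is classical.

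The ``Consequently'' part is then a short monotone-integration argument. Fix $i,j\in[n]$ and write $g_{ij}(\bmu):=\partial_{\mu_i}\partial_{\mu_j}\log Z_n(\beta,\bQ,\bmu)=\mathrm{Cov}_{\beta,\bQ,\bmu}(X_i,X_j)$, a smooth function on $(\mathbb{R}^+)^n$. For every $k\in[n]$ and every $\bmu\in(\mathbb{R}^+)^n$ one has, applying the first assertion to the triple $(i,j,k)$ (which need not have distinct entries; note $\beta>0$ and $\beta\bQ_{ij}\ge0$ force $\bQ_{ij}\ge0$), that $\partial_{\mu_k}g_{ij}(\bmu)=\partial_{\mu_i}\partial_{\mu_j}\partial_{\mu_k}\log Z_n(\beta,\bQ,\bmu)\le0$; thus $g_{ij}$ has coordinatewise non-positive gradient throughout $(\mathbb{R}^+)^n$. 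Given $\bmu_1\succcurlyeq\bmu_2\succcurlyeq\mathbf{0}$, the segment $\bmu(\tau):=(1-\tau)\bmu_2+\tau\bmu_1$ lies in $(\mathbb{R}^+)^n$ for $\tau\in[0,1]$ and $\bmu_1-\bmu_2\succcurlyeq\mathbf{0}$, so
\[
\mathrm{Cov}_{\beta,\bQ,\bmu_1}(X_i,X_j)-\mathrm{Cov}_{\beta,\bQ,\bmu_2}(X_i,X_j)=\int_0^1\sum_{k=1}^n\partial_{\mu_k}g_{ij}\big(\bmu(\tau)\big)\,(\mu_{1,k}-\mu_{2,k})\,d\tau\le0,
\]
which is exactly \eqref{eqn:correlation_ordering}. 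No additional ingredients are required here.
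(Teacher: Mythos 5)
The paper gives no proof of this lemma: it is stated as a known result with the citation \citep{lebowitz1974ghs} and used as a black box. So there is no ``paper proof'' to compare against. Evaluating your proposal on its own terms: the ``Consequently'' part is complete and correct --- computing $\partial_{\mu_k}\mathrm{Cov}_{\beta,\bQ,\bmu}(X_i,X_j)=\partial_{\mu_i}\partial_{\mu_j}\partial_{\mu_k}\log Z_n\le 0$ and integrating along the segment from $\bmu_2$ to $\bmu_1$, which stays in $(\R^+)^n$, is exactly the right argument and needs nothing more. For the first part, the duplicated-variable setup (introducing $t_i=X_i+X_i'$, $w_i=X_i-X_i'$, checking that the Hamiltonian decouples into two ferromagnetic pieces with field on the $t$'s only, and reducing the third Ursell function of $X$ to that of $t$ up to a factor of $2$) is the correct Lebowitz route and all your bookkeeping there is right. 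But the step you flag as ``genuinely delicate'' is precisely the content of the GHS inequality: the reduction of the third cumulant of the $t$'s to a manifestly non-positive combination via GKS-I and GKS-II for the $(t,w)$-system is not a routine conditioning argument, and simply asserting that ``each resulting term carries the right sign'' leaves the heart of the proof unverified. Since the paper treats the lemma as a cited classical fact, this is acceptable as a citation-level recap; as a self-contained proof it has a real gap at exactly that final sign computation.
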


%\begin{proof}
%The first conclusion follows from \citep{lebowitz1974ghs}. The second conclusion follows on noting that 
%We briefly discuss the significance of Lemma \ref{lemma:GHS}. First note that for any $i_1,i_2$, one has by standard theory of
%for an exponential family we have
%\begin{align*}
%\frac{\partial^2\log{Z_n(\beta,\bQ,\bmu)}}{\partial \mu_{i_1}\partial %\mu_{i_2}}=\mathrm{Cov}_{\beta,\bQ,\bmu}(X_i,X_j).
%\end{align*}
%\end{proof}
%Consequently, Lemma \ref{lemma:GHS} implies that

\begin{lemma}[GKS Inequality \citep{friedli2017statistical}]\label{lemma:GKS}
		Suppose $X\sim \P_{\beta,\bQ,\bmu}$ with $\beta>0$, $\bQ_{ij}\geq 0$ for all $i,j\in [n]$ and $\bmu\in \left(\mathbb{R}^+\right)^{n}$. Then the following hold for any $i,j\in [n]$
		\begin{align*}
		\mathrm{Cov}_{\beta,\bQ,\bmu}(X_i,X_j)\geq 0; \quad \E_{\beta,\bQ,\bmu}(X_i)\geq 0.
		\end{align*}
\end{lemma}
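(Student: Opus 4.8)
The plan is to reduce the two asserted inequalities to the classical GKS (Griffiths--Kelly--Sherman) inequalities for the \emph{zero-field} ferromagnetic Ising model by the ghost-spin device, and then establish those in the zero-field setting by a high-temperature (edge) expansion together with a duplicated-system argument. First I would absorb the external field $\bmu$ into the interaction: introduce an auxiliary ``ghost'' vertex $0$ and work on the vertex set $\{0,1,\dots,n\}$ with coupling matrix $\bQ'$ obtained from $\bQ$ by setting $\bQ'_{0i}=\bQ'_{i0}:=\mu_i/\beta$ (so that $\beta\bQ'_{0i}=\mu_i\ge 0$) and $\bQ'_{00}=0$, and with external field $\bzero$. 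Writing $\langle\cdot\rangle'$ for the expectation under $\P_{\beta,\bQ',\bzero}$, the weight is invariant under the global flip $\sigma\mapsto-\sigma$, and conditioning on $\{\sigma_0=+1\}$ reproduces $\P_{\beta,\bQ,\bmu}$ with $X_i=\sigma_i$. Using the $\mathbb{Z}_2$ symmetry one checks
\[
\E_{\beta,\bQ,\bmu}(X_i)=\langle\sigma_0\sigma_i\rangle',\qquad \E_{\beta,\bQ,\bmu}(X_iX_j)=\langle\sigma_i\sigma_j\rangle',
\]
so that $\mathrm{Cov}_{\beta,\bQ,\bmu}(X_i,X_j)=\langle\sigma_i\sigma_j\rangle'-\langle\sigma_0\sigma_i\rangle'\langle\sigma_0\sigma_j\rangle'$. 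Hence both claims follow from two statements about the zero-field ferromagnetic model on a finite graph with nonnegative couplings: (GKS-I) $\langle\sigma_A\rangle\ge 0$ for every finite $A$, and (GKS-II) $\langle\sigma_A\sigma_B\rangle\ge\langle\sigma_A\rangle\langle\sigma_B\rangle$ for all $A,B$; apply (GKS-I) with $A=\{0,i\}$ and (GKS-II) with $A=\{0,i\}$, $B=\{0,j\}$, using $\sigma_0^2=1$.

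For (GKS-I) I would use the edge expansion $\exp\!\big(\beta\sum_{\{u,v\}}\bQ'_{uv}\sigma_u\sigma_v\big)=\prod_{\{u,v\}}\big(\cosh(\beta\bQ'_{uv})+\sinh(\beta\bQ'_{uv})\sigma_u\sigma_v\big)$, expand the product over subsets $F$ of edges, and sum over spins. Since every $\beta\bQ'_{uv}\ge 0$, all coefficients are nonnegative, and $\sum_\sigma\sigma_A\prod_{e\in F}\sigma_u\sigma_v$ equals $2^{|V|}$ when every vertex has even degree in the multiset $F\,\triangle\,A$ (viewing $A$ as a set of defect endpoints) and $0$ otherwise; every surviving term is $\ge 0$, giving $\langle\sigma_A\rangle\ge0$ and hence $\E_{\beta,\bQ,\bmu}(X_i)\ge0$. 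For (GKS-II) I would run the classical duplicated-system argument: take an independent copy $\sigma'$ with the same law, note $\langle\sigma_A\sigma_B\rangle-\langle\sigma_A\rangle\langle\sigma_B\rangle=\langle\sigma_A(\sigma_B-\sigma'_B)\rangle_{\otimes}$ under the product measure, expand $\exp\!\big(\beta\sum\bQ'_{uv}(\sigma_u\sigma_v+\sigma'_u\sigma'_v)\big)$, and perform the substitution that renders the expansion term-by-term nonnegative (equivalently invoke the random-current switching lemma, which directly yields $\langle\sigma_A\sigma_B\rangle-\langle\sigma_A\rangle\langle\sigma_B\rangle=\langle\sigma_{A\triangle B}\rangle\,\mathbf P_{\{A\triangle B\}\,\text{sources}}(\cdot)\ge0$), giving $\mathrm{Cov}_{\beta,\bQ,\bmu}(X_i,X_j)\ge 0$. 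Since these arguments are standard, I would cite them in the form stated in \cite{friedli2017statistical} and only reproduce the ghost-vertex reduction and the sign bookkeeping in detail.

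The main obstacle is (GKS-II): a single-copy edge expansion does \emph{not} make the difference $\langle\sigma_A\sigma_B\rangle-\langle\sigma_A\rangle\langle\sigma_B\rangle$ manifestly nonnegative term by term, so one genuinely needs either the Griffiths/Ginibre change of variables on the doubled system or the random-current formalism, and keeping track of the defect vertices (resp. current sources) across the two copies is where care is required. Everything else---the ghost-vertex construction, the $\mathbb{Z}_2$-symmetry identities, and the deduction of the two stated inequalities from (GKS-I)--(GKS-II)---is routine, so in the write-up I would state the ghost-vertex lemma explicitly and then defer the zero-field GKS inequalities to the literature.
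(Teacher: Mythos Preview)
Your proof is correct. The paper does not actually prove this lemma; it simply records it as a classical fact with a citation to \cite{friedli2017statistical} and moves on. Your ghost-spin reduction to the zero-field model, the $\mathbb{Z}_2$-symmetry identities $\E_{\beta,\bQ,\bmu}(X_i)=\langle\sigma_0\sigma_i\rangle'$ and $\E_{\beta,\bQ,\bmu}(X_iX_j)=\langle\sigma_i\sigma_j\rangle'$, and the application of GKS-I/GKS-II with $A=\{0,i\}$, $B=\{0,j\}$ are all standard and correctly executed, so you have supplied more detail than the paper itself provides.
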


\begin{lemma}[Lemma 8 of \cite{daskalakis2019testing}]\label{lemma:griffith_second}
	Suppose $X^{(k)}\sim \P_{\beta^{(k)},\bQ^{(k)},\mathbf{0}}$ for  $k=1,2$ with $\beta^{(1)}\bQ^{(1)}_{ij}\geq \beta^{(2)}\bQ^{(2)}_{ij}\geq 0$ for all $i,j$. 
	Then
	\begin{align*}
	\mathrm{Cov}_{\beta^{(1)},\bQ^{(1)},\mathbf{0}}(X_i,X_j)\geq \mathrm{Cov}_{\beta^{(2)},\bQ^{(2)},\mathbf{0}}(X_i,X_j), \quad \forall i,j.
	\end{align*}
\end{lemma}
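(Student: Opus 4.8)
The final statement to prove is Lemma~\ref{lemma:griffith_second} (Lemma 8 of \cite{daskalakis2019testing}): if $\beta^{(1)}\bQ^{(1)}_{ij}\geq \beta^{(2)}\bQ^{(2)}_{ij}\geq 0$ for all $i,j$, then $\mathrm{Cov}_{\beta^{(1)},\bQ^{(1)},\mathbf{0}}(X_i,X_j)\geq \mathrm{Cov}_{\beta^{(2)},\bQ^{(2)},\mathbf{0}}(X_i,X_j)$ for all $i,j$. This is the second Griffiths inequality in the form of monotonicity of two-point functions in the coupling constants.

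\textbf{Plan.} The natural approach is to interpolate between the two models and differentiate. Write $\bJ^{(t)}_{ij} := t\,\beta^{(1)}\bQ^{(1)}_{ij} + (1-t)\,\beta^{(2)}\bQ^{(2)}_{ij}$ for $t\in[0,1]$, which is entrywise nonnegative and hollow, and let $\langle\cdot\rangle_t$ denote expectation under the zero-field Ising measure with interaction matrix $\bJ^{(t)}$ (absorbing $\beta$ into $\bJ^{(t)}$, so the Hamiltonian is $\tfrac12\sum_{k,l}\bJ^{(t)}_{kl}x_kx_l$). It suffices to show $\frac{d}{dt}\langle X_iX_j\rangle_t \geq 0$ for every $i,j$ and every $t\in[0,1]$; integrating from $t=0$ to $t=1$ then gives the claim. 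A direct computation gives
\[
\frac{d}{dt}\langle X_iX_j\rangle_t = \sum_{k<l}\big(\beta^{(1)}\bQ^{(1)}_{kl}-\beta^{(2)}\bQ^{(2)}_{kl}\big)\Big(\langle X_iX_jX_kX_l\rangle_t - \langle X_iX_j\rangle_t\langle X_kX_l\rangle_t\Big),
\]
so the lemma reduces to two facts: (a) the prefactor $\beta^{(1)}\bQ^{(1)}_{kl}-\beta^{(2)}\bQ^{(2)}_{kl}\geq 0$, which is exactly the hypothesis; and (b) the four-point truncated correlation $\langle X_iX_jX_kX_l\rangle_t - \langle X_iX_j\rangle_t\langle X_kX_l\rangle_t \geq 0$ for all indices. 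Fact (b) is precisely the (first) Griffiths/GKS inequality for the Ising model with ferromagnetic couplings and no external field — more precisely the statement that $\langle X_A X_B\rangle \geq \langle X_A\rangle\langle X_B\rangle$ for any two subsets $A,B$ of sites, here with $A=\{i,j\}$ and $B=\{k,l\}$. Since $\bJ^{(t)}$ has nonnegative entries, the measure $\langle\cdot\rangle_t$ is ferromagnetic and this applies.

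\textbf{Execution.} First I would record the interpolation setup and note $\bJ^{(t)}$ remains a valid (symmetric, hollow, entrywise nonnegative) interaction matrix for all $t\in[0,1]$. Next I would carry out the differentiation of $\langle X_iX_j\rangle_t = Z_t^{-1}\sum_x x_ix_j \exp(\tfrac12 x^\top \bJ^{(t)}x)$ in $t$ — this is the standard computation showing $\partial_t \langle f\rangle_t = \sum_{k<l}\dot{\bJ}^{(t)}_{kl}\big(\langle f X_kX_l\rangle_t - \langle f\rangle_t\langle X_kX_l\rangle_t\big)$ with $f = X_iX_j$ — and I would keep this brief since it is routine. Then I would invoke the GKS second inequality (already available in the paper essentially as Lemma~\ref{lemma:GKS} / the discussion around the first and second Griffiths inequalities, or citing \cite{friedli2017statistical}): for a ferromagnetic Ising model with nonnegative external field (in particular zero field) and any finite sets of sites $A,B$, $\langle X_AX_B\rangle \geq \langle X_A\rangle\langle X_B\rangle$. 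Applying this with $A=\{i,j\}$, $B=\{k,l\}$ makes every summand in the derivative nonnegative, hence $\frac{d}{dt}\langle X_iX_j\rangle_t\geq 0$, and integrating over $t\in[0,1]$ completes the proof.

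\textbf{Main obstacle.} There is no serious obstacle here — this is a classical argument. The only point requiring a little care is making sure I cite the correct form of Griffiths' second inequality: the version needed is the one comparing $\langle X_AX_B\rangle$ with $\langle X_A\rangle\langle X_B\rangle$ for arbitrary (possibly overlapping) $A,B$, which is exactly the GKS-II inequality valid for ferromagnetic couplings with $\bmu \succeq \mathbf 0$ (here $\bmu=\mathbf 0$). A secondary minor point is bookkeeping about absorbing $\beta^{(k)}$ into $\bQ^{(k)}$ so that the monotonicity hypothesis reads cleanly as $\dot{\bJ}^{(t)}_{kl}\geq 0$; once that normalization is fixed, everything goes through directly. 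One could alternatively give a coupling/FKG-type proof via the random current or Edwards--Sokal representation already introduced in Section~\ref{sec:lattice}, but the interpolation-plus-GKS argument is the shortest and most self-contained, so that is the route I would take.
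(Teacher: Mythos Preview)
Your proof is correct and is precisely the classical interpolation-plus-GKS-II argument for this monotonicity statement. Note that the paper does not actually supply its own proof of this lemma: it is stated as a citation (Lemma 8 of \cite{daskalakis2019testing}) and left unproved in the text, so there is nothing to compare against beyond observing that your argument is the standard one.
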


\begin{lemma}\label{lemma:expectation_vs_externalmag}
	Suppose $X\sim \P_{\beta,\bQ,\bmu}$  with $\bmu\in (\mathbb{R}^+)^{n}$, and $\beta \bQ_{i,j}\ge 0$ for all $i\ne j$.%Assume that 
%		\begin{enumerate}
%			\item  $\beta>0, \min\limits_{i,j\in \Lambda_n(d)}\bQ_{i,j}\geq 0$.
%			
%			\item   $\|\bQ\|_{\infty \rightarrow \infty}\leq C$.
			
			%\item $\max_{i\in \Lambda_n[d]}\sum_{j\in \Lambda_n(d)}\mathrm{Cov}_{\beta,\bQ,\mathbf{0}}(X_i,X_j)\leq C.$	
		%\end{enumerate}
		\begin{enumerate}
		\item[(a)]
		Setting $c=(1-\tanh(\beta \|\bQ\|_{\infty\rightarrow\infty})$ we have $\E_{\beta, \bQ,\bmu}(X_i)\ge c\tanh(\mu_i)$.
		
		\item[(b)]
		With $C:=\frac{1}{\tanh(1)}$ we have
		$$\E_{\beta,\bQ,\bmu}(X_i)\le C\left\{\tanh(\mu_i)+\sum_{j}\tanh(\mu_j)\Big[\bQ_{ij}+\mathrm{Cov}_{\beta,\bQ,\bmu}(X_i,X_j)\Big]\right\}.$$
		
		\end{enumerate}
		\end{lemma}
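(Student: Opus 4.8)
The plan is to base both parts on the \emph{mean-field equation}: conditionally on $(X_k)_{k\neq i}$, the spin $X_i$ is $\pm1$-valued with $\E_{\beta,\bQ,\bmu}[X_i\mid (X_k)_{k\neq i}]=\tanh\big(\mu_i+\beta\sum_{j}\bQ_{ij}X_j\big)$, and hence, writing $R_i(\bX):=\beta\sum_j\bQ_{ij}X_j$,
\[
\E_{\beta,\bQ,\bmu}(X_i)=\E_{\beta,\bQ,\bmu}\big[\tanh(\mu_i+R_i(\bX))\big].
\]
Since $\beta\bQ_{ij}\ge0$ and $|X_j|=1$, one has $|R_i(\bX)|\le \beta\|\bQ\|_{\infty\rightarrow\infty}=:M$ pointwise.

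For part (a) I would first establish the deterministic inequality $\tanh(\mu_i+R)\ge \tanh R+(1-\tanh M)\tanh\mu_i$ for all $|R|\le M$ and $\mu_i\ge0$. This follows from the addition formula $\tanh(\mu_i+R)-\tanh R=\tfrac{\tanh\mu_i\,(1-\tanh^2 R)}{1+\tanh\mu_i\tanh R}$, bounding the numerator below by $1-\tanh^2M=(1-\tanh M)(1+\tanh M)$ and the denominator above by $1+\tanh M$ (using $\tanh\mu_i\le1$, $\tanh R\le\tanh M$). Taking expectations it then remains to show $\E_{\beta,\bQ,\bmu}[\tanh(R_i(\bX))]\ge0$. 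This holds because $\bX\mapsto\tanh(R_i(\bX))$ is coordinatewise nondecreasing (as $\beta\bQ_{ij}\ge0$), it is an odd function and therefore has zero mean under the spin-symmetric measure $\P_{\beta,\bQ,\bzero}$, and turning on the nonnegative external field $\bmu\succcurlyeq\bzero$ can only increase the expectation of an increasing function by the FKG inequality (monotonicity of the ferromagnetic Ising measure in the field). Combining these yields $\E_{\beta,\bQ,\bmu}(X_i)\ge(1-\tanh M)\tanh\mu_i$, which is (a).

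Part (b) is the harder direction and is where the work lies. Again starting from the mean-field equation, I would peel off the self-term using concavity (hence subadditivity on $\mathbb{R}_{\ge0}$) of $\tanh$: $\tanh(\mu_i+R)\le\tanh\mu_i+R^+$, and then $R_i(\bX)^+\le\sum_j(\beta\bQ_{ij}X_j)^+=\beta\sum_j\bQ_{ij}\mathbbm{1}\{X_j=1\}$, producing a first bound $\E_{\beta,\bQ,\bmu}(X_i)\le\tanh\mu_i+\beta\sum_j\bQ_{ij}\,\P_{\beta,\bQ,\bmu}(X_j=1)$. Converting this into the stated form then requires two further steps: (i) replacing the raw fields $\mu_i$ (and, after the analogous manipulation at the neighbours, the $\mu_j$'s) by $\tanh\mu_i$ at the cost of the universal constant $C=1/\tanh(1)$, via the elementary bound $\min(x,1)\le \tanh(x)/\tanh(1)$ valid for all $x\ge0$; and (ii) rewriting $\P_{\beta,\bQ,\bmu}(X_j=1)=\tfrac12(1+\E_{\beta,\bQ,\bmu}(X_j))$ and re-expressing the $\E(X_j)$ contributions through covariances. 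For (ii) the natural devices are the Griffiths identity that differentiating $\E_{\beta,\bQ,\bmu}(X_i)$ in the coupling $\beta\bQ_{ij}$ returns $\mathrm{Cov}_{\beta,\bQ,\bmu}(X_i,X_j)$, or equivalently the ghost-spin reformulation on $\{0,1,\dots,n\}$ with a single extra (zero-field) vertex, under which $\E_{\beta,\bQ,\bmu}(X_i)=\widehat{\E}[X_0X_i]$ and $\E_{\beta,\bQ,\bmu}(X_iX_j)=\widehat{\E}[X_iX_j]$ and one can peel at site $i$ in the augmented zero-field ferromagnet; throughout, GKS (Lemma~\ref{lemma:GKS}) keeps the magnetizations $\E(X_j)\ge0$ and GHS (Lemma~\ref{lemma:GHS}) controls the covariance comparisons. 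The main obstacle is exactly this bookkeeping: organizing the peeling so that the resulting error terms collapse into the single clean combination $\tanh\mu_j\,[\bQ_{ij}+\mathrm{Cov}_{\beta,\bQ,\bmu}(X_i,X_j)]$ with constant $1/\tanh(1)$, rather than a messier expression involving $\beta$, $\|\bQ\|_{\infty\rightarrow\infty}$ and the probabilities $\P(X_j=1)$.
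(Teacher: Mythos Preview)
Your argument for part~(a) is correct and coincides with the paper's: both start from $\E(X_i)=\E[\tanh(\mu_i+R_i(\bX))]$, establish the pointwise inequality $\tanh(\mu_i+R)\ge\tanh R+(1-\tanh M)\tanh\mu_i$ for $|R|\le M$, and finish via monotonicity of the ferromagnetic Ising measure in $\bmu$ together with the symmetry $\E_{\beta,\bQ,\mathbf 0}[\tanh R_i(\bX)]=0$.

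For part~(b) there is a genuine gap, and it is not merely bookkeeping. The opening peel $\tanh(\mu_i+R)\le\tanh\mu_i+R^+$ throws away too much: when $\mu_j=0$ for all $j\neq i$ the target inequality reduces to $\E(X_i)\le C\tanh\mu_i$ (every $\tanh\mu_j$ in the sum vanishes), yet your first bound retains $\beta\sum_j\bQ_{ij}\P(X_j=1)\ge\tfrac{\beta}{2}\sum_j\bQ_{ij}$, which is of order one regardless of how small $\bmu$ is. Concretely, for two spins with $\mu_2=0$ and coupling $q$ one has $\E(X_1)=\tanh\mu_1$ exactly, but your bound gives $\tanh\mu_1+\beta q\,\P(X_2=1)$. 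No subsequent ghost-spin rewriting can repair this, because the slack was introduced pointwise at the very first step. (Your stated Griffiths identity is also off: it is $\partial_{\mu_j}\E(X_i)$, not $\partial_{\beta\bQ_{ij}}\E(X_i)$, that equals $\mathrm{Cov}(X_i,X_j)$.)

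The paper's route is different. After the trivial case $\mu_i>1$ (where $\E(X_i)\le1\le C\tanh\mu_i$), it sets $\mathcal J=\{j:\mu_j\le1\}$ and freezes the spins in $\mathcal J^c$ to $+1$ by sending their fields to $+\infty$, which only increases $\E(X_i)$ by GKS. In the reduced model on $\mathcal J$ the concavity of $t\mapsto\E_{t\tilde{\bmu}}(X_i)$ supplied by GHS yields an upper bound linear in the remaining fields with covariances as coefficients, and a further GHS comparison returns these covariances to the original model. The $\tanh\mu_j$ prefactors and $C=1/\tanh 1$ then come from $\mu_j\le C\tanh\mu_j$ on $\mathcal J$ and $1\le C\tanh\mu_j$ on $\mathcal J^c$. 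The missing idea in your proposal is precisely this freeze-then-linearize step, which is what forces every neighbour's contribution to carry a $\tanh\mu_j$ factor.
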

		
		\begin{proof}
		\begin{enumerate}
		\item[(a)]
			Note that
	\begin{align*}
	\E_{\beta,\bQ,\bmu}(X_i)=&\E_{\beta,\bQ,\bmu}\tanh(\beta \sum_{j\in \Lambda_n(d)}\bQ_{ij}X_j+\mu_i)\\\ge &\E_{\beta,\bQ,\bmu}\tanh(\beta \sum_{j\in \Lambda_n(d)}\bQ_{ij}X_j)+(1-\tanh(\beta \|\bQ\|_{\infty\rightarrow\infty})\tanh(\mu_i),
	\end{align*}
	from which the result follows on noting that 
	$$\E_{\beta,\bQ,\bmu}\tanh(\beta \sum_{j\in \Lambda_n(d)}\bQ_{ij}X_j)\ge \E_{\beta,\bQ,{\bf 0}}\tanh(\beta \sum_{j\in \Lambda_n(d)}\bQ_{ij}X_j)=0.$$
	In the above display, the first inequality follows the fact that the Ising model is stochastically non decreasing in $\bmu$, along with the observation that the function $(x_j,j \in \Lambda_n(d))\mapsto \tanh(\beta \sum_{j\in \Lambda_n(d)}\bQ_{ij}x_j)$ is non decreasing, and the second equality follows by symmetry of the Ising model when $\bmu={\bf 0}$.
	
	\item[(b)]
	A straightforward calculus gives the existence of  $C_1,C_2<\infty$ such that for all $x\in [-c,c]$ and $y>0$ we have 
	\begin{align}\label{eq:tanh_upper}
	 C_1\tanh(y)\le \tanh(x+y)-\tanh(x)\le C_2\tanh(y).
	\end{align}
%	Define the vector $\tilde{\bmu}(i)$ by setting $\tilde{\bmu}(i)_i:=\bmu_i$, and for $j\ne i$ set
%	\begin{align*}
%	\tilde{\bmu}(i)_j:=&\bmu_j\text{ if }\bmu_i\in [0,1]\\
%	:=&1\text{ if }\bmu_j>1.
%	\end{align*}
Note that if $\mu_i>1$ then we have
$$\E_{\beta,\bmu,\bQ}(X_i)\le 1 \le C \tanh(\mu_i),$$
where $C:=\frac{1}{\tanh(1)}$, and so we are done.

 %Thus assume without loss of generality that $\mu_i\in [0,1]$.
Setting $\mathcal{J}:=\{j\in  \Lambda_n(d):\mu_j\le  1\}$ define a vector $\tilde{\bmu}$ by setting
\begin{align*}
\tilde{\mu}_i:=&\mu_i+\sum_{j\notin \mathcal{J}}\bQ_{ij},\\
\tilde{\mu}_j:=&\mu_j\text{ if }j\ne i, j\in\mathcal{J},\\
%&0\bmu_j\text{ if }j\in \mathcal{J},\\
:=&0\text{ if }j \notin \mathcal{J}.
\end{align*}
Also define an $n\times n$ matrix $\tilde{\bQ}$  by setting
\begin{align*}
\tilde{\bQ}_{j,k}:=&\bQ_{j,k}\text{ if both }j,k\in \mathcal{J},\\
:=&0\text{ if either }j,k\notin \mathcal{J}.
\end{align*}
Then invoking Lemma \ref{lemma:GKS} in the first and the last line of the display below, we have
\begin{align*}
\E_{\beta,\bQ,\bmu}(X_i)\le \lim_{\mu_j\rightarrow\infty, j\notin \mathcal{J}}\E_{\beta,\bQ,\bmu}(X_i)=&\E_{\beta,\tilde{\bQ},\tilde{\bmu}}(X_i)\\
%=&\sum_{j=1}^n\tilde{\bmu}_jCov_{\beta,\tilde{\bQ},\tilde{\bmu}}(X_i,X_j)\\
=&\sum_{j\in \mathcal{J}}\tilde{\mu}_j\mathrm{Cov}_{\beta,\tilde{\bQ},\tilde{\bmu}}(X_i,X_j)\\
= &\sum_{j\in \mathcal{J}}\tilde{\mu}_j \lim_{\mu_j\rightarrow\infty, j\notin \mathcal{J}}\mathrm{Cov}_{\beta,\bQ,\bmu}(X_i,X_j)\\
\le& \sum_{j\in \mathcal{J}}\tilde{\mu}_j\mathrm{Cov}_{\beta,\bQ,\bmu}(X_i,X_j).
\end{align*}
Finally with $C=\frac{1}{\tanh(1)}$ the RHS above can be bounded as 
\begin{align*}
\sum_{j\in \mathcal{J}}\tilde{\mu}_j\mathrm{Cov}_{\beta,\bQ,\bmu}(X_i,X_j)=&\mu_i+\sum_{j\notin \mathcal{J}}\bQ_{ij}+\sum_{j\in \mathcal{J}, j\ne i}\mu_j\mathrm{Cov}_{\beta,\bQ,\bmu}(X_i,X_j)\\
\le &C\tanh(\mu_i)+C\sum_{j\notin \mathcal{J}}\bQ_{ij}\tanh(\mu_j)+C\sum_{j \in \mathcal{J}}\tanh(\mu_j)\mathrm{Cov}_{\beta,\bQ,\bmu}(X_i,X_j)
\end{align*}
from which the desired conclusion follows on noting that $\mathrm{Cov}_{\beta,\bQ,\bmu}(X_i,X_j)\ge 0$ for all $i,j$. %where the last line again uses Lemma \ref{lemma:GKS}.

%	\begin{align*}
%	\E_{\beta,\bQ,\bmu}(X_i)=&\E_{\beta,\bQ,\bmu}\tanh(\bmu_i+\beta \sum_{j\in \mathcal{J}}\bQ_{ij}X_j+\sum_{j\in \mathcal{J}^c}\bQ_{ij}X_j)\\
%	\le &\E_{\beta,\bQ,\bmu}\tanh(\bmu_i+\beta \sum_{j\in \mathcal{J}}\bQ_{ij}X_j+\sum_{j\in \mathcal{J}^c}\bQ_{ij})\\
%	\le &\E_{\beta,\bQ,\bmu}\tanh(\beta\sum_{j\in \mathcal{J}^c}\bQ_{ij}X_j)+C\tanh(\mu_i)+C\tanh(\sum_{j\in \mathcal{J}}\bQ_{ij})
%	\end{align*}
	%where the last inequality uses \eqref{eq:tanh_upper}, along with the observation that $\tanh$ is sub-additive on $[0,\infty)$.
	
		%-\E_{\beta,\bQ,\tilde{\bmu}(i)}(X_i)=&\E_{\beta,\bQ,\bmu}\tanh(\beta \sum_{j=1}\bQ_{ij}X_j+\mu_i)-\E_{\beta,\bQ,\tilde{\bmu}(i)}\tanh(\beta \sum_{j=1}\bQ_{ij}X_j+\mu_i)\\
	
	\end{enumerate}
	
%	\begin{align*}
%			\E_{\beta,\bQ,\bmu}(X_i)&= \E_{\beta,\bQ,\mathbf{0}}(X_i)+\sum_{j=1}^n \mu_j\mathrm{Cov}_{\beta,\bQ,\bmu^*}(X_i,X_j)
%			\end{align*}
%			for some $\bmu^*$ lying on the line joining $\mathbf{0}$ and $\bmu$. Now note that  by GHS inequality (allowed by assumption 1 and $\bmu\in (\mathbb{R}^+)^n$)
%	\begin{align*}
%			\sum_{j=1}^n \mu_j\mathrm{Cov}_{\beta,\bQ,\bmu^*}(X_i,X_j)&\leq \sum_{j=1}^n \mu_j\mathrm{Cov}_{\beta,\bQ,\mathbf{0}}(X_i,X_j)\\
%			&\leq \mu_i\mathrm{Var}_{\beta,\bQ,\mathbf{0}}(X_i)+\max_{j\neq i}\mu_j\sum_{j\neq i}\mathrm{Cov}_{\beta,\bQ,\mathbf{0}}(X_i,X_j).
%			\end{align*}
%			
%			Consequently the upper bound follows by assumption 2. For the lower bound note that we can safely assume $\E_{\beta,\bQ,\bmu}(X_i)\leq \frac{1}{2}$ because o.w. the result is true with $c=2M$. with note that by GKS inequality (allowed by assumption 1 and $\bmu\in (\mathbb{R}^+)^n$) we have $\E^2_{\beta,\bQ,\bmu^*}(X_i)\leq \E^2_{\beta,\bQ,\bmu}(X_i)$ and consequently
%	\begin{align*}
%			\sum_{j=1}^n \mu_j\mathrm{Cov}_{\beta,\bQ,\bmu^*}(X_i,X_j)&\geq \mu_i\mathrm{Var}_{\beta,\bQ,\bmu^*}(X_i)=\mu_i(1-\E^2_{\beta,\bQ,\bmu^*}(X_i))\geq \mu_i\left(1-\frac{1}{4}\right).
%			%	&=\mu_i\left(1-\E^2_{\beta,\bQ,\bmu(M)}(\tanh(\sum_{j\neq i}Q_{ji}X_j+M))\right)\\
%			%	&\leq \mu_i\left(1-\E^2_{\beta,\bQ,\bmu(M)}(\tanh(\|\bQ\|_{\infty\rightarrow \infty}+M))\right)
%			\end{align*}
%			%	Consequently the lower bound follows by assumption 2.
%			Therefore we have $\E_{\beta,\bQ,\bmu}\geq \min\{1/2M,3/4\}\mu_i$.	
		\end{proof}

\begin{lemma}\label{lmm:beta_estimation}
    If $\sup_n \|\bQ\|<\infty$ and \begin{equation}\label{eq:llog}
	\liminf_{n \rightarrow \infty} \frac{1}{n}\log \frac{1}{2^n}Z(\beta,\bQ,\bzero)>0,
\end{equation}
then maximum pseudo-likelihood estimator of $\beta$ under $\bmu=0$ based on our data is consistent for $\beta$ even under arbitrary magnetization $\bmu \in \bR^n_{+}$ when $S:=\mathrm{supp}(\bmu)$ with $|S|= o(n)$ uniformly over $\bmu$. The same conclusion holds true if we further assume there a subset $S_1$ with $|S_1|=o(n)$ and we set the vertices of $S_1$ being set to $+1$.
\end{lemma}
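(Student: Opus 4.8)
The plan is to reduce the statement, in both of its forms, to the known consistency of the maximum pseudo‑likelihood estimator (MPLE) in the unmagnetised model $\P_{\beta_0,\bQ,\bzero}$ — which holds under exactly the two hypotheses $\sup_n\|\bQ\|<\infty$ and \eqref{eq:llog} by \cite{infising} — and then to argue that switching on a field supported on $o(n)$ sites, or pinning $o(n)$ sites to $+1$, perturbs neither the hypotheses nor the conclusion. Recall that the working pseudo‑log‑likelihood is $\ell_n(\beta)=\sum_{i=1}^n\big(\beta X_i m_i(\bX)-\log\cosh(\beta m_i(\bX))\big)$ with $m_i(\bX)=\sum_j\bQ_{ij}X_j$; it is concave, $\ell_n''(\beta)=-\sum_i m_i^2(\bX)\sech^2(\beta m_i(\bX))\le 0$, so $\hat\beta_n$ is the unique zero of $\ell_n'(\beta)=\sum_i m_i(\bX)\big(X_i-\tanh(\beta m_i(\bX))\big)$ and, by concavity, it suffices that $\tfrac1n\ell_n'(\beta_0-\ve)>0>\tfrac1n\ell_n'(\beta_0+\ve)$ with probability tending to $1$ for every $\ve>0$. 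I will use repeatedly the deterministic bound $\sum_i m_i^2(\bX)=\bX^\top\bQ^2\bX=\|\bQ\bX\|^2\le\|\bQ\|^2 n$.

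I would treat the magnetisation first. Write $X_i-\tanh(\beta m_i)=\big(X_i-\tanh(\beta m_i+\mu_i)\big)+\big(\tanh(\beta m_i+\mu_i)-\tanh(\beta m_i)\big)=:D_i+R_i$; conditionally on $(X_j)_{j\neq i}$ one has $\mathbb E_{\beta_0,\bQ,\bmu}[D_i\mid\cdot]=0$, while $R_i=0$ for $i\notin S$ and $|R_i|\le2$ always, so that $\big|\sum_i m_i R_i\big|\le2\sum_{i\in S}|m_i|\le2\sqrt{|S|}\big(\sum_i m_i^2\big)^{1/2}\le2\|\bQ\|\sqrt{|S|\,n}=o(n)$ deterministically and uniformly over $\bmu$. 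Hence $\tfrac1n\ell_n'(\beta)$, evaluated along data from $\P_{\beta_0,\bQ,\bmu}$, differs from its counterpart in the base argument only by an $o(1)$ bias plus the mean‑zero field‑adjusted sum $\tfrac1n\sum_i m_i D_i$; the latter is controlled by the same concentration estimates as in \cite{infising} (which use only $\sup_n\|\bQ\|<\infty$ and are therefore insensitive to an external field), and similarly $-\tfrac1n\ell_n''(\beta)$ is that of the base argument up to the $o(n)$ correction from the $S$‑sites. When additionally $\|\bmu\|_\infty=O(1)$ — the regime in which this lemma is actually invoked — there is a shorter route: since $\mathbb E_{\beta_0,\bQ,\bzero}[X_i]=0$ gives $Z(\beta_0,\bQ,\bmu)\ge Z(\beta_0,\bQ,\bzero)$ by Jensen, a change of measure yields $\P_{\beta_0,\bQ,\bmu}(|\hat\beta_n-\beta_0|>\ve)\le e^{\|\bmu\|_1}\P_{\beta_0,\bQ,\bzero}(|\hat\beta_n-\beta_0|>\ve)\le e^{o(n)}e^{-c_\ve n}\to0$, using the exponential‑in‑$n$ concentration of $\hat\beta_n$ furnished by \cite{infising} together with $\|\bmu\|_1=O(|S|)=o(n)$.

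For the pinning, let $S_1$ be the pinned set and let $\P^+$, $Z^+$ denote $\P_{\beta_0,\bQ,\bzero}$ conditioned on $\{X_i=+1,\ i\in S_1\}$ and its normaliser. Expanding $\prod_{i\in S_1}\tfrac{1+X_i}{2}$ and applying the GKS inequality (Lemma \ref{lemma:GKS}) termwise gives $\P_{\beta_0,\bQ,\bzero}(X_i=+1\ \forall i\in S_1)\ge 2^{-|S_1|}$, so $0\le\tfrac1n\log\big(2^{|S_1|}\P_{\beta_0,\bQ,\bzero}(X_{S_1}=+\mathbf 1)\big)\le\tfrac{|S_1|}{n}\log2=o(1)$ and hence $\tfrac1n\log\tfrac{Z^+}{2^{\,n-|S_1|}}=\tfrac1n\log\tfrac{Z(\beta_0,\bQ,\bzero)}{2^n}+o(1)$; thus \eqref{eq:llog} transfers to the pinned model. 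For the estimator itself, $\P^+(|\hat\beta_n-\beta_0|>\ve)\le 2^{|S_1|}\P_{\beta_0,\bQ,\bzero}(|\hat\beta_n-\beta_0|>\ve)\le 2^{o(n)}e^{-c_\ve n}\to 0$, where $\hat\beta_n$ is the MPLE built from the $n-|S_1|$ non‑degenerate conditionals, which differs from the full pseudo‑score by at most $o(n)$ summands by the same $\sum_i m_i^2\le\|\bQ\|^2n$ bound and therefore does not affect the base argument. Running the magnetisation and pinning arguments together yields the last sentence of the lemma.

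The main obstacle I anticipate is the curvature half of the argument rather than the centering of the score: one must verify that switching on a field on $S$ (or deleting the $S_1$‑summands) does not destroy the non‑degeneracy $\liminf_n\tfrac1n\sum_i m_i^2(\bX)\sech^2(\beta m_i(\bX))>0$ on which identifiability rests — a priori $\tfrac1n\sum_{i\in S}m_i^2(\bX)$ need only be $O(1)$, not $o(1)$, so one genuinely has to revisit the robustness of the non‑degeneracy estimates of \cite{infising} under an $o(n)$‑site perturbation (or, in the bounded regime, lean on the exponential deviation bound to absorb the $e^{o(n)}$ change‑of‑measure factor). Pinning down that \cite{infising} indeed supplies such an exponential‑in‑$n$ bound, and that its curvature estimates survive, is where the real work lies.
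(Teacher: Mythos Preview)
Your decomposition of the pseudo-score into the field-adjusted martingale increments $D_i$ plus the $S$-supported perturbation $R_i$ is exactly the paper's argument: they write the working score $S_\sigma(\beta)$ as the ``true'' pseudo-score $\tilde S_\sigma(\beta)$ (controlled by Chatterjee's second-moment bound, \cite[Lemma 1.2]{chaspinglass}, which indeed works under arbitrary $\bmu$) plus a difference supported on $S$ and hence $o(1)$.

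Where you diverge from the paper is in the two places you yourself flag. For the curvature, the paper does \emph{not} appeal to exponential deviation bounds but instead uses condition \eqref{eq:llog} directly: since $Z(\beta,\bQ,\bmu)\ge Z(\beta,\bQ,\bzero)$ for $\bmu\ge 0$, the hypothesis \eqref{eq:llog} persists under the field, and then \cite[Lemma 5.1]{infising} converts it into the Hamiltonian lower bound $\P_{\beta,\bQ,\bmu}(H(\bsigma)<\ve n)\to 0$. The identifiability then follows from the algebraic inequality
\[
\beta\sum_i m_i^2(\bsigma)\ \ge\ \sum_i m_i(\bsigma)\tanh(\beta m_i(\bsigma))\ =\ H(\bsigma)-nS_\sigma(\beta),
\]
so on the high-probability event $\{H\ge\ve n,\ |S_\sigma(\beta)|\le K\sqrt{a_n}\}$ one gets $\tfrac1n\sum_i m_i^2\ge\eta>0$. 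This is the missing idea in your sketch: \eqref{eq:llog} is precisely what guarantees non-degeneracy, and it does so through $H(\bsigma)$ rather than through any robustness of \cite{infising}'s estimates to an $o(n)$-site perturbation.

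Your change-of-measure shortcut $\P_{\bmu}(\,\cdot\,)\le e^{\|\bmu\|_1}\P_{\bzero}(\,\cdot\,)$ is algebraically correct, but it requires $\P_{\bzero}(|\hat\beta_n-\beta_0|>\ve)\le e^{-c_\ve n}$; the references at hand supply only a Chebyshev-type bound via the second moment of the score, not exponential concentration, so this route is not available as stated. For the pinning, the paper's treatment is simpler than yours: setting $X_i=+1$ on $S_1$ is just the limiting case of a positive field on $S_1$, so the pinned problem is subsumed by the magnetised one with support $S\cup S_1$, still of size $o(n)$; there is no need to run a separate GKS/change-of-measure argument for the partition function.
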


\begin{proof}
By assumption \eqref{eq:llog},
	$$\liminf_{n \rightarrow \infty} \frac{1}{n}\log \frac{1}{2^n}Z(\beta,\bQ,\bmu) \ge \liminf_{n \rightarrow \infty} \frac{1}{n}\log \frac{1}{2^n}Z(\beta,\bQ,\bzero) >0$$
Our maximum pseudolikelihood estimator $\hat{\beta}_n$ satisfies the equation $\tilde{S}_{\bsigma}(\beta)=0$, where $\tilde{S}_{\bsigma}(\beta)$ is defined by
\[\tilde{S}_{\bsigma}(\beta):=\frac{1}{n}\sum_i m_i({\bsigma})(\sigma_i-\tanh(\beta m_i({\bsigma})+\bmu_i)),\]
and $m_i({\bsigma}):= \sum_{j} \bQ_{ij}\sigma_j$. Define,
$$S_{\bsigma}(\beta)=\frac{1}{n}\sum_i m_i({\bsigma})(\sigma_i-\tanh(\beta m_i({\bsigma}))).$$
Following the proof techniques of \cite{infising}, we want a lower bound on $\frac{1}{n}\sum_i m^2_i({\bsigma})$ and an upper bound on $\bE_{\beta,\bQ,\bmu}(S^2_{\bsigma}(\beta))$. For the upper bound, one can redo \cite[Lemma 1.2]{chaspinglass} for $\bmu$ yielding $\bE_{\beta,\bQ,\bmu}(\tilde{S}^2_{\bsigma}(\beta))\leq c/n$, for some $c>0$.
Also 
$$|S_{\bsigma}(\beta)- \tilde{S}_{\bsigma}(\beta)| \leq \frac{1}{n}\sum_{i \in S} 2|m_i(
\bsigma)|=O\Big(\frac{s}{n}\Big).$$ Hence, 
$$\bE_{\beta,\bQ,\bmu}(S^2_{\bsigma}(\beta)) \leq 2 (\bE_{\beta,\bQ,\bmu}(\tilde{S}^2_{\bsigma}(\beta)) + o(1/n)) =O(\max\{\frac{s^2}{n^2},\frac{1}{n}\})=o(a_n),$$ 
for some $a_n=o(1)$. Therefore, $\bP_{\beta,\bQ,\bmu}(|S_{\bsigma}(\beta)|>K\sqrt{a_n})  \leq C/K^2$.\par 
To obtain a lower bound on $\frac{1}{n}\sum_i m^2_i(\bsigma)$, one needs to lower bound $H(\bsigma)=\sum_{i,j} \bQ_{ij}\sigma_i\sigma_j$. \eqref{eq:llog} and \cite[Lemma 5.1]{infising} yields
\begin{equation}\label{eq:haml}
	\lim\limits_{\varepsilon \rightarrow 0} \limsup_{n \rightarrow \infty} \sup_{\bmu} \bP_{\beta,\bQ,\bmu}(H(\bsigma)< \varepsilon n)=0.
\end{equation}
Fixing $\delta >C/K^2$. $\exists \varepsilon >0$ such that
\begin{equation}
	\bP_{\beta,\bQ,\bmu}(H(\bsigma)< \varepsilon n) \leq \delta,
\end{equation}
Consider the set $\mathcal{A}=\{|S_{\bsigma}(\beta)|\leq K\sqrt{a_n}, H(\bsigma)\geq \varepsilon n\}$. $\bP_{\beta,\bQ,\bmu}(\mathcal{A}) \geq 1-2\delta$. On $\mathcal{A}$,
\begin{align*}
	\sum \beta m^2_i({\bsigma}) & \geq \sum m_i(\bsigma) \tanh(m_i({\bsigma}))= H({\bsigma})-n S_{\bsigma}(\beta) \geq \eta n,
\end{align*} 
for some small $\eta >0$. So, $\bP_{\beta,\bQ,\bmu}(\frac{1}{n}\sum_i m^2_i({\bsigma}) \geq \eta) \geq 1-2\delta$. Also on $\mathcal{A}$,
\begin{align*}
	|S'_{\bsigma}(\beta)|&= \frac{1}{n}\sum_{i=1}^{n}m^2_i({\bsigma})\sech^2(\beta m_i({\bsigma}))\geq \eta \sech^2(\beta).
\end{align*}  
Therefore,
\begin{equation*}
	K\sqrt{a_n} \geq |S_{\bsigma}(\beta)|= |S_{\bsigma}(\beta)-S_{\bsigma}(\hat{\beta}_n)|=\int_{\beta \vee \hat{\beta}_n}^{\beta \wedge \hat{\beta}_n}|S'_{\bsigma}(x)| dx
	\geq \eta |\tanh(\beta)-\tanh(\hat{\beta}_n)|
\end{equation*}
This proves $\hat{\beta}_n$ is consistent estimator of $\beta$.\par 
If we set $S_1$ many vertices to be $+1$ with $|S_1|=o(n)$, then we have non-zero magnetization on the set $S \cup S_1$ with $|S \cup S_1|=o(n)$. Hence the previous argument still holds yielding the same conclusion.
\end{proof}

\begin{lemma}\label{lemma:expectation_underalt_lower_bound}
    Suppose $\bX\sim \P_{\beta,\bQ,\bmu}$ with $\bmu\in \Xi(\C_s, A)$. Then for any set $\tilde{S}^{\star}\subset [n]$ one has 
    \begin{align*}
        \E_{\beta,\bQ,\bmu}(\sum_{i\in \tilde{S}^{\star}}X_i)\geq A|\tilde{S}^{\star}\cap S^{\star}|-A^2\sum_{i\in \tilde{S}^{\star}\cap S^{\star}}\sum_{j\in S}\mathrm{Cov}_{\beta,\bQ,\tilde{\mathbf{\eta}}_{S^{\star}}(A)}(X_i,X_j),
    \end{align*}
    where $S^{\star}$ denotes the support of $\bmu$ and $\tilde{\mathbf{\eta}}_{S^{\star}}(A)$ is any point on the line segment joining $\bmu_{S^{\star}}(A)$ and $\mathbf{0}$.
\end{lemma}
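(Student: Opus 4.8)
The plan is to express $\E_{\beta,\bQ,\bmu}(\sum_{i\in\tilde S^\star}X_i)$ via a Taylor expansion in the external field along the segment from $\mathbf 0$ to $\bmu_{S^\star}(A)$, and then use the GKS nonnegativity of covariances (Lemma \ref{lemma:GKS}) together with GHS (Lemma \ref{lemma:GHS}) to control the remainder. Concretely, since $\bmu\in\Xi(\C_s,A)$ has support $S^\star$ and $\mu_i\ge A$ on $S^\star$, by monotonicity of the Ising model in the external field (the expectation $\E_{\beta,\bQ,\bmu}(\sum_{i\in\tilde S^\star}X_i)$ is coordinatewise nondecreasing in $\bmu$, a consequence of GKS-type inequalities) it suffices to prove the bound when $\bmu=\bmu_{S^\star}(A)$, i.e. $\mu_i=A\mathbf 1(i\in S^\star)$. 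So from now on set $\bmu=\bmu_{S^\star}(A)$.

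First I would write, for each $i\in\tilde S^\star$, the function $g_i(t):=\E_{\beta,\bQ,t\bmu_{S^\star}(A)}(X_i)$ for $t\in[0,1]$, noting $g_i(0)=0$ by symmetry (Lemma \ref{lemma:GKS} or the sign-flip symmetry of the zero-field model). Then
$$g_i(1)=g_i(0)+\int_0^1 g_i'(t)\,dt = \int_0^1 \sum_{j\in S^\star} A\,\mathrm{Cov}_{\beta,\bQ,t\bmu_{S^\star}(A)}(X_i,X_j)\,dt,$$
using the standard field-derivative identity $\partial_{\mu_j}\E(X_i)=\mathrm{Cov}(X_i,X_j)$. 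A second application of the fundamental theorem of calculus (or simply a first-order expansion with integral remainder) gives
$$g_i(1) = A\sum_{j\in S^\star}\mathrm{Cov}_{\beta,\bQ,\mathbf 0}(X_i,X_j) \;+\; \text{(second-order term)}.$$
The leading term: for $i\in\tilde S^\star\cap S^\star$ the diagonal contribution $\mathrm{Cov}_{\beta,\bQ,\mathbf 0}(X_i,X_i)=1$ is present, so $A\sum_{j\in S^\star}\mathrm{Cov}_{\beta,\bQ,\mathbf 0}(X_i,X_j)\ge A$; for $i\notin S^\star$ the GKS inequality (Lemma \ref{lemma:GKS}) makes every covariance nonnegative so $g_i(1)\ge 0$. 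Summing over $i\in\tilde S^\star$ thus produces the main term $A|\tilde S^\star\cap S^\star|$, provided the second-order correction is controlled from below by $-A^2\sum_{i\in\tilde S^\star\cap S^\star}\sum_{j\in S}\mathrm{Cov}_{\beta,\bQ,\tilde{\boldsymbol\eta}_{S^\star}(A)}(X_i,X_j)$. The cleanest route: write $g_i(1)=g_i(0)+g_i'(0)\cdot 1+\tfrac12 g_i''(\tilde t)$ for some $\tilde t\in(0,1)$ (mean-value form of Taylor's theorem), identify $g_i'(0)=A\sum_{j\in S^\star}\mathrm{Cov}_{\beta,\bQ,\mathbf 0}(X_i,X_j)$, and bound $\tfrac12 g_i''(\tilde t)$ from below. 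Here $g_i''(t)=A^2\sum_{j,k\in S^\star}\partial_{\mu_k}\mathrm{Cov}_{\beta,\bQ,t\bmu}(X_i,X_j)$, and by the GHS inequality (Lemma \ref{lemma:GHS}) the third field-derivative $\partial^3_{\mu_i\mu_j\mu_k}\log Z\le 0$, so each $\partial_{\mu_k}\mathrm{Cov}(X_i,X_j)\le 0$; hence $g_i''(\tilde t)\ge -A^2\sum_{j,k\in S^\star}|\partial_{\mu_k}\mathrm{Cov}_{\beta,\bQ,\tilde t\bmu}(X_i,X_j)|$, which one bounds crudely by $A^2\sum_{j\in S^\star}\mathrm{Cov}_{\beta,\bQ,\tilde{\boldsymbol\eta}_{S^\star}(A)}(X_i,X_j)$ after noting $S^\star\subseteq S$ in the statement's notation (the set $S$ there being the signal set).

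The step I expect to be the main obstacle is matching the precise form of the remainder term as written in the statement — namely getting $-A^2\sum_{i\in\tilde S^\star\cap S^\star}\sum_{j\in S}\mathrm{Cov}_{\beta,\bQ,\tilde{\boldsymbol\eta}_{S^\star}(A)}(X_i,X_j)$ with a single power of the covariance rather than a sum over pairs $(j,k)$ of mixed third derivatives. This requires a slightly more careful organization: one groups the two ``extra'' field-derivatives so that one of them is absorbed into replacing a covariance-at-field-$0$ by a covariance-at-field-$\tilde{\boldsymbol\eta}$ (using monotonicity of $\mathrm{Cov}$ in the field from GHS, i.e. \eqref{eqn:correlation_ordering}), and the remaining factor $A\sum_{j}(\cdot)$ contributes the stated $A^2$ weight. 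Alternatively, and perhaps more transparently, one applies the integral form of Taylor's theorem once, writing $g_i(1)=A\sum_{j\in S^\star}\mathrm{Cov}_{\beta,\bQ,\mathbf 0}(X_i,X_j) - A^2\int_0^1(1-t)\big[{-}\sum_{j,k\in S^\star}\partial_{\mu_k}\mathrm{Cov}_{\beta,\bQ,t\bmu}(X_i,X_j)\big]dt$, bounds the bracket using GHS and then the mean value theorem for integrals to pull out a single point $\tilde{\boldsymbol\eta}_{S^\star}(A)$ on the segment, absorbing the inner $\sum_k$ into the monotonicity statement $\mathrm{Cov}_{\beta,\bQ,t\bmu}(X_i,X_j)\le\mathrm{Cov}_{\beta,\bQ,\mathbf 0}(X_i,X_j)\le 1$. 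Once the remainder is in the claimed shape, summing over $i\in\tilde S^\star$ and using $\mathrm{Cov}_{\beta,\bQ,\mathbf 0}(X_i,X_i)=1$ on $\tilde S^\star\cap S^\star$ together with GKS-nonnegativity of all off-diagonal terms finishes the proof.
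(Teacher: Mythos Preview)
Your approach is genuinely different from the paper's, and the obstacle you correctly identify---getting the second-order remainder into the stated single-covariance form---is one the paper's proof sidesteps entirely.

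The paper never expands to second order and never invokes GHS. Instead it applies the \emph{first-order} mean value theorem twice. First, for $i\in\tilde S^\star\cap S^\star$,
\[
\E_{\beta,\bQ,\bmu_{S^\star}(A)}(X_i)=A\sum_{j\in S^\star}\mathrm{Cov}_{\beta,\bQ,\tilde\bmu}(X_i,X_j)
\]
for some $\tilde\bmu$ on the segment. The off-diagonal part is dropped (GKS makes it nonnegative), leaving $A\,\mathrm{Var}_{\beta,\bQ,\tilde\bmu}(X_i)$. The crucial elementary step is then
\[
\mathrm{Var}_{\beta,\bQ,\tilde\bmu}(X_i)=1-\E_{\beta,\bQ,\tilde\bmu}(X_i)^2\ \ge\ 1-\E_{\beta,\bQ,\tilde\bmu}(X_i),
\]
valid because $\E_{\beta,\bQ,\tilde\bmu}(X_i)\in[0,1]$ (GKS). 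Finally, a \emph{second} application of the first-order mean value theorem to $t\mapsto\E_{\beta,\bQ,t\bmu}(X_i)$ gives $\E_{\beta,\bQ,\tilde\bmu}(X_i)\le A\sum_{j\in S^\star}\mathrm{Cov}_{\beta,\bQ,\tilde{\boldsymbol\eta}}(X_i,X_j)$ for some $\tilde{\boldsymbol\eta}$ on the segment. Multiplying through by $A$ and summing over $i\in\tilde S^\star\cap S^\star$ gives exactly the stated bound.

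So the paper's argument is strictly more elementary (GKS only, no GHS, no third cumulants) and produces the remainder in the stated form directly. Your route would require an honest bound of the form $\big|\sum_{j,k\in S^\star}\partial_{\mu_k}\mathrm{Cov}_{t\bmu}(X_i,X_j)\big|\lesssim\sum_{j\in S^\star}\mathrm{Cov}_{\tilde{\boldsymbol\eta}}(X_i,X_j)$, which is not a standard inequality and which your sketch does not actually establish; the suggestions (``absorb one derivative into monotonicity'', ``mean value theorem for integrals'') do not close this gap as written. The $\mathrm{Var}=1-\E^2\ge 1-\E$ trick is the idea you are missing.
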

\begin{proof}
   We begin by noting that by G.K.S. Inequality (cf. Lemma \ref{lemma:GKS}), we have
\begin{align}
    \E_{\beta,\bQ,\bmu}\left(\sum_{i\in \tilde{S}^{\star}}X_i\right)&\geq \E_{\beta,\bQ,\bmu_S^{\star}(A)}\left(\sum_{i\in \tilde{S}^{\star}\cap S^{\star}}X_i\right)\nonumber\\
    &=\sum_{i\in \tilde{S}^{\star}\cap S^{\star}}\left\{\E_{\beta,\bQ,\bzero}(X_i)+\sum_{j=1}^n \bmu_{j,S^*}(A)\mathrm{Cov}_{\beta,\bQ,\tilde{\bmu}_S^{\star}(A)}(X_i,X_j)\right\}\nonumber\\
    &=\sum_{i\in \tilde{S}^{\star}\cap S^{\star}}\sum_{j=1}^n \bmu_{j,S^*}(A)\mathrm{Cov}_{\beta,\bQ,\tilde{\bmu}_S^{\star}(A)}(X_i,X_j),\label{eqn:expectation_lb_t1}
\end{align}
for some $\tilde{\bmu}_S^{\star}(A)$ lying on the line joining $\bzero$ and $\bmu_S^{\star}(A)$. Now
\begin{align}
     \sum_{i\in \tilde{S}^{\star}\cap S^{\star}}\sum_{j=1}^n \bmu_{j,S^{\star}}(A)\mathrm{Cov}_{\beta,\bQ,\tilde{\bmu}_S^{\star}(A)}(X_i,X_j)
    &=A\sum_{i\in \tilde{S}^{\star}\cap S^{\star}}\sum_{j\in S^{\star}}\mathrm{Cov}_{\beta,\bQ,\tilde{\bmu}_S^{\star}(A)}(X_i,X_j)\nonumber\\
    &=A\sum_{i\in \tilde{S}^{\star}\cap S^{\star}}\mathrm{Var}_{\beta,\bQ,\tilde{\bmu}_S^{\star}(A)}(X_i)+\tilde{R}_n\nonumber\\
   & \geq A\sum_{i\in \tilde{S}^{\star}\cap S^{\star}}\mathrm{Var}_{\beta,\bQ,\tilde{\bmu}_S^{\star}(A)}(X_i)\label{eqn:expectation_lb_t2}
\end{align}
since $\tilde{R}_n=A\sum_{i\in \tilde{S}^{\star}\cap S^{\star}}\sum_{j\neq i\in S^{\star}}\mathrm{Cov}_{\beta,\bQ,\tilde{\bmu}_S^{\star}(A)}(X_i,X_j)\geq 0$ by by G.K.S. Inequality (cf. Lemma \ref{lemma:GKS}). 
%We shall show that $\sum_{i\in \tilde{S}^{\star}\cap S^{\star}}\mathrm{Var}_{\beta,\bQ,\tilde{\bmu}_S^{\star}(A)}(X_i)/s\rightarrow 1$ and $\tilde{R}_n=o(1)$. To check the validity of the first claim, note that
Thereafter note that
\begin{align*}
    \mathrm{Var}_{\beta,\bQ,\tilde{\bmu}_S^{\star}(A)}(X_i)=1-\E_{\beta,\bQ,\tilde{\bmu}_S^{\star}(A)}^2(X_i).
\end{align*}
Now, by repeating the argument above we have that for some $\tilde{\boldsymbol{\eta}}_S^{\star}(A)$ on the line joining $\bzero$ and $\tilde{\bmu}_S^{\star}(A)$, and hence on on the line joining $\bzero$ and $\bmu_S^{\star}(A)$, one has
\begin{align*}
    \E^2_{\beta,\bQ,\tilde{\bmu}_S^{\star}(A)}(X_i)\leq \E_{\beta,\bQ,\tilde{\bmu}_S^{\star}(A)}(X_i)=\sum_{j=1}^n \tilde{\bmu}_{j,S}^{\star}(A)\mathrm{Cov}_{\beta,\bQ,\tilde{\boldsymbol{\eta}}_S^{\star}(A)}(X_i,X_j)\leq A\sum_{j\in S^*} \mathrm{Cov}_{\beta,\bQ,\tilde{\boldsymbol{\eta}}_S^{\star}(A)}(X_i,X_j),
\end{align*}
where the last inequality follows by noting that $\tilde{\bmu}_S^{\star}(A)$ lies on the line joining $\bzero$ and ${\bmu}_S^{\star}(A)$. Therefore
\begin{align*}
   A\sum_{i\in \tilde{S}^{\star}\cap S^{\star}}\mathrm{Var}_{\beta,\bQ,\tilde{\bmu}_S^{\star}(A)}(X_i)\geq  A|\tilde{S}^{\star}\cap S^{\star}|-A^2\sum_{i\in \tilde{S}^{\star}\cap S^*}\sum_{j\in S^*}\mathrm{Cov}_{\beta,\bQ,\tilde{\boldsymbol{\eta}}_S^{\star}(A)}(X_i,X_j). \label{eqn:expectation_lb_t3}
\end{align*} 
The proof of the lemma follows.
\end{proof}

\begin{lemma}\label{lem:hanson_wright}
Suppose $X_i$ are independent $\pm 1$ valued random variables such that $\E(X_i)=\mu_i$. Define $\tilde{X}_i=X_i-\mu_i$. For $\theta \in (-1,1) \setminus \{0\}$, define $s_\theta:= \frac{2 \theta}{\log(1+\theta)-\log(1-\theta)}$. Set $s_0=1$. Let $D_n$ be an $n \times n$ symmetric matrix with $D_{n,ii}=0$ $\forall i \in [n]$. Let $\lambda_1$ denote the largest eigenvalue of $D_n$. For any vector $\mathbf{c}= (c_1,\ldots,c_n)^\top$, if $\lambda_1 \max_{i \le n} s_{\mu_i} <1$, then
\begin{equation}
    \log\left( \bE \Big( \frac{1}{2} \sum_{i,j=1}^{n} D_{n,ij} \tilde{X}_i \tilde{X}_j + \sum_i c_i \tilde{X}_i \Big)\right) \lesssim \|D_n\|^2_F+ \sum_{i=1}^n c^2_i.
\end{equation}
As an immediate consequence, we obtain $\frac{1}{2} \sum_{i,j=1}^{n} D_{n,ij} \tilde{X}_i \tilde{X}_j + \sum_i c_i \tilde{X}_i$ is tight.
\end{lemma}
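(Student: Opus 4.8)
The plan is to bound $\log\E\exp(Y)$ with $Y:=\tfrac12\widetilde{\mathbf X}^\top D_n\widetilde{\mathbf X}+\mathbf c^\top\widetilde{\mathbf X}$, where $\widetilde{\mathbf X}=(\tilde X_1,\dots,\tilde X_n)^\top$; the tightness claim then follows by applying the resulting bound to $(\ve D_n,\ve\mathbf c)$ for small $\ve>0$ together with Markov's inequality — and, as will be apparent below, the bound depends on $D_n$ only through $D_nSD_n$, which is unchanged under $D_n\mapsto-D_n$, so both tails are controlled. The one-dimensional input is the sharp sub-Gaussian estimate for shifted Bernoulli variables: for each $i$, since $X_i\in\{\pm1\}$ with $\E X_i=\mu_i$, one has $\E e^{t\tilde X_i}\le\exp(\tfrac12 s_{\mu_i}t^2)$ for all $t\in\bR$, with $s_{\mu_i}$ exactly the quantity in the statement — writing $X_i=2B_i-1$ with $B_i$ Bernoulli$(\tfrac{1+\mu_i}{2})$ this is the classical Kearns--Saul inequality after rescaling, and it is consistent with $s_0=1$ since then $\E e^{t\tilde X_i}=\cosh t\le e^{t^2/2}$.

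First I would peel off the linear term by Cauchy--Schwarz: $\E e^Y\le(\E e^{\widetilde{\mathbf X}^\top D_n\widetilde{\mathbf X}})^{1/2}(\E e^{2\mathbf c^\top\widetilde{\mathbf X}})^{1/2}$, the second factor being at most $\exp(2\sum_i s_{\mu_i}c_i^2)\le\exp(2(\max_i s_{\mu_i})\,\|\mathbf c\|_2^2)$ by the one-dimensional bound; so it remains to control $\log\E\exp(\widetilde{\mathbf X}^\top D_n\widetilde{\mathbf X})$. Here the naive route — split $D_n=D_n^+-D_n^-$, drop the harmless negative part, and linearize $\tfrac12\widetilde{\mathbf X}^\top D_n^+\widetilde{\mathbf X}$ via a Gaussian — is doomed, because it yields a bound in terms of $\mathrm{tr}(D_n^+)$, i.e.\ essentially the nuclear norm $\|D_n\|_*$, which can be as large as $\sqrt n\,\|D_n\|_F$. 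Instead I would first \emph{decouple}.

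Let $\widetilde{\mathbf X}'$ be an independent copy of $\widetilde{\mathbf X}$. Since $D_n$ has zero diagonal, $\widetilde{\mathbf X}^\top D_n\widetilde{\mathbf X}=\sum_{i\ne j}D_{n,ij}\tilde X_i\tilde X_j$, and the standard decoupling inequality for quadratic chaos in independent mean-zero coordinates gives, with an absolute constant $c_0$ (one may take $c_0=4$), $\E\exp(\widetilde{\mathbf X}^\top D_n\widetilde{\mathbf X})\le\E\exp(c_0\,\widetilde{\mathbf X}^\top D_n\widetilde{\mathbf X}')$. Conditioning on $\widetilde{\mathbf X}'$, the exponent is linear in $\widetilde{\mathbf X}$, so by independence and the one-dimensional estimate $\E_{\widetilde{\mathbf X}}\exp(c_0\widetilde{\mathbf X}^\top D_n\widetilde{\mathbf X}')\le\exp(\tfrac{c_0^2}{2}\sum_i s_{\mu_i}(D_n\widetilde{\mathbf X}')_i^2)=\exp(\tfrac{c_0^2}{2}(\widetilde{\mathbf X}')^\top D_nSD_n\widetilde{\mathbf X}')$, where $S=\mathrm{diag}(s_{\mu_1},\dots,s_{\mu_n})$. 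The point of decoupling is that $D_nSD_n=(S^{1/2}D_n)^\top(S^{1/2}D_n)\succeq0$, so I can now linearize this positive semidefinite form with a standard Gaussian $\mathbf h\sim N(\mathbf 0,I_n)$, namely $\exp(\tfrac{c_0^2}{2}(\widetilde{\mathbf X}')^\top D_nSD_n\widetilde{\mathbf X}')=\E_{\mathbf h}\exp\bigl(c_0\,((D_nSD_n)^{1/2}\widetilde{\mathbf X}')^\top\mathbf h\bigr)$, integrate out $\widetilde{\mathbf X}'$ with the one-dimensional estimate once more, and arrive at $\E\exp(\widetilde{\mathbf X}^\top D_n\widetilde{\mathbf X})\le\E_{\mathbf h}\exp(\tfrac{c_0^2}{2}\mathbf h^\top N\mathbf h)$ with $N:=(D_nSD_n)^{1/2}S(D_nSD_n)^{1/2}\succeq0$.

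Finally I would evaluate the Gaussian integral and bound $\mathrm{tr}(N)$. Since $\|N\|_{\mathrm{op}}\le\|S\|_{\mathrm{op}}\|D_nSD_n\|_{\mathrm{op}}\le(\max_i s_{\mu_i})^2\|D_n\|_{\mathrm{op}}^2$, the hypothesis (which pins down $\|D_n\|_{\mathrm{op}}\max_i s_{\mu_i}$; note that in every application of this lemma $\|D_n\|_{\mathrm{op}}=o(1)$) guarantees $c_0^2N\prec I_n$, so $\E_{\mathbf h}\exp(\tfrac{c_0^2}{2}\mathbf h^\top N\mathbf h)=\det(I_n-c_0^2N)^{-1/2}$ and, applying $-\log(1-x)\le x/(1-\|c_0^2N\|_{\mathrm{op}})$ to each eigenvalue of $c_0^2N$, $\log\E\exp(\widetilde{\mathbf X}^\top D_n\widetilde{\mathbf X})\le\tfrac{c_0^2}{2}\,\mathrm{tr}(N)/(1-c_0^2\|N\|_{\mathrm{op}})$. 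The decisive computation is then $\mathrm{tr}(N)=\mathrm{tr}(SD_nSD_n)=\sum_{i,j}s_{\mu_i}s_{\mu_j}D_{n,ij}^2\le(\max_i s_{\mu_i})^2\|D_n\|_F^2$, which is exactly the scale claimed; combined with the linear-term bound this gives $\log\E e^Y\lesssim\|D_n\|_F^2+\|\mathbf c\|_2^2$. The main obstacle is precisely this last identity: decoupling is what trades the indefinite, zero-diagonal chaos $\widetilde{\mathbf X}^\top D_n\widetilde{\mathbf X}$ for the positive semidefinite form $D_nSD_n$, whose $S$-weighted trace is $O(\|D_n\|_F^2)$ rather than $O(\|D_n\|_*)$; a minor additional point is keeping track of the decoupling/linearization constant $c_0$ against the hypothesis $\lambda_1\max_i s_{\mu_i}<1$, which is of no consequence in the regime $\|D_n\|_{\mathrm{op}}=o(1)$ in which the lemma is used.
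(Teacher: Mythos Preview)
Your argument is correct. The decoupling step (Vershynin, Theorem~6.1.1, valid for any convex $F$ and independent mean-zero coordinates), the Kearns--Saul sub-Gaussian constant $s_{\mu_i}$, the observation that $D_nSD_n=(S^{1/2}D_n)^\top(S^{1/2}D_n)\succeq0$, the Gaussian linearization, and the trace identity $\mathrm{tr}(SD_nSD_n)=\sum_{i,j}s_{\mu_i}s_{\mu_j}D_{n,ij}^2\le(s^\star)^2\|D_n\|_F^2$ all go through, and the tightness argument via $D_n\mapsto-D_n$ is clean.

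The paper's proof is a one-line citation: it invokes Lemma~3.1 of \cite{debsum} (a Hanson--Wright bound) and says the only modification is to replace $\tilde X_i$ by $\sqrt{s^\star}Z_i$ at the step where the MGF of a single coordinate is used --- i.e., exactly the Kearns--Saul substitution you make. So your write-up is effectively an unpacking of what lies behind that citation. One structural difference is visible in the hypothesis: the condition $\lambda_1(D_n)\,s^\star<1$ is precisely what makes the \emph{Gaussian} integral $\bE\exp(\tfrac{s^\star}{2}Z^\top D_nZ)=\det(I-s^\star D_n)^{-1/2}$ finite, after which the zero-diagonal condition $\sum_k\lambda_k(D_n)=0$ kills the first-order term in $-\tfrac12\sum_k\log(1-s^\star\lambda_k)$ and leaves $O((s^\star)^2\|D_n\|_F^2)$. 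Your route, because of the decoupling constant and the two successive sub-Gaussian integrations, instead needs $c_0\,s^\star\|D_n\|_{\mathrm{op}}<1$ --- symmetric in $D_n\mapsto-D_n$ (which you exploit for the lower tail) but formally stronger than the stated hypothesis both in the constant and in using $\|D_n\|_{\mathrm{op}}$ rather than $\lambda_1$. You already flag this, and it is indeed immaterial in every application in the paper, where $\|D_n\|_{\mathrm{op}}=o(1)$.
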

\begin{proof}
Define $s^\star:= \max_i s_{\mu_i}$. The proof follows same steps as \cite[Lemma 3.1]{debsum} with the change that we replace $\tilde{X}_i$ by $\sqrt{s^\star}Z_i$ in \cite[Equation 5.1]{debsum}.
\end{proof}

\end{document}